\def\myarabic#1{\normalfont(\roman{#1})}
\newlist{theoremlist}{enumerate}{1}
\setlist[theoremlist]{label=\myarabic{theoremlisti},ref={\myarabic{theoremlisti}},itemindent=0pt,labelindent=0pt,
leftmargin=*,noitemsep}
\renewcommand{\p@theoremlisti}{\perh@ps{\thetheorem}}
\protected\def\perh@ps#1#2{\textup{#1#2}}
\newcommand{\itemrefperh@ps}[2]{\textup{#2}}
\newcommand{\itemref}[1]{\begingroup\let\perh@ps\itemrefperh@ps\ref{#1}\endgroup}
\newtheorem{theorem}{Theorem}[section]
\newtheorem{lemma}[theorem]{Lemma}
\newtheorem{proposition}[theorem]{Proposition}
\newtheorem{corollary}[theorem]{Corollary}
\theoremstyle{definition}
\newtheorem{remark}[theorem]{Remark}
\theoremstyle{definition}
\newtheorem{definition}[theorem]{Definition}
\newtheorem{conjecture}[theorem]{Conjecture}
\newtheorem{question}[theorem]{Question}
\theoremstyle{definition}
\newtheorem{problem}[theorem]{Problem}
\theoremstyle{definition}
\newtheorem{example}[theorem]{Example}
\crefname{figure}{Figure}{Figures}
\def\Acal{\mathcal{A}}\def\Ccal{\mathcal{C}}\def\Hcal{\mathcal{H}}\def\Zcal{\mathcal{Z}}
\def\zbf{\mathbf{z}}
\def\one{{\mathbbm{1}}}
\def\C{\mathbb{C}}
\def\R{\mathbb{R}}
\def\Z{\mathbb{Z}}
\def\<{{\langle}}
\def\>{{\rangle}}
\def\Span{ \operatorname{Span}}
\def\wt{\operatorname{wt}}
\def\ALPHA{\alpha}
\def\DELTA{\delta}
\def\zbz{0.2}
\def\MU{{
\begin{tikzpicture}[baseline=(ZUZU.base),scale=0.3]
  \coordinate(ZUZU) at (0,\zbz);
\draw[white] (0,0.5)--(1,1);
\draw[white] (0,0.5)--(1,0);
\draw[->] (0,0.5)--(1,1);
\node[anchor=east,inner sep=0pt](A) at (0,0.5){$\ALPHA$};
\end{tikzpicture}
}}
\def\MMa{{
\begin{tikzpicture}[baseline=(ZUZU.base),scale=0.3]
  \coordinate(ZUZU) at (0,\zbz);
\draw[white] (0,0.5)--(1,1);
\draw[white] (0,0.5)--(1,0);
\draw[->] (0,0.5)--(1,0.5);
\node[anchor=east,inner sep=0pt](A) at (0,0.5){$\ALPHA$};
\end{tikzpicture}
}}
\def\MD{{
\begin{tikzpicture}[baseline=(ZUZU.base),scale=0.3]
  \coordinate(ZUZU) at (0,\zbz);
\draw[white] (0,0.5)--(1,1);
\draw[white] (0,0.5)--(1,0);
\draw[->] (0,0.5)--(1,0);
\node[anchor=east,inner sep=0pt](A) at (0,0.5){$\ALPHA$};
\end{tikzpicture}
}}
\def\MDb{{
\begin{tikzpicture}[baseline=(ZUZU.base),scale=0.3]
  \coordinate(ZUZU) at (0,\zbz);
\draw[white] (0,0.5)--(1,1);
\draw[white] (0,0.5)--(1,0);
\draw[->,blue] (0,0.5)--(1,0);
\node[anchor=east,inner sep=0pt,blue](A) at (0,0.5){$\ALPHA$};
\end{tikzpicture}
}}
\def\UMb{{
\begin{tikzpicture}[baseline=(ZUZU.base),scale=0.3]
  \coordinate(ZUZU) at (0,\zbz);
\draw[white] (0,0.5)--(1,1);
\draw[white] (0,0.5)--(1,0);
\draw[->,blue] (0,1)--(1,0.5);
\node[anchor=west,inner sep=0pt,blue](A) at (1,0.7){$\DELTA$};
\end{tikzpicture}
}}
\def\UM{{
\begin{tikzpicture}[baseline=(ZUZU.base),scale=0.3]
  \coordinate(ZUZU) at (0,\zbz);
\draw[white] (0,0.5)--(1,1);
\draw[white] (0,0.5)--(1,0);
\draw[->] (0,1)--(1,0.5);
\node[anchor=west,inner sep=0pt](A) at (1,0.7){$\DELTA$};
\end{tikzpicture}
}}
\def\MMd{{
\begin{tikzpicture}[baseline=(ZUZU.base),scale=0.3]
  \coordinate(ZUZU) at (0,\zbz);
\draw[white] (0,0.5)--(1,1);
\draw[white] (0,0.5)--(1,0);
\draw[->] (0,0.5)--(1,0.5);
\node[anchor=west,inner sep=0pt](A) at (1,0.7){$\DELTA$};
\end{tikzpicture}
}}
\def\DM{{
\begin{tikzpicture}[baseline=(ZUZU.base),scale=0.3]
  \coordinate(ZUZU) at (0,\zbz);
\draw[white] (0,0.5)--(1,1);
\draw[white] (0,0.5)--(1,0);
\draw[->] (0,0)--(1,0.5);
\node[anchor=west,inner sep=0pt](A) at (1,0.7){$\DELTA$};
\end{tikzpicture}
}}
\def\XX{{
\begin{tikzpicture}[baseline=(ZUZU.base),scale=0.3]
  \coordinate(ZUZU) at (0,\zbz);
\def\ep{0.2}
\draw[fill=black] (\ep,\ep) rectangle (1-\ep,1-\ep);
\node[anchor=east,inner sep=0pt](A) at (0,0.5){$\ALPHA$};
\node[anchor=west,inner sep=0pt](A) at (1,0.7){$\DELTA$};
\end{tikzpicture}
}}
\def\MM{{
\begin{tikzpicture}[baseline=(ZUZU.base),scale=0.3]
  \coordinate(ZUZU) at (0,\zbz);
\draw[white] (0,0.5)--(1,1);
\draw[white] (0,0.5)--(1,0);
\draw[->] (0,0.5)--(1,0.5);
\node[anchor=east,inner sep=0pt](A) at (0,0.5){$\ALPHA$};
\node[anchor=west,inner sep=0pt](A) at (1,0.7){$\DELTA$};
\end{tikzpicture}
}}
\def\Yov{\overline{Y}}
\def\pip{\Pi}
\def\SKDZ{\SKD_\Z}
\def\SKDpZ{\SKDp_\Z}
\def\Prob(#1){\PF(#1)}
\def\wtij#1#2{\operatorname{wt}_{#2}(#1)}
\def\wt{\operatorname{wt}}
\def\cperm_#1{\pi_{#1}}
\def\lazy(#1,#2){\Pi_{#2}(#1)}
\def\irrel(#1,#2){\operatorname{Irrel}(#1,#2)}
\def\rel(#1,#2){\operatorname{Rel}(#1,#2)}
\let\subset\subseteq
\let\supset\supseteq
\def\bi{{\mathbf{i}}}
\def\bx{{\mathbf{x}}}
\def\by{{\mathbf{y}}}
\def\bu{{\mathbf{u}}}
\def\bv{{\mathbf{v}}}
\def\PF{{\mathbf{P}}}
\def\Zpi{\PF^{\Ppath,\Qpath}_\pi(\bx,\by)}
\def\H{\mathbb{H}}
\def\V{\mathbb{V}}
\def\h{h}
\def\v{v}
\def\ci{{\mathfrak{i}}}
\def\cj{{\mathfrak{j}}}
\def\ZHV{\PF^{\H,\V}(\bx,\by)}
\def\Sn{S_n}
\def\flip{\operatorname{180}^\circ}
\def\rev{\operatorname{rev}}
\def\ZHFV{\PF^{\flip(\H),\V}(\bx,\rev(\by))}
\def\midd{\Big|}
\def\HTop{\operatorname{Ht}}
\def\HTBop{{\mathbf{Ht}}}
\def\HT(#1){\HTop^{\Ppath,\Qpath}(#1;\bx,\by)}
\def\HTp(#1){\HTop^{\Ppath',\Qpath'}(#1;\bx',\by')}
\def\Hij{\HTop_\pi(\ci,\cj)}
\def\HTB(#1){\HTBop^{\Ppath,\Qpath}(#1;\bx,\by)}
\def\HTBp(#1){\HTBop^{\Ppath',\Qpath'}(#1;\bx',\by')}
\def\HTpi_#1(#2){\HTop^{\Ppath,\Qpath}_{#1}(#2)}
\def\SATop{\operatorname{SAT}}
\def\SATHV{\operatorname{SAT}(\H,\V)}
\def\ZP{\Z_{\geq1}}
\def\ZNN{\Z_{\geq0}}
\def\hpp(#1,#2){\left(#1+\frac12,#2+\frac12\right)}
\def\hmm(#1,#2){\left(#1-\frac12,#2-\frac12\right)}
\def\eqd{\stackrel{d}{=}}
\def\YBC{\kappa}
\def\YBCxx#1#2{\YBC^{#1}_{#2}(\bzz)}
\def\rv_#1{\rev_{[#1]}}
\def\lr#1{[l_{#1},r_{#1}]}
\def\du#1{[d_{#1},u_{#1}]}
\def\lrp#1{[l'_{#1},r'_{#1}]}
\def\Hecke{\Hcal_q(\Sn;\bz)}
\def\zz{z}
\def\bzz{\zbf}
\def\bz{\bzz}
\def\YB{Y}
\def\spx{\sp}
\def\SCSV{SC6V\xspace}
\def\ein#1{\Ppath_{#1}}
\def\fout#1{\Qpath_{#1}}
\def\eout#1{\fout{#1}}
\def\sp{{\mathfrak{p}}}
\def\Ppath{P}
\def\Qpath{Q}
\def\SKD{(\Ppath,\Qpath)}
\def\SKDp{(\Ppath',\Qpath')}
\def\skew{skew\xspace}
\def\cut{C}
\def\cutL{l}
\def\cutD{d}
\def\cutR{r}
\def\cutU{u}
\def\CUT{\Ccal}
\def\supp{\operatorname{supp}}
\def\suppH(#1){\supp_H(#1;\bx)}
\def\suppV(#1){\supp_V(#1;\by)}
\def\suppHp(#1){\supp_H(#1;\bx')}
\def\suppVp(#1){\supp_V(#1;\by')}
\def\HL{l}
\def\HR{r}
\def\VD{d}
\def\VU{u}
\def\Hpi{\H^{M,N}_\pi}
\def\Vpi{\V^{M,N}_\pi}
\def\IMop{\operatorname{IM}}
\def\IM{\IMop}
\def\IMP{\IMop^+}
\def\bd{{\mathbf{d}}}
\def\Zcal{{\mathcal{Z}}}
\def\KZx#1{\Zcal^{(#1)}}
\def\KZxxx#1#2#3{\Zcal^{(#1)}(#2,#3)}
\def\zu{{\mathfrak{Z}}}
\def\zux#1#2#3#4{\zu_{(#1,#2)\to(#3,#4)}}
\def\id{{\operatorname{id}}}
\def\cper{\sigma}
\def\ZBiber{\PF^{\bi,\bro,\cper}_\pi(\bzz)}
\def\ZBibx#1#2#3{\PF^{#1,#2,\cper}_{#3}(\bzz)}
\newcommand\SmallMatrix[1]{{%
    \small\arraycolsep=0.8\arraycolsep\ensuremath{\begin{pmatrix}#1\end{pmatrix}}}}
\def\YBarb#1#2{\YB^{#1,#2}}
\def\YBiber{\YB^{\bi,\bro,\cper}}
\def\bro{{\mathfrak{P}}}
\def\bro{{\pmb{\sp}}}
\def\concat#1#2{(#1,#2)}
\def\bj{\mathbf{j}}
\def\revv(#1){\rev(#1)}
\def\Daut{D}
\def\wPQ{w^{\Ppath, \Qpath}}
\def\bzzPQ{\bzz}
\def\zzPQ{\zz}
\def\wMN{w^{M, N}}
\def\HVpx#1#2{{#1}^{#2}}
\def\HVpart#1{\HVpx{#1}{\H}}
\def\HLx(#1){\mathcal{L}(#1)}
\def\HRx(#1){\mathcal{R}(#1)}
\def\HRS{\mathcal{R}(\H)}
\def\ZHY[#1,#2]{\PF[#1,#2]}
\def\sper{\sigma}
\def\PA{A}
\def\sz_#1{s_{#1;\bzz}}
\def\leqI{\preceq}
\def\lessI{\prec}
\def\xing{\operatorname{xing}}
\def\lbar{\bar l}
\def\rbar{\bar r}
\def\Yij{\YB^{\ci,\cj}_{M\times N}}
\def\Yijx{\overline{\YB}^{\ci,\cj}_{M\times N}}
\def\HVpx#1#2{{#1}^{#2,\V}}
\def\HVpart#1{\HVpx{#1}{\H}}
\def\Spar#1#2{S_{[#1,#2]}}
\def\Hpar#1#2{\Hcal_q(\Spar{#1}{#2};\bz)}
\def\kmin{k_{\operatorname{min}}}
\def\kmax{k_{\operatorname{max}}}
\def\al{l}
\def\ar{r}
\def\pih{\phi_H}
\def\piv{\phi_V}
\def\PHI{\Phi}
\def\Minf{M_{\infty}}
\def\Ninf{N_{\infty}}
\def\Vd{$\V$}
\def\Hd{$\H$}
\def\cl[#1]{[#1]}
\def\suppcut{\supp(\CUT)}
\def\suppcutp{\supp(\CUT')}
\def\cont#1#2{{\operatorname{cont}}_{#1,#2}}
\def\PD{\operatorname{PD}}
\def\PD{\operatorname{PD}}
\def\Gov(#1){G^{\operatorname{ov}}(#1)}
\def\NC_#1{\mathcal{P}_{#1}}
\def\PC{\NC_{\CUT}}
\def\PCP{\NC_{\CUT'}}
\def\trd{{\scalebox{0.5}{\textregistered}}}
\def\ler{\stackrel{{\trd}}{<}}
 \def\dist{\operatorname{dist}}
\def\bplr{\mathcal{B}(l,r)}
\def\ab#1{$\{a,b\}$}
\def\cutk_#1{\cut_{k_{#1}}}
\def\zero{0}
\def\one{1}
\def\br{{\mathbf{r}}}
\def\onebb{\mathbbm{1}_{m\times m}}
\def\Rvw{R^{\pi,w}}
\def\sb{s^{\bv}}
\def\bw{{\mathbf{w}}}
\def\vp_#1{\pi_{(#1)}}
\def\wp_#1{w_{(#1)}}
\def\Jo{J_{\bv}^\circ}
\def\Jm{J_{\bv}^-}
\def\Rich{\mathcal{R}^{\pi,w}}
\def\SL{\operatorname{SL}}
\def\Fbb{\mathbb{F}}
\def\Gr{\operatorname{Gr}}
\def\GRMN{\Gr(M,n)}
\def\Bound{\operatorname{Bound}(M,n)}
\def\Pio{\Pi^\circ}
\def\MPN{n}
\def\tom{t_{\omega_M}}
\def\Haff{\widetilde \H}
\def\Vaff{\widetilde \V}
\def\fop{\operatorname{rev}_{[1,N]}}
\def\xrasim{\xrightarrow{\sim}}
\def\xing{\operatorname{xing}}
\def\cntcol{white}
\begin{document}
\numberwithin{equation}{section}

\title{Symmetries of stochastic colored vertex models}
\author{Pavel Galashin}
\address{Department of Mathematics, University of California, Los Angeles, USA}
\email{galashin@math.ucla.edu}
\date{\today}

\subjclass[2010]{
Primary: 82C22. Secondary: 60K35, 14M15, 05E99.
}

\keywords{Six-vertex model, flip-invariance, Hecke algebra, last passage percolation, KPZ equation, Airy sheet, directed polymers, Kazhdan--Lusztig polynomials, positroid varieties}

\begin{abstract}
We discover a new property of the stochastic colored six-vertex model called \emph{flip-invariance}. We use it to show that for a given collection of observables of the model, any transformation that preserves the distribution of each individual observable also preserves their joint distribution. This generalizes recent shift-invariance results of Borodin--Gorin--Wheeler. As limiting cases, we obtain similar statements for the Brownian last passage percolation, the Kardar--Parisi--Zhang equation, the Airy sheet, and directed polymers. Our proof relies on an equivalence between the stochastic colored six-vertex model and the Yang--Baxter basis of the Hecke algebra. We conclude by discussing the relationship of the model with Kazhdan--Lusztig polynomials and positroid varieties in the Grassmannian.
\end{abstract}

\maketitle

\setcounter{tocdepth}{1}
\tableofcontents

\section{Introduction}
We study various symmetries of the \emph{stochastic colored six-vertex model}. It is a special case of a ``higher spin'' colored vertex model introduced in~\cite{KMMO}, related to the stochastic version of the $R$-matrix for the quantum affine algebra $U_q(\widehat{\mathfrak{sl}}_{n+1})$ studied earlier in~\cite{Bazhanov,FRT,Jimbo1,Jimbo2}. The model admits a very simple description: one fixes a subdomain of the square grid and considers a family of paths that enter it from the bottom left. Once two paths meet at a vertex of the grid, they either cross with probability $b$ or do not cross with probability $1-b$. The value of $b$ depends (in a certain \emph{integrable} way) on the coordinates of the vertex and on the parity of the number of times these two paths have crossed before, see \cref{fig:spec} and \cref{sec:intro-main}.

The main motivation for our work comes from the recent \emph{shift-invariance} results of~\cite{BGW}. In fact, the \emph{flip-invariance} property (\cref{thm:flip} and \cref{fig:flip}) was originally formulated as an attempt to give a simple explanation for the shift-invariance of~\cite{BGW}: one can realize their shift as a composition of two flips, see \cref{fig:doubleh} and \cref{rmk:gen_BGW}. We show in \cref{thm:main} that this happens more generally: whenever a transformation preserves the one-dimensional distributions of a given vector of \emph{height functions}, we prove that it can essentially be obtained as a composition of several flips, and therefore it preserves the joint distribution of this vector of height functions. Thus, in a sense, one can view flip-invariance as the \emph{fundamental} symmetry of the model inside the class of distribution-preserving transformations that we consider.

Our proofs rely on a connection between the stochastic colored six-vertex model and the \emph{Hecke algebra} $\Hecke$ of the symmetric group. Specifically, we make a simple observation (\cref{prop:YB_equals_Zbipi}) showing that the probabilities induced by the model coincide with the coefficients in the expansion of the \emph{Yang--Baxter basis}~\cite{LLT} of $\Hecke$ in the standard basis. This observation allows us to greatly simplify the proofs of our results. We also use it to give a simple proof (discovered independently by Bufetov~\cite{Buf20}) of the \emph{color-position symmetry}~\cite{BoBu} for the Asymmetric Simple Exclusion Process (ASEP).

\begin{figure}
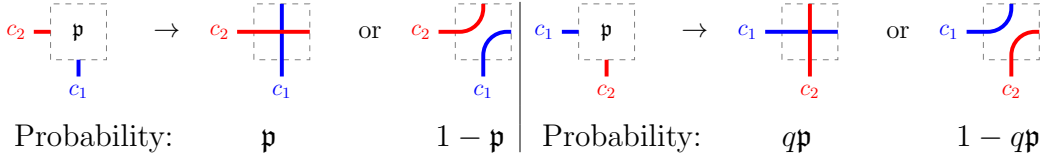

\FIGspec
  \caption{\label{fig:spec} When two paths of colors $c_1<c_2$ enter a cell $(i,j)$, they proceed in the up-right direction according to these probabilities. Here $\sp$ is equal to $\sp_{i,j}=\frac{y_j-x_i}{y_j-qx_i}$, see~\eqref{eq:sp}.}
\end{figure}

\begin{figure}
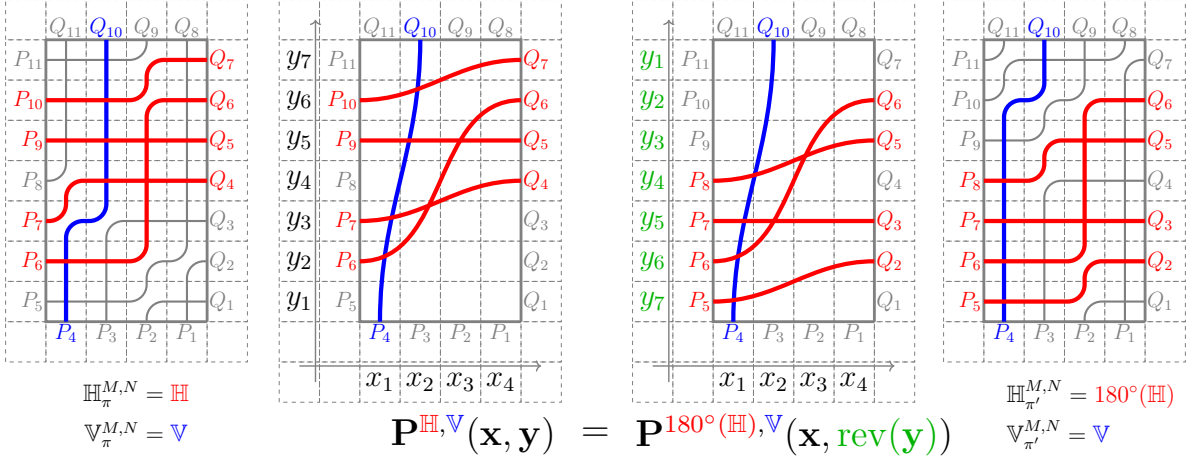

\FIGflip
  \caption{\label{fig:flip} The flip theorem states that the two partition functions shown in the middle are equal to each other. The picture on the far left (resp., far right) represents a configuration of the stochastic colored six-vertex model contributing to $\PF^{\H,\V}$ (resp., to $\PF^{\flip(\H),\V}$). See \cref{ex:flip}.}
\end{figure}

The stochastic colored six-vertex model specializes to many other probabilistic objects of interest. Following~\cite{BGW}, we obtain versions of our results for the Brownian last passage percolation and directed $(1+1)$d polymers in random media, as well as two universal objects, the Kardar--Parisi--Zhang (KPZ) equation and the Airy sheet, see \cref{sec:applications}. We refer the reader to \cite[Figure~2]{BGW} for the full chart of objects that can be obtained as limits of the stochastic colored six-vertex model (which includes many objects not considered here, for example, the colored $q$-PushTASEP, TASEP, ASEP, and the Bernoulli-Exponential last passage percolation).

Another consequence of the Hecke algebra approach is a family of unexpected connections between the stochastic colored six-vertex model and well studied algebraic objects such as \emph{Kazhdan--Lusztig polynomials}~\cite{KL1,KL2} and \emph{positroid varieties}~\cite{Pos,BGY,KLS} reviewed in \cref{sec:KL_GR}. In particular, we give an interpretation of the flip-invariance property in the language of Grassmannians in \cref{prop:Gr}. We hope to further explore the connections between the above objects in future work.

\subsection{Stochastic colored six-vertex model}\label{sec:intro-main}
An \emph{up-left path} $\Ppath$ is a lattice path in the positive quadrant with unit steps that go either up or left. A \emph{\skew domain} is a pair $\SKD$ of up-left paths with common start and end points such that $\Ppath$ is weakly below $\Qpath$. We denote by $n:=|\Ppath|=|\Qpath|$ the number of steps in $P$ and $Q$, and we label their steps by $\ein 1,\ein 2,\dots,\ein n$ and $\eout 1,\eout 2,\dots,\eout n$, respectively. 
The area between $\Ppath$ and $\Qpath$ is subdivided into unit squares called \emph{cells} whose Cartesian coordinates are pairs of positive integers, see \cref{fig:htpq}(left). 

Fix a \skew domain $\SKD$, two sets of formal variables $\bx=(x_1,x_2,\dots)$ and $\by=(y_1,y_2,\dots)$, called, respectively, \emph{column} and \emph{row rapidities}, and a parameter $0<q<1$.  To describe the model, we consider colored paths, where a \emph{color} is just an integer assigned to each path that determines its ``priority'' in the model dynamics. Suppose that for each $c=1,2,\dots,n$, a path of color $c$ enters the domain through the edge $\ein c$. The $n$ paths then propagate in the up-right direction according to the following rule: once two paths enter a cell $(i,j)$ from the left and from the bottom, they proceed in the up-right direction according to the probabilities given in \cref{fig:spec}. The probabilities depend on a parameter $\sp$, which for a given cell $(i,j)$ equals\footnote{This choice of parameters ensures that the model is \emph{integrable}, i.e., satisfies the \emph{Yang--Baxter equation}.}
\begin{equation}\label{eq:sp}
\sp_{i,j}:=\frac{y_j-x_i}{y_j-qx_i}.
\end{equation}
We apply this sampling procedure to all cells between $\Ppath$ and $\Qpath$, proceeding in the up-right direction, see \cref{fig:htpq}(right) for an example. This procedure gives rise to a random \emph{color permutation} $\pi=(\pi(1),\pi(2),\dots,\pi(n))$, where for $c=1,2,\dots,n$, the path of color $c$ exits the domain through the edge $\fout{\pi(c)}$. We let $\Sn$ be the group of permutations of $\{1,2,\dots,n\}$, and for each $\pi\in\Sn$, we denote by $\Zpi$ the total probability\footnote{This is a probability distribution in the sense that $\Zpi\geq 0$ and $\sum_{\pi\in\Sn}\Zpi=1$ when $0<q<1$ and $\bx,\by$ are specialized so that $0<x_i<y_j$ for all $i,j$.} of observing $\pi$ as the color permutation.

\subsection{Flips in rectangular domains}\label{sec:intro-rect}
We formulate our first result which we call the \emph{flip theorem}. The proof of the main result (\cref{thm:main}) will essentially consist of repeated applications of this fundamental hidden symmetry of the model.

For positive integers $M,N$, an \emph{$M\times N$-rectangular domain} is a \skew domain $\SKD$ such that $\Ppath$ goes $M$ steps left and then $N$ steps up while $\Qpath$ goes $N$ steps up and then $M$ steps left. The lengths of $\Ppath$ and $\Qpath$ are given by $n:=M+N$. We let $\bx=(x_1,\dots,x_{M})$ and $\by=(y_{1},\dots,y_{N})$.

We will be interested in the probability of observing a color permutation satisfying given \emph{horizontal and vertical boundary conditions}. For a permutation $\pi\in\Sn$, we let 
\begin{equation*}
  \Hpi:=\{(i,\pi(i))\mid i> M\text{ and }\pi(i)\leq N\},\quad    \Vpi:=\{(i,\pi(i))\mid i\leq M\text{ and }\pi(i)> N\}.
\end{equation*}

\noindent For two sets $\H=\{(\HL_1,\HR_1),\dots,(\HL_\h,\HR_\h)\}$ and $\V=\{(\VD_1,\VU_1),\dots,(\VD_\v,\VU_\v)\}$ of pairs, we denote
\begin{equation*}
\ZHV:=\sum_{\pi\in\Sn:\ \text{$\Hpi=\H$ and $\Vpi=\V$}} \Zpi.
\end{equation*}
Thus $\ZHV$ is the probability of observing a color permutation with specified endpoints of \emph{each} path that connects the opposite boundaries of the rectangle.

For a set of pairs $\H=\{(\HL_1,\HR_1),\dots,(\HL_\h,\HR_\h)\}$, we define its \emph{180-degree rotation} by 
\begin{equation*}
\flip(\H):=\{(n+1-\HR_1,n+1-\HL_1),\dots,(n+1-\HR_\h,n+1-\HL_\h)\}.
\end{equation*}
Additionally, denote $\rev(\by):=(y_{N},y_{N-1},\dots,y_{1})$. We are ready to state the flip theorem, which is a special case of \cref{thm:main} below. 
\begin{theorem}\label{thm:flip}
  For an $M\times N$-rectangular domain $\SKD$ and any $\H$ and $\V$, we have
\begin{equation}\label{eq:flip}
\ZHV=\ZHFV.
\end{equation}
\end{theorem}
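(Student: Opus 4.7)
The approach is to leverage the integrability of the stochastic colored six-vertex model, specifically the Yang--Baxter equation satisfied by its local vertex weights at parameters $\sp_{i,j}$ of the form \eqref{eq:sp}. The plan is to introduce an auxiliary ``reversal braid'' $\mathcal{Y}_{w_0}$ on the $N$ horizontal lines carrying spectral parameters $y_1, \dots, y_N$, built from $\binom{N}{2}$ elementary Yang--Baxter $R$-matrix crossings and realizing the longest permutation $w_0$ of the horizontal indices $\{1,\ldots,N\}$. By iteratively invoking the Yang--Baxter equation vertex-by-vertex, this braid is slid through the entire $M \times N$ rectangle, in each elementary step swapping a pair of adjacent rapidities across a column and preserving the partition function.

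The sliding has two simultaneous effects. First, on the spectral side, passing the $w_0$-braid across the rectangle reverses the order of horizontal rapidities from $(y_1,\ldots,y_N)$ to $\rev(\by) = (y_N,\ldots,y_1)$. Second, on the boundary side, the braid permutes horizontal positions by $w_0$: a horizontal pair $(M+j, k) \in \H$ is mapped to $(M+N+1-k,\; N+1-j)$, which is precisely $\flip((M+j, k))$. The vertical boundary condition $\V$ is untouched, because the braid acts only on the horizontal auxiliary spaces and does not interact with the vertical bonds entering from the bottom or exiting through the top. Combining these two effects yields the desired identity $\ZHV = \ZHFV$, which is exactly \eqref{eq:flip}.

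I expect the main obstacle to be that the stochastic colored six-vertex $R$-matrix is a probabilistic (rather than deterministic) swap, so the ``sliding'' cannot be interpreted naively as an identity of permutation matrices. The cleanest resolution is to pass to the Hecke algebra language foreshadowed in the introduction: the partition function $\Zpi$ coincides with a coefficient of the Yang--Baxter basis of $\Hecke$ in the standard basis, and the flip identity then reduces to a purely algebraic identity in $\Hecke$ (a braid relation combined with the fundamental property of the Yang--Baxter basis). Verifying both this algebraic identity and the exact combinatorial matching of the $w_0$-braid action with the involution $\flip$ on boundary pairs constitutes the bulk of the work; conceptually, one can alternatively reduce to the ``base'' case where $\H$ is the ``identity'' boundary condition depicted in \cref{fig:flipbase} and then use local Yang--Baxter moves on pairs of horizontal strands to extend to an arbitrary $\H$.
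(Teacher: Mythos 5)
Your proposal correctly identifies the key ingredients: the Hecke algebra reformulation $\Zpi=\PF^{\wMN}_\pi(\bz)$, the single Yang--Baxter relation $\YB^w\cdot R_r(\sp_{l,l+1})=R_l(\sp_{l,l+1})\cdot\sz_l(\YB^w)$ with $l=r+M$, and the fully-packed base case $\H_0$. You also correctly observe that $\V$ is untouched by multiplication by $R_r$ (right, $r<N$) or $R_l$ (left, $l>M$). However, the central step — ``slide a $w_0$-braid through the rectangle, reversing rapidities and permuting $\H$ by $w_0$'' — does not give a proof. You name the obstacle yourself: the $R_k(\spx)$ are not permutation matrices in $\Hecke$. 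Iterated sliding produces only the Hecke identity $\YB^w(\bz)\cdot Y^R = Y^L\cdot\YB^w(\bz')$ with genuine Yang--Baxter braids $Y^R$ and $Y^L$ on both sides, and these braids \emph{mix} the $\H$-components via~\eqref{eq:HV_T_w*R_k}--\eqref{eq:HV_R_k*T_w} rather than permuting them. There is no Hecke automorphism (or anti-automorphism) implementing $\flip$ on $\H$ alone: the natural candidate $T_\pi\mapsto T_{w_0\pi^{-1}w_0}$ flips \emph{both} $\H$ and $\V$, which is the color-position symmetry of \cref{lemma:CPS}, not the flip theorem. So the ``purely algebraic identity in $\Hecke$'' you gesture at does not exist in the form needed, and your concluding sentence that matching the $w_0$-braid with $\flip$ ``constitutes the bulk of the work'' is precisely the gap.

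The paper's proof fills this gap by an induction on the set $\HRS=\{r_1,\dots,r_\h\}$ in the dominance-type order $\leqI$. One picks $r\notin\HRS$ with $r+1\in\HRS$, applies the functional $Y\mapsto\HVpx Y{\H'}$ (for $\H'=\H\cdot s_r$) to both sides of the single relation~\eqref{eq:master_flip}, and obtains a linear equation~\eqref{eq:YH_ind_step} expressing $\HVpx{(\YB^w)}{\H}$ in terms of the quantities indexed by $\H'$ and $\H''=s_l\cdot\H\cdot s_r$, which are strictly smaller in $\leqI$. The key observation is that the analogous functional applied at $\flip(\H')$ produces the same coefficient tuple after the substitution $\by\mapsto\rev(\by)$ (because of the manifest symmetry between~\eqref{eq:HV_T_w*R_k} and~\eqref{eq:HV_R_k*T_w}), so the inductive hypothesis closes the argument. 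Your fallback idea of ``reduce to $\H_0$ and extend by local moves'' is the right direction, but without the ordering of $\HRS$ and the explicit coefficient-matching of~\eqref{eq:YH_ind_step}--\eqref{eq:YBC_ind_step4} the extension step is unsubstantiated.
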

\begin{example}\label{ex:flip}
\noindent \Cref{fig:flip} illustrates \cref{thm:flip} in the case $M=4$, $N=7$, and
\begin{equation*}
  \H=\{(6,6),(7,4),(9,5),(10,7)\},\quad \flip(\H)=\{(5,2),(6,6),(7,3),(8,5)\},\quad \V=\{(4,10)\}.  
\end{equation*}
\end{example}

\begin{figure}
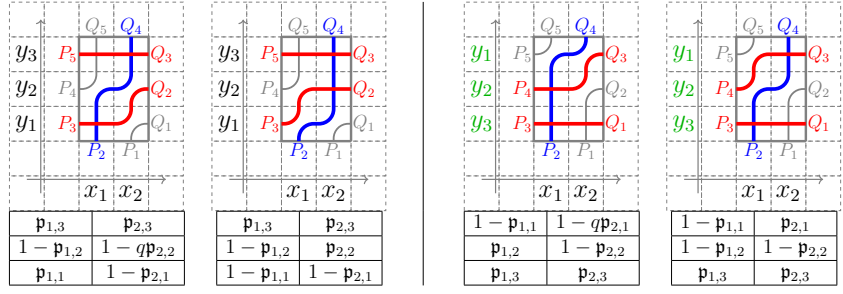

\FIGexdt
  \caption{\label{fig:exdt} Applying the flip theorem to a $2\times 3$ rectangle leads to a non-trivial identity, see \cref{ex:23}. For each of the four configurations, its probability equals the product of the entries in the corresponding table shown below it.}
\end{figure}

\begin{example}\label{ex:23}
Let $M=2$, $N=3$, $\H=\{(3,2),(5,3)\}$, $\V=\{(2,4)\}$. The left (resp., right) hand side of~\eqref{eq:flip} is the sum of probabilities of the two configurations shown in \cref{fig:exdt} on the left (resp., right). Thus we get
\begin{align*}
  \ZHV&= \sp_{1,3}\sp_{2,3} (1-\sp_{1,2})(1-\sp_{2,1}) \left(\sp_{1,1}(1-q\sp_{2,2})+\sp_{2,2}(1-\sp_{1,1})\right) \\
  \ZHFV&= \sp_{1,3}\sp_{2,3} (1-\sp_{1,1})(1-\sp_{2,2}) \left(\sp_{1,2}(1-q\sp_{2,1})+\sp_{2,1}(1-\sp_{1,2})\right).
\end{align*}
A quick calculation verifies that the right hand sides coincide as rational functions in $(q,\bx,\by)$.
\end{example}
\begin{remark}
In the above example, the \emph{number} of configurations contributing to each side of~\eqref{eq:flip} was the same (equal to $2$). This is true in general, see \cref{sec:counting-pipe-dreams}.
\end{remark}

\begin{remark}\label{rmk:flip_colors_ordered}
\Cref{thm:flip} is stated in the case of \emph{ordered} incoming colors: for each $c$, the color of the path entering through the edge $\ein c$ equals to $c$. As we explain in \cref{rmk:bottom_colors}, \cref{thm:flip} holds more generally when the colors entering from the left are ordered and are larger than the colors entering from the bottom (which may be ordered arbitrarily).
\end{remark}

Our proof of \cref{thm:flip} is given in \cref{sec:flip_proof}. It is quite short and makes repeated use of the Yang--Baxter equation~\eqref{eq:YB} combined with the machinery of Hecke algebras that we develop in \cref{sec:Hecke}. In contrast, deducing \cref{thm:main} from \cref{thm:flip} requires a long technical argument which is presented in \cref{sec:flip_conseq,sec:main_proof}.

\begin{figure}
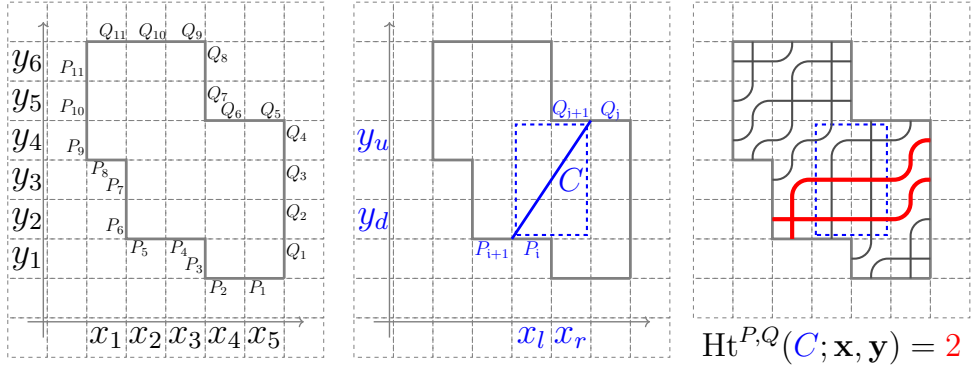

\FIGhtpq
  \caption{\label{fig:htpq} Left: a \skew domain $\SKD$ shown together with row/column rapidities. Middle: a $\SKD$-cut $\cut=(l,d,u,r)$. Right: a configuration of the model for which the height function $\HT(\cut)$ is equal to $2$ since there are two paths that connect the left and right boundaries of the dashed rectangle.}
\end{figure}

\subsection{Main result: height functions}\label{sec:intro:main}
 Following~\cite{BGW}, we study \emph{height functions} defined as follows. Given a pair of integers $1\leq \ci,\cj\leq n$ and a permutation $\pi\in\Sn$, the value of the associated height function is simply given by $\Hij:=\#\{c>\ci\mid \pi(c)\leq \cj\}$. Let us reformulate this slightly, taking the geometry of the domain into account.

Let $\ZP:=\{1,2,\dots\}$ and denote by $\SKDZ\subset\ZP\times\ZP$ the set of cells inside the domain $\SKD$. A \emph{$\SKD$-cut} is a quadruple $\cut=(\cutL,\cutD,\cutU,\cutR)$ of positive integers satisfying  $\cutL\leq\cutR$, $\cutD\leq\cutU$, and such that $(\cutL,\cutD),(\cutR,\cutU)\in\SKDZ$ while $(\cutL-1,\cutD-1),(\cutR+1,\cutU+1)\notin\SKDZ$. Define $1\leq \ci,\cj\leq n$ so that the bottom left corner of the cell $(\cutL,\cutD)$ belongs to both $\ein{\ci}$ and $\ein{\ci+1}$ while the top right corner of the cell $(\cutR,\cutU)$ belongs to both $\eout{\cj}$ and $\eout{\cj+1}$, see \cref{fig:htpq}(middle).  Given a permutation $\pi\in \Sn$, we set 
\begin{equation}\label{eq:HTpi}
 \HTpi_{\pi}(\cut):=\Hij=\#\{c>\ci\mid \pi(c)\leq \cj\}.
\end{equation} 
In other words, $\HTpi_\pi(\cut)$ counts the number of colored paths that connect the left and right boundaries of the sub-rectangle $\{\cutL,\cutL+1,\dots,\cutR\}\times\{\cutD,\cutD+1,\dots,\cutU\}$ of $\SKDZ$, see \cref{fig:htpq}(right). We let $\HT(\cut)$ denote the associated random variable.

Let $\suppH(\cut):=\{x_\cutL,x_{\cutL+1},\dots,x_{\cutR}\}$ and $\suppV(\cut):=\{y_\cutD,y_{\cutD+1},\dots,y_{\cutU}\}$ denote the \emph{unordered} sets of column and row rapidities covered by $\cut$. Let us say that $\bx'=(x'_1,x'_2,\dots)$ is a \emph{permutation of the variables in $\bx$} if there exists a bijection $\phi:\ZP\to\ZP$ such that $x'_i=x_{\phi(i)}$ for all $i\in\ZP$. 
 Our main result is the following equality between joint distributions of two vectors of height functions, conjectured by Borodin--Gorin--Wheeler. 

\begin{samepage}
\begin{theorem}\label{thm:main}
Suppose that we are given the following data:
\begin{itemize}
\item two \skew domains $\SKD$ and $\SKDp$;
\item a permutation $\bx'$ of the variables in $\bx$ and a permutation $\by'$ of the variables in $\by$;
\item a tuple $(\cut_1,\cut_2,\dots,\cut_m)$ of $\SKD$-cuts and a tuple $(\cut'_1,\cut'_2,\dots,\cut'_m)$ of $\SKDp$-cuts.
\end{itemize}
Assume that for each $i=1,2,\dots,m$, we have 
\begin{equation}\label{eq:main:supp=supp}
  \suppH(\cut_i)=\suppHp(\cut'_i)\quad\text{and}\quad\suppV(\cut_i)=\suppVp(\cut'_i).
\end{equation} 
Then the  distributions of the following two vectors of height functions agree:
\[\Big(\HT(\cut_1),\dots,\HT(\cut_m)\Big)\eqd \Big(\HTp(\cut'_1),\dots,\HTp(\cut'_m)\Big).\]
\end{theorem}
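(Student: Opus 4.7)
The plan is to deduce \cref{thm:main} from the flip theorem (\cref{thm:flip}) by connecting the two sides via a finite sequence of flip operations, each preserving the joint distribution of the height functions under consideration. First I would upgrade \cref{thm:flip}, which is a statement about the one-dimensional marginal $\PF^{\H,\V}$, to a statement about joint distributions of height functions. The key observation is that the bijection between configurations on the two sides of \eqref{eq:flip} is \emph{local}: for any cut $\cut$ whose support $\suppH(\cut) \times \suppV(\cut)$ lies entirely inside the flipped rectangle, $\HT(\cut)$ is preserved after relabeling $\cut$ by its $180^\circ$-rotated image, while cuts entirely outside the flip region are unaffected.

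Next I would construct, from any two setups $\SKD, \SKDp, \bx, \by, \bx', \by'$ and cuts satisfying the support condition~\eqref{eq:main:supp=supp}, a finite chain of \emph{compatible} flips connecting them, where a flip is compatible with the data $(\cut_1, \dots, \cut_m)$ if each $\cut_i$ is either entirely inside or entirely outside the flipped sub-rectangle. The strategy is to normalize both setups to a canonical form: using local flips aligned with individual cuts (as in \cref{fig:localh}) one can permute the rapidities within each $\suppH(\cut_i)$ and each $\suppV(\cut_i)$ arbitrarily, while using global flips and their compositions --- which realize shifts as in \cref{fig:doubleh} --- one can reposition the cuts themselves inside the skew domain. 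Since the unordered support sets coincide on both sides, both setups will be driven to the same canonical form.

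The main obstacle is that cuts may overlap in intricate ways: a flip that repairs one cut can disturb another whose support is interleaved with the first. To handle this I would impose a partial order on the cuts based on containment and crossing of their supports, and induct on this order. Concretely, one would show that after a suitable preliminary flip (perhaps extending $\SKD$ temporarily to a larger skew domain to accommodate the flip rectangle), there always exists a flip reducing the combined complexity of the two setups while remaining compatible with all $m$ cuts. The interaction between row and column rapidities under skew deformations introduces additional bookkeeping, as does the need to keep intermediate domains valid, and verifying that an appropriate sequence of flips always exists will occupy the bulk of the technical work.

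Once both setups are reduced to the common canonical form, equality of the joint distributions is immediate, and tracing the chain of flips backward --- using the enhanced flip theorem at each step to preserve the full joint law --- yields the desired distributional identity between $\bigl(\HT(\cut_1),\ldots,\HT(\cut_m)\bigr)$ and $\bigl(\HTp(\cut'_1),\ldots,\HTp(\cut'_m)\bigr)$.
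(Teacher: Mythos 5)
Your high-level strategy — normalize both sides to a common form via a chain of flips, preserving the joint law at each step, and induct on a complexity measure — is indeed the paper's strategy, but the central technical device you propose is wrong in a way that would sink the argument. You define a flip to be \emph{compatible} with $(\cut_1,\dots,\cut_m)$ if each $\cut_i$ lies entirely inside or entirely outside the flipped rectangle, and you plan to use only compatible flips. That restriction is too strong to be useful: in the paper's transformations (global \Hd-flips, double \Hd-flips/\Hd-shifts; see \cref{fig:global_H,fig:doubleh}), the flip rectangle must \emph{cross} some of the cuts, meaning $[l_i,r_i]\subset[l,r]$ and $[d,u]\subset[d_i,u_i]$ or vice versa — neither ``inside'' nor ``outside.'' The reason such flips still preserve the joint law is the observation formalized in \cref{lemma:Ht_H_V_flip}: for a cut $\cut_i$ that crosses the flip rectangle $\cut$, the value of $\HT(\cut_i)$ is a deterministic function of the boundary data $(\H,\V)$ at $\cut$, and the flip theorem (\cref{thm:flip_gen}) preserves the joint law of this boundary data. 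Without this lemma you have no mechanism for treating cuts that interleave with the flip region, and shifting one cut relative to another (the core of shift-invariance) cannot be achieved by flips that leave every cut untouched or rotate it wholesale. Said differently, your ``upgrade of the flip theorem to joint distributions'' is stated only for cuts strictly inside the rectangle, but the upgrade that is actually needed — and proved in the paper — is for cuts that cross it.

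Two further ingredients are missing. First, you omit the color-position symmetry (\cref{lemma:CPS}), which is not a flip at all but the anti-automorphism $T_\pi\mapsto T_{\pi^{-1}}$ of $\Hecke$; it is used (in \cref{lemma:rigid_preserved}) to reverse the orientation of an indecomposable component, and it is unclear how to achieve this with flips alone. Second, the difficulty you flag about ``extending $\SKD$ temporarily to a larger skew domain'' is resolved in the paper not by ad hoc extensions but by the notion of $\CUT$-equivalence classes of pipe dreams (\cref{sec:equiv-class-pipe}, \cref{cor:domain_indep}), which shows the joint law of $\HT(\CUT)$ depends only on the cells in $\suppcut$ and hence is independent of the ambient domain; this is what legitimately allows applying the rectangular flip theorem inside a skew domain. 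The partial-order-and-induction framework you sketch is close in spirit to the paper's analysis via the non-crossing cut poset, rigid relations, and indecomposable components in \cref{sec:main_proof}, but as written your plan lacks the one lemma (\cref{lemma:Ht_H_V_flip}) without which the chain of flips cannot move cuts past one another, and so the normalization you envision cannot be carried out.
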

\end{samepage}
\noindent An illustration of \cref{thm:main} is shown in \cref{fig:doubleh}. For example, we check that $\suppV(\cut_3)=\{y_4,y_5,y_6\}=\suppVp(\cut'_3)$, in agreement with~\eqref{eq:main:supp=supp}.

\begin{remark}\label{rmk:supp_necessary}
The distribution of an individual height function $\HT(\cut_i)$ depends\footnote{In fact, it follows as a simple consequence of the Yang--Baxter equation that the distribution of $\HT(\cut_i)$ is symmetric in the variables $\suppH(\cut_i)$ and (separately) $\suppV(\cut_i)$, see \cref{sec:flip_proof}.} on the variables in $\suppH(\cut_i)$ and $\suppV(\cut_i)$, and it is straightforward to check that the condition~\eqref{eq:main:supp=supp} is necessary in order for a distributional identity
\[\HT(\cut_i)\eqd\HTp(\cut'_i)\]
to hold.  The content of \cref{thm:main} is that this condition is also sufficient even when one considers joint distributions of multiple height functions.
\end{remark}

\begin{figure}
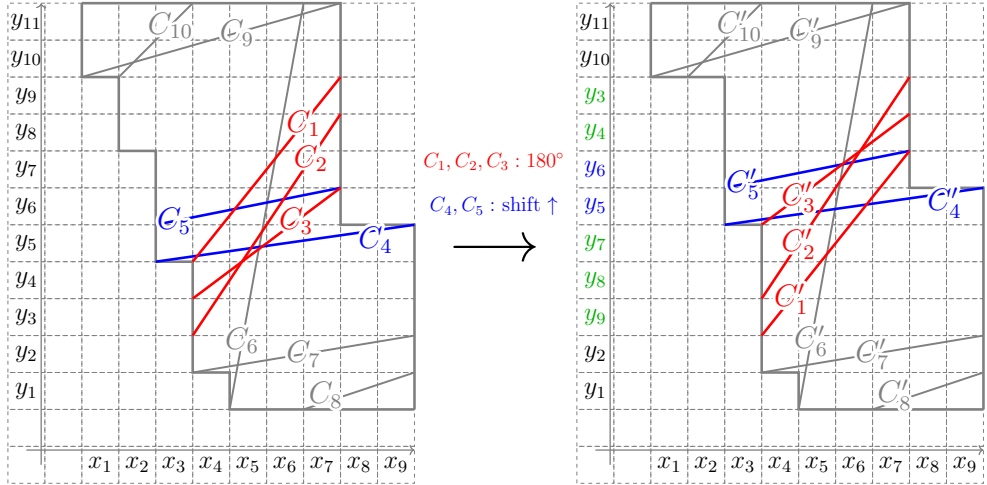

\FIGdoubleh
  \caption{\label{fig:doubleh} An application of \cref{thm:main}: the joint distributions of the two vectors of height functions are the same. This transformation is called a \emph{double \Hd-flip}, see \cref{lemma:double_H_flip}.}
\end{figure}

Examples of various transformations satisfying the assumptions of \cref{thm:main} are given in \cref{fig:doubleh} and \cref{sec:examples} (cf. \cref{fig:global_H,fig:local_H}). These examples include the shift-invariance of~\cite[Theorems~1.2 and~4.13]{BGW}, see \cref{rmk:gen_BGW}.

\subsection{Applications}\label{sec:applications}
Following~\cite{BGW}, we describe some probabilistic models and universal objects which can be obtained 
as limiting cases of the stochastic colored six-vertex model and state analogs of \cref{thm:main} for them. In particular, the generalized shift-invariance property \cite[Conjecture~1.5]{BGW} of the KPZ equation is given below in \cref{thm:KPZ}. We closely follow the notation and exposition of~\cite[Section~1]{BGW}.

For simplicity, we restrict to the case where the \skew domain is a vertical strip of some fixed width $M$. We start by introducing the notion of an \emph{intersection matrix} which, in view of our results below, plays a role similar to the covariance matrix of a multivariate Gaussian distribution.

\begin{figure}
\FIGimp
  \caption{\label{fig:IMP} The geometric meaning of the intersection matrix $\IMP(\bd,\bu)$.}
\end{figure}

\begin{definition}\label{dfn:IMP}
Given vectors  $\bd=(d_1,d_2,\dots,d_m)$ and $\bu=(u_1,u_2,\dots,u_m)$ in $\R^m$, we introduce two $m\times m$ symmetric matrices $\IM(\bd,\bu),\IMP(\bd,\bu)$ whose entries are given by 
\begin{equation}\label{eq:IM_dfn}
  \IM_{i,j}:=\min(u_i,u_j)-\max(d_i,d_j),\quad \IMP_{i,j}=\max(\IM_{i,j},0) \quad\text{for all $i,j=1,2,\dots,m$.}
\end{equation}
We refer to $\IMP(\bd,\bu)$ as the \emph{intersection matrix of $(\bd,\bu)$} since its $(i,j)$-th entry equals the length of the intersection of line segments $[d_i,u_i]\cap [d_j,u_j]$, see \cref{fig:IMP}.
\end{definition}

\subsection*{Brownian last passage percolation}

Fix a collection $\{B_n(t)\}_{n\in\Z}$ of independent standard Brownian motions on the real line. Given $l,r\in\Z$ and $d,u\in\R$ satisfying $l\leq r$ and $d\leq u$, the \emph{last passage time} $\zux ldru$ is defined by
\begin{equation}\label{eq:BLPP}
  \zux ldru:=\max_{d=t_{l}<t_{l+1}<\dots<t_{r+1}=u} \left[\sum_{i=l}^{r}(B_{i}(t_{i+1})-B_i(t_i)) \right].
\end{equation}

The following result is a special case of \cref{thm:BLPP2}.
\begin{theorem}\label{thm:BLPP}
Fix $M\in\ZP$ and consider vectors $\bd,\bd',\bu,\bu'\in\R^m$ satisfying $d_i\leq u_i$ and $d'_i\leq u'_i$ for all $i=1,\dots,m$. Suppose that the intersection matrices of $(\bd,\bu)$ and $(\bd',\bu')$ coincide: $\IMP(\bd,\bu)=\IMP(\bd',\bu')$. Then we have a distributional identity
\begin{equation}\label{eq:zu_dist}
  \left(\zux 0{d_1}M{u_1},\dots,\zux 0{d_m}M{u_m}\right)\eqd  \left(\zux 0{d'_1}M{u'_1},\dots,\zux 0{d'_m}M{u'_m}\right).
\end{equation}
\end{theorem}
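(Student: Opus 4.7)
The plan is to deduce \cref{thm:BLPP} from \cref{thm:main} by taking the Brownian last passage percolation scaling limit of the stochastic colored six-vertex model, following the template developed in \cite{BGW}. First, I would fix an $M \times N$-rectangular domain with all column rapidities set to a common value and row rapidities $y_j$ chosen to discretize the continuous time axis according to the standard BLPP scaling; in this regime the discrete height function $\HT(\cut_i)$ for a cut covering all $M$ columns and the rows approximating the interval $[d_i, u_i]$ converges (after the appropriate affine rescaling) to $\zux 0{d_i}M{u_i}$ as $N \to \infty$.

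The heart of the argument is the following combinatorial observation: for any finite family of intervals $\{I_i = [d_i, u_i]\}_{i=1}^m$ on $\R$, the multiset of atom sizes, where atoms are grouped by the subset $S \subseteq \{1,\dots,m\}$ of intervals containing them, is a function of $\IMP(\bd, \bu)$ alone. Indeed, by M\"obius inversion on the subset lattice,
\begin{equation*}
L_S \ :=\ \sum_{\text{atoms with label exactly } S} (\text{length}) \ =\ \sum_{T \supseteq S} (-1)^{|T| - |S|}\, \Big|\bigcap_{i \in T} I_i\Big|,
\end{equation*}
and for intervals one has the crucial reduction
\begin{equation*}
\Big|\bigcap_{i \in T} I_i\Big| \ =\ \max\!\Big(0,\ \min_{i \in T} u_i - \max_{i \in T} d_i\Big) \ =\ \IMP_{k,l},
\end{equation*}
where $k, l \in T$ achieve the minimum and maximum, respectively. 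Thus every multi-intersection length collapses to an entry of $\IMP$, and so does each $L_S$.

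Next I would pick $N$ large and take a sufficiently fine rational refinement of the discretization so that all $d_i, u_i, d'_i, u'_i$ become lattice points; the combinatorial lemma then guarantees that the two interval systems produce \emph{identical} atom-size multisets (grouped by label) in the discrete model. A bijection $\phi$ between discrete row indices can therefore be constructed that sends each atom in the primed system onto the correspondingly-labeled atom in the unprimed system. Setting $\by' := (y_{\phi(j)})_j$ yields $\suppV(\cut_i) = \suppVp(\cut'_i)$ for every $i$, while $\suppH(\cut_i) = \suppHp(\cut'_i) = \{x_1, \dots, x_M\}$ holds trivially since every cut spans the strip and we do not permute the column rapidities. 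Applying \cref{thm:main} then gives equality in distribution of the discrete vectors of height functions, and passing to the scaling limit converts this into the BLPP identity~\eqref{eq:zu_dist}.

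The hard part will be justifying that the scaling limit transfers from marginal to joint convergence for a finite collection of cuts simultaneously; this is a standard tightness-plus-identification-of-limit-points argument, but requires uniform control on the joint law of several rescaled height functions. One also needs to verify that the discrete atom-matching construction can be performed compatibly with the scaling, so that the permutation $\by'$ used in \cref{thm:main} indeed corresponds, in the $N \to \infty$ limit, to the prescribed BLPP configuration $(\bd', \bu')$ rather than to some unintended rearrangement of time slots.
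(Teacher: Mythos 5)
Your combinatorial observation---that $\IMP(\bd,\bu)$ determines, up to relabeling of discrete row indices, the decomposition of the line into atoms labeled by subsets of $\{1,\dots,m\}$---is correct and is essentially the content of \cref{prop:left_aligned}. The paper phrases it more directly: for any $T\subset[1,m]$ one has $\#\bigcap_{i\in T}[d_i+1,u_i]_{\Z}=\min\{\IMP_{i,j}\mid i,j\in T\}$, so every multi-intersection cardinality is explicitly a function of the entries of $\IMP$; your $\IMP_{k,l}$ for the indices achieving $\min_i u_i$ and $\max_j d_j$ is exactly that minimum, and the M\"obius-inversion packaging is not strictly needed. This is the ingredient that converts the hypothesis $\IMP(\bd,\bu)=\IMP(\bd',\bu')$ into the existence of the bijection $\piv$ required to apply \cref{thm:main}.

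The genuine gap is the scaling limit, and it is not closed by the ``standard tightness-plus-identification'' remark. Passing from the \SCSV model in a rectangle to BLPP is not a single-step lattice approximation: it involves sending $q\to1$, degenerating the rapidities in a nontrivial way, and several successive rescalings, and neither the paper nor \cite{BGW} attempts to do this directly. The paper instead routes the argument through the chain of degenerations established in \cite{BGW}: \cref{thm:main} is applied exactly once, for unmerged cuts in the discrete \SCSV model, to obtain \cref{thm:continuous_model}; one then passes to the Beta polymer (\cref{thm:Beta}), the Gamma polymer (\cref{thm:Gamma}), the O'Connell--Yor polymer (\cref{thm:OY}), and finally to BLPP (\cref{thm:BLPP2}, of which \cref{thm:BLPP} is the special case $\br=(M,\dots,M)$). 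The intersection-matrix form of the hypothesis first appears at the Gamma polymer stage, where the rapidities become homogeneous and only interval positions matter, which is where \cref{prop:left_aligned} is invoked. Crucially, \cite{BGW} already established convergence of joint finite-dimensional distributions at every step of this chain in the course of proving their shift-invariance results, and the paper simply cites those convergence statements; that convergence is precisely the part you leave as ``the hard part.'' So the outline you propose (apply \cref{thm:main}, translate the hypothesis into $\IMP$, take the limit) matches the paper, but the limiting procedure must go through the intermediate models of \cite{BGW} rather than being carried out in a single jump.
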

\noindent Similarly to \cref{rmk:supp_necessary}, we expect that the condition $\IMP(\bd,\bu)=\IMP(\bd',\bu')$ is not only sufficient but also necessary in order for~\eqref{eq:zu_dist} to hold.

\subsection*{KPZ equation}
Consider a two-dimensional white Gaussian noise $\eta(x,t)$. For $y\in\R$, define a random function $\KZxxx ytx$ as a solution to the following \emph{stochastic heat equation with multiplicative white noise}:
\begin{equation*}
  \KZx y_t=\frac12\KZx y_{xx}+\eta\KZx y,\quad t\in\R_{\geq0},x\in\R;\quad \KZx y(0,x)=\delta(x-y),
\end{equation*}
where the initial condition is given by the delta function at $y$. We will consider the random variables $\KZxxx ytx$ for fixed $t\in\R_{\geq0}$ and different pairs $(x,y)$, and we assume that the white noise $\eta$ is the same for different values of $y$. The formal logarithm $\Hcal:=-\ln(\KZx y)$ satisfies the celebrated Kardar--Parisi--Zhang (KPZ) equation~\cite{KPZ}:
\begin{equation*}%
  \Hcal_t=\frac12\Hcal_{xx}-\frac12(\Hcal_{x})^2-\eta.
\end{equation*}
The KPZ universality class has been a subject of intense interest throughout the past two decades, see~\cite{Corwin,QS} for reviews.

For the following result, we consider vectors $\bx,\by\in\R^m$ that do not necessarily satisfy $x_i\leq y_i$. In fact, we will swap $\bx$ and $\by$ and use the matrix $\IM(\by,\bx)$ from \cref{dfn:IMP} rather than the intersection matrix $\IMP(\by,\bx)$. The geometric meaning of the entries of $\IM(\by,\bx)$ is that for all large enough $L$, the length of the intersection $[y_i,x_i+L]\cap[y_j,x_j+L]$ is given by $\IM_{i,j}+L$.
\begin{theorem}\label{thm:KPZ}
Let $t\in\R_{\geq0}$ and $\bx,\bx',\by,\by'\in\R^m$. If $\IM(\by,\bx)=\IM(\by',\bx')$ then 
\begin{equation*}%
  \Big(\KZxxx{y_1}t{x_1},\dots,\KZxxx{y_m}t{x_m}\Big)\eqd  \Big(\KZxxx{y'_1}t{x'_1},\dots,\KZxxx{y'_m}t{x'_m}\Big).
\end{equation*}
\end{theorem}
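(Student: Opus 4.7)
The plan is to derive Theorem~\ref{thm:KPZ} as a scaling limit of Theorem~\ref{thm:main}, following the strategy of~\cite{BGW}. The essential input is the weak-asymmetry convergence of the stochastic colored six-vertex model to the KPZ equation: setting $q=e^{-\epsilon}$ and rescaling the rapidities $\bx,\by$ near $1$ with scale $\epsilon$, the height function at a suitably chosen cut $\cut$, after deterministic centering and renormalization, converges in distribution to $-\ln \KZxxx{y}{t}{x}$ for the continuum pair $(y,x)$ encoded by the cut. Crucially, for several cuts taken jointly the limit is the tuple of KPZ solutions sharing the same white noise $\eta$; this joint convergence is known from the existing literature on KPZ limits of stochastic vertex models.

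For each small $\epsilon>0$, I would realize both continuum configurations inside one and the same discrete vertical strip of width $M_\epsilon\to\infty$ with appropriately rescaled rapidities. For each $i$, choose a cut $\cut_{i,\epsilon}$ whose lower-left cell has vertical coordinate approximating $y_i$ and whose upper-right cell approximates $x_i+L$, for a large common shift $L$. The shift $L$ is required because the statement uses $\IM$ rather than $\IMP$: some continuum segments $[y_i,x_i]$ may be empty, but after shifting all upper endpoints by $L$ they become nonempty intervals. A second family $\cut'_{i,\epsilon}$ encodes the primed data $(\by',\bx')$ inside the same strip. By construction, the intersection length $|\suppV(\cut_{i,\epsilon})\cap \suppV(\cut_{j,\epsilon})|$ corresponds, after rescaling, to $\IM_{i,j}(\by,\bx)+L$, and similarly for the primed family.

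The hypothesis $\IM(\by,\bx)=\IM(\by',\bx')$ then ensures that, after relabeling $y$-indices within the strip if necessary (a symmetry provided by iterating Theorem~\ref{thm:flip}), one can match the vertical supports exactly: $\suppV(\cut_{i,\epsilon})=\suppV(\cut'_{i,\epsilon})$ for every $i$. The horizontal supports coincide trivially, since both equal the full set of column rapidities in the strip. Thus the hypotheses of Theorem~\ref{thm:main} are satisfied at each $\epsilon>0$, and that theorem yields equality in joint distribution of the discrete height-function vectors associated with the two families of cuts. Passing to the limit $\epsilon\to 0$ and invoking the joint weak-asymmetry convergence of the first paragraph produces Theorem~\ref{thm:KPZ}.

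The main obstacle is justifying the joint convergence of multiple discrete height functions, placed at different spatial points and coupled through the same realization of the underlying noise, to the corresponding tuple of KPZ partition functions. Individual convergence is classical since Bertini--Giacomin, and joint convergence follows in principle from moment/Bethe-ansatz calculations combined with tightness, but verifying it rigorously requires care. A secondary subtlety is the use of $\IM$ in place of $\IMP$, which forces the common shift $L$ so that all vertical supports are nonempty index intervals in the discrete approximation before Theorem~\ref{thm:main} can be invoked; once this shift and the set-theoretic translation of the intersection-matrix condition are in place, the limit transition is routine.
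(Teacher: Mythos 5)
You have the right structural idea and you correctly identify the key trick: adding a large deterministic shift $L$ so that the condition $\IM(\by,\bx)=\IM(\by',\bx')$ becomes a genuine intersection-matrix identity $\IMP=\IMP$ to which the invariance machinery applies. In the paper this is exactly the step $\IMP(\bd,\bu)=t\sqrt{L}\mathbbm{1}_{m\times m}+\IM(\by,\bx)$ with the shift $t\sqrt{L}$. That part matches.

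The route you propose, however, has a real gap which you yourself flag. You want to go directly from the discrete six-vertex model (Theorem~\ref{thm:main}) to the KPZ equation in one weak-asymmetry limit, and the essential technical input --- joint (multi-point, common-noise) convergence of the discrete colored height functions to the tuple $(\mathcal{Z}^{(y_i)}(t,x_i))_i$ --- is precisely what you cannot cite. One-point convergence is classical, but the joint coupling through the common noise is nontrivial, and you acknowledge it ``requires care.'' The paper deliberately avoids this problem by degenerating the model along a chain $\text{six-vertex} \to \text{fused/continuous model} \to \text{Beta polymer} \to \text{Gamma polymer} \to \text{O'Connell--Yor polymer} \to \text{KPZ}$, invoking Theorem~\ref{thm:OY} rather than Theorem~\ref{thm:main} as the last discrete-parameter step. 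Each individual degeneration is a joint distributional convergence that is already established in~\cite{BGW} (in particular, the passage from the stochastic six-vertex model to the continuous model is~\cite[Corollary~6.21]{BGW}, and the identification with the Beta polymer is~\cite[Proposition~7.2]{BGW}), so the chain is not bookkeeping but the mechanism by which the joint convergence is actually made rigorous. Your shortcut, as written, would not close without supplying a proof of the joint weak-asymmetry KPZ limit for the colored model --- which is not available off the shelf and would be a separate nontrivial result.

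A secondary issue is that since the O'Connell--Yor statement (Theorem~\ref{thm:OY}) already allows real-valued cut endpoints, the paper never has to discretize the continuum points $y_i, x_i+L$ back into integer cut coordinates. Your construction of cuts $\cut_{i,\epsilon}$ ``approximating'' the continuum data pushes this discretization into the vertex model level, which introduces additional error terms you would have to control uniformly in the joint limit. Again this is avoidable by working at the polymer level, where the intermediate objects already live on the appropriate mixed discrete/continuous lattice.

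Finally, the remark about ``relabeling $y$-indices within the strip (a symmetry provided by iterating Theorem~\ref{thm:flip})'' is not needed: Theorem~\ref{thm:main} (and its downstream versions) already allows an arbitrary permutation of rapidities, so the matching of vertical supports is exactly the hypothesis of Theorem~\ref{thm:main}, not an additional symmetry that has to be supplied.
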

\noindent In particular, this result implies~\cite[Conjecture~1.5]{BGW}.

\subsection*{Airy sheet}
It is believed~\cite{CQR} that the large time limit of $\KZxxx ytx$ (as well as the universal limit of various directed polymers and last-passage percolation models) is described by the \emph{Airy sheet} $\Acal(x,y)$. We define it in~\eqref{eq:Airy} building on recent results of~\cite{DOV}. For $(x,y)\in\R^2$, the random variable $\Acal(x,y)$ has the Tracy--Widom distribution~\cite{TW1,TW2}.

\begin{theorem}\label{thm:Airy}
  Let $t\in\R_{\geq0}$ and $\bx,\bx',\by,\by'\in\R^m$. If $\IM(\bx,\by)=\IM(\bx',\by')$ then 
\begin{equation*}%
  \left(\Acal(x_1,y_1),\dots,\Acal(x_m,y_m)\right)\eqd    \left(\Acal(x'_1,y'_1),\dots,\Acal(x'_m,y'_m)\right).
\end{equation*}
\end{theorem}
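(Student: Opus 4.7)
The plan is to deduce Theorem~\ref{thm:Airy} from Theorem~\ref{thm:BLPP} by appealing to the construction of the Airy sheet as a scaling limit of Brownian last passage percolation established in~\cite{DOV}. Up to centering and rescaling, the joint distribution of $\bigl(\zux 0{d_i^{(M)}}M{u_i^{(M)}}\bigr)_{i=1}^m$ converges as $M\to\infty$ to $\bigl(\Acal(x_i,y_i)\bigr)_{i=1}^m$, provided the BLPP endpoints are chosen under the standard KPZ rescaling $d_i^{(M)}=b_M y_i$ and $u_i^{(M)}=M+b_M x_i$, with $b_M\asymp M^{2/3}$, and the corresponding order-$M^{1/3}$ prefactor and deterministic shift are applied. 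An identical statement holds for the primed data $(\bx',\by')$, and the convergence is joint over the $m$ chosen pairs.

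Next I would match the intersection-matrix hypotheses on both sides of the limit. A direct calculation yields
\begin{equation*}
\IMP_{i,j}(\bd^{(M)},\bu^{(M)})=\min(u_i^{(M)},u_j^{(M)})-\max(d_i^{(M)},d_j^{(M)})=M+b_M\,\IM_{i,j}(\bx,\by),
\end{equation*}
which is positive for all large $M$. Thus the assumption $\IM(\bx,\by)=\IM(\bx',\by')$ forces $\IMP(\bd^{(M)},\bu^{(M)})=\IMP(\bd'^{(M)},\bu'^{(M)})$ for all sufficiently large $M$. In particular the diagonal entries match, so the individual centering and scaling constants, which depend only on the single-point data $(x_i,y_i)$ through $u_i^{(M)}-d_i^{(M)}$, agree across the two sides.

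Applying Theorem~\ref{thm:BLPP} in the pre-limit now gives, for every large enough $M$, the distributional identity of the two BLPP vectors, hence also of their common centered-and-rescaled versions. Sending $M\to\infty$ and using the joint convergence of DOV yields
\begin{equation*}
\big(\Acal(x_1,y_1),\dots,\Acal(x_m,y_m)\big)\eqd\big(\Acal(x'_1,y'_1),\dots,\Acal(x'_m,y'_m)\big).
\end{equation*}

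The main point of care is to invoke the DOV convergence \emph{jointly} across the $m$ marginals and to verify that the pre-limit centerings are symmetric between the two sides; both are handled by the observation that the centerings are functions only of the diagonal $\IMP_{i,i}$, which is preserved by the hypothesis. Everything else is bookkeeping with the KPZ scaling window.
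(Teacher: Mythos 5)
The proof strategy here is essentially the same as the paper's: parametrize the Brownian last passage percolation endpoints via the KPZ scaling, observe that the intersection-matrix hypothesis of Theorem~\ref{thm:BLPP}/\ref{thm:BLPP2} is satisfied, and pass to the limit using \cite{DOV} to get the Airy sheet. However, your endpoint parametrization is transposed relative to the paper's \eqref{eq:Airy}, and this introduces an internal inconsistency. With $d_i^{(M)}=b_M y_i$ and $u_i^{(M)}=M+b_M x_i$, one computes
\[
\min(u_i^{(M)},u_j^{(M)})-\max(d_i^{(M)},d_j^{(M)})=M+b_M\bigl[\min(x_i,x_j)-\max(y_i,y_j)\bigr]=M+b_M\,\IM_{i,j}(\by,\bx),
\]
not $M+b_M\,\IM_{i,j}(\bx,\by)$ as you claim. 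Moreover \eqref{eq:Airy} states $\zux 0{2x n^{2/3}}{n}{n+2 y n^{2/3}}\to\Acal(x,y)$, so $d$ should carry $x$ and $u$ should carry $y$, matching the paper's choice $\bd:=2n^{2/3}\bx$, $\bu:=\br+2n^{2/3}\by$; with your convention the DOV limit would be $\Acal(y_i,x_i)$, not $\Acal(x_i,y_i)$.

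This does not sink the argument, because the hypothesis $\IM(\bx,\by)=\IM(\bx',\by')$ turns out to be equivalent to $\IM(\by,\bx)=\IM(\by',\bx')$: the diagonal gives $y_i-x_i=y'_i-x'_i$, and $\IM_{i,j}(\bx,\by)-\IM_{i,j}(\by,\bx)=y_i+y_j-x_i-x_j$ then matches across primed and unprimed sides. But you should simply use $d_i^{(M)}=b_M x_i$ and $u_i^{(M)}=M+b_M y_i$ (with $b_M=2M^{2/3}$), which makes the IMP computation, the Airy limit, and the theorem hypothesis all line up without detours. One other small point: your observation that the centering constants match because they only depend on $\IMP_{i,i}$ (equivalently $y_i-x_i$) is correct and worth saying explicitly; the paper leaves this implicit.
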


\subsection*{Directed polymers}
The limit transition from the stochastic colored six-vertex model to the objects described above is obtained through a sequence of intermediate steps. First, one passes to the \emph{fused} vertex model~\cite{BW,Kuan1,KMMO} which then gets degenerated further to the \emph{Beta polymer}~\cite{BC}, the \emph{Gamma polymer}~\cite{CSS,OCO}, and the \emph{O’Connell--Yor polymer}~\cite{OCY}. We will define these objects in \cref{sec:applications-body}. 

Note that our proof of \cref{thm:main} has the following advantage. The general Beta polymer depends on a choice of parameters $(\sigma_i)_{i\in\ZNN}$ and $(\rho_j)_{j\in\ZP}$. The shift-invariance of~\cite{BGW} is stated for the Beta polymer that is \emph{homogeneous} in the vertical direction, i.e., when all parameters $\rho_j$ are equal to each other. The result is expected \cite[Remark~7.4]{BGW} to hold more generally for the Beta polymer that is inhomogeneous in both directions. Our proof allows to confirm this: we show that the analog of \cref{thm:main} holds for Beta polymers with arbitrary parameters $(\sigma_i)_{i\in\ZNN}$ and $(\rho_j)_{j\in\ZP}$, see \cref{thm:Beta}.

\subsection{Related work}
In a very recent preprint~\cite{Duncan}, the author relies on the \emph{geometric RSK correspondence}~\cite{Kirillov,NY} to describe several transformations similar to the ones we construct. The models considered in~\cite{Duncan} are mostly disjoint from the models studied here: we focus on the stochastic six-vertex model and Beta polymers, while~\cite{Duncan} works with the last passage percolation with geometric or exponential weights and log-gamma polymers. There is a certain overlap among the limiting Gaussian objects, such as the Brownian last passage percolation or the KPZ equation. For example, \cite[Conjecture~1.5]{BGW} that we obtain as a consequence of \cref{thm:KPZ} can also be deduced from \cite[Theorem~1.1]{Duncan}.  

The central transformation of this paper, \cref{thm:flip}, shares some similarities with the notion of a \emph{decoupled polymer model}~\cite[Definition~2.2]{Duncan} that captures the class of models to which the RSK correspondence applies, see~\cite[Theorem~2.3]{Duncan}. It would be interesting to understand whether our flip theorem leads to an analog of the geometric RSK correspondence for the stochastic colored six-vertex model. We thank Vadim Gorin for bringing the paper~\cite{Duncan} to our attention.

\subsection{Outline} 
We give background on Hecke algebras in \cref{sec:Hecke} and use them to prove \cref{thm:flip} in \cref{sec:flip}. The next two sections are devoted to the proof of \cref{thm:main}: in \cref{sec:flip_conseq}, we describe a family of transformations and use \cref{thm:flip} to show that they preserve joint distributions of height functions. In \cref{sec:main_proof}, we show that any transformation satisfying the assumptions of \cref{thm:main} can be obtained as a composition of the transformations constructed in \cref{sec:flip_conseq}. In \cref{sec:applications-body}, we describe the limiting transitions of~\cite{BGW} and use them to prove the results stated in \cref{sec:applications}. Finally, we discuss the relationship of the stochastic colored six-vertex model with Kazhdan--Lusztig polynomials and positroid varieties in \cref{sec:KL_GR}. We include some examples and a conjecture relating \cref{thm:main} to more general wiring diagram domains in \cref{sec:arbitr-perm}.

\subsection*{Acknowledgments}
I am deeply grateful to Alexei Borodin for sparking my interest in this problem and for his guidance throughout the various stages of the project. I am also indebted to Vadim Gorin for the numerous consultations and explanations. Additionally, I would like to thank Thomas Lam and Pavlo Pylyavskyy with whom I discussed some questions and objects from \cref{sec:KL_GR}. Finally, I am grateful to the anonymous referees for their extremely careful reading of this manuscript and many suggested improvements. This work was partially supported by the National Science Foundation under Grant No.~DMS-1954121.

\section{Hecke algebra and the Yang--Baxter basis}\label{sec:Hecke}
One of our main tools is a certain direct relationship between the stochastic colored six-vertex model (which we from now on abbreviate as the \emph{\SCSV model}) and the \emph{Yang--Baxter basis} of the Hecke algebra of $\Sn$ introduced in~\cite{LLT}. A simple proof of the color-position symmetry of~\cite{BoBu} is given in \cref{sec:CPS}. Another non-trivial property of the \SCSV model that becomes obvious in the language of Hecke algebras is known as the \emph{non-local relations} of Borodin--Wheeler~\cite{BW}, see~\eqref{eq:R_k*T_w} below.

Fix $n\geq1$. For integers $i\leq j$, we denote  $[i,j]:=\{i,i+1,\dots,j\}$, and for $i>j$, we set $[i,j]:=\emptyset$. For $i\in[1,n-1]$, denote by $s_i\in \Sn$ the transposition $(i,i+1)$. For indeterminates $q$ and $\bzz:=(\zz_1,\zz_2,\dots,\zz_n)$, we consider the \emph{Hecke algebra} $\Hecke$, which is an associative algebra over $\C(q;\bzz):=\C(q,z_1,\dots,z_n)$ with 
basis $\{T_w\}_{w\in \Sn}$ and relations 
\begin{equation}\label{eq:Hecke_T}
T_{u}T_w=T_{uw}\quad\text{if $\ell(uw)=\ell(u)+\ell(w)$},\quad\text{and}\quad (T_i+q)(T_i-1)=0 \quad\text{for $i\in[1,n-1]$}, 
\end{equation}
where $T_i:=T_{s_i}$ and $\ell(w)$ denotes \emph{length} of $w\in \Sn$, i.e., the number of inversions of $w$. For the identity permutation $\id\in\Sn$, we have $T_{\id}=1\in\Hecke$.

For $j\in[1,n]$ and $u,w\in \Sn$, we use the convention that $(uw)(j):=w(u(j))$. For $k\in[1,n-1]$ and $\spx\in \C(q;\bzz)$, we write 
\[R_k(\spx):=\spx T_k+(1-\spx)\in\Hecke.\] 
Given an arbitrary permutation $\pi\in\Sn$ and $k\in[1,n-1]$, the above relations imply the following rules for multiplying $T_\pi$ by $R_k(\spx)$:
\begin{align}
\label{eq:T_w*R_k}
T_\pi\cdot R_k(\spx)&=
  \begin{cases}
    \spx T_{\pi s_k}+(1-\spx)T_\pi, &\text{if $\ell(\pi s_k)=\ell(\pi)+1$,}\\
    q\spx T_{\pi s_k}+(1-q\spx)T_\pi, &\text{if $\ell(\pi s_k)=\ell(\pi)-1$;}\\
  \end{cases}\\
\label{eq:R_k*T_w}
 R_k(\spx)\cdot T_\pi&=
  \begin{cases}
    \spx T_{s_k \pi}+(1-\spx)T_\pi, &\text{if $\ell(s_k\pi)=\ell(\pi)+1$,}\\
    q\spx T_{s_k \pi}+(1-q\spx)T_\pi, &\text{if $\ell(s_k\pi)=\ell(\pi)-1$.}\\
  \end{cases}
\end{align}
We note the formal similarity between~\eqref{eq:T_w*R_k} and \cref{fig:spec}. We will make this precise in \cref{prop:YB_equals_Zbipi} by interpreting the \SCSV model in terms of products of elements of the form $R_k(\spx)$. With this interpretation, \eqref{eq:R_k*T_w} turns into the \emph{non-local relations} studied in~\cite[Theorem~5.3.1]{BW}. In the language of \cref{sec:intro:main}, applying~\eqref{eq:T_w*R_k}  (resp.,~\eqref{eq:R_k*T_w}) corresponds to adding a single square to the top right (resp., bottom left) boundary of a \skew domain $\SKD$. Then~\eqref{eq:T_w*R_k} can be proved using a simple bijection on configurations that only changes them locally inside that square. The analogous bijection proving~\eqref{eq:R_k*T_w} is much more complicated and involves changing the configurations globally. See the discussion after~\cite[Theorem~5.3.1]{BW} for further details.

\subsection{Wiring diagram domains}\label{sec:wiring_domains}
We would like to consider the \SCSV model described in \cref{sec:intro-main} associated to more general domains that we call \emph{wiring diagram domains}. 

Let $\bi=(i_1,i_2,\dots,i_r)$ be an arbitrary sequence of elements of $[1,n-1]$, and choose an arbitrary family $\bro=(\sp_1,\sp_2,\dots,\sp_r)$ of elements of $\C(q;\bzz)$. The sequence $\bi$ gives rise to a \emph{wiring diagram} as in \cref{ex:wdom_gen} and \cref{fig:wdom_gen}: there are $n$ paths called \emph{wires}, and each wire moves horizontally left-to-right. The wires start on the left at heights $1,2,\dots,n$. For each $j\in[1,r]$, the wires at heights $i_j$ and $i_j+1$ \emph{cross} at the point $\left(j,i_j+\frac12\right)$ and then proceed to the right at heights $i_j+1$ and $i_j$, respectively.

Let us also  fix a permutation $\cper\in\Sn$ called the \emph{incoming color permutation}.

\begin{definition}\label{dfn:wiring_domain}
The \emph{\SCSV model inside $(\bi,\bro)$ with incoming colors $\cper$} is a probability distribution on $\Sn$ denoted  $(\ZBiber)_{\pi\in\Sn}$, defined as follows. Suppose that for each $c\in[1,n]$, a path of color $\cper(c)$ enters the wiring diagram of $\bi$ on the left at height $c$. We say that \emph{the incoming colors are ordered} if $\cper=\id$. For each $j\in[1,r]$, the two paths entering the crossing point $\left(j,i_j+\frac12\right)$ from the left proceed to the right according to the probabilities in \cref{fig:spec}, where the parameter $\sp$ is equal to $\sp_j$. Once all paths reach the right boundary of the wiring diagram of $\bi$, they give rise to a random \emph{outgoing color permutation} $\pi$ defined so that for $c\in[1,n]$, the path of color $c$ exits at height $\pi(c)$. We let $\ZBiber$ denote the total probability of observing $\pi$ as the outgoing color permutation.
\end{definition}

\begin{figure}
  \FIGwdomgen
  \caption{\label{fig:wdom_gen} Left: a wiring diagram domain from \cref{dfn:wiring_domain}. Right: a configuration of the \SCSV model inside $(\bi,\bro)$, see \cref{ex:wdom_gen}.}
\end{figure}

\begin{example}\label{ex:wdom_gen}
Let $n=5$, $\bi=(4,2,3,2,1,2,3)$, and $\bro:=(\sp_1,\sp_2,\dots,\sp_7)$. The corresponding wiring diagram domain is shown in \cref{fig:wdom_gen}(left). An example of a configuration of the \SCSV model inside $(\bi,\bro)$ with incoming color permutation $\sigma=(3,1,2,5,4)$ and outgoing color permutation $\pi=(1,4,2,5,3)$ is shown in \cref{fig:wdom_gen}(right) together with its probability. 
\end{example}

Our first goal is to interpret the probabilities $\ZBiber$ in terms of elements of $\Hecke$. Let us write
\begin{equation}\label{eq:YBiber_dfn}
\YBiber:=T_\cper R_{i_1}(\sp_1) R_{i_2}(\sp_2)\cdots R_{i_r}(\sp_r)=\sum_{\pi\in\Sn} \YBCxx{\bi,\bro,\cper}\pi T_\pi.
\end{equation}

\begin{proposition}\label{prop:YB_equals_Zbipi}
For all $\bi$, $\bro$, $\cper$, and $\pi$, we have
\begin{equation}\label{eq:Biber:Y_equals_Z}
\ZBiber=\YBCxx{\bi,\bro,\cper}\pi.
\end{equation}
\end{proposition}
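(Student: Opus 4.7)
The plan is to induct on $r=|\bi|$, exploiting that both sides satisfy the same one-step recursion under appending a single generator $R_k(\sp)$ on the right of the Hecke product (respectively, a single crossing on the right of the wiring diagram). The base case $r=0$ is immediate: $\YBiber=T_\cper$ gives $\YBCxx{\bi,\bro,\cper}{\pi}=\delta_{\pi,\cper}$, while an empty wiring diagram leaves the incoming colors in place, so $\ZBiber=\delta_{\pi,\cper}$ as well.

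For the inductive step I would write $\bi=(\bi',k)$ and $\bro=(\bro',\sp)$ with $k=i_r$ and $\sp=\sp_r$, so that $\YBiber=\YB^{\bi',\bro',\cper}\cdot R_k(\sp)$. Expanding the first factor by the inductive hypothesis as $\YB^{\bi',\bro',\cper}=\sum_{\pi'}\YBCxx{\bi',\bro',\cper}{\pi'}T_{\pi'}$ and applying~\eqref{eq:T_w*R_k}, each $\pi'$ contributes either to the coefficient of $T_{\pi'}$ or to that of $T_{\pi' s_k}$, with weights drawn from $\{\sp,1-\sp,q\sp,1-q\sp\}$ according to the sign of $\ell(\pi' s_k)-\ell(\pi')$. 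In parallel, conditioning on the outgoing permutation $\pi'$ of the shorter wiring diagram $(\bi',\bro')$ (which has probability $\YBCxx{\bi',\bro',\cper}{\pi'}$ by induction), the final crossing at heights $k,k+1$ either swaps the colors $c_1=(\pi')^{-1}(k)$ and $c_2=(\pi')^{-1}(k+1)$ located at those heights (giving the new permutation $\pi=\pi' s_k$) or leaves them alone (giving $\pi=\pi'$), with probabilities prescribed by \cref{fig:spec}.

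To identify the two recursions termwise, I need two compatibility statements, both routine consequences of the paper's convention $(uw)(j)=w(u(j))$. First, $(\pi' s_k)(c)=s_k(\pi'(c))$, so right-multiplying $\pi'$ by $s_k$ swaps the exit heights of the colors $c_1$ and $c_2$, which is exactly the effect of a crossing; hence the deterministic transition above is correctly labeled $\pi=\pi' s_k$ rather than $\pi=s_k\pi'$. Second, right-multiplication by $s_k$ exchanges the values $k$ and $k+1$ in the one-line notation of $\pi'$, so $\ell(\pi' s_k)=\ell(\pi')+1$ if and only if value $k$ appears before value $k+1$, i.e.\ $(\pi')^{-1}(k)<(\pi')^{-1}(k+1)$, i.e.\ $c_1<c_2$. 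This aligns the $\sp$ versus $q\sp$ branches of~\eqref{eq:T_w*R_k} with the $c_1<c_2$ versus $c_1>c_2$ branches of \cref{fig:spec}, and a termwise comparison then closes the induction. The only real obstacle is the bookkeeping around this composition convention; once it is in place, no deeper argument is needed.
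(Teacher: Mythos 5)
Your proposal is correct and matches the paper's own argument: both proceed by induction on $r$, peel off the final crossing $R_{i_r}(\sp_r)$, and verify that the probabilistic transition from \cref{dfn:wiring_domain} and the algebraic transition from~\eqref{eq:T_w*R_k} are given by the same $2\times 2$ matrix in the basis $\{\pi,\pi s_k\}$. The extra care you take in matching $\ell(\pi' s_k)=\ell(\pi')+1$ with the colour ordering $c_1<c_2$ under the convention $(uw)(j)=w(u(j))$ is exactly the bookkeeping the paper leaves implicit.
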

\begin{proof}
We prove this by induction on $r$. The case $r=0$ is trivial. Suppose now that~\eqref{eq:Biber:Y_equals_Z} holds for some $\bi=(i_1,i_2,\dots,i_r)$, $\bro=(\sp_1,\sp_2,\dots,\sp_r)$, $\cper\in\Sn$, and all $\pi\in\Sn$. Choose $k\in[1,n-1]$ and $\sp\in\C(q;\bzz)$ and let  $\bi':=(i_1,i_2,\dots,i_r,k)$, $\bro':=(\sp_1,\sp_2,\dots,\sp_r,\sp)$. Consider a permutation $\pi\in\Sn$ such that $\ell(\pi s_k)=\ell(\pi)+1$.   

The following transition formulas are easily deduced respectively from \cref{dfn:wiring_domain} and from Equations~\eqref{eq:T_w*R_k}, \eqref{eq:YBiber_dfn}:
\[\SmallMatrix{
\ZBibx{\bi'}{\bro'}{\pi}\\
\ZBibx{\bi'}{\bro'}{\pi s_k}
}=\SmallMatrix{
1-\sp & q\sp\\
\sp & 1-q\sp
}\SmallMatrix{
\ZBibx{\bi}{\bro}{\pi}\\
\ZBibx{\bi}{\bro}{\pi s_k}
},\quad
\SmallMatrix{
\YBCxx{\bi',\bro',\cper}\pi\\
\YBCxx{\bi',\bro',\cper}{\pi s_k}
}=\SmallMatrix{
1-\sp & q\sp\\
\sp & 1-q\sp
}\SmallMatrix{
\YBCxx{\bi,\bro,\cper}\pi\\
\YBCxx{\bi,\bro,\cper}{\pi s_k}
}. \]
By the induction hypothesis, we have $\ZBibx{\bi}{\bro}{\pi}=\YBCxx{\bi,\bro,\cper}\pi$ and $\ZBibx{\bi}{\bro}{\pi s_k}=\YBCxx{\bi,\bro,\cper}{\pi s_k}$, therefore we get that the left hand sides are equal, completing the induction step.
\end{proof}

\subsection{Color-position symmetry}\label{sec:CPS}
Color-position symmetry in interacting particle systems has been studied in e.g.~\cite{AAV,AHR,BoBu,BW,Kuan2}. 
As a warm up, we explain the recent results of~\cite{BoBu} using the machinery of Hecke algebras. Let $\bi=(i_1,i_2,\dots,i_r)$ and $\bro=(\sp_1,\sp_2,\dots,\sp_r)$ be arbitrary. First, following~\cite{BoBu}, we would like to consider the case where the incoming colors are ordered, i.e., $\cper=\id$. In this case, we usually omit $\cper$ from the notation and write $\YB^{\bi,\bro}$ and $\PF^{\bi,\bro}_\pi(\bzz)$ instead of $\YB^{\bi,\bro,\id}$ and $\PF^{\bi,\bro,\id}(\bzz)$.

For $\bi=(i_1,i_2,\dots,i_r)$ and $\bro=(\sp_1,\sp_2,\dots,\sp_r)$, let us denote $\revv(\bi):=(i_r,\dots,i_2,i_1)$, $\revv(\bro)=(\sp_r,\dots,\sp_2,\sp_1)$. The color-position symmetry of~\cite{BoBu} amounts to the following statement.

\begin{theorem}[{\cite[Theorem~2.2]{BoBu}}]\label{thm:CPS}
  For all $\bi$, $\bro$, and $\pi\in\Sn$, the coefficient of $T_\pi$ in $\YB^{\bi,\bro}$ is equal to the coefficient of $T_{\pi^{-1}}$ in $\YB^{\revv(\bi),\revv(\bro)}$:
\[\PF^{\bi,\bro}_\pi(\bzz)=\PF^{\revv(\bi),\revv(\bro)}_{\pi^{-1}}(\bzz).\]
\end{theorem}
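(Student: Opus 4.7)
The plan is to realize this identity as coming from a natural anti-involution on the Hecke algebra $\Hecke$. Recall that $\Hecke$ admits an anti-automorphism $\phi: \Hecke \to \Hecke$ (i.e., $\C(q;\bzz)$-linear with $\phi(ab) = \phi(b)\phi(a)$) uniquely determined by $\phi(T_i) = T_i$ for all $i \in [1,n-1]$. This is well defined because the Hecke relations $(T_i + q)(T_i - 1) = 0$, the commutation $T_iT_j = T_jT_i$ for $|i-j| > 1$, and the braid relation $T_iT_{i+1}T_i = T_{i+1}T_iT_{i+1}$ are all palindromic in the generators, so reversing the order of a product respects the ideal of relations. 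Moreover, if $s_{i_1} s_{i_2} \cdots s_{i_\ell}$ is a reduced word for $w \in \Sn$, then the reverse $s_{i_\ell} \cdots s_{i_2} s_{i_1}$ is a reduced word for $w^{-1}$, so using the first relation in~\eqref{eq:Hecke_T} we get $\phi(T_w) = T_{w^{-1}}$.

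Since $R_k(\sp) = \sp T_k + (1-\sp) \cdot 1$ and $\phi$ fixes both $T_k$ and $1$, we have $\phi(R_k(\sp)) = R_k(\sp)$. Applying $\phi$ to $\YB^{\bi,\bro} = R_{i_1}(\sp_1) R_{i_2}(\sp_2) \cdots R_{i_r}(\sp_r)$ and using that $\phi$ reverses products then yields
\begin{equation*}
\phi(\YB^{\bi,\bro}) = R_{i_r}(\sp_r) \cdots R_{i_2}(\sp_2) R_{i_1}(\sp_1) = \YB^{\revv(\bi),\revv(\bro)}.
\end{equation*}

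On the other hand, expanding $\YB^{\bi,\bro} = \sum_{\pi \in \Sn} \YBCxx{\bi,\bro}{\pi} T_\pi$ and applying $\phi$ term by term gives
\begin{equation*}
\phi(\YB^{\bi,\bro}) = \sum_{\pi \in \Sn} \YBCxx{\bi,\bro}{\pi} T_{\pi^{-1}} = \sum_{\pi \in \Sn} \YBCxx{\bi,\bro}{\pi^{-1}} T_{\pi}.
\end{equation*}
Comparing the two expressions for $\phi(\YB^{\bi,\bro})$ and equating the coefficient of $T_\pi$ gives $\YBCxx{\revv(\bi),\revv(\bro)}{\pi} = \YBCxx{\bi,\bro}{\pi^{-1}}$, equivalently $\PF^{\bi,\bro}_\pi(\bzz) = \PF^{\revv(\bi),\revv(\bro)}_{\pi^{-1}}(\bzz)$ via \cref{prop:YB_equals_Zbipi}.

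The only step requiring verification is the existence of the anti-automorphism $\phi$ with $\phi(T_i)=T_i$, which I expect to be the main (though standard) technical point; the rest of the argument is a purely formal consequence. This approach has the pleasant feature that it completely bypasses any bijective reasoning on configurations of the \SCSV model: the color-position symmetry becomes an immediate shadow of the transpose anti-involution on $\Hecke$, once one accepts the translation provided by \cref{prop:YB_equals_Zbipi}.
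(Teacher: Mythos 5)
Your proof is correct and is essentially the paper's own argument: the paper invokes the same anti-automorphism (there called $\Daut$, sending $T_\pi \mapsto T_{\pi^{-1}}$) and notes that it reverses the product $R_{i_1}(\sp_1)\cdots R_{i_r}(\sp_r)$ to give $\YB^{\revv(\bi),\revv(\bro)}$. You simply spell out in more detail why $\phi$ is well-defined and why $\phi(R_k(\sp))=R_k(\sp)$, both of which the paper takes as standard.
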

\begin{proof}
Consider the anti-automorphism $\Daut:\Hecke\to\Hecke$ sending $T_\pi$ to $T_{\pi^{-1}}$ for each $\pi\in\Sn$. It reverses the order of multiplication, sending $\YBarb\bi\bro=R_{i_1}(\sp_1)R_{i_2}(\sp_2)\cdots R_{i_r}(\sp_r)$ to $\YBarb{\revv(\bi)}{\revv(\bro)}=R_{i_r}(\sp_r)\cdots R_{i_2}(\sp_2)R_{i_1}(\sp_1)$. The result follows.
\end{proof}

\begin{remark}\label{rmk:Bprep}
The above proof was also independently found by Bufetov~\cite{Buf20}. Connections between Hecke algebras and the ASEP have been discovered in the literature a number of times throughout the years, see e.g.~\cite{ADHR,Cantini,CdGW,CMW}. On the other hand, the equivalence between the \SCSV model and the Yang--Baxter basis of~\cite{LLT} discussed in \cref{sec:yang-baxter-basis} appears to have not been pointed out before.
\end{remark}

Next, we would like to reduce the case of an arbitrary incoming color permutation $\cper$ to the above case $\cper=\id$. Given $\cper\in\Sn$, a \emph{reduced word} for $\cper$ is a sequence $\bj=(j_1,j_2,\dots,j_l)$ such that $\cper=s_{j_1}s_{j_2}\cdots s_{j_l}$ and $l=\ell(\cper)$. For two sequences $\bi=(i_1,\dots,i_r)$, $\bj=(j_1,\dots,j_l)$, let $\concat\bi\bj=(i_1,\dots,i_r,j_1,\dots,j_l)$ denote their concatenation. For $l\geq1$, denote by $1^l:=(1,1,\dots,1)$ the sequence that consists of $l$ ones.
\begin{lemma}\label{lemma:inc_colors}
Let $\bi=(i_1,i_2,\dots,i_r)$, $\bro=(\sp_1,\sp_2,\dots,\sp_r)$, and $\cper\in\Sn$ be arbitrary, and choose a reduced word $\bj=(j_1,j_2,\dots,j_l)$ for $\cper$, where $l=\ell(\cper)$. Let $\bi':=\concat\bj\bi$ and $\bro':=\concat{1^l} \bro$. Then 
\[\YBiber=\YB^{\bi',\bro',\id}.\]
\end{lemma}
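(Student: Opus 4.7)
The plan is extremely short: the statement reduces to a direct computation at the level of Hecke algebra elements, unpacking the definition~\eqref{eq:YBiber_dfn}. The key observation is that $R_k(1)=1\cdot T_k+(1-1)=T_k$ for every $k\in[1,n-1]$. Hence, specializing all the spectral parameters in the initial $l$ factors to $1$, we get
\[
\YB^{\bi',\bro',\id}=R_{j_1}(1)R_{j_2}(1)\cdots R_{j_l}(1)\cdot R_{i_1}(\sp_1)\cdots R_{i_r}(\sp_r)=T_{j_1}T_{j_2}\cdots T_{j_l}\cdot R_{i_1}(\sp_1)\cdots R_{i_r}(\sp_r).
\]

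Next, since $\bj=(j_1,\dots,j_l)$ is a reduced word for $\cper$ with $l=\ell(\cper)$, a straightforward induction on $l$ using the first relation in~\eqref{eq:Hecke_T} (i.e., $T_uT_w=T_{uw}$ whenever $\ell(uw)=\ell(u)+\ell(w)$) shows that $T_{j_1}T_{j_2}\cdots T_{j_l}=T_\cper$. Substituting this into the above expression yields
\[
\YB^{\bi',\bro',\id}=T_\cper\cdot R_{i_1}(\sp_1)\cdots R_{i_r}(\sp_r)=\YBiber,
\]
which is exactly the desired equality.

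There is essentially no obstacle: the lemma is a purely formal consequence of $R_k(1)=T_k$ together with the standard fact that products of $T_{s_{j_k}}$ along any reduced word collapse to $T_\cper$. The only thing worth emphasizing in the write-up is why prepending $\bj$ with all parameters equal to $1$ precisely manufactures the factor $T_\cper$ that defines $\YBiber$.
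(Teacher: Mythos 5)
Your proof is correct and takes essentially the same route as the paper: both observe that $R_k(1)=T_k$, that the product $T_{j_1}\cdots T_{j_l}$ along a reduced word for $\cper$ collapses to $T_\cper$ by the braid relations in $\Hecke$, and then match the result with the definition~\eqref{eq:YBiber_dfn} of $\YBiber$.
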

\begin{proof}
By definition, $\YB^{\bi',\bro',\id}=R_{j_1}(1)\cdots R_{j_l}(1)\YB^{\bi,\bro,\id}$. Since $R_k(1)=T_k$, we get 
$\YB^{\bi',\bro',\id}=T_\cper\YB^{\bi,\bro,\id}$, which is by definition equal to  $\YBiber$.
\end{proof}

\begin{remark}
\Cref{lemma:inc_colors} yields a generalization of \cref{thm:CPS} to the case of arbitrary incoming colors: for the \SCSV model inside $(\bi,\bro)$ with incoming colors $\sigma$, the probability of observing $\pi$ as the outgoing color permutation is equal to the probability of observing $\pi^{-1}$ as the outgoing color permutation for the \SCSV model inside $(\bi',\bro')$ with ordered incoming colors $(\sigma'=\id)$. Here $\bi'=\concat{\revv(\bi)}{\revv(\bj)}$, $\bj$ is a reduced word for $\cper$, and $\bro'=\concat\bro{1^{\ell(\cper)}}$.
\end{remark}

\begin{figure}
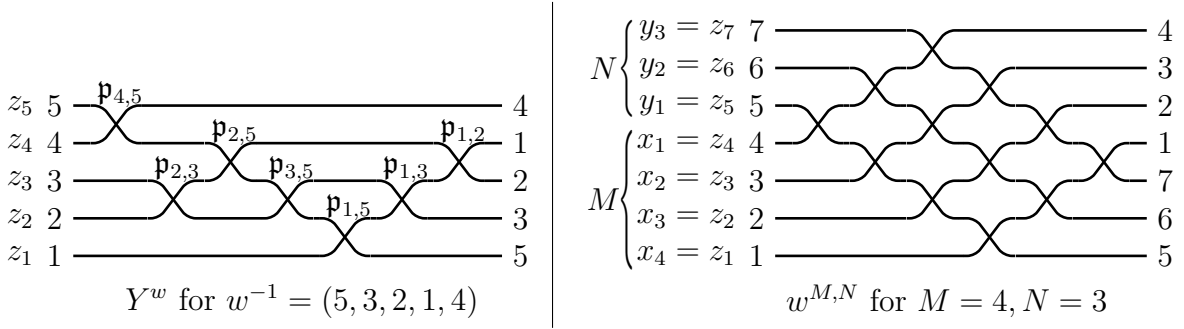

  \FIGwdomy
  \caption{\label{fig:wdom_Y} Left: the wiring diagram domain associated with a Yang--Baxter basis element $\YB^w$. Here $\sp_{i,j}=\frac{\zz_j-\zz_i}{\zz_j-q\zz_i}$, see~\eqref{eq:YB_rec}. Right: The case of a Grassmannian permutation $\wMN$ from \cref{ex:wMN}.}
\end{figure}
\subsection{Yang--Baxter basis}\label{sec:yang-baxter-basis}
The Hecke algebra $\Hecke$ has a basis called the \emph{Yang--Baxter basis} $\{\YB^w\}_{w\in\Sn}$, introduced in~\cite{LLT}. It is defined via the following recurrence relation. For the identity permutation $\id\in \Sn$, we set $\YB^\id=T_\id=1\in\Hecke$. For any $w\in \Sn$ and $k\in[1,n-1]$ such that $\ell(ws_k)=\ell(w)+1$ (equivalently, such that $w^{-1}(k)<w^{-1}(k+1)$), we have
\begin{equation}\label{eq:YB_rec}
\YB^{ws_k}=\YB^w\cdot R_k(\sp_{w^{-1}(k),w^{-1}(k+1)}),\quad\text{where}\quad
\sp_{i,j}:=\frac{\zz_j-\zz_i}{\zz_j-q\zz_i}.
\end{equation}
This is a special case of the construction from the previous subsection, see \cref{fig:wdom_Y}(left): we have $\YB^w=\YB^{\bi,\bro,\id}$ for a reduced word $\bi$ of $w$ and a particular choice of $\bro$. The element $\YB^w$ can be computed via this recurrence in various ways (which correspond to the various reduced words for $w$), but the result is uniquely determined because the elements $R_k(\spx)$ satisfy the following \emph{Yang--Baxter relation}: for any $a<b<c\in[1,n]$ and $k\in [1,n-2]$, we have
\begin{equation}\label{eq:YB}
R_k(\sp_{a,b})R_{k+1}(\sp_{a,c})R_{k}(\sp_{b,c})=R_{k+1}(\sp_{b,c})R_{k}(\sp_{a,c})R_{k+1}(\sp_{a,b}).
\end{equation}
Consider the entries $\PF^{w}_\pi(\bz)$ of the transition matrix between the bases $\{\YB^w\}$ and $\{T_\pi\}$:
\begin{equation}\label{eq:YBC_dfn}
  \YB^w=\sum_{\pi\in \Sn} \PF^{w}_\pi(\bz) T_\pi.
\end{equation}
By \cref{prop:YB_equals_Zbipi}, each of these coefficients equals the probability of observing $\pi$ as an outgoing color permutation for the \SCSV model associated with a reduced word for $w$.

\subsection{From \skew domains to wiring diagram domains}\label{sec:skew_to_wiring}
Let $\SKD$ be a \skew domain with $n=|\Ppath|=|\Qpath|$. Recall that the column and row rapidities are given by $\bx=(x_1,x_2,\dots)$ and $\by=(y_1,y_2,\dots)$. The \SCSV model inside $\SKD$ gives rise to a probability distribution $(\Zpi)_{\pi\in\Sn}$ on permutations. We claim that there exists a permutation $\wPQ\in \Sn$ and the values $\bzzPQ=(\zzPQ_1,\zzPQ_2,\dots,\zzPQ_n)$ such that for all $\pi\in\Sn$, we have $\PF^{\wPQ}_{\pi}(\bzzPQ)=\Zpi$. Indeed, recall that the steps of $\Ppath$ and $\Qpath$ are given respectively by $\ein 1,\ein 2,\dots,\ein n$ and $\eout 1,\eout 2,\dots,\eout n$. Let $i\in[1,n]$. Suppose that $\ein i$ is a vertical step located in row $r\in\ZP$.  Then there exists a unique vertical step $\eout j$ of $\Qpath$ in the same row, and we set $\zzPQ_i:=y_{r}$ and $\wPQ(i):=j$. Similarly, suppose that $\ein i$ is a horizontal step located in column $c\in\ZP$. Then there exists a unique horizontal step $\eout j$ of $\Qpath$ in the same column, and we set $\zzPQ_i:=x_{c}$ and $\wPQ(i):=j$. Comparing the descriptions of the \SCSV model in \cref{sec:intro-main,sec:wiring_domains}, we find the following result.
\begin{proposition}\label{prop:PQ_vs_w}
For any \skew domain $\SKD$ and any $\pi\in\Sn$, we have
\[\PF^{\wPQ}_{\pi}(\bzzPQ)=\Zpi.\]
\end{proposition}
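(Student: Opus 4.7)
The plan is to realize the SC6V model on the \skew domain $\SKD$ as a wiring-diagram model and invoke \cref{prop:YB_equals_Zbipi} together with the recursive definition~\eqref{eq:YB_rec} of $\YB^{\wPQ}$. Enumerate the cells of $\SKDZ$ in any linear order $c_1, c_2, \dots, c_r$ compatible with the product partial order on $\ZP \times \ZP$ (so each cell appears only after every cell strictly south and west of it). This order corresponds to a growth sequence $P = Q^{(0)} \subset Q^{(1)} \subset \cdots \subset Q^{(r)} = Q$, with $Q^{(j)}$ obtained from $Q^{(j-1)}$ by attaching the cell $c_j$ at a concave corner of $Q^{(j-1)}$ (swapping two consecutive boundary edges for their counterparts on $Q^{(j)}$). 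In the wiring-diagram picture, this amounts to a crossing at some adjacent heights $k_j, k_j + 1$. Writing $\bi := (k_1, \dots, k_r)$ and $\bro := (\sp_{a_1, b_1}, \dots, \sp_{a_r, b_r})$ where $c_j = (a_j, b_j)$, the sequential sampling of the SC6V model on $(P, Q)$ matches cell-by-cell that on the wiring diagram $(\bi, \bro)$, yielding
\[
\PF^{P,Q}_\pi(\bx, \by) \;=\; \PF^{\bi, \bro, \id}_\pi(\bzz).
\]

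Next I would identify $(\bi, \bro)$ with the data that computes $\YB^{\wPQ}(\bzz)$ via~\eqref{eq:YB_rec}. The sequence $\bi$ is a reduced word for $\wPQ$: any pair of wires meet in at most one cell of $\SKDZ$ (a wire entering through a horizontal step of $P$ at column $a$ and one entering through a vertical step at row $b$ can only cross at the unique cell $(a, b)$, if present), so no pair of wires crosses twice; and if every crossing were a genuine swap, wire $i$ would exit through the $\eout{j}$ in the same row or column as $\ein{i}$, which by construction is $\eout{\wPQ(i)}$. The parameter match is the only subtle point: at the crossing coming from cell $(a, b)$, the SC6V parameter $\sp_{a, b} = (y_b - x_a)/(y_b - q x_a)$ equals the Yang--Baxter parameter $(z_{j'} - z_{i'})/(z_{j'} - q z_{i'})$ (with $i' < j'$ the initial labels of the two crossing wires) precisely when the wire carrying rapidity $x_a$ has the smaller initial label. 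This reduces to the geometric observation that, along $P$ traversed from its bottom-right start to its top-left end, the horizontal step at column $a$ is encountered before the vertical step at row $b$ whenever $(a, b) \in \SKDZ$: the former sits at height $y = b_0(a) - \tfrac12$ with $b_0(a) \leq b$ the lowest row of a cell in column $a$, while the latter straddles heights $b - \tfrac12$ to $b + \tfrac12$, with potential ties at $y = b - \tfrac12$ resolved by the $x$-monotonicity of $P$.

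Combining these observations with the uniqueness of $\YB^{\wPQ}$ afforded by the Yang--Baxter relation~\eqref{eq:YB}, the product $R_{k_1}(\sp_{a_1, b_1}) \cdots R_{k_r}(\sp_{a_r, b_r})$ equals $\YB^{\wPQ}$ evaluated at $\bzz = \bzzPQ$. Extracting $T_\pi$-coefficients and applying \cref{prop:YB_equals_Zbipi} then gives the desired identity. The main nontrivial step is the geometric matching of rapidity orderings above; the rest is a direct translation between the two sampling descriptions.
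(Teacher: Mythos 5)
Your proposal is correct and follows the same route the paper takes: translate the skew-domain model into the wiring-diagram model of Section 2.1, identify the resulting data $(\bi,\bro)$ with a reduced word for $\wPQ$ carrying the Yang--Baxter-basis parameters at $\bzz=\bzzPQ$, and then invoke Proposition~\ref{prop:YB_equals_Zbipi}. The paper presents Proposition~\ref{prop:PQ_vs_w} as an immediate consequence of comparing the two descriptions of the model (it defines $\wPQ$ and $\bzzPQ$ by the same row/column pairing you use and then states the claim without further argument), so the main value of your write-up is that it makes the translation precise: the up-right-compatible enumeration of cells as a growth sequence from $\Ppath$ to $\Qpath$, the observation that in the all-crossings picture each horizontal-entry wire runs straight up its column and each vertical-entry wire straight across its row (so $\bi$ is reduced and realizes $\wPQ$), and the geometric check that along $\Ppath$ the horizontal step in column $a$ precedes the vertical step in row $b$ whenever $(a,b)\in\SKDZ$, which is exactly what makes the recurrence~\eqref{eq:YB_rec} produce $\sp_{a,b}$ at each crossing. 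The appeal to uniqueness via~\eqref{eq:YB} at the end is harmless but superfluous, since you already exhibit one reduced word computing $\YB^{\wPQ}$ with the correct parameters.
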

\noindent A similar statement holds for the case of an arbitrary incoming color permutation $\cper$.

\begin{example}\label{ex:wMN}
In the setting of \cref{sec:intro-rect}, consider the \SCSV model inside an $M\times N$-rectangular domain $\SKD$. We have $n=M+N$, $\bzzPQ=(x_M,\dots,x_1,y_1,\dots,y_N)$, and the permutation $\wMN:=\wPQ$ is defined by $\wMN(i)=i+N$ modulo $n$ (thus $\wMN$ is a \emph{Grassmannian permutation} of length $MN$). We can write it as a product 
\begin{equation}\label{eq:Gr_wred}
\wMN=(s_Ms_{M+1}\cdots s_{M+N-1})\cdot (s_{M-1}s_M\cdots s_{M+N-2})\cdots (s_1s_2\cdots s_{N})
\end{equation}
of $MN$ simple transpositions. They correspond naturally to the cells in $[1,M]\times [1,N]=\SKDZ$. The case $M=4$, $N=3$ is shown in \cref{fig:wdom_Y}(right).
\end{example}

\section{Flip symmetry}\label{sec:flip}
Our goal is to prove \cref{thm:flip} and state a more general version that will be used in the proof of \cref{thm:main}. Throughout the first two subsections, we fix an $M\times N$-rectangular domain $\SKD$, a vertical boundary condition $\V=\{(\VD_1,\VU_1),\dots,(\VD_\v,\VU_\v)\}$, and an integer $\h$.

\subsection{Boundary conditions and the Hecke algebra}
Let $\H=\{(\HL_1,\HR_1),\dots,(\HL_\h,\HR_\h)\}$ be a set of pairs. Let $\SATHV:=\{\pi\in\Sn\mid\Hpi=\H\text{ and }\Vpi=\V\}$ denote the set of all permutations $\pi\in\Sn$ satisfying given horizontal and vertical boundary conditions $(\H,\V)$. For an arbitrary element 
\begin{equation}\label{eq:HVpart}
  Y=\sum_{\pi\in\Sn}\YBC_\pi T_\pi \in\Hecke,\quad\text{let}\quad\HVpart{Y}:=\sum_{\pi\in\SATHV} \YBC_\pi\in\C(q;\bzz).
\end{equation}
\def\sper{\tau}
For $\H=\{(\HL_1,\HR_1),\dots,(\HL_\h,\HR_\h)\}$ and a permutation $\sper\in\Sn$, we denote 
\[\sper\cdot \H:=\{(\sper(\HL_1),\HR_1),\dots,(\sper(\HL_\h),\HR_\h)\}\quad\text{and}\quad\H\cdot \sper:=\{(\HL_1,\sper(\HR_1)),\dots,(\HL_\h,\sper(\HR_\h))\}.\]

Let $r\in [1,N-1]$. We write $\H<\H\cdot s_r$ if for all $\pi\in\SATHV$, we have $\ell(\pi)<\ell(\pi s_r)$. Similarly, we write $\H>\H\cdot s_r$ if for all $\pi\in\SATHV$, we have $\ell(\pi)>\ell(\pi s_r)$. It is easy to check that if neither $\H>\H\cdot s_r$ nor $\H<\H\cdot s_r$ is satisfied then we must have $\H=\H\cdot s_r$ as sets of pairs. Indeed, for each $\pi\in\SATHV$, the condition that $\pi$ satisfies $\H$ implies that either $\pi^{-1}(r)=\HL_j$ for some $j\in[1,\h]$, or $\pi^{-1}(r)\in[1,M]$, and similarly for $\pi^{-1}(r+1)$. If both $\pi^{-1}(r),\pi^{-1}(r+1)$ belong to $[1,M]$ then $\H=\H\cdot s_r$. Otherwise, $\H$ determines whether $\pi^{-1}(r)<\pi^{-1}(r+1)$ or $\pi^{-1}(r)>\pi^{-1}(r+1)$, which determines respectively whether $\H\cdot s_r>\H$ or $\H\cdot s_r<\H$. Similarly, for $l\in[M+1,n-1]$, we write $\H<s_l\cdot \H$ (resp., $\H>s_l\cdot \H$) if for all $\pi\in\SATHV$,  we have $\ell(\pi)<\ell(s_l\pi)$ (resp., $\ell(\pi)>\ell(s_l\pi)$).

For an element $Y\in\Hecke$ and $r\in[1,N-1],l\in[M+1,n-1]$, \eqref{eq:T_w*R_k}--\eqref{eq:R_k*T_w} give
\begin{align}
\label{eq:HV_T_w*R_k}
\HVpart{(Y\cdot R_r(\spx))}&=
  \begin{cases}
    \HVpart{Y}, &\text{if $\H\cdot s_r=\H$,}\\
    \spx \HVpx{Y}{\H\cdot s_r}+(1-q\spx)\HVpx{Y}{\H}, &\text{if $\H\cdot s_r<\H$,}\\
    q\spx \HVpx{Y}{\H\cdot s_r}+(1-\spx)\HVpx{Y}{\H}, &\text{if $\H\cdot s_r>\H$;}
  \end{cases}\\
\label{eq:HV_R_k*T_w}
 \HVpart{(R_{l}(\spx)\cdot Y)}&=
  \begin{cases}
    \HVpart{Y}, &\text{if $s_l\cdot \H=\H$,}\\
    \spx \HVpx Y{s_l\cdot \H}+(1-q\spx)\HVpx{Y}{\H}, &\text{if $s_l\cdot \H<\H$,}\\
    q\spx \HVpx Y{s_l\cdot \H}+(1-\spx)\HVpx{Y}{\H}, &\text{if $s_l\cdot \H>\H$.}
  \end{cases}
\end{align}
\def\YBCP{\YBC}
Let us prove one of these identities, the other cases being completely analogous. Suppose that $\H\cdot s_r>\H$ and let $\PA:=\SATHV$. Then we may assume $Y=\sum_{\pi\in \PA} \YBC_\pi T_\pi+\sum_{\pi\in \PA} \YBCP_{\pi s_r} T_{\pi s_r}$, so $\HVpart{Y}=\sum_{\pi\in \PA} \YBC_\pi$ and $\HVpx{Y}{\H\cdot s_r}=\sum_{\pi\in \PA} \YBCP_{\pi s_r}$. Applying~\eqref{eq:T_w*R_k}, we find 
\begin{align*}
Y\cdot R_r(\spx)&=\sum_{\pi\in \PA} \YBC_\pi \left(\spx T_{\pi s_r}+(1-\spx) T_\pi\right)+\sum_{\pi\in \PA} \YBCP_{\pi s_r} \left(q\spx T_{\pi}+(1-q\spx) T_{\pi s_r}\right)\\
               &=\sum_{\pi\in \PA}\left(\YBC_\pi(1-\spx)+\YBCP_{\pi s_r}q\spx  \right) T_\pi 
+\sum_{\pi\in \PA}\left( \YBCP_{\pi s_r}(1-q\spx)+\YBC_{\pi}\spx\right) T_{\pi s_r}.
\end{align*}
Thus $\HVpart{(Y\cdot R_r(\spx))}=\sum_{\pi\in \PA}\left(\YBC_\pi(1-\spx)+\YBCP_{\pi s_r}q\spx  \right)=(1-\spx)\HVpart{Y}+q\spx \HVpx{Y}{\H\cdot s_r}$, which completes one of the cases in~\eqref{eq:HV_T_w*R_k}.

\subsection{Proof of \cref{thm:flip}}\label{sec:flip_proof}
We would like to show that for any horizontal boundary condition $\H=\{(\HL_1,\HR_1),\dots,(\HL_\h,\HR_\h)\}$, we have $\ZHV=\ZHFV$. We will do this by induction, making extensive use of Equations~\eqref{eq:HV_T_w*R_k}--\eqref{eq:HV_R_k*T_w}. We will induct on the set $\HRS:=\{\HR_1,\HR_2,\dots,\HR_\h\}$ using a certain partial order defined below.

Let $w:=\wMN$ be the Grassmannian permutation from \cref{ex:wMN}, and let $\YB^w\in\Hecke$ be the corresponding element of the Yang--Baxter basis. Equation~\eqref{eq:YB} implies
\begin{equation}\label{eq:master_flip}
  \YB^w\cdot R_r(\sp_{l,l+1})=R_l(\sp_{l,l+1})\cdot \sz_l(\YB^w)\quad\text{for $r\in [1,N-1]$ and $l:=r+M$,}
\end{equation}
where $\sz_l$ is the automorphism of $\Hecke$ that swaps $z_l$ and $z_{l+1}$.

Let $\H_0:=\{(M+1,1),(M+2,2),\dots,(M+\h,\h)\}$ be the horizontal boundary condition where the corresponding $\h$ colored paths are fully packed at the bottom of the rectangle.  In this case, we have $\flip(\H_0)=\{(n,N),(n-1,N-1),\dots,(n-\h+1,N-\h+1)\}$, and the corresponding $\h$ paths are fully packed at the top of the rectangle. 

Since $\V,P,Q,\bx$ are fixed, we denote $\ZHY[\H,\by]:=\ZHV$. The idea of the proof is to first show the result for $\H_0$ and then use~\eqref{eq:master_flip} to express $\ZHY[\H,\by]$ inductively for any $\H$ in terms of $\ZHY[\H_0,\by]$. Because of the $\flip$-symmetry of~\eqref{eq:HV_T_w*R_k}--\eqref{eq:master_flip}, 
 it will follow that $\ZHY[\flip(\H),\rev(\by)]$ can be expressed in terms of $\ZHY[\flip(\H_0),\rev(\by)]$ in a symmetric way, which will imply $\ZHY[\H,\by]=\ZHY[\flip(\H),\rev(\by)]$.

\begin{figure}
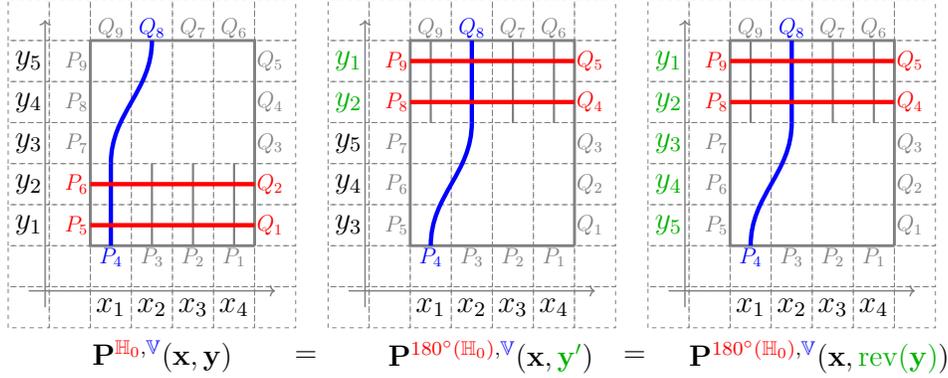

  \FIGflipbase
  \caption{\label{fig:flip_base} The base of the induction in the proof of \cref{thm:flip}. The first equality is true by definition, and the second equality follows by applying a sequence of Yang--Baxter moves to flip the order of $y_3,y_4,y_5$.}
\end{figure}

We start with the base case $\ZHY[\H_0,\by]=\ZHY[\flip(\H_0),\rev(\by)]$, illustrated in \cref{fig:flip_base}. Recall from \cref{fig:wdom_Y}(right) and \cref{ex:wMN} that the variables $\bx,\by,\bz$ are related as $\bz=(x_M,x_{M-1},\dots,x_1,y_1,y_2,\dots,y_N)$. Let $\by':=(y_{\h+1},y_{\h+2},\dots,y_N,y_\h,y_{\h-1},\dots,y_1)$ and recall that $\rev(\by)=(y_N,y_{N-1},\dots,y_1)$. It follows from the definition of the model that $\ZHY[\H_0,\by]=\ZHY[\flip(\H_0),\by']$. To go from $\by'$ to $\rev(\by)$, we use the standard application of the Yang--Baxter equation, see e.g.~\cite[Eq.~(5.18)]{BGW}: it is clear that $\H_0':=\flip(\H_0)$ satisfies $\H_0'\cdot s_r=s_{r+M}\cdot \H_0'=\H_0$ for all $r\in[1,N-\h-1]$. Thus~\eqref{eq:master_flip} combined with~\eqref{eq:HV_T_w*R_k}--\eqref{eq:HV_R_k*T_w} shows that $\ZHY[\flip(\H_0),\by']$ is symmetric in the variables $y_{\h+1},y_{\h+2},\dots,y_N$ and therefore equals $\ZHY[\flip(\H_0),\rev(\by)]$. This shows $\ZHY[\H_0,\by]=\ZHY[\flip(\H_0),\rev(\by)]$, finishing the induction base.

Recall that for $\H=\{(\HL_1,\HR_1),\dots,(\HL_\h,\HR_\h)\}$, we denote  $\HRS:=\{\HR_1,\HR_2,\dots,\HR_\h\}\subset[1,N]$. For two $\h$-element subsets $I=\{i_1<i_2<\dots<i_\h\}$ and $J=\{j_1<j_2<\dots<j_\h\}$ of $[1,N]$, we write $I\leqI J$ if $i_1\leq j_1,i_2\leq j_2,\dots,i_\h\leq j_\h$. We also write $I\lessI J$ if $I\leqI J$ but $I\neq J$. Note that if $\HRS=[1,\h]$ is minimal in this order then either $\H=\H_0$ or $\ZHY[\H,\by]=0$.

\begin{figure}
  \FIGhsr
  \caption{\label{fig:Hsr} The induction step in the proof of \cref{thm:flip}: one can express $\ZHY[\H,\by]$ recursively in terms of $\ZHY[\H',\by]$ and $\ZHY[\H'',\by]$.}
\end{figure}

Let $\H=\{(\HL_1,\HR_1),\dots,(\HL_\h,\HR_\h)\}$. Suppose that we have shown $\ZHY[\H',\by]=\ZHY[\flip(\H'),\rev(\by)]$ for all $\H'$ satisfying $\HRx(\H')\lessI \HRS$. If $\H= \H_0$ then we are done, otherwise $\HRS\neq [1,\h]$, so let $r\in[1,N-1]$ be an index such that $r\notin \HRS$ but $r+1\in\HRS$. Let $\H':=\H\cdot s_r>\H$ and denote $l:=r+M$.  Applying the map $Y\mapsto \HVpx{Y}{\H'}$ to both sides of~\eqref{eq:master_flip} and using the relations~\eqref{eq:HV_T_w*R_k}--\eqref{eq:HV_R_k*T_w}, we get
\begin{equation}\label{eq:YH_ind_step}
\YBC_1\HVpx{(\YB^w)}{\H}+\YBC_1'\HVpx{(\YB^w)}{\H'}=\YBC_2' \HVpx{\sz_l(\YB^w)}{\H'}+\YBC_2''\HVpx{\sz_l(\YB^w)}{\H''},
\end{equation}
for some coefficients $\YBC_1$, $\YBC_1'$, $\YBC_2'$, and $\YBC_2''$, where $\H'':=s_l\cdot \H'=s_l\cdot \H\cdot s_r$. See \cref{fig:Hsr}.

Since $\H'\cdot s_r=\H<\H'$, we get $\YBC_1=\sp_{l,l+1}$ and $\YBC_1'=(1-q\sp_{l,l+1})$. The coefficients $\YBC_2'$ and $\YBC_2''$ depend on whether $\H'<\H''$, $\H'=\H''$, or $\H'>\H''$. For example, \cref{fig:Hsr} illustrates the case $\H'>\H''$. Recall that $\ZHY[\H,\by]=\HVpx{(\YB^w)}{\H}$ by \cref{prop:PQ_vs_w}. 
 Thus~\eqref{eq:YH_ind_step} allows us to express $\ZHY[\H,\by]$ as a linear combination of $\ZHY[\H',\by]$, $\sz_l \ZHY[\H',\by]$, and $\sz_l \ZHY[\H'',\by]$.

Let $\lbar:=n-r$, $\rbar:=n-l$. Observe that $\H<\H\cdot s_r$ implies $\flip(\H)<\flip(\H\cdot s_r)$, where $\flip(\H\cdot s_r)=s_{\lbar}\cdot \flip(\H)$. Applying the map $Y\mapsto \HVpx{Y}{\flip(\H')}$ to both sides of~\eqref{eq:master_flip} yields
\begin{equation}\label{eq:YH_ind_step'}
\bar\YBC_2' \HVpx{(\YB^w)}{\flip(\H')}+\bar\YBC_2''\HVpx{(\YB^w)}{\flip(\H'')}=\bar\YBC_1\HVpx{\sz_{\lbar}(\YB^w)}{\flip(\H)}+\bar\YBC_1'\HVpx{\sz_{\lbar}(\YB^w)}{\flip(\H')}
\end{equation}
for some coefficients $\bar\YBC_1$, $\bar\YBC_1'$, $\bar\YBC_2'$, and $\bar\YBC_2''$. Because of the symmetry between Equations~\eqref{eq:HV_T_w*R_k} and~\eqref{eq:HV_R_k*T_w},  we find that the tuple $(\bar\YBC_1,\bar\YBC_1',\bar\YBC_2',\bar\YBC_2'')$ is obtained from $(\YBC_1,\YBC_1',\YBC_2',\YBC_2'')$ by replacing $\sp_{l,l+1}$ with $\sp_{\lbar,\lbar+1}$. For instance, we have $\bar\YBC_1=\sp_{\lbar,\lbar+1}$ and $\bar\YBC_1'=(1-q\sp_{\lbar,\lbar+1})$. 

Since $\sp_{l,l+1}=\left(\sz_{\lbar}(\sp_{\lbar,\lbar+1})\right)\mid_{\by\mapsto\rev(\by)}$, the transformation $\bar\YBC\mapsto \left(\sz_{\lbar}(\bar\YBC)\right)\mid_{\by\mapsto\rev(\by)}$ takes $(\bar\YBC_1,\bar\YBC_1',\bar\YBC_2',\bar\YBC_2'')$ to $(\YBC_1,\YBC_1',\YBC_2',\YBC_2'')$. Applying $\sz_{\lbar}$ to both sides of~\eqref{eq:YH_ind_step'} and substituting $\by\mapsto\rev(\by)$ therefore gives
\begin{equation}\label{eq:YBC_ind_step3}
\begin{split}
\YBC_2' \sz_{l}\ZHY[\flip(\H'),\rev(\by)]+\YBC_2''\sz_{l}&\ZHY[\flip(\H''),\rev(\by)]\\
&=\YBC_1\ZHY[\flip(\H),\rev(\by)]+\YBC_1'\ZHY[\flip(\H'),\rev(\by)].
\end{split}
\end{equation}
Finally, notice that $\HRx(\H')=\HRx(\H'')\prec \HRS$. Applying the induction hypothesis, we get 
\begin{equation}\label{eq:YBC_ind_step4}
\ZHY[\H',\by]=\ZHY[\flip(\H'),\rev(\by)]\quad\text{and}\quad 
\ZHY[\H'',\by]=\ZHY[\flip(\H''),\rev(\by)].
\end{equation}
Combining~\eqref{eq:YH_ind_step}, \eqref{eq:YBC_ind_step3}, and \eqref{eq:YBC_ind_step4} with the fact that $\YBC_1=\sp_{l,l+1}\neq0$,  we find $\ZHY[\H,\by]=\ZHY[\flip(\H),\rev(\by)]$, completing the induction step. \qed

\subsection{Generalized flip theorem}\label{sec:gen_flip}
In order to pass from rectangles to arbitrary \skew domains, we will need to state a certain generalization of the flip theorem to the case where the incoming colors are not necessarily ordered.

Fix two elements $\ci,\cj\in[1,n-1]$. For simplicity, let us first assume that $\ci+\cj=n$ so that the map $k\mapsto \ci+\cj-k$ is a bijection $[1,n-1]\to[1,n-1]$. (We explain how to lift this assumption in \cref{rmk:gen_flip_general_i_j}.) 
Consider an involutive anti-automorphism $Y\mapsto (Y)_{\ci,\cj}^\ast$ of $\Hecke$ sending $T_{k}\mapsto T_{\ci+\cj-k}$ for all $k\in[1,n-1]$, thus we have 
\begin{equation*}
  (T_{k_1}T_{k_2}\cdots T_{k_r})_{\ci,\cj}^\ast:=T_{\ci+\cj-k_r}\cdots T_{\ci+\cj-k_2}T_{\ci+\cj-k_1}.
\end{equation*}

Fix $M\in[1,\ci]$ and $N\in[1,\cj]$ satisfying $N+\ci=M+\cj$.   Consider a permutation $w\in \Sn$ defined for all $r\in[1,n]$ by
\[w(r):=
  \begin{cases}
    r+M+\cj-\ci, &\text{if $r\in[\ci-M+1,\ci]$,}\\
    r-N+\cj-\ci, &\text{if $r\in[\ci+1,\ci+N]$,}\\
    r, &\text{otherwise.}\\
  \end{cases} \]
We let $\Yij:=\YB^w$ be the corresponding Yang--Baxter basis element, see \cref{fig:flip_gen}(left). We also denote  $\Yijx:=\Yij\mid_{\bz\mapsto \rv_{\ci+1,\ci+N}(\bz)}$, where the substitution $\bz\mapsto \rv_{\ci+1,\ci+N}(\bz)$ is a shorthand for $z_{\ci+1}\mapsto z_{\ci+N}$, $z_{\ci+2}\mapsto z_{\ci+N-1}$, \dots, $z_{\ci+N}\mapsto z_{\ci+1}$, see \cref{fig:flip_gen}(right).

Analogously to \cref{sec:intro-rect}, given a permutation $\pi\in\Sn$, we let 
\begin{equation}\label{eq:Hcicj}
\H^{\ci,\cj}_\pi:=\{(i,\pi(i))\mid i> \ci\text{ and }\pi(i)\leq \cj\},\quad    \V^{\ci,\cj}_\pi:=\{(i,\pi(i))\mid i\leq \ci\text{ and }\pi(i)> \cj\}.
\end{equation}
For two sets $\H=\{(\HL_1,\HR_1),\dots,(\HL_\h,\HR_\h)\}$ and $\V=\{(\VD_1,\VU_1),\dots,(\VD_\v,\VU_\v)\}$ of pairs, we denote $\SATop_{\ci,\cj}(\H,\V):=\{\pi\in\Sn\mid \H^{\ci,\cj}_\pi=\H\text{ and }\V^{\ci,\cj}_\pi=\V\}$ and for $Y=\sum_{\pi\in\Sn}\YBC_\pi T_\pi\in\Hecke$, we let $(Y)^{\H,\V}_{\ci,\cj}:=\sum_{\pi\in\SATop_{\ci,\cj}(\H,\V)}\YBC_\pi$ as in~\eqref{eq:HVpart}. Finally, we define
\begin{equation}\label{eq:flip_ci_cj}
\flip_{\ci,\cj}(\H):=\{(\ci+\cj+1-\HR_1,\ci+\cj+1-\HL_1),\dots,(\ci+\cj+1-\HR_\h,\ci+\cj+1-\HL_\h)\}.
\end{equation}

For $1\leq a\leq b\leq n$, we let $\Spar ab$ denote the (parabolic) subgroup of $\Sn$ generated by $\{s_k\mid k\in[a,b-1]\}$, and we let $\Hpar ab$ be the subalgebra of $\Hecke$ generated by $\{T_\pi\mid \pi\in\Spar ab\}$.

\begin{figure}
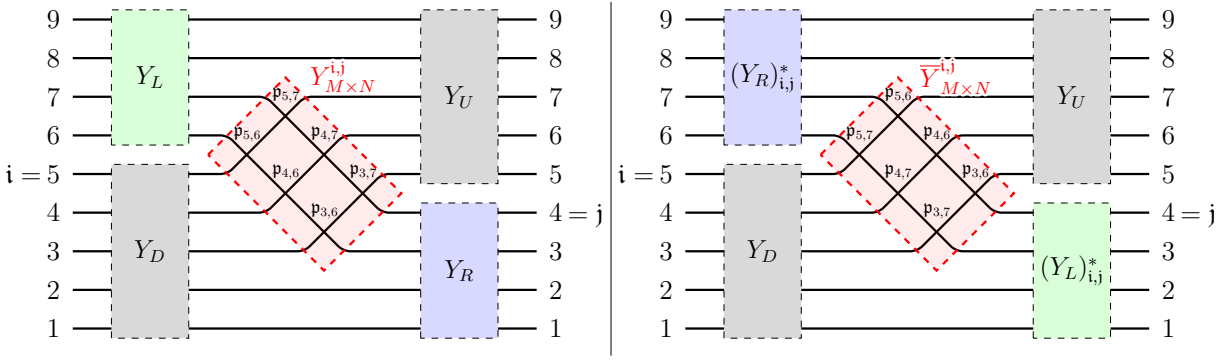

  \FIGflipgen
  \caption{\label{fig:flip_gen}An application of the generalized flip theorem (\cref{thm:flip_gen}).}
\end{figure}

\begin{theorem}\label{thm:flip_gen}
Fix $n,\ci,\cj,M,N$ as above and let 
\[Y_L\in \Hpar{\ci+1}{n},\quad Y_D\in \Hpar{1}{\ci},\quad Y_U\in \Hpar{\cj+1}{n},\quad Y_R\in \Hpar{1}{\cj}\] 
be arbitrary elements. Then for all $\H$ and $\V$, we have
\begin{equation}\label{eq:flip_gen}
\Big(Y_L\cdot Y_D\cdot \Yij\cdot Y_U\cdot Y_R\Big)^{\H,\V}_{\ci,\cj}=\Big((Y_R)_{\ci,\cj}^\ast\cdot Y_D\cdot \Yijx\cdot Y_U\cdot (Y_L)_{\ci,\cj}^\ast\Big)^{\flip_{\ci,\cj}(\H),\V}_{\ci,\cj}.
\end{equation}
\end{theorem}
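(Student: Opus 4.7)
The plan is to extend the inductive argument from the proof of \cref{thm:flip} in \cref{sec:flip_proof} to accommodate the outer factors $Y_L, Y_D, Y_U, Y_R$. The overall scheme will be a primary induction on the horizontal boundary $\H$ with respect to the same partial order $\leqI$ on $\HRx(\H) \subseteq [1, \cj]$, combined with an auxiliary induction on $\ell(w_L) + \ell(w_R)$ after reducing by linearity to the case $Y_L = T_{w_L}$, $Y_R = T_{w_R}$. The factors $Y_D \in \Hpar{1}{\ci}$ and $Y_U \in \Hpar{\cj+1}{n}$ are easier to handle because their generators commute with the simple reflections driving the flip.

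The main ingredients would be: (i) a master identity $\Yij \cdot R_r(\sp_{l, l+1}) = R_l(\sp_{l, l+1}) \cdot \sz_l(\Yij)$ for $r \in [\cj - N + 1, \cj - 1]$ and $l := r + N + \ci - \cj \in [\ci + 1, \ci + N - 1]$, which follows from the Yang-Baxter equation \eqref{eq:YB} applied inside a reduced expression for $\Yij = \YB^w$ (paralleling \eqref{eq:master_flip}); (ii) the commutations $R_l \cdot Y_D = Y_D \cdot R_l$ and $R_r \cdot Y_U = Y_U \cdot R_r$, coming from the fact that the indices of $R_l, R_r$ differ by at least $2$ from every generator of $Y_D, Y_U$; and (iii) a base case $Y_L = Y_D = Y_U = Y_R = 1$, which follows from \cref{thm:flip} applied to the $M \times N$ rectangle embedded at positions $[\ci - M + 1, \ci + N]$ of the wiring diagram.

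The primary induction step closely mirrors \cref{sec:flip_proof}: for non-minimal $\H$, pick $r \in [\cj - N + 1, \cj - 1]$ with $\H \cdot s_r > \H$, apply the operator $X \mapsto (Y_L Y_D \cdot X \cdot Y_U Y_R)^{\H \cdot s_r, \V}_{\ci, \cj}$ to both sides of the master identity, and invoke \eqref{eq:HV_T_w*R_k}--\eqref{eq:HV_R_k*T_w} to obtain a recursion expressing $(Y_L Y_D \Yij Y_U Y_R)^{\H, \V}_{\ci, \cj}$ in terms of the same quantity at strictly smaller boundaries. The analogous recursion holds for the right-hand side of \eqref{eq:flip_gen}, with coefficients matching after the rapidity substitution $\bz \mapsto \rv_{\ci+1, \ci+N}(\bz)$ that distinguishes $\Yijx$ from $\Yij$; the induction hypothesis then closes the step.

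The main obstacle is that, unlike $Y_D$ and $Y_U$, the factors $Y_L$ and $Y_R$ do not commute with the moving $R_l$ and $R_r$ factors. The auxiliary induction on $\ell(w_L) + \ell(w_R)$ is meant to handle this: multiplying $Y_L$ by an extra $T_k$ (for $k \in [\ci+1, n-1]$) on the LHS corresponds, under the flip, to multiplying $(Y_L)^{\ast}_{\ci,\cj}$ on the right of the RHS by $T_{\ci+\cj-k}$, in accordance with the anti-automorphism property of $\ast$. Verifying that the two resulting recursions agree requires a case analysis depending on whether $s_k \cdot \H$ is less than, equal to, or greater than $\H$; the Yang-Baxter equation \eqref{eq:YB} together with \eqref{eq:HV_T_w*R_k}--\eqref{eq:HV_R_k*T_w} should produce the cancellations needed to match the two sides.
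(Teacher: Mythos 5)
The paper's proof is a single induction on the \emph{total} length $\ell(w_L)+\ell(w_D)+\ell(w_U)+\ell(w_R)$, after reducing by linearity to $T$-basis elements. The base case ($L=0$) is exactly Theorem~\ref{thm:flip}, which already covers \emph{all} $\H$. The $\H$-induction over $\leqI$, the master identity of the form \eqref{eq:master_flip}, and the commutations $R_l\cdot Y_D=Y_D\cdot R_l$, $R_r\cdot Y_U=Y_U\cdot R_r$ that you list as ingredients all belong to the proof of the base case (Theorem~\ref{thm:flip}) and do not reappear in the induction step. What the induction step does use is exactly your last paragraph: write $Y_L=T_kY'_L$, observe $(Y_L)^\ast_{\ci,\cj}=(Y'_L)^\ast_{\ci,\cj}T_{\ci+\cj-k}$, and compare the two sides via the $\spx=1$ specialization of \eqref{eq:HV_T_w*R_k}--\eqref{eq:HV_R_k*T_w}. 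The only commutations needed are $Y_LY_D=Y_DY_L$ and $Y_UY_R=Y_RY_U$, to slide the peeled generator to the outer boundary of the product.

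Two concrete issues with your double-induction scheme. First, a nested induction with $\H$ primary (outer) and length auxiliary (inner) does not close: when you peel $T_k$ off $Y_L$, the relation \eqref{eq:HV_R_k*T_w} at $\spx=1$ produces a term involving the boundary $s_k\cdot\H$, which is in general not $\lessI\H$, so the inner induction would require a statement lying outside the outer inductive hypothesis. Second, you restrict the auxiliary induction to $\ell(w_L)+\ell(w_R)$ on the grounds that $Y_D$ and $Y_U$ ``commute with the simple reflections driving the flip,'' but this only matters for the $\H$-induction; for the length induction, $Y_D$ and $Y_U$ must also be peeled off, because left multiplication by $\sigma\in\Spar{1}{\ci}$ preserves $\H^{\ci,\cj}_\pi$ but does change $\V^{\ci,\cj}_\pi$ (and similarly $Y_U$ affects $\H^{\ci,\cj}_\pi$), so they genuinely alter the boundary-condition decomposition. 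This is also in tension with your stated base case (all four $Y$'s trivial), which does not match an induction that never touches $Y_D,Y_U$. Both problems vanish if you discard the $\H$-induction and the master identity entirely, use Theorem~\ref{thm:flip} verbatim as the base case, and induct once on the total length of all four factors.
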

\noindent See \cref{fig:flip_gen} for an illustration when $n=9$, $\ci=5$, $\cj=4$, $M=3$, and $N=2$.
\begin{remark}
The substitution $\bz\mapsto \rv_{\ci+1,\ci+N}(\bz)$ is performed only for the element $\Yij$, the parameters appearing in the other four elements remain unchanged. Thus if the left hand side $Y_L\cdot Y_D\cdot \Yij\cdot Y_U\cdot Y_R$ belongs to the Yang--Baxter basis, the right hand side $(Y_R)_{\ci,\cj}^\ast\cdot Y_D\cdot \Yijx\cdot Y_U\cdot (Y_L)_{\ci,\cj}^\ast$ in general does not belong to the Yang--Baxter basis.
\end{remark}
\begin{proof}
By the $\C(q;\bzz)$-linearity of the maps $Y\mapsto (Y)^{\H,\V}_{\ci,\cj}$ and $Y\mapsto (Y)_{\ci,\cj}^\ast$, it suffices to show the result for the case $Y_L=T_{w_L}$, $Y_D=T_{w_D}$, $Y_U=T_{w_U}$, $Y_R=T_{w_R}$ for some $w_L\in \Spar{\ci+1}{n}$, $w_D\in \Spar{1}{\ci}$, $w_U\in \Spar{\cj+1}{n}$, $w_R\in \Spar{1}{\cj}$. We do this by induction on the total length of these permutations.

The base case $\left(\Yij\right)^{\H,\V}_{\ci,\cj}=\left(\Yijx\right)^{\flip_{\ci,\cj}(\H),\V}_{\ci,\cj}$ is the content of \cref{thm:flip}. For the induction step, observe that $Y_L$ commutes with $Y_D$ and $Y_U$ commutes with $Y_R$. Let us for example consider the case $w_L=s_kw'_L$ with $\ell(w_L)=\ell(w'_L)+1$. Then $Y_L=T_k Y'_L$, where $Y'_L:=T_{w'_L}$, and we also have $(Y_L)_{\ci,\cj}^\ast=(Y'_L)_{\ci,\cj}^\ast T_{\ci+\cj-k}$. Recall that $T_k=R_k(1)$, thus the rules for how both sides of~\eqref{eq:flip_gen} change under multiplication by $T_k$ are obtained from~\eqref{eq:HV_T_w*R_k}--\eqref{eq:HV_R_k*T_w} by specializing $\spx=1$. The induction step follows. The other three cases are handled similarly.
\end{proof}

\begin{remark}\label{rmk:gen_flip_general_i_j}
We have assumed above that $\ci+\cj=n$. For general $\ci$ and $\cj$, let us introduce $\kmin:=\max(1,\ci+\cj+1-n)$ and $\kmax:=\min(n,\ci+\cj)$. Then the map $k\mapsto \ci+\cj-k$ is a bijection sending $[\kmin,\kmax-1]$ to itself. In the statement of \cref{thm:flip_gen} we then additionally assume that the elements $Y_L$, $\Yij$, and $Y_R$ all belong to $\Hpar{\kmin}{\kmax}$. The proof of \cref{thm:flip_gen} remains the same in this more general case.
\end{remark}

\begin{remark}\label{rmk:bottom_colors}
Substituting $Y_L=Y_U=Y_R=1$ into~\eqref{eq:flip_gen}, we find that the flip theorem (\cref{thm:flip}) holds more generally as described in \cref{rmk:flip_colors_ordered}. Thus the two assumptions required for \cref{thm:flip} to hold are that the incoming colors entering the rectangle from the left are increasing bottom-to-top and are all larger than the colors entering the rectangle from the bottom. 
 \Cref{thm:flip} fails in general if one of these two assumptions is not satisfied.
\end{remark}

\section{Consequences of the flip theorem}\label{sec:flip_conseq}
Our proof of \cref{thm:main} will consist of two parts. In this section, we use the flip theorem to construct a ``zoo'' of transformations on tuples of $\SKD$-cuts that preserve the joint distribution of the associated vector of height functions. In the next section, we will show that, after restricting to \emph{connected components}, any transformation satisfying the conditions of \cref{thm:main} can be obtained as a composition of transformations constructed in this section. We start by introducing some notation related to $\SKD$-cuts.

\begin{definition}\label{dfn:cut_cross}
Let $\SKD$ be a \skew domain and consider a $\SKD$-cut $\cut=(l,d,u,r)$. Recall from \cref{sec:intro:main} and \cref{fig:htpq}(middle) that we associate the numbers $1\leq \ci,\cj\leq n$ to $\cut$ so that $\ci$ (resp., $\cj$) is the number of steps in $\Ppath$ (resp., of $\Qpath$) between its start and the bottom left (resp., top right) corner of the cell $(l,d)$ (resp., $(r,u)$). We call the numbers $(\ci,\cj)$ the \emph{color cutoff levels} of $\cut$.  Given another $\SKD$-cut $\cut'=(l',d',u',r')\neq\cut$, we say that $\cut$ and $\cut'$ \emph{cross} if the closed line segments $[(l,d),(r,u)]$ and $[(l',d'),(r',u')]$ intersect in the plane. Equivalently, $\cut$ and $\cut'$ \emph{cross} if $[l,r]\subset[l',r']$ and $[d,u]\supset[d',u']$ or vice versa. 
\end{definition}

For the rest of this section, we fix a \skew domain $\SKD$ and a tuple $\CUT=(\cut_1,\cut_2,\dots,\cut_m)$ of $\SKD$-cuts, where $\cut_i=(l_i,d_i,u_i,r_i)$ for $i\in[1,m]$. We let 
\begin{equation*}
  \HT(\CUT):=\left(\HT(\cut_1),\HT(\cut_2),\dots,\HT(\cut_m)\right) 
\end{equation*}
denote the associated vector of height functions.

\subsection{Admissible transformations: definitions}

Given two integers $\al\leq \ar\in\ZP$, we let $\rv_{\al,\ar}:\ZP\to\ZP$ be the bijection defined by
\begin{equation*}
  \rv_{\al,\ar}(k)=
  \begin{cases}
    \al+\ar-k, &\text{if $k\in[\al,\ar]$,}\\
    k, &\text{otherwise,}
  \end{cases}\qquad\text{for all $k\in\ZP$.}
\end{equation*}

\begin{definition}\label{dfn:cut_adm}
A \emph{transformation} is a pair $\PHI:=(\pih,\piv)$ of bijections $\pih,\piv:\ZP\to\ZP$. We say that a transformation $\PHI$ is \emph{$\CUT$-admissible} if there exists another \skew domain $\SKDp$ and a tuple $\CUT'=(\cut'_1,\cut'_2,\dots,\cut'_m)$ of $\SKDp$-cuts with $\cut'_i=(l'_i,d'_i,u'_i,r'_i)$ such that for all $i\in[1,m]$, we have $\pih([l_i,r_i])=[l'_i,r'_i]$ and $\piv([d_i,u_i])=[d'_i,u'_i]$. In this case, we say that $\PHI$ is \emph{strongly $\CUT$-admissible} if $\HT(\CUT)\eqd \HTp(\CUT')$, where $\bx':=\pih^{-1}(\bx)=(x_{\pih^{-1}(1)},x_{\pih^{-1}(2)},\dots)$ and $\by':=\piv^{-1}(\by)=(y_{\piv^{-1}(1)},y_{\piv^{-1}(2)},\dots)$.
\end{definition}

\begin{remark}\label{rmk:inverse}
Having $\pih([l_i,r_i])=[l'_i,r'_i]$ is equivalent to having $\suppH(\cut_i)=\suppHp(\cut'_i)$ for $\bx'=\pih^{-1}(\bx)$: we have $\suppHp(\cut'_i)=\{x'_s\}_{s\in \lrp i}=\{x'_{\pih(t)}\}_{t\in\lr i}$, and this equals to $\suppH(\cut_i)=\{x_{t}\}_{t\in\lr i}$ when we take $x'_{\pih(t)}:=x_t$.
\end{remark}

\begin{remark}\label{rmk:depend_on_P_Q}
Given a $\CUT$-admissible transformation $\PHI$, the above tuple $\CUT'$ is uniquely determined by the conditions $\pih([l_i,r_i])=[l'_i,r'_i]$, $\piv([d_i,u_i])=[d'_i,u'_i]$, thus we denote $\CUT'=\PHI(\CUT)$.
 However, there are in general many choices for domains $\SKD$ and $\SKDp$ such that $\CUT$ is a tuple of $\SKD$-cuts and $\CUT'$ is a tuple of $\SKDp$-cuts. So it may appear that the notion of strong $\CUT$-admissibility depends on the choices of \skew domains $\SKD$ and $\SKDp$, but actually this is not the case as we show in \cref{cor:domain_indep}. 
Thus proving \cref{thm:main} amounts to showing that each $\CUT$-admissible transformation is strongly $\CUT$-admissible.  We now give a list of strongly $\CUT$-admissible transformations.
\end{remark}

\subsection{Admissible transformations: examples}\label{sec:examples}
In this subsection, we state that several transformations are strongly $\CUT$-admissible. We will prove these statements later in \cref{sec:admiss-transf-proofs}. Fix some integers $\Minf,\Ninf\in\ZP$ satisfying $\SKDZ\subset[1,\Minf]\times[1,\Ninf]$.

\subsection*{Color-position symmetry}
Our first transformation comes from \cref{thm:CPS}. In fact, this is the only transformation that is not a consequence of the flip theorem (although it is a trivial transformation from the point of view of Hecke algebras as explained in \cref{sec:CPS}). Recall that we have fixed a \skew domain $\SKD$ and a tuple $\CUT=(\cut_1,\cut_2,\dots,\cut_m)$ of $\SKD$-cuts.

\begin{lemma}[Color-position symmetry]\label{lemma:CPS}
$\PHI:=(\rv_{1,\Minf},\rv_{1,\Ninf})$ is a strongly $\CUT$-admissible transformation.
\end{lemma}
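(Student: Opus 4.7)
The transformation $\PHI=(\rv_{1,\Minf},\rv_{1,\Ninf})$ is the coordinate-level incarnation of a $180$-degree rotation of the picture inside the ambient box $[0,\Minf]\times[0,\Ninf]$, and the plan is to prove strong $\CUT$-admissibility by carrying out this rotation at the level of \skew domains and then invoking \cref{thm:CPS} through the bridge with the Yang--Baxter basis provided by \cref{prop:PQ_vs_w}.

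First, take $\SKDp$ to be the $180$-rotation of $\SKD$, so $\SKDpZ=\{(\Minf+1-i,\Ninf+1-j):(i,j)\in\SKDZ\}$, and set $\bx'=\pih^{-1}(\bx)$, $\by'=\piv^{-1}(\by)$ so that each cell of $\SKDp$ inherits the same row and column rapidities (and hence the same vertex weights from \cref{fig:spec}) as its preimage cell in $\SKD$. For each $i\in[1,m]$, define
\[\cut'_i:=(\Minf+1-r_i,\ \Ninf+1-u_i,\ \Ninf+1-d_i,\ \Minf+1-l_i).\]
The corner conditions defining $\cut_i$ as a $\SKD$-cut rotate directly to the conditions defining $\cut'_i$ as a $\SKDp$-cut, and the image-interval relations $\pih([l_i,r_i])=[l'_i,r'_i]$, $\piv([d_i,u_i])=[d'_i,u'_i]$ hold by construction; this establishes $\CUT$-admissibility.

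For the distributional identity $\HT(\CUT)\eqd\HTp(\CUT')$, I would use \cref{prop:PQ_vs_w} to pick a reduced word $\bi$ for $\wPQ$ together with parameter sequence $\bro$ obtained from reading the cells of $\SKD$ in an integrable bottom-left-to-top-right order, so that $\PF^{\Ppath,\Qpath}_\sigma(\bx,\by)=\PF^{\bi,\bro}_\sigma(\bzz)$ for all $\sigma\in\Sn$. Reading the cells of $\SKDp$ in its own bottom-left-to-top-right order traverses the same underlying cells in the reverse order, producing the reduced word $\rev(\bi)$ with parameter sequence $\rev(\bro)$ for $w^{\Ppath',\Qpath'}$; the substitution $\bx\to\bx'$, $\by\to\by'$ exactly compensates for the rotation of cell positions, so the $\sp$-value attached to each cell is unchanged. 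Now \cref{thm:CPS} yields
\[\PF^{\bi,\bro}_\sigma(\bzz)=\PF^{\rev(\bi),\rev(\bro)}_{\sigma^{-1}}(\bzz),\]
identifying the random color permutation on $\SKDp$ (with rapidities $\bx',\by'$) with a specific bijective transform of the inverse of the random color permutation on $\SKD$.

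To conclude, it remains to check that this transform preserves each height function. The cleanest argument is geometric: $\HT(\cut_i)$ counts colored paths in the sample configuration connecting the left and right boundaries of the sub-rectangle $[l_i,r_i]\times[d_i,u_i]\subset\SKDZ$, and $180$-rotation of the configuration sends such paths in bijection to paths connecting the left and right boundaries of $[l'_i,r'_i]\times[d'_i,u'_i]\subset\SKDpZ$---the same count. Combined with the distributional identity above, this gives $\HT(\CUT)\eqd\HTp(\CUT')$, as desired. The main technical obstacle I foresee is the bookkeeping around how edge labels, and hence color permutations, transform under rotation; using the geometric description of the height function (which makes no reference to edge numbering) sidesteps this entirely.
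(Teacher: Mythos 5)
Your core error is in identifying the reduced word attached to the rotated domain. If cell $(i,j)\in\SKDZ$ has content $\cont ij=\Minf+j-i$, its image $(\Minf+1-i,\Ninf+1-j)$ in $\SKDpZ$ has content $\Minf+(\Ninf+1-j)-(\Minf+1-i)=n-\cont ij$. So reading $\SKDp$ bottom-left to top-right produces the word $(n-i_r,\dots,n-i_1)$, not $\rev(\bi)=(i_r,\dots,i_1)$. Consequently \cref{thm:CPS}, which implements the anti-automorphism $T_\pi\mapsto T_{\pi^{-1}}$, does not relate the two domains; the paper instead invokes the anti-automorphism $T_\pi\mapsto T_{w_0\pi^{-1}w_0}$ (which sends $R_k(\sp)\mapsto R_{n-k}(\sp)$ and reverses products), giving the correct correspondence $\pi'=w_0\pi^{-1}w_0$, not $\pi'=\pi^{-1}$. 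Your claimed identity $\PF^{\bi,\bro}_\sigma=\PF^{\rev(\bi),\rev(\bro)}_{\sigma^{-1}}$ as applied to $\SKDp$ already fails on a $2\times1$ rectangle: there $\PF^{\Ppath,\Qpath}_{(1,3,2)}=\sp_{1,1}(1-\sp_{2,1})$ while $\PF^{\Ppath',\Qpath'}_{(1,3,2)}=\sp_{2,1}(1-\sp_{1,1})$ after the rapidity swap, and these are not equal.

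The concluding geometric argument also does not close the gap, because it conflates a configuration-level statement with a distributional one. It is true that $180$-rotation maps pipe dreams on $\SKD$ bijectively to pipe dreams on $\SKDp$ and that $\HTpi_\pip(\cut_i)=\HTpi_{\pip'}(\cut'_i)$ for each such pair; but this bijection does \emph{not} preserve weights cell-by-cell. The local weight ($1-\sp$ versus $1-q\sp$, or $\sp$ versus $q\sp$) at a cell depends on the parity of the number of prior crossings of the two strands meeting there, measured along the order of traversal; reversing the traversal order changes these parities. For example, if two strands cross at their first meeting and elbow at their second and third, the $\SKD$ weights from those meetings are $\sp\cdot(1-q\sp)\cdot(1-q\sp)$ while the rotated configuration contributes $\sp\cdot(1-\sp)\cdot(1-\sp)$. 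So the identity $\HT(\CUT)\eqd\HTp(\CUT')$ cannot be deduced one configuration at a time. The paper avoids this by working at the level of partition functions $\Zpi$ (via the Hecke anti-automorphism) and then verifying the purely combinatorial height-function identity $\HTop_\pi^{\ci_k,\cj_k}=\HTop_{w_0\pi^{-1}w_0}^{n-\cj_k,n-\ci_k}$, noting that the color cutoff levels transform as $(\ci'_k,\cj'_k)=(n-\cj_k,n-\ci_k)$.
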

\noindent Under this transformation, the domain $\SKD$ and all $\SKD$-cuts rotate by $180$ degrees.

\begin{figure}
  \FIGglobalh
  \caption{\label{fig:global_H} A global \Hd-flip (\cref{lemma:global_H_flip}).}
\vspace{\floatsep}
  \FIGlocalh
  \caption{\label{fig:local_H} A local \Hd-flip (\cref{lemma:local_H_flip}).}
\end{figure}
\subsection*{Global flips}
Our next transformation is a direct application of \cref{thm:flip_gen}. In fact, we describe two transformations, a \emph{global \Hd-flip} and a \emph{global \Vd-flip}. They are obtained from each other by interchanging the horizontal and vertical directions, i.e., reflecting along the line $y=x$. We thus describe only one of the two transformations.
\begin{lemma}[Global \Hd-flip]\label{lemma:global_H_flip}
Suppose that $\cut=(l,d,u,r)$ is a $\SKD$-cut that crosses $\cut_i$ for all $i\in[1,m]$. Then $\PHI:=(\rv_{1,\Minf}\circ\rv_{l,r},\rv_{d,u})$ is a strongly $\CUT$-admissible transformation.
\end{lemma}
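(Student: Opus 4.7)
The plan is to derive the lemma directly from the generalized flip theorem (\cref{thm:flip_gen}) applied to the rectangular block of crossings picked out by $\cut$. By \cref{prop:PQ_vs_w,prop:YB_equals_Zbipi}, the probabilities $\Zpi$ are the coefficients of $T_\pi$ in $\YB^{\wPQ}\in\Hecke$. Let $(\ci,\cj)$ be the color cutoff levels of $\cut$ and set $M:=r-l+1$, $N:=u-d+1$. In the wiring diagram for $\wPQ$, the sub-rectangle $[l,r]\times[d,u]$ of cells corresponds to an $M\times N$ block of crossings; using Yang--Baxter relations~\eqref{eq:YB} to reorder generators, one can factor
\[\YB^{\wPQ}=Y_L\cdot Y_D\cdot \Yij\cdot Y_U\cdot Y_R\]
with $\Yij$ the Yang--Baxter element of this block and $(Y_L,Y_D,Y_U,Y_R)$ satisfying the parabolic subalgebra hypotheses of \cref{thm:flip_gen}.

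Applying \cref{thm:flip_gen} to this factorization yields, for every boundary data $(\H,\V)$ at the block,
\[\left(\YB^{\wPQ}\right)^{\H,\V}_{\ci,\cj}=\left((Y_R)_{\ci,\cj}^\ast\cdot Y_D\cdot \Yijx\cdot Y_U\cdot (Y_L)_{\ci,\cj}^\ast\right)^{\flip_{\ci,\cj}(\H),\V}_{\ci,\cj}.\]
The substitution $\bz\mapsto\rv_{\ci+1,\ci+N}(\bz)$ inside $\Yijx$ reverses the row rapidities $y_d,\dots,y_u$, producing the vertical bijection $\piv=\rv_{d,u}$. The swap $Y_L\leftrightarrow (Y_R)_{\ci,\cj}^\ast$, combined with the order-reversing nature of the $\ast$-operation (which also reflects generator indices via $k\mapsto \ci+\cj-k$), reindexes the column rapidities exactly by $\pih=\rv_{1,\Minf}\circ\rv_{l,r}$. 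Thus the right-hand side becomes a partition function in a new \skew domain $\SKDp$ with rapidities $\pih^{-1}(\bx)$, $\piv^{-1}(\by)$, which simultaneously shows that $\PHI(\CUT)$ is well-defined.

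It then remains to descend from boundary data at $\cut$ to joint distributions of the height functions. This is where the hypothesis that $\cut$ crosses every $\cut_i$ enters: each path counted by $\HT(\cut_i)$ must cross the line segment of $\cut$, so $\HTpi_{\pi}(\cut_i)$ is fully determined by $(\H^{\ci,\cj}_\pi,\V^{\ci,\cj}_\pi)$, and a direct combinatorial check confirms that the reflection $\flip_{\ci,\cj}$ preserves each $\HT(\cut_i)$. Summing the identity above over all $(\H,\V)$ compatible with prescribed values of $\HT(\cut_1),\dots,\HT(\cut_m)$ then yields $\HT(\CUT)\eqd \HTp(\PHI(\CUT))$. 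The main obstacle will be extracting, from the $\ast$-operation, exactly why the horizontal bijection is $\rv_{1,\Minf}\circ\rv_{l,r}$ rather than the ``pure flip'' $\rv_{l,r}$; the additional global reversal $\rv_{1,\Minf}$ arises because $\ast$ reverses the order of multiplication of the \emph{entire} transformed element, and translating this back through the convention $\bzzPQ=(x_\Minf,\dots,x_1,y_1,\dots,y_\Ninf)$ for the associated \skew domain requires careful index bookkeeping.
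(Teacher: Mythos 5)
Your high-level plan mirrors the paper's: apply the generalized flip theorem \cref{thm:flip_gen} to a five-factor decomposition around the cut $\cut$, then use the crossing hypothesis to descend from boundary data at $(\ci,\cj)$ to height functions via \cref{lemma:Ht_H_V_flip}. However, the factorization step has a genuine gap. You assert that $\YB^{\wPQ}$ can be written as $Y_L\cdot Y_D\cdot \Yij\cdot Y_U\cdot Y_R$ after reordering with Yang--Baxter moves, with the outer four factors satisfying the parabolic constraints of \cref{thm:flip_gen}. This is false in general: the skew domain $\SKDZ$ typically contains cells in the ``corner'' regions $[1,l-1]\times[u+1,\Ninf]$ and $[r+1,\Minf]\times[1,d-1]$, which lie outside the plus-shaped set $L\cup D\cup A\cup U\cup R$. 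These corner cells contribute generators that cannot all be placed inside the required parabolic subalgebras, and no Yang--Baxter shuffle removes them from the product $\YB^{\wPQ}$.

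The paper avoids this entirely. It never factors $\YB^{\wPQ}$; instead, it replaces the full partition function with the restricted element $Y:=Y_L\cdot Y_D\cdot Y_A\cdot Y_U\cdot Y_R$ (a product over only the plus-shaped cells) and invokes the pipe-dream equivalence-class machinery of \cref{sec:equiv-class-pipe}---specifically \cref{lemma:pipe_irrel} and \eqref{eq:Y_A=CUT}---to conclude $\HTop^Y(\CUT)\eqd\HT(\CUT)$. The hypothesis that $\cut$ crosses every $\cut_i$ is precisely what ensures $\suppcut\subset L\cup D\cup A\cup U\cup R$, so the corner cells are irrelevant for all the height functions and can be discarded. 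Without this reduction your factorization claim does not hold; once it is added, the remaining steps of your argument (applying \cref{thm:flip_gen}, reading off $\piv=\rv_{d,u}$ and $\pih=\rv_{1,\Minf}\circ\rv_{l,r}$, and descending via \cref{lemma:Ht_H_V_flip}) coincide with the paper's and are sound.
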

\noindent See \cref{fig:global_H} for an example. The dashed rectangle represents the cut $\cut=(l,d,u,r)$.

\subsection*{Local flips}
Similarly to global flips, we introduce \emph{local \Hd-flips} and \emph{local \Vd-flips}. For ${d\leq u\in\ZP}$, we say that $\ZP\times [d,u]$ is a \emph{horizontal $\CUT$-strip} if there does not exist $i\in[1,m]$ such that either $d_i<d\leq u_i<u$ or $d<d_i\leq u<u_i$. In other words, for all $i\in[1,m]$, we have either $u_i<d$, or $u<d_i$, or $[d,u]\subset [d_i,u_i]$, or $[d_i,u_i]\subset[d,u]$.

\begin{lemma}[Local \Hd-flip]\label{lemma:local_H_flip}
Suppose that $\ZP\times[d,u]$ is a horizontal $\CUT$-strip. Let 
\begin{equation}\label{eq:local_H_IJ}
  J:=\{j\in[1,m]\mid [d_j,u_j]\subsetneq [d,u]\},\quad   K:=\{k\in[1,m]\mid [d,u]\subset [d_k,u_k]\}.
\end{equation}
Suppose that there exist integers $l\leq r\in\ZP$ satisfying $[l,r]=[l_j,r_j]$ and $[l_k,r_k]\subset [l,r]$ for all $j\in J$ and $k\in K$. Then $\PHI:=(\id,\rv_{d,u})$ is a strongly $\CUT$-admissible transformation.
\end{lemma}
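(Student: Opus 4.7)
The plan is to apply the generalized flip theorem (\cref{thm:flip_gen}) to the rectangular sub-region $R := [l,r] \times [d,u]$ of $\SKDZ$, which under our hypotheses is a full rectangle of cells inside the skew domain through which no cut passes only partially. Write $M := r-l+1$ and $N := u-d+1$, and let $(\ci, \cj)$ denote the color cutoffs, in the sense of \cref{dfn:cut_cross}, of the corner cells $(l,d)$ and $(r,u)$ of $R$.

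First I would use \cref{prop:PQ_vs_w} and~\eqref{eq:YBC_dfn} to reformulate the joint distribution of $(\HT(\cut_i))_{i=1}^m$ as a statement about coefficients of $\YB^{w^{P,Q}}$ in the standard basis of $\Hecke$. Next, I would choose a reduced word for $w^{P,Q}$ that processes the cells of $\SKDZ$ in the order: first those strictly below the strip $\ZP \times [d,u]$; then those of the strip lying to the left of $R$; then the $MN$ cells of $R$ itself; then the cells of the strip to the right of $R$; and finally those strictly above the strip. The horizontal-strip hypothesis combined with $[l_k,r_k]\subset[l,r]$ for $k \in K$ and $[l_j,r_j]=[l,r]$ for $j \in J$ guarantees that such a reduced word exists. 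This yields a factorization
\[
\YB^{w^{P,Q}} \;=\; C_{\text{below}} \cdot Y_L \cdot \Yij \cdot Y_R \cdot C_{\text{above}},
\]
in which $Y_L, Y_R$ lie in the appropriate parabolic subalgebra $\Hpar{\kmin}{\kmax}$ from \cref{rmk:gen_flip_general_i_j}.

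I would then invoke \cref{thm:flip_gen} (with $Y_D = Y_U = 1$) to transform the middle block $Y_L \cdot \Yij \cdot Y_R$ into $(Y_R)^\ast_{\ci,\cj} \cdot \Yijx \cdot (Y_L)^\ast_{\ci,\cj}$, preserving for every $(\H, \V)$ the sum $(\cdot)^{\H, \V}_{\ci, \cj}$. Under the dictionary of \cref{sec:skew_to_wiring}, the substitution $\bz \mapsto \rv_{\ci+1,\ci+N}(\bz)$ appearing in $\Yijx$ is precisely the reversal of $y_d, y_{d+1}, \dots, y_u$, matching $\piv = \rv_{d,u}$ on $\by$. The anti-involution $(\cdot)^\ast_{\ci,\cj}$ applied to $Y_L, Y_R$, together with the unchanged $C_{\text{below}}, C_{\text{above}}$, assembles back into $\YB^{w^{P',Q'}}$ for a new skew domain $\SKDp$ obtained from $\SKD$ by the local rearrangement of boundary steps within the strip; in particular, $\pih = \id$, so the horizontal rapidities $\bx$ are unchanged.

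The main obstacle will be matching the event $\bigcap_i \{\HT(\cut_i) = h_i\}$ to a sum of coefficients preserved by \cref{thm:flip_gen}. For $j \in J$, the cut $\cut_j$ cleanly slices $R$ horizontally, so $\HT(\cut_j)$ is encoded in the horizontal boundary condition of $R$ at the intermediate color level $(\ci_j, \cj_j)$; the flip transforms this condition by $\flip_{\ci_j, \cj_j}$, which exactly matches $\cut'_j = \PHI(\cut_j)$ under the identification that $[l_j,r_j]=[l,r]$ is preserved while $[d_j,u_j]$ is reflected across $[d,u]$. For $k \in K$, the inclusions $[l_k, r_k] \subset [l, r]$ and $[d,u] \subset [d_k, u_k]$ imply that $\HT(\cut_k)$ is determined by the horizontal boundary condition at level $(\ci_k, \cj_k)$ of the combined middle block $Y_L \cdot \Yij \cdot Y_R$, and hence the coefficient sum is again preserved by the flip. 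Cuts with $u_i < d$ or $d_i > u$ sit entirely inside $C_{\text{below}}$ or $C_{\text{above}}$ and so are unaffected. Packaging these observations together establishes the distributional identity $\HT(\CUT) \eqd \HTp(\PHI(\CUT))$, proving that $\PHI$ is strongly $\CUT$-admissible.
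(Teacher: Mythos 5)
Your plan of factoring $\YB^{w^{P,Q}}$ and invoking \cref{thm:flip_gen} on a middle block is natural, but there are two real gaps.

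First, you keep $Y_L$ and $Y_R$ (cells of the strip outside the rectangle $R$) nontrivial. After \cref{thm:flip_gen}, the far-left and far-right factors become $(Y_R)^\ast_{\ci,\cj}$ and $(Y_L)^\ast_{\ci,\cj}$, which live on the $180^\circ$-rotated cells and carry the \emph{original} parameters $\sp_{i,j}$ rather than the ones a skew domain would dictate there. They therefore do \emph{not} reassemble into $\YB^{w^{P',Q'}}$ for any $\SKDp$ with $\pih=\id$; matching the rapidities would force a permutation of $\bx$, contradicting the local-flip statement. The paper sidesteps this by first invoking \cref{cor:domain_indep} to shrink $\SKD$ so that the strip is \emph{exactly} $[l,r]\times[d,u]$ (legitimate because $\suppcut$ meets the strip only inside $[l,r]\times[d,u]$), which makes $Y_L=Y_R=1$. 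You should do the same; your claim about the transformed element assembling into $\YB^{w^{P',Q'}}$ is false otherwise.

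Second, and more fundamentally, the horizontal-strip hypothesis allows cuts $\cut_i$ with $[d_i,u_i]$ disjoint from $[d,u]$, i.e.\ cuts not in $J\cup K$ that do not cross $(l,d,u,r)$. Applying \cref{thm:flip_gen} at the single color-cutoff level $(\ci,\cj)$ only equates the two laws of $\bigl(\H^{\ci,\cj}_\pi,\V^{\ci,\cj}_\pi\bigr)$, and via \cref{lemma:Ht_H_V_flip} this determines $\HT(\cut_k)$ precisely for cuts \emph{crossing} $(l,d,u,r)$ --- not for cuts sitting below or above the strip. Your one-line dismissal ("sit entirely inside $C_{\text{below}}$ or $C_{\text{above}}$ and so are unaffected") asserts a statement about the joint distribution without proof; the difficulty is coupling those height functions with those in $J\cup K$. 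The paper's proof handles this by conditioning on $(\CUT,\cut)$-equivalence classes of pipe dreams --- which fixes the restriction to $D$ and to $\rel(\pip,\CUT)\cap U$ together with the vertical crossing data $\V^A_\pip$ --- and then applying \cref{thm:flip_gen} to the conditioned element $T_{\cper_D}\cdot Y_A$ inside each class. The nontrivial verification that this conditioning is well defined (that the set of relevant cells in $U$ and their weights are class-invariant) is the technical core of the argument, and it is entirely absent from your proposal. Without this conditioning step, your proof at best establishes the distributional identity for the subfamily $(\cut_i)_{i\in J\cup K}$, not for the full tuple $\CUT$.
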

\noindent See \cref{fig:local_H} for an example with $J=\{1,2,3\}$ and $K=\{4\}$. The dashed rectangle represents the cut $(l,d,u,r)$.

\subsection*{Double flips and shifts}
The last pair of local transformations that we consider are \emph{double \Hd-flips} and \emph{double \Vd-flips}. 

\begin{lemma}[Double \Hd-flip]\label{lemma:double_H_flip}
Let $\ZP\times [d',u']$ and $\ZP\times[d,u]$ be two horizontal $\CUT$-strips with $[d',u']\subsetneq[d,u]$. Denote
\begin{align}\label{eq:double_H_I}
I&:=\{i\in[1,m]\mid [d_i,u_i]\subset[d',u']\},\\
\label{eq:double_H_J}
  J&:=\{j\in[1,m]\mid [d_j,u_j]\subsetneq [d,u]\}\setminus I,\\ 
\label{eq:double_H_K}
K&:=\{k\in[1,m]\mid [d,u]\subset[d_k,u_k]\}.
\end{align}
Suppose that there exist integers $l\leq r\in\ZP$ satisfying $[l,r]\subset[l_i,r_i]$, $[l,r]=[l_j,r_j]$, and $[l_k,r_k]\subset [l,r]$ for all $i\in I$, $j\in J$, and $k\in K$. Then $\PHI:=(\id,\rv_{d,u}\circ\rv_{d',u'})$ is a strongly $\CUT$-admissible transformation.
\end{lemma}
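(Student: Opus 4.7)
The plan is to realise the double $\H$-flip $\PHI=(\id,\rv_{d,u}\circ\rv_{d',u'})$ as a composition of two flips, based on the identity
\[
\rv_{d,u}\circ\rv_{d',u'} \;=\; \rv_{d+u-u',\,d+u-d'}\circ\rv_{d,u},
\]
together with the generalized flip theorem \cref{thm:flip_gen}. Neither constituent reversal is implementable by a single local $\H$-flip, because the inner cuts ($i\in I$) have horizontal extents $[l_i,r_i]\supsetneq[l,r]$ and thus violate the equality hypothesis of \cref{lemma:local_H_flip}; one therefore has to invoke the more flexible \cref{thm:flip_gen}.

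First, via \cref{prop:PQ_vs_w}, encode the \SCSV model on $\SKD$ as a product in $\Hecke$ and factor it as $Y_L\cdot Y_D\cdot Y_M\cdot Y_U\cdot Y_R$, where $Y_M$ is a Yang--Baxter basis element associated with the rectangle $[l,r]\times[d,u]$ and $Y_L,Y_D,Y_U,Y_R$ capture the regions of $\SKD$ outside this rectangle. The hypotheses on $I$, $J$, $K$ ensure that $J$- and $K$-cuts sit entirely inside $Y_M$ (with horizontal extents equal to or contained in $[l,r]$), whereas the $I$-cuts extend horizontally beyond $[l,r]$ and their ``tails'' are absorbed into $Y_L$ and $Y_R$.

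Apply \cref{thm:flip_gen} to $Y_M$: this substitutes $\rv_{d,u}$ into the $y$-variables of $Y_M$ and reverses the vertical extents of all cuts intersecting $Y_M$, sending each inner cut $\cut_i$ (for $i\in I$) to one with vertical extent in $[d+u-u',\,d+u-d']$. Then apply \cref{thm:flip_gen} a second time, to the sub-rectangle $[l,r]\times[d+u-u',\,d+u-d']$: by the strip property of $[d',u']$, every $J$- or $K$-cut has vertical extent either entirely containing or entirely disjoint from this sub-strip and is unaffected, while the inner cuts have their relative order restored. The composition of the two flips performs exactly $\rv_{d,u}\circ\rv_{d',u'}$ on the $y$-coordinates and leaves the $x$-coordinates fixed, yielding the required distributional identity.

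The main obstacle is the Hecke-algebra factorisation itself: when an inner cut's horizontal extent strictly exceeds $[l,r]$, its contribution to the product is spread across $Y_L$, $Y_M$, and $Y_R$. One must verify that the portions lying in $Y_L$ and $Y_R$ are unchanged by the flip applied to $Y_M$ and, more subtly, that the horizontal and vertical boundary conditions of \cref{thm:flip_gen} correctly encode the events $\{\HT(\cut_i)=k\}$ defining the joint distribution. A bookkeeping argument analogous to those used to derive \cref{lemma:local_H_flip} and \cref{lemma:global_H_flip} from \cref{thm:flip_gen} should handle this, but the coexistence of cuts with radically different vertical extents (some strictly inside $[d',u']$, others wrapping around it) requires extra care.
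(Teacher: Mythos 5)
Your strategy is essentially the same as the paper's: realize the double $\H$-flip as two applications of \cref{thm:flip_gen}. The one genuine difference is the \emph{order}: you apply the outer flip $\rv_{d,u}$ first and then the inner flip at the reflected window $[d+u-u',\,d+u-d']$, whereas the paper first flips at the inner cut $\cut'=(l,d',u',r)$ (i.e., at $(\ci',\cj')$) and then at the outer cut $\cut=(l,d,u,r)$ (i.e., at $(\ci,\cj)$). Both orders are reconciled by the identity $\rv_{d,u}\circ\rv_{d',u'}=\rv_{d+u-u',d+u-d'}\circ\rv_{d,u}$, which you state, so your ordering is not incorrect — but it forces you to apply the second flip inside a domain whose row rapidities have already been permuted, which complicates the bookkeeping somewhat relative to the paper's order.

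The more substantive gap is that your proposal leaves out precisely the parts of the argument that make \cref{thm:flip_gen} applicable, and these are where the real work is. First, the proposal never mentions conditioning on the $(\CUT,\cut)$-equivalence class of pipe dreams. In the paper this conditioning is what fixes the color permutation $\cper_D\in S_{[1,\ci]}$ along the bottom boundary so that the model can be written in the form $T_{\cper_D}\cdot Y_L\cdot Y_A\cdot Y_R$, with $T_{\cper_D}$ serving as the $Y_D$ factor required by \cref{thm:flip_gen}; without this the factorization into the five parabolic pieces does not exist. Second, the decomposition $Y_L\cdot Y_D\cdot Y_M\cdot Y_U\cdot Y_R$ in your proposal is underspecified in exactly the place where it matters: the paper's $L$ and $R$ are restricted to the vertical band $[d',u']$ (not $[d,u]$), and $Y_A$ is further split as $Y_{A_D}Y_{A'}Y_{A_U}$, precisely so that the required commutations ($Y_{A_D}$ with $Y_L$, $Y_{A_U}$ with $Y_R$) and parabolic memberships hold. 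You flag this as ``the main obstacle'' and say ``a bookkeeping argument should handle this'' — that is an honest assessment, but it is also the content of the lemma. Third, the proposal does not invoke \cref{lemma:Ht_H_V_flip}, which is how one passes from the conclusion of \cref{thm:flip_gen} (an identity of boundary-condition-restricted coefficients) to the desired identity of joint height-function distributions. As it stands the proposal is a correct high-level outline that identifies the right tool and the right obstacles, but it is a sketch rather than a proof.
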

\noindent See \cref{fig:doubleh} for an example with $d=3$, $u=9$, $d'=5$, $u'=6$, $l=4$, $r=7$, $I=\{1,2,3\}$, $J=\{4,5\}$, and $K=\{6\}$. 

\begin{remark}\label{rmk:H_shift_BGW}
Under the double \Hd-flip, the tuple $(\cut_j)_{j\in J}$ rotates by $180$ degrees while the tuple $(\cut_i)_{i\in I}$ shifts up by $u+d-u'-d'$. The transformation makes sense when either $I$ or $J$ is empty. If $I$ is empty then it is essentially a special case of a local \Hd-flip (or rather a composition of two local \Hd-flips one of which does not change the tuple of cuts). When $J$ is empty, we call this transformation an \emph{\Hd-shift}. 
\end{remark}
\begin{remark} \label{rmk:gen_BGW}
When $J=\{j\}$ consists of one element and $d'+u'=d+u$, the double \Hd-flip can be considered as an operation of shifting $\cut_j$ relative to $(\cut_i)_{i\in [1,m]\setminus J}$. In this case, \cref{lemma:double_H_flip} generalizes the shift-invariance results of~\cite{BGW}. Specifically, \cite[Theorems~1.2 and~4.13]{BGW} correspond to the special case of \cref{lemma:double_H_flip} where $|J|=1$, $(d',u')=(d+1,u-1)$, and $I\cup J\cup K=[1,m]$.
\end{remark}

\subsection{Equivalence classes of pipe dreams}\label{sec:equiv-class-pipe}
In order to prove that the above transformations are strongly $\CUT$-admissible, we need to introduce a certain equivalence relation on the set of \SCSV model configurations. Following the literature~\cite{BeBi,FK,KM} on Schubert polynomials, we say that a \emph{$\SKD$-pipe dream} (equivalently, a configuration of the \SCSV model) is a diagram  $\pip$ obtained by replacing each cell in $\SKDZ$ with either a crossing \scalebox{0.4}{\begin{tikzpicture}[baseline=(ZUZU.base)]
\coordinate(ZUZU) at (0,-0.5);\drawgrid{0}{0}{0}{0}\draw[line width=3pt, rounded corners=12] (0.00,-0.50)--(0.00,0.50);\draw[line width=3pt, rounded corners=12] (-0.50,0.00)--(0.50,0.00);
\end{tikzpicture}} or an elbow \scalebox{0.4}{\begin{tikzpicture}[baseline=(ZUZU.base)]
\coordinate(ZUZU) at (0,-0.5);\drawgrid{0}{0}{0}{0}\draw[line width=3pt, rounded corners=12] (0.00,-0.50)--(0.00,0.00)--(0.50,0.00);\draw[line width=3pt, rounded corners=12] (-0.50,0.00)--(0.00,0.00)--(0.00,0.50);
\end{tikzpicture}}. Thus the total number of $\SKD$-pipe dreams equals $2^{\#\SKDZ}$, and each $\SKD$-pipe dream $\pip$ is naturally a union of $n$ colored paths, so we denote the corresponding color permutation by $\cperm_\pip$.  For a $\SKD$-pipe dream $\pip$, denote $\HTpi_\pip(\CUT):=\left(\HTpi_{\cperm_{\pip}}(\cut_1),\dots,\HTpi_{\cperm_\pip}(\cut_m)\right)$.
When the \skew domain $\SKD$ is fixed, we usually omit the dependence on it from the notation.

 The probability $\Prob(\pip)$ of a given pipe dream $\pip$ is defined by
\begin{equation*}
  \Prob(\pip)=\prod_{(i,j)\in\SKDZ}\wtij{i,j}{\pip},\quad\text{where}\quad \wtij{i,j}{\pip}\in\{\sp_{i,j},q\sp_{i,j},1-\sp_{i,j},1-q\sp_{i,j}\}
\end{equation*}
according to the four possibilities in \cref{fig:spec}.  For all $\pi\in\Sn$, we have $\Zpi=\sum_{\pip:\cperm_\pip=\pi} \Prob(\pip)$.

Recall that we have fixed a tuple $\CUT=(\cut_1,\cut_2,\dots,\cut_m)$ of $\SKD$-cuts. 
 For each $k\in[1,m]$,  let $\ci_k,\cj_k\in[1,n]$ be the color cutoff levels of $\cut_k$ (cf. \cref{dfn:cut_cross}).

\begin{figure}
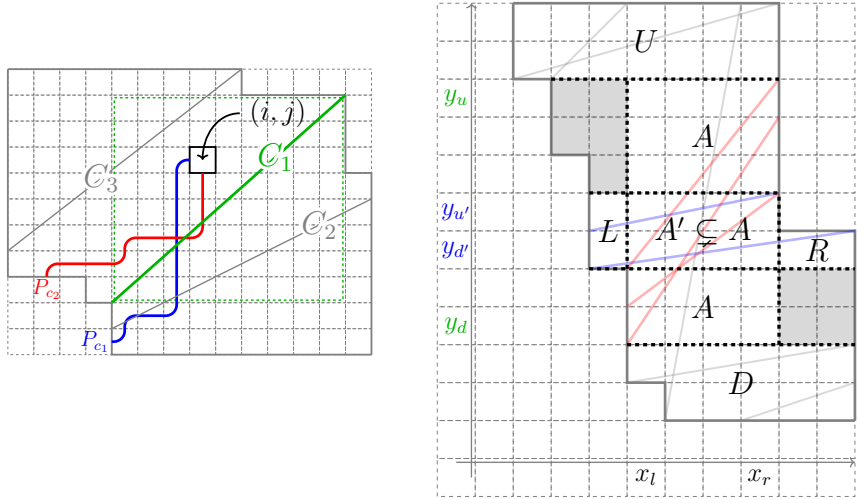

\begin{tabular}{ccc}
  \FIGrel 
& \hspace{1in}&
             \FIGdoubleldur
\end{tabular}
  \caption{\label{fig:rel} Left: The cut $\cut_1$ involves $(i,j)$ in $\pip$ while $\cut_2$ and $\cut_3$ do not, see \cref{dfn:rel}. Right: The partition of the double \Hd-flip \skew domain from \cref{fig:doubleh} as described in the proof of  \cref{lemma:double_H_flip}. The shaded areas contain no cells from $\suppcut$ (and therefore no $(\pip,\CUT)$-relevant cells for any $\pip$).}
\end{figure}
\begin{definition}\label{dfn:rel}
 Given a pipe dream $\pip$ and a cell $(i,j)\in\SKDZ$, let $c_1<c_2\in[1,n]$ be the colors of the two paths entering this cell. For $k\in[1,m]$, we say that $\cut_k$ \emph{involves $(i,j)$ in $\pip$} if $(i,j)\in[l_k,r_k]\times[d_k,u_k]$ and $\ci_k\in[c_1,c_2-1]$, see \cref{fig:rel}(left). We say that $(i,j)$ is \emph{$(\pip,\CUT)$-relevant} if there exists $k\in[1,m]$ such that $\cut_k$ involves $(i,j)$ in $\pip$, otherwise we say that $(i,j)$ is \emph{$(\pip,\CUT)$-irrelevant}. Let $\SKDZ=\rel(\pip,\CUT)\sqcup\irrel(\pip,\CUT)$ denote the sets of all $(\pip,\CUT)$-relevant and $(\pip,\CUT)$-irrelevant cells, respectively. 
\end{definition}

\begin{lemma}\label{lemma:pipe_irrel}
Let $\pip$ and $\pip'$ be pipe dreams such that $\pip'$ is obtained from $\pip$ by changing its values in some $(\pip,\CUT)$-irrelevant cells. Then
\begin{align*}
\rel(\pip,\CUT)&=\rel(\pip',\CUT),\quad \irrel(\pip,\CUT)=\irrel(\pip',\CUT),\quad \HTpi_\pip(\CUT)=\HTpi_{\pip'}(\CUT),\quad\text{and}\\
\wtij{i,j}{\pip}&=\wtij{i,j}{\pip'}
\quad\text{for all $(i,j)\in\rel(\pip,\CUT).$} 
\end{align*}
\end{lemma}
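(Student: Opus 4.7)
The plan is to reduce to the case where $\pip'$ differs from $\pip$ by toggling a single $(\pip,\CUT)$-irrelevant cell $(i_0,j_0)$ (swapping elbow and crossing). The general statement then follows by toggling the cells in the set one at a time; conclusion~(i) applied at each intermediate step guarantees that the remaining cells to be toggled are still irrelevant in the intermediate pipe dream, which justifies the iteration.

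For each cut $\cut_k$, I would introduce the $\cut_k$-\emph{coarsening} of a pipe dream, obtained by labeling each colored path as \emph{low} if its color is at most $\ci_k$ and \emph{high} otherwise. The height function
\[
\HTpi_\pip(\cut_k)=\#\{c>\ci_k\mid \cperm_\pip(c)\leq \cj_k\}
\]
manifestly depends only on the $\cut_k$-coarsening. Moreover, a toggle at a cell is invisible to the $\cut_k$-coarsening precisely when its two entering colors share a class; only $\cut_k$-straddling cells are coarsening-sensitive.

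The key invariance to prove is that toggling the irrelevant $(i_0,j_0)$ leaves the $\cut_k$-coarsening unchanged on the entire rectangle $[l_k,r_k]\times[d_k,u_k]$, for every cut $\cut_k$. If $(i_0,j_0)$ lies in this rectangle, then $(\pip,\CUT)$-irrelevance forces its two entering colors to share a $\cut_k$-class, so the toggle is coarsening-invariant at $(i_0,j_0)$ and, since the configurations at all other cells inside the rectangle are unchanged, throughout the rectangle. The case $(i_0,j_0)\notin[l_k,r_k]\times[d_k,u_k]$ is more delicate: one must show that the propagation of the toggle through up-right paths cannot alter the classes reaching the rectangle through its left or bottom boundary. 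This should follow from the geometric placement of the rectangle's southwest corner on $P$, precisely at the transition between $\ein{\ci_k}$ and $\ein{\ci_k+1}$ (which separates the low and high initial entries), combined with the joint irrelevance of $(i_0,j_0)$ with respect to \emph{all} cuts simultaneously. I expect this case analysis, split according to the position of $(i_0,j_0)$ relative to the rectangle, to be the main technical obstacle.

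Granting the key invariance, all three conclusions of the lemma follow directly. For~(i), a cell $(i,j)$ is $\cut_k$-involving iff $(i,j)\in[l_k,r_k]\times[d_k,u_k]$ (independent of the pipe dream) and its entering colors straddle $\ci_k$ (determined by the $\cut_k$-coarsening and hence preserved). Conclusion~(ii) is immediate since $\HTpi_\pip(\cut_k)$ depends only on the $\cut_k$-coarsening on its rectangle. For~(iii), the weight at a relevant cell $(i,j)\neq(i_0,j_0)$ depends on its configuration (unchanged, as we toggled only $(i_0,j_0)$) and on the side—bottom or left—from which the smaller entering color arrives; at a $\cut_k$-involving cell this is exactly the side carrying the \emph{low} class, and this assignment is preserved by the key invariance.
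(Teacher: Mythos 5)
Your strategy — reduce to a single toggle, then show that the $\cut_k$-coarsening (low/high labeling of pipes relative to $\ci_k$) is preserved on the rectangle $[l_k,r_k]\times[d_k,u_k]$ for each $k$ — is essentially a reformulation of the paper's argument, and your handling of the ``inside'' case is correct. However, you defer the ``$(i_0,j_0)$ outside the rectangle'' case as an expected ``technical obstacle,'' and since this case is the crux of the lemma, the proposal as written is not a proof: you name the right geometric input (the placement of the southwest corner of $(l_k,d_k)$ at the junction of $\ein{\ci_k}$ and $\ein{\ci_k+1}$) but do not extract the fact that actually closes the argument.

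Here is the missing step. Suppose the two colors $c_1<c_2$ entering $(i_0,j_0)$ straddle $\ci_k$, i.e., $c_1\leq\ci_k<c_2$. The steps $\ein 1,\dots,\ein{\ci_k}$ of $\Ppath$ all lie weakly below-right of the bottom-left corner of $(l_k,d_k)$ (because $\Ppath$ is an up-left path), so the path of color $c_1$ starts and, being monotone up-right, stays in columns $\geq l_k$; similarly the path of color $c_2$ stays in rows $\geq d_k$. Since both pass through $(i_0,j_0)$, we get $i_0\geq l_k$ and $j_0\geq d_k$. Irrelevance of $(i_0,j_0)$ for $\cut_k$ together with the straddling assumption therefore forces $i_0>r_k$ or $j_0>u_k$. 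But the toggle at $(i_0,j_0)$ only changes the colors on edges that are up-right of $(i_0,j_0)$, i.e., at cells $(i',j')$ with $i'\geq i_0$ and $j'\geq j_0$ — all of which then also satisfy $i'>r_k$ or $j'>u_k$, hence lie outside $[l_k,r_k]\times[d_k,u_k]$. So the $\cut_k$-coarsening on the rectangle is untouched. The remaining subcase ($(i_0,j_0)$ outside the rectangle but $c_1,c_2$ in the same $\cut_k$-class) is handled exactly as your ``inside'' case: the toggle is coarsening-invariant everywhere. Finally, note that this is carried out cut-by-cut using only irrelevance of $(i_0,j_0)$ with respect to $\cut_k$; the ``joint irrelevance with respect to all cuts simultaneously'' you invoke plays no role.
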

\begin{proof}
Without loss of generality we may assume that $\pip'$ is obtained from $\pip$ by changing the value in a single cell $(i,j)\in\irrel(\pip,\CUT)$ from an elbow to a crossing or vice versa. Let $p_1$ and $p_2$ be the paths in $\pip$ that pass through the cell $(i,j)$  and let $c_1<c_2\in[1,n]$ be their respective colors.

 Let $(i',j')\in\SKDZ$ be some cell such that either $(i',j')\in\rel(\pip,\CUT)\cap\irrel(\pip',\CUT)$ or $(i',j')\in\irrel(\pip,\CUT)\cap\rel(\pip',\CUT)$. Then at least one of the paths $p_1,p_2$ has to enter the cell $(i',j')$ after leaving $(i,j)$, which implies $i\leq i'$ and $j\leq j'$.

 Moreover, there must exist $k\in[1,m]$ such that $(i',j')\in[l_k,r_k]\times[d_k,u_k]$ and $\ci_k\in[c_1,c_2-1]$: the interval $[c_1,c_2-1]$ has to contain at least one such $\ci_k$ in order for the status of $(i',j')$ to change between irrelevant and relevant. Because $\ci_k\in[c_1,c_2-1]$, the cell $(l_k,d_k)$ has to be weakly below and to the left from $(i,j)$, so $(i,j)\in[l_k,r_k]\times[d_k,u_k]$. It follows that $\cut_k$ involves $(i,j)$ in $\pip$, so $(i,j)$ is $(\pip,\CUT)$-relevant, a contradiction. This shows $\irrel(\pip,\CUT)=\irrel(\pip',\CUT)$ and $\rel(\pip,\CUT)=\rel(\pip',\CUT)$. It is also clear that $\HTpi_\pip(\CUT)=\HTpi_{\pip'}(\CUT)$ because $\HTpi_{\pip}(\cut_k)\neq \HTpi_{\pip'}(\cut_k)$ can only happen when $\cut_k$ involves $(i,j)$ in $\pip$.

Suppose now that $(i',j')\in\rel(\pip,\CUT)=\rel(\pip',\CUT)$  is such that $\wtij{i',j'}{\pip}\neq\wtij{i',j'}{\pip'}$. As before, this implies that $i\leq i'$ and $j\leq j'$. Let $k\in[1,m]$ be such that $\cut_k$ involves $(i',j')$ in $\pip$, and let $c'_1<c'_2$ be the colors of the paths entering $(i',j')$ in $\pip$, thus $\ci_k\in[c'_1,c'_2-1]$. 
 In order for $\wtij{i',j'}{\pip}$ to change as we swap $c_1$ with $c_2$, it is necessary to have $\{c'_1,c'_2\}\cap\{c_1,c_2\}\neq\emptyset$ and $[c'_1,c'_2-1]\subset[c_1,c_2-1]$. But then $\cut_k$ involves $(i,j)$ in $\pip$, contradicting the assumption that $(i,j)\in\irrel(\pip,\CUT)$.
\end{proof}

Let us say that pipe dreams $\pip,\pip'$ are \emph{$\CUT$-equivalent} if they are obtained from each other by changing the values of $(\pip,\CUT)$-irrelevant cells. We denote by $\cl[\pip]$ the $\CUT$-equivalence class of $\pip$. It consists of $2^{\#\irrel(\pip,\CUT)}$ elements which all have the same height vector $\HTpi_{\pip}(\CUT)$. By \cref{lemma:pipe_irrel}, we have
\begin{equation}\label{eq:prob_cl_pip}
\Prob(\cl[\pip]):=\sum_{\pip'\in\cl[\pip]} \Prob(\pip')=\prod_{(i,j)\in\rel(\pip,\CUT)} \wtij{i,j}{\pip},
\end{equation}
where for $(i,j)\in\rel(\pip,\CUT)$, $\wtij{i,j}{\pip}$ does not depend on the choice of $\pip\in\cl[\pip]$. Note that 
\begin{equation}\label{eq:suppcut}
\rel(\pip,\CUT)\subset \suppcut:=\bigcup_{i=1}^m \left([l_i,r_i]\times[d_i,u_i]\right),
\end{equation}
where $\suppcut$ does not depend on either $\pip$ or $\SKD$. Thus, modulo $\CUT$-equivalence, we may restrict each $\SKD$-pipe dream only to the cells in $\suppcut$ (or more generally only to the cells in some set $A$ containing $\suppcut$). One immediate application of this approach is that the distribution of the height vector $\HT(\CUT)$ does not depend on the choice of $\SKD$.
\begin{corollary}\label{cor:domain_indep}
Let $\SKD$ and $\SKDp$ be two \skew domains, and let $\CUT=(\cut_1,\cut_2,\dots,\cut_m)$ be such that for all $i\in[1,m]$, $\cut_i$ is both a $\SKD$-cut and a $\SKDp$-cut. Then 
\begin{equation*}
  \HT(\CUT)\eqd \HTop^{\Ppath',\Qpath'}(\CUT;\bx,\by).
\end{equation*}
\end{corollary}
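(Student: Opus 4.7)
My plan is to exploit the $\CUT$-equivalence class decomposition developed in \cref{sec:equiv-class-pipe}. Formula~\eqref{eq:prob_cl_pip} expresses $\Prob(\cl[\pip])$ as a product of weights at cells in $\rel(\pip,\CUT)\subset \suppcut$, and \cref{lemma:pipe_irrel} shows that the height vector $\HT(\CUT)$ is constant on each $\CUT$-equivalence class. Together, these strongly suggest that the joint distribution of $\HT(\CUT)$ should only ``see'' information inside $\suppcut \subset \SKDZ \cap \SKDpZ$, and my strategy is to turn this heuristic into a genuine interpolation argument.

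To formalize this, I would interpolate between the two models by a sequence of local moves on the boundary. Concretely, fix a large rectangular skew domain $R$ whose cell set contains both $\SKDZ$ and $\SKDpZ$, and separately prove the distributional identity between the model on $\SKD$ (resp., on $\SKDp$) and the model on $R$. To compare $\SKD$ with $R$, connect the two domains by a sequence of local modifications of $\Ppath$ or $\Qpath$, each adding a single cell $(i,j)$ at the boundary (e.g.\ flipping an \Hd-\Vd\ corner of $\Ppath$ into a \Vd-\Hd\ corner or vice versa); since $\suppcut \subset \SKDZ$, we may arrange for every added cell to satisfy $(i,j) \notin \suppcut$. For a single-cell enlargement $\SKD \rightsquigarrow \SKD_+$, observe that every $\SKD_+$-pipe dream $\pip'$ has $(i,j)$ as a $(\pip',\CUT)$-irrelevant cell (since $\rel(\pip',\CUT) \subset \suppcut$), so by \cref{lemma:pipe_irrel} the height vector $\HT(\CUT)$ is unchanged when the value at $(i,j)$ is toggled; meanwhile, the two possible weights at $(i,j)$ (for any fixed pair of incoming colors) sum to $1$ in view of \cref{fig:spec}. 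Marginalizing the $\SKD_+$-distribution over the choice of crossing versus elbow at $(i,j)$ should therefore recover the $\SKD$-distribution on the restricted pipe dream, with the height vector preserved along the way.

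The main obstacle is handling this marginalization step carefully: adding a cell at the boundary of $\SKD$ shifts the edge labels $\ein k, \eout k$ and potentially permutes the pair of incoming colors feeding into two adjacent cells of $\SKDZ$, so one must verify that summing over the two values of $\pip'(i,j)$ really does reproduce the $\SKD$-model probability for the restricted pipe dream. This reduces to a local verification at $(i,j)$ using the identities $\sp+(1-\sp)=1$ and $q\sp+(1-q\sp)=1$, combined with a bookkeeping check that the edge-relabeling induced by the boundary move is precisely compensated by the color-swap induced by the crossing/elbow choice at the added cell. Once this local check is in place, induction on the number of local moves separating $\SKD$ from $R$ completes the proof.
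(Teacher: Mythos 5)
Your interpolation strategy breaks down at the local marginalization step: adding a cell along $\Ppath$ is not a weight-preserving operation on the restricted pipe dream, even though adding a cell along $\Qpath$ is. Concretely, suppose one step of the chain adds a cell $(i,j)\notin\suppcut$ at a concave corner of $\Ppath$. The two colors $\alpha,\alpha+1$ that entered $\SKDZ$ through the two boundary edges around the top-right corner of $(i,j)$ now enter through the bottom and left of $(i,j)$ and meet there first. A crossing at $(i,j)$ sends them onward in order, matching the $\SKD$-model; an elbow sends them onward with $\alpha$ and $\alpha+1$ swapped. At the next cell of $\SKDZ$ where these two pipes meet --- which can be arbitrarily far from $(i,j)$ --- the swapped configuration has the larger color entering from below, so by \cref{fig:spec} its weight is $q\sp$ or $1-q\sp$ rather than $\sp$ or $1-\sp$. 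If $A$ and $B$ denote the $\SKD$-weights of the restricted pipe dream under ordered and swapped incoming colors, the marginal of the enlarged model is $\sp_{i,j}A+(1-\sp_{i,j})B$, which equals the target $A$ only when $A=B$. On the $2\times 2$ square with the single cut $(1,2,2,1)$ and with cell $(1,1)$ removed, the marginal over $(1,1)$ of the all-crossings pipe dream is off by the factor $\sp_{1,1}+(1-\sp_{1,1})q$; no bookkeeping on edge labels can cancel a genuine factor of $q$ arising at $(2,2)\ne(i,j)$.

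What saves the statement is exactly the mechanism you are trying to bypass: the mismatch between $A$ and $B$ occurs only at $(\pip,\CUT)$-irrelevant cells, so it disappears after summing over $\CUT$-equivalence classes. But once \cref{lemma:pipe_irrel} and \eqref{eq:prob_cl_pip} are in play, the interpolation is unnecessary: $\rel(\pip,\CUT)\subset\suppcut\subset\SKDZ\cap\SKDpZ$ already shows that equivalence classes, their probabilities, and their height vectors depend only on the restriction to $\suppcut$, hence coincide for $\SKD$ and $\SKDp$ outright, which is the paper's one-line proof. (A secondary problem with the chain: for each $\cut_k$ to remain a cut of the intermediate domains one must never add $(l_k-1,d_k-1)$ or $(r_k+1,u_k+1)$, which is incompatible with inflating to a large rectangle $R$; the height function of $\cut_k$ with respect to $R$ is then not even defined via \eqref{eq:HTpi}.)
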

\begin{proof}
The set of relevant cells is contained inside $\suppcut\subset\SKDZ\cap\SKDpZ$, so $\SKD$ and $\SKDp$ give rise to the same set of $\CUT$-equivalence classes of pipe dreams.
\end{proof}

\subsection{Admissible transformations: proofs}\label{sec:admiss-transf-proofs}
We are ready to prove Lemmas~\ref{lemma:CPS}--\ref{lemma:double_H_flip}. We start with a tuple $\CUT=(\cut_1,\cut_2,\dots,\cut_m)$ of $\SKD$-cuts and then each lemma gives a transformation $\Phi=(\pih,\piv)$. It is straightforward to check in each case that this transformation is $\CUT$-admissible, and following \cref{rmk:depend_on_P_Q} we have  another $m$-tuple ${\CUT'=(\cut'_1,\cut'_2,\dots,\cut'_m):=\Phi(\CUT)}$ of $\SKDp$-cuts for some other \skew domain $\SKDp$. As usual, for $k\in[1,m]$, we denote $\cut_k=(l_k,d_k,u_k,r_k)$, $\cut'_k=(l'_k,d'_k,u'_k,r'_k)$, and we let  $(\ci_k,\cj_k)$ and $(\ci'_k,\cj'_k)$ be the color cutoff levels of $\cut_k$ and $\cut'_k$. Recall also that we have $\SKDZ\subset[1,\Minf]\times[1,\Ninf]$.  We let $\bx':=\pih^{-1}(\bx)$ and $\by':=\piv^{-1}(\by)$, cf. \cref{rmk:inverse}.

\begin{proof}[Proof of \cref{lemma:CPS}]
In this case, $\SKDp$ is just a $\flip$-rotation of $\SKD$. Let $w_0\in\Sn$ be the permutation of maximal length, defined by $w_0(i)=n+1-i$ for all $i\in[1,n]$. We claim that the probability $\PF^{\Ppath,\Qpath}_\pi$ equals the probability $\PF^{\Ppath',\Qpath'}_{\pi'}$ for $\pi':=w_0\pi^{-1}w_0$. This follows from the analog of \cref{thm:CPS} where instead of the anti-automorphism $T_\pi\mapsto T_{\pi^{-1}}$, one uses the anti-automorphism $T_\pi\mapsto T_{w_0\pi^{-1}w_0}$. 
 Observe also that $\cj'_k=n-\ci_k$ and $\ci'_k=n-\cj_k$ for all $k\in[1,m]$. We are done since by definition, $\HTop_\pi^{\ci_k,\cj_k}=\HTop_{\pi'}^{n-\cj_k,n-\ci_k}$.
\end{proof}

For the remaining results, we need to discuss the further relationship between height functions, horizontal/vertical boundary conditions, \skew domains, and Hecke algebra elements.

We may assume that the paths $\Ppath$ and $\Qpath$ connect the bottom right and the top left vertices of the rectangle $[1,\Minf]\times[1,\Ninf]$, in which case $n=\Minf+\Ninf$. For a cell $(i,j)\in\SKDZ$, let $\cont ij:=\Minf+j-i\in[1,n-1]$ denote its \emph{content}. For a subset $A\subset\SKDZ$, denote 
\begin{equation}\label{eq:Y_A}
Y_A:=\prod_{(i,j)\in A} R_{\cont ij}(\sp_{i,j})\in\Hecke,
\end{equation}
where the (non-commutative) product is taken in the ``up-right reading order'', so that the term corresponding to $(i,j)$ appears before the term corresponding to $(i',j')$ for all $i'\geq i$ and $j'\geq j$. The element $Y_A=\sum_{\pi\in\Sn}\YBC^A_\pi T_\pi$ defines a probability distribution on $\Sn$. For $k\in[1,m]$ and $\pi\in\Sn$, we set $\HTop_\pi(\cut_k):=\HTop_\pi^{\ci_k,\cj_k}$, and this gives rise to a random variable $\HTop^{Y_A}(\cut_k)$: for $h\in\ZNN$, the probability that $\HTop^{Y_A}(\cut_k)=h$ equals $\sum_{\pi\in\Sn:\HTop_\pi(\cut_k)=h} \YBC^A_\pi$. We let $\HTop^{Y_A}(\CUT)=\left(\HTop^{Y_A}(\cut_1),\dots,\HTop^{Y_A}(\cut_m)\right)$. By \cref{lemma:pipe_irrel}, we have
\begin{equation}\label{eq:Y_A=CUT}
\HTop^{Y_A}(\CUT)\eqd \HT(\CUT)\quad \text{if $\suppcut\subset A\subset\SKDZ$.}
\end{equation}

For the next result, we borrow some notation from \cref{sec:gen_flip}. We assume that all indices appearing in the statement belong to the set $[\kmin,\kmax]$ from \cref{rmk:gen_flip_general_i_j}.
\begin{lemma}\label{lemma:Ht_H_V_flip}
Let $\ci,\cj\in[1,n]$. Let $\pi,\pi'\in\Sn$ be permutations satisfying
\[\H^{\ci,\cj}_{\pi'}=\flip_{\ci,\cj} \left(\H^{\ci,\cj}_{\pi}\right) \quad\text{and}\quad  \V^{\ci,\cj}_{\pi'}=\V^{\ci,\cj}_{\pi}.\]
Then 
\begin{alignat}{2}\label{eq:ci_zero_one}
  \HTop^{\ci_\zero,\cj_\one}_\pi&=\HTop^{\ci_\zero,\cj_\one}_{\pi'}&\qquad\text{for all $\ci_0\leq \ci$ and $\cj_1\geq\cj$;}\\
\label{eq:ci_one_zero}
 \HTop^{\ci_\one,\cj_\zero}_\pi&=\HTop^{\ci+\cj-\cj_\zero,\ci+\cj-\ci_\one}_{\pi'}&\qquad\text{for all $\ci_1\geq \ci$ and $\cj_0\leq\cj$.}
\end{alignat}
\end{lemma}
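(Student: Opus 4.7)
The proof will be entirely combinatorial, working directly from the definitions in~\eqref{eq:HTpi} and~\eqref{eq:Hcicj} together with the description~\eqref{eq:flip_ci_cj} of $\flip_{\ci,\cj}$; no Hecke-algebraic input is required beyond what was already needed for \cref{thm:flip_gen}. In particular, the statement is just a check that the counts $\HTop^{a,b}_\pi$ appearing on the two sides are completely determined by the pair of data $\bigl(\H^{\ci,\cj}_\pi,\,\V^{\ci,\cj}_\pi\bigr)$ in the appropriate range.

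The plan for~\eqref{eq:ci_one_zero} is the easier of the two. Since $\ci_1 \geq \ci$ and $\cj_0 \leq \cj$, every index $c$ with $c > \ci_1$ and $\pi(c) \leq \cj_0$ automatically satisfies $c > \ci$ and $\pi(c) \leq \cj$, so $(c,\pi(c)) \in \H^{\ci,\cj}_\pi$. Hence
\[
  \HTop^{\ci_1,\cj_0}_\pi \;=\; \#\bigl\{(l,r) \in \H^{\ci,\cj}_\pi : l > \ci_1,\ r \leq \cj_0\bigr\}.
\]
I would then apply the same observation to $\pi'$ with the shifted parameters $(\ci+\cj-\cj_0,\ \ci+\cj-\ci_1)$, rewriting $\HTop^{\ci+\cj-\cj_0,\,\ci+\cj-\ci_1}_{\pi'}$ as a count over $\H^{\ci,\cj}_{\pi'}$, and use~\eqref{eq:flip_ci_cj} to verify that the bijection $(l,r) \mapsto (\ci+\cj+1-r,\,\ci+\cj+1-l)$ converts the inequalities $l > \ci_1,\ r \leq \cj_0$ precisely into $\ci+\cj+1-r > \ci+\cj-\cj_0$ and $\ci+\cj+1-l \leq \ci+\cj-\ci_1$, completing the matching.

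For~\eqref{eq:ci_zero_one} the plan is to partition the index set $\{c > \ci_0 : \pi(c) \leq \cj_1\}$ into four pieces, according to whether $c \leq \ci$ or $c > \ci$ and whether $\pi(c) \leq \cj$ or $\pi(c) > \cj$, and verify that each piece has the same cardinality for $\pi$ and for $\pi'$. Concretely: (a) pairs with $c > \ci$ and $\pi(c) \leq \cj$ exhaust all of $\H^{\ci,\cj}_\pi$ (since $\ci_0 \leq \ci$ and $\cj \leq \cj_1$), contributing $|\H^{\ci,\cj}_\pi| = |\H^{\ci,\cj}_{\pi'}|$; (b) pairs with $c \leq \ci$ and $\pi(c) > \cj$ (restricted by $c > \ci_0$ and $\pi(c) \leq \cj_1$) agree because $\V^{\ci,\cj}_\pi = \V^{\ci,\cj}_{\pi'}$ as sets of pairs; (c) pairs with $c \leq \ci$ and $\pi(c) \leq \cj$ are indexed by the $c$-values in $[1,\ci]$ that are \emph{not} first coordinates of $\V^{\ci,\cj}_\pi$ (and any such $c$ automatically satisfies $\pi(c) \leq \cj_1$), so the count in $(\ci_0,\ci]$ is flip-invariant; (d) pairs with $c > \ci$ and $\pi(c) > \cj$ are indexed by the values $\pi(c) \in (\cj,n]$ that are \emph{not} second coordinates of $\V^{\ci,\cj}_\pi$, and the number of these that lie in $(\cj,\cj_1]$ is again flip-invariant. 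Summing the four contributions yields $\HTop^{\ci_0,\cj_1}_\pi = \HTop^{\ci_0,\cj_1}_{\pi'}$.

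The main obstacle is purely bookkeeping: one must keep the four regions and their defining inequalities straight, use the assumptions $\ci_0 \leq \ci \leq \ci_1$ and $\cj_0 \leq \cj \leq \cj_1$ at the right places, and confirm that all indices stay inside the range $[\kmin,\kmax]$ from \cref{rmk:gen_flip_general_i_j}. Once the partition is set up correctly, each of the four counts is seen to agree for $\pi$ and $\pi'$ by invoking either the cardinality identity $|\H^{\ci,\cj}_\pi| = |\H^{\ci,\cj}_{\pi'}|$ (forced by the flip hypothesis) or the equality $\V^{\ci,\cj}_\pi = \V^{\ci,\cj}_{\pi'}$; no further ideas are needed.
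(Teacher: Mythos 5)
Your proof is correct, and for~\eqref{eq:ci_one_zero} it is essentially identical to the paper's: both express $\HTop^{\ci_1,\cj_0}_\pi$ as a count over $\H^{\ci,\cj}_\pi$ and transport the defining inequalities through the flip bijection. For~\eqref{eq:ci_zero_one} your route differs slightly. You partition $\{c>\ci_0 : \pi(c)\leq\cj_1\}$ into four pieces, handling the $(c>\ci,\ \pi(c)\leq\cj)$ block via the cardinality identity $|\H^{\ci,\cj}_\pi|=|\H^{\ci,\cj}_{\pi'}|$ forced by the flip, and the other three via $\V^{\ci,\cj}_\pi=\V^{\ci,\cj}_{\pi'}$. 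The paper instead writes down a single closed form
\[
\HTop^{\ci_\zero,\cj_\one}_\pi=\#\bigl\{(d,u)\in\V^{\ci,\cj}_{\pi} : d\leq \ci_\zero,\ u>\cj_\one\bigr\}+\cj_\one-\ci_\zero,
\]
obtained by complementing inside the set of all $c$ with $\pi(c)\leq\cj_1$, which exhibits $\HTop^{\ci_0,\cj_1}_\pi$ as a function of $\V^{\ci,\cj}_\pi$ alone; then~\eqref{eq:ci_zero_one} is immediate without ever invoking $\H$. Your decomposition buys nothing extra here and is strictly more bookkeeping (four regions versus one formula), but it is sound: your pieces (b), (c), (d) combine to precisely the paper's $\V$-count plus the constants, while your piece (a) and the accompanying $|\H^{\ci,\cj}_\pi|=|\H^{\ci,\cj}_{\pi'}|$ observation cancel out in the comparison. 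If you want to streamline, notice that the three $\V$-controlled pieces together with the constant $\cj_1-\ci_0$ already determine the answer, so the $\H$-piece and the cardinality-preservation fact are dispensable.
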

\begin{proof}
The values $\HTop^{\ci_\zero,\cj_\one}_\pi$ and $\HTop^{\ci_\one,\cj_\zero}_\pi$ can be expressed in terms of $\H^{\ci,\cj}_{\pi}$ and $\V^{\ci,\cj}_{\pi}$: 
\begin{align*}
\HTop^{\ci_\zero,\cj_\one}_\pi&=\#\{(d,u)\in\V^{\ci,\cj}_{\pi}\mid d\leq \ci_\zero\text{ and }u>\cj_\one\}+\cj_\one-\ci_\zero, \\
\HTop^{\ci_\one,\cj_\zero}_\pi&=\#\{(l,r)\in\H^{\ci,\cj}_{\pi}\mid l>\ci_\one\text{ and }r\leq \cj_\zero\}. 
\end{align*}
The first equation immediately implies~\eqref{eq:ci_zero_one}, and comparing the second equation with the definition~\eqref{eq:flip_ci_cj} of $\flip_{\ci,\cj}$, we see that~\eqref{eq:ci_one_zero} also follows.
\end{proof}

\begin{remark}
Given two $\SKD$-cuts $\cut,\cut'$ with color cutoff levels $(\ci,\cj)$ and $(\ci',\cj')$, note that $\cut$ and $\cut'$ cross (in the sense of \cref{dfn:cut_cross}) if and only if either $\ci'\geq \ci$ and $\cj'\leq \cj$ or $\ci'\leq \ci$ and $\cj'\geq \cj$, which is precisely when Equations~\eqref{eq:ci_zero_one} and~\eqref{eq:ci_one_zero} apply.
\end{remark}

\begin{proof}[Proof of \cref{lemma:global_H_flip}]
  Let $C=(l,d,u,r)$ be as in \cref{lemma:global_H_flip}, and let $\ci,\cj$ be its color cutoff levels. Let $L,D,A,U,R\subset \SKDZ$ be the intersections of $\SKDZ$ with $[1,l-1]\times[d,u]$, $[l,r]\times[1,d-1]$, $[l,r]\times[d,u]$, $[l,r]\times[u+1,\Ninf]$, and $[r+1,\Minf]\times[d,u]$, respectively. The assumptions of \cref{lemma:global_H_flip} imply that $\suppcut\subset L\cup D\cup A\cup U\cup R$.  

After possibly changing $\Minf$ and shifting $\SKDZ$ inside $[1,\Minf]\times[1,\Ninf]$, we may assume that:
\begin{itemize}
\item $l+r=1+\Minf$,
\item $\SKDpZ\cap \left([l,r]\times\ZP\right)=\SKDZ\cap \left([l,r]\times\ZP\right)$, and
\item the sets $L':=\SKDpZ\cap \left([1,l-1]\times[d,u]\right)$ and $R':=\SKDpZ\cap \left([r+1,\Minf]\times[d,u]\right)$ are the images of, respectively, $R$ and $L$ under the map $(i,j)\mapsto(l+r-i,u+d-j)$.
\end{itemize}
 We find that 
\begin{equation*}%
  (Y_L)^\ast_{\ci,\cj}=Y_{R'}\mid_{(\bx,\by)\mapsto(\bx',\by')} \quad\text{and}\quad (Y_R)^\ast_{\ci,\cj}=Y_{L'}\mid_{(\bx,\by)\mapsto(\bx',\by')}.
\end{equation*}
Denote $\Yov_A:=Y_A\mid_{(\bx,\by)\mapsto(\bx',\by')}$ and let 
\begin{equation*}
  Y:=Y_L\cdot Y_D\cdot Y_A\cdot Y_U\cdot Y_R,\quad\text{and}\quad \Yov:=(Y_R)_{\ci,\cj}^\ast\cdot Y_D\cdot \Yov_A\cdot Y_U\cdot (Y_L)_{\ci,\cj}^\ast.
\end{equation*}
By \cref{thm:flip_gen}, for all $\H,\V$, we have $(Y)^{\H,\V}_{\ci,\cj}=(\Yov)^{\flip_{\ci,\cj}(\H),\V}_{\ci,\cj}$.  By~\eqref{eq:Y_A=CUT}, we get $\HTop^{Y}(\CUT)\eqd \HT(\CUT)$ and $\HTop^{\Yov}(\CUT')\eqd \HTp(\CUT')$. Finally, we have assumed that the cut $\cut$ crosses all elements of $\CUT$, and therefore all elements of $\CUT'$. Applying \cref{lemma:Ht_H_V_flip}, we find $\HTop^{Y}(\CUT)\eqd \HTop^{\Yov}(\CUT')$.
\end{proof}

\begin{proof}[Proof of \cref{lemma:local_H_flip}]
Let $l,d,u,r,J,K$ be as in \cref{lemma:local_H_flip}. Let $D,A,U\subset \SKDZ$ be the intersections of $\SKDZ$ with $\ZP\times[1,d-1]$, $\ZP\times[d,u]$, and $\ZP\times[u+1,\Ninf]$, respectively. We may assume that $A=[l,r]\times[d,u]$ is a rectangle, in which case $\cut:=(l,d,u,r)$ is a $\SKD$-cut and we denote by $\ci,\cj$ its color cutoff levels.

Given a $\SKD$-pipe dream $\pip$, let us denote by
\begin{equation}\label{eq:Vapip}
\V^A_\pip:=\{(d^{A,\pip}_1,u^{A,\pip}_1),\dots,(d^{A,\pip}_\v,u^{A,\pip}_\v)\}
\end{equation}
the set of all pairs $d^{A,\pip}_i,u^{A,\pip}_i\in[l,r]$ such that the path in $\pip$ that enters $A$ from below in column $d^{A,\pip}_i$ exits $A$ from above in column $u^{A,\pip}_i$. 

Let $\pip$ and $\pip'$ be two $\SKD$-pipe dreams. We say that $\pip$ is \emph{$(\CUT,\cut)$-equivalent} to $\pip'$ if their restrictions to $D\cup (\rel(\pip,\CUT)\cap U)$ coincide and in addition $\V^A_\pip=\V^A_{\pip'}$. Our first goal is to show that $\rel(\pip,\CUT)\cap U=\rel(\pip',\CUT)\cap U$, and thus $\pip'$ is $(\CUT,\cut)$-equivalent to $\pip$. Along the way, we will also see that for each cell $(i,j)\in \rel(\pip,\CUT)\cap  U$, we have $\wtij{i,j}{\pip}=\wtij{i,j}{\pip'}$.

\def\DOWN#1#2#3{\mathcal{D}_{#1,#2}^{\cut_{#3}}(\pip)}
\def\LEFT#1#2#3{\mathcal{L}_{#1,#2}^{\cut_{#3}}(\pip)}
\def\DOWNp#1#2#3{\mathcal{D}_{#1,#2}^{\cut_{#3}}(\pip')}
\def\LEFTp#1#2#3{\mathcal{L}_{#1,#2}^{\cut_{#3}}(\pip')}

For a $\SKD$-pipe dream $\pip$, $k\in[1,m]$, and $(i,j)\in[l_k,r_k]\times [d_k,u_k]$, let $\DOWN ijk\in\{1,0\}$ (resp., $\LEFT ijk\in\{1,0\}$) be equal to $1$ if the path in $\pip$ entering $(i,j)$ from below (resp., from the left) enters the rectangle $[l_k,r_k]\times[d_k,u_k]$ from the left, and to $0$ if it enters the rectangle $[l_k,r_k]\times[d_k,u_k]$ from below. Thus $\cut_k$ involves $(i,j)$ in $\pip$ iff $\DOWN ijk\neq\LEFT ijk$. 

Suppose that $\pip$ is $(\CUT,\cut)$-equivalent to $\pip'$. Our goal is to show that in this case, for all $k\in[1,m]$ and all $(i,j)\in U\cap([l_k,r_k]\times [d_k,u_k])$, we have $\DOWN ijk=\DOWNp ijk$ and $\LEFT ijk=\LEFTp ijk$. Otherwise, choose $(i,j)\in U\cap([l_k,r_k]\times [d_k,u_k])$ with the minimal possible value of $i+j$ such that either $\DOWN ijk\neq\DOWNp ijk$ or $\LEFT ijk\neq\LEFTp ijk$. Suppose first that $\DOWN ijk\neq\DOWNp ijk$. If $(i,j-1)\in  U\cap([l_k,r_k]\times [d_k,u_k])$ then the restrictions of $\pip$ and $\pip'$ to $(i,j-1)$ must be different, so $(i,j-1)\notin \rel(\pip,\CUT)$. Therefore $\cut_k$ does not involve $(i,j-1)$ in $\pip$, so $\DOWN i{j-1}k=\LEFT i{j-1}k=\DOWN ijk$. By the minimality of $i+j$, we must have $\DOWNp i{j-1}k=\DOWN i{j-1}k$ and $\LEFTp i{j-1}k=\LEFT i{j-1}k$, which implies $\DOWN ijk=\DOWNp ijk$, a contradiction. Thus we must have $(i,j-1)\notin  U\cap([l_k,r_k]\times [d_k,u_k])$. If $(i,j-1)\notin [l_k,r_k]\times [d_k,u_k]$ then by definition we have $\DOWN ijk=\DOWNp ijk=0$. The only other option is that $(i,j-1)\in ([l_k,r_k]\times [d_k,u_k])\setminus U$, which implies that $(i,j-1)\in A$ and $k\in K$. We know that $\V^A_\pip=\V^A_{\pip'}$ and that the restrictions of $\pip$ and $\pip'$ to $D$ coincide, so $\DOWN ijk=\DOWNp ijk$. Suppose now that $\LEFT ijk\neq\LEFTp ijk$. An argument similar to the one above shows that we cannot have $(i-1,j)\in U\cap([l_k,r_k]\times [d_k,u_k])$. But then $(i-1,j)\notin [l_k,r_k]\times [d_k,u_k]$, so $\LEFT ijk=\LEFTp ijk=1$. 

We have shown that $\DOWN ijk=\DOWNp ijk$ and $\LEFT ijk=\LEFTp ijk$ for all $(i,j)\in U\cap([l_k,r_k]\times [d_k,u_k])$. This implies both of our desired statements: that $\rel(\pip,\CUT)\cap U=\rel(\pip',\CUT)\cap U$ and that $\wtij{i,j}{\pip}=\wtij{i,j}{\pip'}$ for all $(i,j)\in \rel(\pip,\CUT)\cap  U$.

\def\CCL#1{[#1]_{\CUT,\cut}}
\def\CCLO{\CCL{\pipo}}
\def\pipo{\pip_0}

Fix some $\SKD$-pipe dream $\pipo$ and denote by $\CCLO$ the $(\CUT,\cut)$-equivalence class of $\pipo$. Denote by $\cper_D\in S_{[1,\ci]}$ the color permutation induced by the restriction of $\pipo$ to $D$ on the top right boundary of $D$. 
 Applying \cref{thm:flip_gen} (cf. \cref{rmk:bottom_colors}) to the element $T_{\cper_D}\cdot  Y_A$, we find that for any set $\H$ of pairs, we have 
\begin{equation*}%
  \sum_{\pip\in\CCLO:\ \H^{\ci,\cj}_{\pi_\pip}=\H } \Prob(\pip)=  \sum_{\pip\in\CCLO:\ \H^{\ci,\cj}_{\pi_\pip}=\flip_{\ci,\cj}(\H) } \Prob(\pip)\midd_{(\bx,\by)\mapsto (\bx',\by')}.
\end{equation*}
Summing over all possible classes $\CCLO$ and applying \cref{lemma:Ht_H_V_flip}, the result follows.
\end{proof}

\begin{proof}[Proof of \cref{lemma:double_H_flip}]
We proceed similarly to the proof of \cref{lemma:local_H_flip}, except that now we want to apply two \Hd-flips. Let $L,D,A, A',U,R\subset\SKDZ$ be the intersections of $\SKDZ$ with $[1,l-1]\times [d',u']$, $[l,\Minf]\times[1,d-1]$, $[l,r]\times[d,u]$, $[l,r]\times [d',u']$, $[1,r]\times[u+1,\Ninf]$, and $[r+1,\Minf]\times [d',u']$ respectively, see \cref{fig:rel}(right). Note that the shaded areas $[1,l-1]\times[u'+1,u]$ and $[r+1,\Minf]\times[d,d'-1]$ contain no cells in $\suppcut$, so $\suppcut\subset L\cup D\cup A\cup U\cup R$. We may assume that $A'\subsetneq A$ are rectangles: $A'=[l,r]\times [d',u']$ and $A=[l,r]\times[d,u]$.

\def\pipo{\pip_0}

\def\CCLPQ#1{[#1]^{\Ppath,\Qpath}_{\CUT,\cut}}
\def\CCLPQp#1{[#1]^{\Ppath',\Qpath'}_{\CUT,\cut}}
\def\CCLOPQ{\CCLPQ{\pipo}}
\def\CCLOPQp{\CCLPQp{\pipo}}

\def\HTcond#1{\HTop(#1;\bx,\by)|_{\CCLOPQ}}
\def\HTcondp#1{\HTop(#1;\bx',\by')|_{\CCLOPQp}}
Introduce $\SKD$-cuts $\cut:=(l,d,u,r)$ and $\cut':=(l,d',u',r)$ with color cutoff levels $\ci,\cj$ and $\ci',\cj'$, respectively. As above, fix a $\SKD$-pipe dream $\pipo$. The domain $\SKD$ changes into $\SKDp$, however, the subdomains $D,A,U$ stay invariant. We denote by $\CCLOPQ$ the set of $\SKD$-pipe dreams that are $(\CUT,\cut)$-equivalent to $\pipo$. We also denote by $\CCLOPQp$ the set of all $\SKDp$-pipe dreams $\pip'$ such that $\V^A_{\pipo}=\V^A_{\pip'}$ and the restrictions of $\pipo$ and $\pip'$ to $D\cup (\rel(\pipo,\CUT)\cap U)$ coincide. Recall that our probability space consists of $2^{\#\SKDZ}$ $\SKD$-pipe dreams. The class $\CCLOPQ$ is considered an \emph{event}, and for $i\in[1,m]$, we let $\HTcond{\cut_i}$ denote the random variable $\HT(\cut_i)$ conditioned on this event. Let $\HTcond{\CUT}:=\left(\HTcond{\cut_i}\right)_{i\in [1,m]}$.

Let $\cper_D\in S_{[1,\ci]}$ be the color permutation induced by the restriction of $\pipo$ to $D$ on the top right boundary of $D$, and denote $Y:=T_{\cper_D}\cdot Y_L\cdot Y_A\cdot Y_R$. Consider subsets $L',R'\subset\SKDpZ$ obtained respectively from $L$ and $R$ via the map sending $(i,j)\mapsto (i,j+u+d-u'-d')$. We may assume that $L'$ and $R'$ are the intersections of $\SKDpZ$ with $[1,l-1]\times[d'',u'']$ and $[r+1,\Minf]\times[d'',u'']$, where $u'':=u+d-d'$, $d'':=u+d-u'$. Thus $\suppcutp\subset L'\cup D\cup A\cup U\cup R'$. Let $Y':=\left(T_{\cper_D}\cdot Y_{L'}\cdot Y_A\cdot Y_{R'}\right)\mid_{(\bx,\by)\mapsto(\bx',\by')}$. We claim that $Y'$ is obtained from $Y$ via two applications of \cref{thm:flip_gen}, first at $(\ci',\cj')$ and then at $(\ci,\cj)$. To see that, let us write $A=A_D\sqcup A'\sqcup A_U$, where $A_D$ and $A_U$ are the intersections of $A$ with $[l,r]\times [d,d'-1]$ and $[l,r]\times [u'+1,u]$, respectively. Thus $Y_A=Y_{A_D}\cdot Y_{A'}\cdot Y_{A_U}$ and $T_{\cper_D},Y_{A_D}$ commute with $Y_L$ while $Y_{A_U}$ commutes with $Y_R$. Applying \cref{thm:flip_gen} at $(\ci',\cj')$ sends
\begin{equation}\label{eq:flipgen_1}
 Y_L\cdot  (T_{\cper_D} Y_{A_D})\cdot Y_{A'}\cdot Y_{A_U}\cdot Y_R \mapsto   (Y_R)^*_{\ci',\cj'}\cdot (T_{\cper_D} Y_{A_D})\cdot  \overline{Y_{A'}}\cdot Y_{A_U}\cdot (Y_L)^*_{\ci',\cj'}.
\end{equation}
(In~\eqref{eq:flipgen_1} and~\eqref{eq:flipgen_2}, the five terms on each side, separated by the $\cdot$ symbol, appear in the same order as the corresponding five terms on each side of~\eqref{eq:flip_gen}.) We may now apply \cref{thm:flip_gen} at $(\ci,\cj)$, sending
\begin{equation}\label{eq:flipgen_2}
(Y_R)^*_{\ci',\cj'}\cdot  T_{\cper_D}\cdot  \left( Y_{A_D} \overline{Y_{A'}} Y_{A_U}\right)\cdot 1\cdot  (Y_L)^*_{\ci',\cj'}\mapsto   ((Y_L)^\ast_{\ci',\cj'})^\ast_{\ci,\cj}\cdot T_{\cper_D}\cdot  \left( \overline{Y_{A_D} \overline{Y_{A'}} Y_{A_U}}\right)\cdot 1\cdot  ((Y_R)^\ast_{\ci',\cj'})^\ast_{\ci,\cj}.
\end{equation}
The terms $((Y_L)^\ast_{\ci',\cj'})^\ast_{\ci,\cj}$, $\overline{Y_{A_D} \overline{Y_{A'}} Y_{A_U}}$, and $((Y_R)^\ast_{\ci',\cj'})^\ast_{\ci,\cj}$ are obtained respectively from $Y_{L'}$, $Y_{A}$, and $Y_{R'}$ by substituting $(\bx,\by)\mapsto(\bx',\by')$. 
 Each of the cuts $\cut'$ and $\cut$ (at which we applied \cref{thm:flip_gen}) crosses $\cut_k$ all $k\in I\cup J\cup K$.  By \cref{lemma:Ht_H_V_flip}, we get 
\begin{equation}\label{eq:HTop_conditional_dist2}
\HTcond{\CUT}\eqd \HTcondp{\CUT'}.
\end{equation}
Summing over all possible classes $\CCLOPQ$, the result follows.
\end{proof}

\section{Proof of \texorpdfstring{\cref{thm:main}}{the main result}}\label{sec:main_proof}
Recall from \cref{rmk:depend_on_P_Q} that our goal is to show that each $\CUT$-admissible transformation is strongly $\CUT$-admissible. We showed above that several $\CUT$-admissible transformations are strongly $\CUT$-admissible. The purpose of this section is to show that, after passing to \emph{connected components} defined below, any $\CUT$-admissible transformation can be represented as a composition of transformations introduced in \cref{sec:examples}.

Throughout, we fix the following data:
\begin{itemize}
\item two \skew domains $\SKD$ and $\SKDp$;
\item a tuple $\CUT:=(\cut_1,\cut_2,\dots,\cut_m)$ of $\SKD$-cuts with $\cut_i:=(l_i,d_i,u_i,r_i)$ for $i\in[1,m]$;
\item a tuple $\CUT':=(\cut'_1,\cut'_2,\dots,\cut'_m)$ of $\SKDp$-cuts with $\cut'_i:=(l'_i,d'_i,u'_i,r'_i)$ for $i\in[1,m]$;
\item a $\CUT$-admissible transformation $\PHI:=(\pih,\piv)$ satisfying $\CUT'=\PHI(\CUT)$ (cf. \cref{rmk:depend_on_P_Q}). 
\end{itemize}
We assume that $\cut_i\neq\cut_j$ for $i\neq j$.

\subsection{Connected components}
Clearly, if $\cut_i,\cut_j$ satisfy $[l_i,r_i]\cap[l_j,r_j]=[d_i,u_i]\cap [d_j,u_j]=\emptyset$ then the random variables $\HT(\cut_i)$ and $\HT(\cut_j)$ are independent. One can check that this remains true when only one of the two intersections is empty. A generalization of this to joint distributions is given in \cref{prop:conn_compts} below.
\begin{definition}
We say that two cuts $\cut_i$ and $\cut_j$ \emph{have overlapping rectangles} if $[l_i,r_i]\cap[l_j,r_j]\neq\emptyset$ and $[d_i,u_i]\cap [d_j,u_j]\neq\emptyset$. We denote by $\Gov(\CUT)$ the \emph{overlap graph} of $\CUT$: the vertex set of $\Gov(\CUT)$ is $[1,m]$, and $i$ and $j$ are connected by an edge if and only if $\cut_i$ and $\cut_j$ have overlapping rectangles.
\end{definition}

\begin{remark}
Since $\PHI$ is $\CUT$-admissible, the graphs $\Gov(\CUT)$ and $\Gov(\CUT')$ coincide.
\end{remark}

\begin{proposition}\label{prop:conn_compts}
  Suppose that the set $[1,m]$ is partitioned into two nonempty subsets $[1,m]=I\sqcup J$ such that $\Gov(\CUT)$ contains no edges connecting a vertex in $I$ to a vertex in $J$. Then the height vectors $\left(\HT(\cut_i)\right)_{i\in I}$ and $\left(\HT(\cut_j)\right)_{j\in J}$ are independent (as random variables).
\end{proposition}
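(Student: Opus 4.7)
The plan is to exploit the pipe dream machinery developed in \cref{sec:equiv-class-pipe}. By \cref{cor:domain_indep}, we may choose $\SKD$ to be any convenient \skew domain with $\SKDZ\supset\supp(\CUT)$. Write $A:=\supp(\CUT_I)$ and $B:=\supp(\CUT_J)$; the assumption that $\Gov(\CUT)$ has no edge between $I$ and $J$ is exactly the statement that $A\cap B=\emptyset$ as subsets of $\ZP\times\ZP$, since disjoint rectangles in a pair $(\cut_i,\cut_j)\in I\times J$ is equivalent to their projections failing to overlap in at least one coordinate.

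The first key observation, immediate from \cref{dfn:rel}, is that for every pipe dream $\pip$ we have $\rel(\pip,\CUT_I)\subset A$ and $\rel(\pip,\CUT_J)\subset B$; in particular these two sets are disjoint, with $\rel(\pip,\CUT)=\rel(\pip,\CUT_I)\sqcup\rel(\pip,\CUT_J)$. Combined with~\eqref{eq:prob_cl_pip}, the probability of each $\CUT$-equivalence class factors accordingly as an $A$-piece times a $B$-piece:
\begin{equation*}
\Prob(\cl[\pip])=\Big(\prod_{(i,j)\in\rel(\pip,\CUT_I)}\wtij{i,j}{\pip}\Big)\cdot\Big(\prod_{(i,j)\in\rel(\pip,\CUT_J)}\wtij{i,j}{\pip}\Big).
\end{equation*}
Moreover, $\HT(\cut_k)$ equals the color-blind count of strands in $\pip$ crossing the rectangle $[l_k,r_k]\times[d_k,u_k]$ from its left to its right side, so $\HT(\CUT_I)$ depends on the pipe dream only through $\pip|_A$ and $\HT(\CUT_J)$ only through $\pip|_B$.

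The remaining step — promoting the factorization of class probabilities into genuine independence of the two height vectors — is the main obstacle. The difficulty is that the weight $\wtij{i,j}{\pip}$ at a cell in $A$ depends on the colors of paths entering that cell, which in turn depend on the configuration outside $A$ (and symmetrically for $B$). To overcome this I would appeal to the Yang--Baxter basis representation of the partition function (\cref{prop:YB_equals_Zbipi}) and use the Yang--Baxter relation~\eqref{eq:YB} to commute the $R$-matrices associated to cells in $C:=\SKDZ\setminus(A\cup B)$ past those in $A\cup B$, grouping the $A$-factors and $B$-factors into separate blocks. The disjointness $A\cap B=\emptyset$ is precisely what permits these commutations. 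Once the partition function is written in the decoupled form $Y_{\mathrm{pre}}\cdot (Y_A\cdot Y_B)\cdot Y_{\mathrm{post}}$ (with $Y_A\in\Hecke$ supported on the rectangles of $\CUT_I$ and $Y_B$ on those of $\CUT_J$, commuting past one another modulo the pre/post factors), summing over the ``interface'' colors entering $A\cup B$ splits the joint distribution into a product of an $A$-marginal and a $B$-marginal, yielding independence of $\HT(\CUT_I)$ and $\HT(\CUT_J)$.
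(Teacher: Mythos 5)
Your first three steps---the disjointness $A\cap B=\emptyset$, the factorization of $\Prob(\cl[\pip])$ over $\rel(\pip,\CUT)=\rel(\pip,\CUT_I)\sqcup\rel(\pip,\CUT_J)$, and the observation that $\HT(\CUT_I)$ is a function of $\pip|_A$ alone---are correct and coincide with the paper's own argument. But the ``main obstacle'' you then raise does not actually exist, and the paper resolves it with a tool you already have in hand. The weight $\wtij{i,j}{\pip}$ at a \emph{$(\pip,\CUT_I)$-relevant} cell (which is all your factorization needs) is already a function of $\pip|_A$: since $\rel(\pip,\CUT_I)\subset\supp(\CUT_I)=A$, every cell outside $A$ is $(\pip,\CUT_I)$-irrelevant, so \cref{lemma:pipe_irrel} applied with $\CUT_I$ in place of $\CUT$ shows that changing $\pip$ outside $A$ preserves $\rel(\pip,\CUT_I)$, $\HTpi_\pip(\CUT_I)$, and $\wtij{i,j}{\pip}$ for all $(i,j)\in\rel(\pip,\CUT_I)$. (The paper's remark that at such a cell the smaller color enters the enclosing rectangle from the bottom and the larger one from the left is a concrete explanation of this fact.) Symmetrically for $B$ and $\CUT_J$. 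Once both factors in your displayed product, as well as the height constraints, are recognized as functions of $\pip|_A$ and $\pip|_B$ respectively, independence follows by summing over $\CUT$-equivalence classes: these biject with pairs consisting of a $\CUT_I$-class and a $\CUT_J$-class, the probability of each class being the product of the probabilities of the two components, so the sum splits as a product.

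The Yang--Baxter route you propose would not go through as stated, for a concrete arithmetic reason. Two cells $(i,j)\in A$ and $(i',j')\in B$ belonging to disjoint rectangles can nonetheless lie on the same or adjacent diagonals, in which case $\cont{i}{j}$ and $\cont{i'}{j'}$ are equal or differ by one, and then $R_{\cont{i}{j}}(\sp_{i,j})$ and $R_{\cont{i'}{j'}}(\sp_{i',j'})$ do not commute in $\Hecke$. Disjointness of $A$ and $B$ in the plane has nothing to do with commutativity of the associated Hecke generators, so the ``decoupled'' form $Y_{\mathrm{pre}}\cdot Y_A\cdot Y_B\cdot Y_{\mathrm{post}}$ is not attainable in general; and even if it were, passing from a factorization of an element of $\Hecke$ to independence of two height observables would require additional argument. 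The correct mechanism here is the local pipe-dream argument, not algebraic decoupling.
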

\noindent See \cref{fig:GOV}(left) for an example.
\begin{figure}
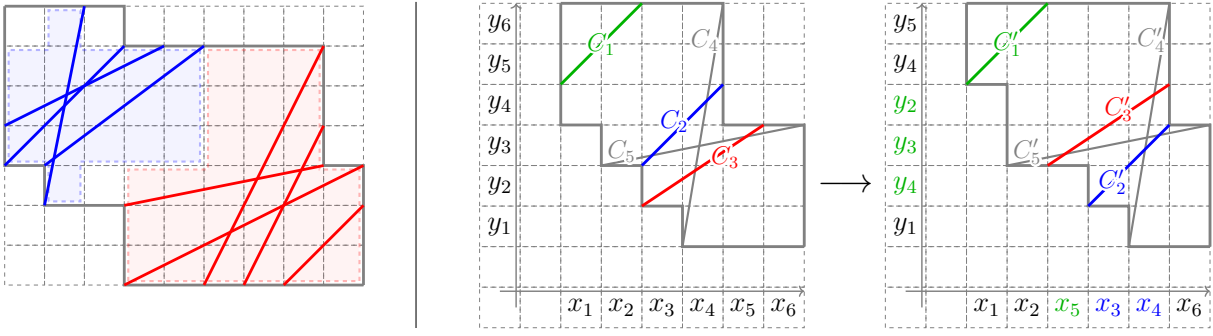

\begin{tabular}{cc|cc}
  \FIGgov
&\hspace{0.2in}&\hspace{0.2in}
\FIGgovdva
\end{tabular}
  \caption{\label{fig:GOV}Left: a tuple $\CUT$ of cuts with disconnected overlap graph $\Gov(\CUT)$. The sets $\supp(\CUT_I)$ and $\supp(\CUT_J)$ are shaded. Right: a $\CUT$-admissible transformation that gives a counterexample to \cref{lemma:rigid_trans} when the assumption that $\Gov(\CUT)$ is connected is not satisfied.}
\end{figure}
\begin{proof}
Denote $\CUT_I:=(\cut_i)_{i\in I}$ and $\CUT_J:=(\cut_j)_{j\in J}$, and let $\supp(\CUT_I),\supp(\CUT_J)\subset\SKDZ$ be the corresponding supports defined in~\eqref{eq:suppcut}. Consider a pipe dream $\pip$ and a cell $(a,b)\in\rel(\pip,\CUT)$. By~\eqref{eq:suppcut}, we must have either $(a,b)\in \rel(\pip,\CUT_I)\subset\supp(\CUT_I)$ or $(a,b)\in\rel(\pip,\CUT_J)\subset\supp(\CUT_J)$. Suppose that, say, $(a,b)\in \rel(\pip,\CUT_I)$ and let $i\in I$ be such that $\cut_i$ involves $(a,b)$ in $\pip$. Let $p_1,p_2$ be the two paths of colors $c_1<c_2$ entering $(a,b)$ in $\pip$. Then $p_1$ must enter the rectangle $[l_i,r_i]\times[d_i,u_i]$ from the bottom while $p_2$ must enter the same rectangle from the left. Thus changing the values of $\pip$ inside the cells of $\supp(\CUT_J)$ preserves $\rel(\pip,\CUT_I)$, $\wtij{a,b}{\pip}$ for $(a,b)\in\rel(\pip,\CUT_I)$, and $\HTpi_\pip(\CUT_I)$. Similarly, changing the values of $\pip$ inside the cells of $\supp(\CUT_I)$ preserves $\rel(\pip,\CUT_J)$, $\wtij{a,b}{\pip}$ for $(a,b)\in\rel(\pip,\CUT_J)$, and $\HTpi_\pip(\CUT_J)$. We are done by~\eqref{eq:prob_cl_pip}.
\end{proof}

In view of \cref{prop:conn_compts}, \textbf{we assume from now on that the graphs $\Gov(\CUT)=\Gov(\CUT')$ are connected}.

\subsection{Non-crossing cut poset}
\begin{definition}
We write $\cut_i\leq \cut_j$ if $\cut_i$ and $\cut_j$ do not cross (cf. \cref{dfn:cut_cross}) and in addition $\cut_j$ is \emph{weakly up-left} from $\cut_i$, that is, $l_i\geq l_j$, $r_i\geq r_j$, $d_i\leq d_j$, and $u_i\leq u_j$. We write $\cut_i<\cut_j$ if $\cut_i\leq\cut_j$ and $\cut_i\neq\cut_j$. We denote by $\PC=(\CUT,\leq)$ the associated poset (partially ordered set).
\end{definition}

\def\lkd{$\PHI$-linked\xspace}

As follows from our examples in \cref{sec:examples}, the posets $\PC$ and $\PCP$ in general need not coincide. For example, the color-position symmetry (\cref{lemma:CPS}) reverses all relations in $\PC$. However, for certain \emph{pairs of relations} in $\PC$, we can show that their ``relative orientation'' is preserved.
\begin{definition}
We say that the relations $\cut_{i_1}<\cut_{j_1}$ and $\cut_{i_2}<\cut_{j_2}$ in $\PC$ are \emph{\lkd} if either $\cut'_{i_1}<\cut'_{j_1}$ and $\cut'_{i_2}<\cut'_{j_2}$ or $\cut'_{i_1}>\cut'_{j_1}$ and $\cut'_{i_2}>\cut'_{j_2}$.
\end{definition}
\noindent Thus when $\PHI$ is the color-position symmetry of \cref{lemma:CPS}, any two relations  $\cut_{i_1}<\cut_{j_1}$ and $\cut_{i_2}<\cut_{j_2}$ are \lkd. 
The following simple observation gives rise to a family of pairs of relations that are \lkd regardless of the choice of $\PHI$.
\begin{lemma}\label{lemma:rigid_overlap}
Suppose that $i,j,k\in[1,m]$ are such that $\cut_i<\cut_j$ and $\cut_i<\cut_k$. Suppose in addition that $\cut_j$  and $\cut_k$ have overlapping rectangles. Then the relations $\cut_i<\cut_j$ and $\cut_i<\cut_k$ are \lkd.
\end{lemma}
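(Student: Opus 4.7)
The plan is to argue by contradiction: assume the two relations are not $\PHI$-linked and derive a contradiction. Suppose without loss of generality that $\cut'_i < \cut'_j$ holds while $\cut'_i < \cut'_k$ fails, so either (A) $\cut'_i > \cut'_k$ or (B) $\cut'_i, \cut'_k$ are incomparable in $\PCP$. The symmetric case where $\cut'_i > \cut'_j$ holds is dealt with identically after swapping roles.

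The first step is to exploit the interval-preservation structure of $\CUT$-admissibility. Since $\pih$ is a bijection sending each $[l_s, r_s]$ for $s \in \{i,j,k\}$ to the interval $[l'_s, r'_s]$, it satisfies $\pih(S \cap T) = \pih(S) \cap \pih(T)$ for any two of these intervals. In particular, $[l_j, r_j] \cap [l_k, r_k] \neq \emptyset$ implies $[l'_j, r'_j] \cap [l'_k, r'_k] \neq \emptyset$; and analogously for $\piv$ in the $y$-direction, where overlap of $[d_j, u_j]$ and $[d_k, u_k]$ with $[d_i, u_i]$ is automatic from $u_j, u_k \geq u_i \geq d_i$.

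The core of the proof is case (A). Here $[l'_j, r'_j]$ lies weakly to the left of $[l'_i, r'_i]$ while $[l'_k, r'_k]$ lies weakly to the right; combined with their overlap, this forces the chain $l'_i \leq l'_k \leq r'_j \leq r'_i$, so both $l'_k$ and $r'_j$ lie in $[l'_i, r'_i]$. Pulling back through $\pih^{-1}$ shows that $[l_j, r_j]$ and $[l_k, r_k]$ each meet $[l_i, r_i]$. Comparing the two sets $\pih([l_i, r_i] \cap [l_j, r_j] \cap [l_k, r_k])$ and $\pih([l_j, r_j] \cap [l_k, r_k])$—both of which equal $[l'_k, r'_j]$ as sets—and using the bijectivity of $\pih$, I conclude that their preimages coincide, which gives $\max(l_j, l_k) = l_i$. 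The analogous $y$-direction analysis via $\piv$ yields $d_j = d_k = d_i$ and $\min(u_j, u_k) = u_i$.

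To finish, I would unpack the non-crossing requirement in the original relations. A short computation from \cref{dfn:cut_cross} shows that if $\cut_i < \cut_s$ with $\cut_s \neq \cut_i$, then $(l_s, d_s) \neq (l_i, d_i)$; that is, $\cut_s$ cannot share its bottom-left corner with $\cut_i$. But the derived identities $d_j = d_k = d_i$ and $\max(l_j, l_k) = l_i$ force at least one of $l_j, l_k$ to equal $l_i$, so the corresponding cut shares its bottom-left corner with $\cut_i$, contradicting either $\cut_i < \cut_j$ or $\cut_i < \cut_k$. Case (B) is handled by an analogous interval argument exploiting the overlap of $\cut'_j$ and $\cut'_k$; I expect this to be the main obstacle, as it requires ruling out several partial orientations of $[l'_k, r'_k]$ and $[d'_k, u'_k]$ relative to $[l'_i, r'_i]$ and $[d'_i, u'_i]$ using the overlap with $\cut'_j$ together with size preservation under $\pih, \piv$.
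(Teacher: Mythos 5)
Your interval-bookkeeping idea is a genuinely different route from the paper's proof, which splits on whether $\cut_j$ and $\cut_k$ cross and, in the non-crossing case, passes to the ``mixed'' cuts $(l_j,d_j,u_k,r_k)$ and $(l_k,d_k,u_j,r_j)$, which cross and are both $> \cut_i$, thereby reducing to the crossing case. However, as written your case (A) has a real gap.

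The assertion that ``the analogous $y$-direction analysis via $\piv$ yields $d_j = d_k = d_i$'' is false. Running the same intersection computation in the $y$-direction shows that $[d_j,u_j]\cap[d_k,u_k]\subseteq[d_i,u_i]$. Since $\cut_j,\cut_k$ are weakly up-left of $\cut_i$ you already know $d_i\leq d_j,d_k$ and $u_i\leq u_j,u_k$, so this containment only forces $\min(u_j,u_k)\leq u_i$, hence $\min(u_j,u_k)=u_i$; it says nothing about $d_j,d_k$, because $\max(d_i,d_j,d_k)=\max(d_j,d_k)$ automatically. With only $\max(l_j,l_k)=l_i$ and $\min(u_j,u_k)=u_i$ in hand, no cut is forced to share its bottom-left corner with $\cut_i$, and your final step does not go through. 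A concrete configuration such as $\cut_i=(3,1,3,5)$, $\cut_j=(3,2,3,4)$, $\cut_k=(2,3,3,3)$ satisfies both derived equalities without any shared corner.

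To make this route work you would need two further ingredients. First, a parallel ``union'' argument: the image of $[l_j,r_j]\cup[l_k,r_k]$ contains the image of $[l_i,r_i]$, giving $\max(r_j,r_k)=r_i$, and the $y$-analogue gives $\min(d_j,d_k)=d_i$. Second, and more importantly, even the resulting four equalities do not immediately produce a contradiction. You must additionally invoke the fact that the bottom-left corners of $\SKD$-cuts all lie on the up-left path $P$ (and top-right corners on $Q$), so no two such corners can be strictly northeast--southwest of one another. It is this geometric constraint that kills the surviving sub-case (say $l_j=l_i$, $d_j>d_i$, $l_k<l_i$, $d_k=d_i$, where $\cut_j$ and $\cut_k$ would have strictly NE--SW bottom-left corners). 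The same constraint is what makes your case (B) vacuous: two distinct, non-crossing $\SKDp$-cuts are always comparable in $\PCP$ precisely because their corners sit on the staircase paths $P'$ and $Q'$. You never invoke this structure, and without it the argument cannot close; with it, the approach becomes considerably longer and more case-heavy than the paper's two-line reduction via mixed cuts.
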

\begin{remark}\label{rmk:rigid_cross}
Recall that if $\cut_j$ crosses $\cut_k$ then they have overlapping rectangles, so the above lemma applies in this important special case.
\end{remark}
\begin{proof}
Suppose first that $\cut_j$ crosses $\cut_k$. Then $\cut'_j$ and $\cut'_k$ cross but none of them crosses $\cut'_i$, which immediately implies the result. Suppose now that $\cut_j$ does not cross $\cut_k$. Since they have overlapping rectangles,  the $\SKD$-cuts $\cut_{j,k}:=(l_j,d_j,u_k,r_k)$ and $\cut_{k,j}:=(l_k,d_k,u_j,r_j)$ cross and satisfy $\cut_i<\cut_{j,k}$ and $\cut_i<\cut_{k,j}$. The overlapping rectangles condition also implies that both the intersection and the union of $[l_j,r_j]$ and $[l_k,r_k]$ is a single interval that is sent by $\pih$ to some other interval. This shows that $\PHI$ sends $\cut_{j,k},\cut_{k,j}$ to $\SKDp$-cuts $\cut'_{j,k}:=(l'_j,d'_j,u'_k,r'_k)$ and $\cut'_{k,j}:=(l'_k,d'_k,u'_j,r'_j)$ that cross each other but do not cross $\cut'_i$. Since each of them crosses both $\cut'_j$ and $\cut'_k$, the result follows.
\end{proof}

Our next goal is to describe more \lkd pairs of relations. We will repeatedly make use of the \emph{transitivity} property of \lkd relations: if $\cut_{i_1}<\cut_{j_1}$ and $\cut_{i_2}<\cut_{j_2}$ are \lkd and $\cut_{i_2}<\cut_{j_2}$ and $\cut_{i_3}<\cut_{j_3}$ are \lkd then $\cut_{i_1}<\cut_{j_1}$ and $\cut_{i_3}<\cut_{j_3}$ are \lkd.

\subsection{Rigid relations and indecomposable components}

As it is apparent from local and double \Hd- and \Vd-flips (\cref{lemma:local_H_flip,lemma:double_H_flip}), one needs to take special care of relations between $\SKD$-cuts whose horizontal or vertical projections coincide.
\begin{definition}
We say that a relation $\cut_i<\cut_j$ in $\PC$ is \emph{rigid} if $[l_i,r_i]\neq [l_j,r_j]$ \emph{and} $[d_i,u_i]\neq[d_j,u_j]$. In this case, we write $\cut_i\ler \cut_j$.
\end{definition}
\noindent One immediate property of rigid relations is that having $\cut_i\leq\cut_j\ler\cut_k$ or $\cut_i\ler\cut_j\leq\cut_k$ implies $\cut_i\ler \cut_k$. The following lemma is a special case of \cref{prop:rigid_indec} below.

\begin{lemma}\label{lemma:rigid_trans}
Suppose that $i,j,k\in [1,m]$ satisfy $\cut_i\leq\cut_j\ler\cut_k$. Then the rigid relations $\cut_i\ler\cut_k$ and $\cut_j\ler\cut_k$ are \lkd.
\end{lemma}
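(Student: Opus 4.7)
First, the case $\cut_i=\cut_j$ is trivial (both relations coincide), so I will assume $\cut_i<\cut_j$; the rigidity of $\cut_i\ler\cut_k$ then follows from the observation immediately preceding the lemma statement. My plan is to interpose an auxiliary rigid cut and propagate the linkage through it via \cref{lemma:rigid_overlap} and its symmetric (common-upper-endpoint) variant. The natural candidate is
\[\cut^*:=(l_j,\,d_i,\,u_j,\,r_i),\]
which is a valid rectangle because $l_j\leq l_i\leq r_i$ and $d_i\leq d_j\leq u_j$ (both from $\cut_i\leq \cut_j$). Direct checking of the four defining inequalities yields $\cut_i\leq\cut^*\leq\cut_j$ in $\PC$ and $\cut^*\leq\cut_k$. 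The relation $\cut^*\ler\cut_k$ is moreover rigid: should $[l_j,r_i]=[l_k,r_k]$ hold, one would get $l_j=l_k$ and $r_i=r_k$, forcing $r_j=r_k$ via $r_k\leq r_j\leq r_i=r_k$, hence $[l_j,r_j]=[l_k,r_k]$, contradicting the rigidity of $\cut_j\ler\cut_k$; the vertical case is analogous.

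The next step will be to promote $\cut^*$ to a cut compatible with $\PHI$. Setting $\widetilde\CUT:=(\CUT,\cut^*)$, compatibility requires $\pih([l_j,r_i])$ and $\piv([d_i,u_j])$ to be intervals. When $[l_j,r_j]$ and $[l_i,r_i]$ overlap (or touch), the identity $[l_j,r_i]=[l_j,r_j]\cup[l_i,r_i]$, combined with $\pih([l_j,r_j])=[l'_j,r'_j]$ and $\pih([l_i,r_i])=[l'_i,r'_i]$, forces $\pih([l_j,r_i])=[l'_j,r'_j]\cup[l'_i,r'_i]$ to itself be an interval, since a bijection preserves the overlap pattern among intervals (the image of an interval intersection is the intersection of the images). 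When the source intervals fail to overlap, I propagate admissibility through a chain of cuts in $\CUT$ provided by the connectedness of $\Gov(\CUT)$; the vertical direction is handled symmetrically.

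Once $\cut^*$ has been admitted, its rectangle contains both $\cut_i$ and $\cut_j$ as subrectangles, so $\{\cut^*,\cut_i\}$ and $\{\cut^*,\cut_j\}$ each have overlapping rectangles. I then apply the common-upper-endpoint version of \cref{lemma:rigid_overlap}---proved by the same argument as \cref{lemma:rigid_overlap} itself with the roles of the common base and common apex swapped, using the witness cuts $(l_i,d_i,u_j,r_j)$ and $(l_j,d_j,u_i,r_i)$ that bridge $\cut_i$ and $\cut_j$ beneath $\cut_k$---to the triples $(\cut_j,\cut^*,\cut_k)$ and $(\cut_i,\cut^*,\cut_k)$. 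This yields that $\cut_j\ler\cut_k$ and $\cut_i\ler\cut_k$ are each \lkd to $\cut^*\ler\cut_k$, and transitivity of the \lkd relation completes the argument.

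The main obstacle will be the admissibility extension to $\cut^*$ in the case when $[l_i,r_i]$ and $[l_j,r_j]$ are disjoint; this is precisely where the connectedness hypothesis on $\Gov(\CUT)$ is indispensable, and indeed \cref{fig:GOV}~(right) exhibits the failure of the lemma in the disconnected case.
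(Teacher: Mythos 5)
Your approach takes a genuinely different route from the paper's, but it has a fatal gap at the very first step: the rectangle $\cut^*:=(l_j,d_i,u_j,r_i)$ is, in general, not a valid $\SKD$-cut for \emph{any} skew domain $\SKD$ in which both $\cut_i$ and $\cut_j$ are cuts. Recall that a cut's lower-left cell $(l,d)$ must be a convex corner of the lower staircase $\Ppath$, i.e.\ $(l,d)\in\SKDZ$ but $(l-1,d-1)\notin\SKDZ$. Two \emph{distinct} convex corners $(l_j,d_j)$ and $(l_i,d_i)$ of the same up-left staircase, with $\cut_i\leq\cut_j$, must satisfy $l_j<l_i$ and $d_j>d_i$ \emph{strictly}. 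But then the cell $(l_j,d_i)$ lies strictly below-left of the portion of $\Ppath$ between those two corners, hence outside $\SKDZ$ — and this is forced for \emph{every} admissible domain, so it cannot be repaired by invoking \cref{cor:domain_indep}. The same obstruction occurs at $(r_i,u_j)$ relative to $\Qpath$. Consequently $\cut^*$ cannot be adjoined to $\CUT$ (except in the degenerate cases where $\cut_i,\cut_j$ share a corner), and the common-upper-endpoint variant of \cref{lemma:rigid_overlap} that you want to apply has nothing to act on.

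The contrast with the paper's auxiliary cuts is instructive. In the proof of \cref{lemma:rigid_overlap} the auxiliary cuts are $\cut_{j,k}=(l_j,d_j,u_k,r_k)$ and $\cut_{k,j}=(l_k,d_k,u_j,r_j)$: each reuses an \emph{entire} bottom-left corner from one cut and an \emph{entire} top-right corner from another, so both corner cells stay on the staircases $\Ppath$ and $\Qpath$. Your $\cut^*$ mixes the $l$-coordinate of $\cut_j$ with the $d$-coordinate of $\cut_i$, which is exactly what pushes the corner off the staircase. For \cref{lemma:rigid_trans} itself, the paper does not introduce a middle cut at all: it argues directly from the observation~\eqref{eq:observation_supp_cut}, splitting into the cases $\lr i\cap\lr j\neq\emptyset$, $\du i\cap\du j\neq\emptyset$, and otherwise inducting on graph distance in $\Gov(\CUT)$ and applying \cref{lemma:rigid_overlap} only to pairs of cuts that actually lie in $\CUT$.

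Two further soft spots would matter even if the cut issue were fixed. First, when $[l_i,r_i]$ and $[l_j,r_j]$ touch but do not overlap, a bijection $\pih$ taking each to an interval need not keep the images adjacent, so $\pih([l_j,r_i])$ need not be an interval; your bijection-preserves-overlap argument only controls intersection cardinalities, not adjacency. Second, the ``propagate admissibility through a chain'' step in the doubly-disjoint case is unjustified: connectedness of $\Gov(\CUT)$ does not guarantee that the horizontal projections of the cuts along the chain cover the gap between $[l_j,r_j]$ and $[l_i,r_i]$, since consecutive cuts in the chain may overlap only in the vertical direction.
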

\begin{proof}
We will use the following observation: 
\begin{equation}\label{eq:observation_supp_cut}
\parbox{0.85\textwidth}{
if $\cut'_s,\cut'_t$ do not cross and $a\in\lrp s\setminus\lrp t$, then $\cut'_s<\cut'_t$ (resp., $\cut'_s>\cut'_t$) \\
if and only if $a$ is larger (resp., smaller) than all elements of $\lrp t$.}
\end{equation}
\noindent  Our assumptions on $i,j,k$ imply that $\cut'_k$ does not cross  $\cut'_i$ and $\cut'_j$. 

Let us assume first that $\lr i\cap\lr j\neq \emptyset$, which in view of $\cut_i\leq \cut_j$ is equivalent to $l_i\leq r_j$. We consider three cases: $\lr i\cap\lr j\not\subset\lr k$, $\lr k\not\subset\lr i\cup\lr j$ and $\lr i\cap \lr j\subset\lr k\subset\lr i\cup\lr j$. In the first case, let  $a\in \lr i\cap \lr j$ be such that $a\notin \lr k$. Then $\pih(a)$ is either strictly to the left or strictly to the right of $\pih(\lr k)=\lrp k$, therefore by~\eqref{eq:observation_supp_cut}, we have either $\cut'_i,\cut'_j>\cut'_k$ or $\cut'_i,\cut'_j<\cut'_k$, respectively. Thus the relations $\cut_i\ler\cut_k$ and $\cut_j\ler\cut_k$ are \lkd. Similarly, in the second case, let $b\notin \lr i\cup \lr j$ be such that $b\in \lr k$. Because the intersection $\lr i\cap\lr j$ is nonempty, the image of the union $\lr i\cup \lr j$ under $\pih$ is a single interval $[l',r']=\lrp i\cup \lrp j$. Then $\pih(b)\notin [l',r']$ is either to the left or to the right of this interval, so we again find that the relations $\cut_i\ler\cut_k$ and $\cut_j\ler\cut_k$ are \lkd. In the third case, we have
\begin{equation*}
  \left( \lr i\cap \lr j\right)\subset\lr k\subset \left(\lr i\cup \lr j\right) \quad\Longleftrightarrow\quad [l_i,r_j]\subset\lr k\subset[l_j,r_i].
\end{equation*}
 This yields $l_j\leq l_k\leq l_i\leq r_j\leq r_k\leq r_i$. On the other hand, $\cut_j<\cut_k$ implies $l_k\leq l_j$ and $r_k\leq r_j$, so we get $l_j=l_k$ and $r_j=r_k$. This leads to a contradiction since $\cut_j\ler \cut_k$ requires $\lr j\neq \lr k$. We have completed the proof in the case $\lr i\cap\lr j\neq \emptyset$.

A similar argument finishes the proof in the case $\du i\cap \du j\neq\emptyset$. Suppose now that $\lr i\cap\lr j=\du i\cap \du j=\emptyset$. In this case, we finish the proof by induction on the graph distance $\dist(i,j)$ between $i$ and $j$ in $\Gov(\CUT)$, which we have assumed to be connected. The conditions $\lr i\cap\lr j=\du i\cap \du j=\emptyset$ imply that $\dist(i,j)\geq 2$. Consider the shortest path from $i$ to $j$ in $\Gov(\CUT)$, and let $i'\neq i,j$ be the first vertex after $i$ on this path. By definition, $\cut_i$ and $\cut_{i'}$ have overlapping rectangles. On the other hand, it is easy to see that $\cut_{i'}$ does not cross $\cut_j$ because of the condition $\lr i\cap\lr j=\du i\cap \du j=\emptyset$. Thus $\cut_{i'}<\cut_j\ler \cut_k$ and $\dist(i',j)<\dist(i,j)$, so we apply the induction hypothesis to conclude that the relations $\cut_{i'}\ler\cut_k$ and $\cut_j\ler\cut_k$ are \lkd. Since $\cut_i$ and $\cut_{i'}$ have overlapping rectangles, \cref{lemma:rigid_overlap} shows that the relations $\cut_{i'}\ler\cut_k$ and $\cut_i\ler\cut_k$ are \lkd, so by transitivity, $\cut_i\ler\cut_k$ and $\cut_j\ler\cut_k$ are \lkd.
\end{proof}
\begin{example}
\Cref{lemma:rigid_trans} does not hold without our running assumption that the graph $\Gov(\CUT)$ is connected, see \cref{fig:GOV}(right).
\end{example}

The following notion is motivated by global \Hd- and \Vd-flips described in \cref{lemma:global_H_flip}.
\begin{definition}\label{dfn:indec}
We say that $\CUT$ is \emph{indecomposable} if there does not exist a partition $[1,m]=I\sqcup J$ into nonempty subsets such that $\cut_i$ crosses $\cut_j$ for all $i\in I$ and $j\in J$.
\end{definition}

\begin{proposition}\label{prop:rigid_indec}
Suppose that $\CUT$ is indecomposable. Then any two rigid relations $\cut_{i_1}\ler\cut_{j_1}$ and $\cut_{i_2}\ler\cut_{j_2}$ are \lkd.
\end{proposition}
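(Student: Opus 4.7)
My strategy is to show that the rigid relations in $\PC$ form a single equivalence class under the transitive relation $R \sim R'$ meaning ``$R$ and $R'$ are $\Phi$-linked''. To do so, I would first establish two symmetric companions to \cref{lemma:rigid_overlap} and \cref{lemma:rigid_trans}: a symmetric version of \cref{lemma:rigid_trans} stating that $\cut_i \ler \cut_j \leq \cut_k$ implies $\cut_i \ler \cut_j$ and $\cut_i \ler \cut_k$ are $\Phi$-linked (proved by mirroring the argument of \cref{lemma:rigid_trans} with the roles of $\lr\cdot$ and $\du\cdot$ swapped, or equivalently by pre-composing $\Phi$ with the color-position symmetry of \cref{lemma:CPS}), and a symmetric version of \cref{lemma:rigid_overlap} for two rigid relations sharing a top cut whose bottom cuts have overlapping rectangles. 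Together, these four results provide four \emph{elementary moves}: \emph{slide} either endpoint of a rigid relation along a chain of comparable cuts, and \emph{jump} between endpoints whose rectangles overlap.

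The second input is the indecomposability hypothesis. Observe that indecomposability of $\CUT$ is equivalent to connectedness of the non-crossing graph on $[1,m]$ whose edges are pairs of non-crossing cuts, since any disconnected component would furnish a decomposition of $[1,m]$ into two parts with complete crossing structure between them. Moreover, since $\Phi$-admissibility preserves interval containment in both the horizontal and vertical directions, it preserves the non-crossing relation on cuts, so this graph is the same for $\CUT$ and $\CUT'$. Combined with the running connectedness of $\Gov(\CUT)$, this ensures that any two cuts are joined by a chain of consecutive cuts that are either comparable or have overlapping rectangles. Given two rigid relations $R_1 = (\cut_{i_1} \ler \cut_{j_1})$ and $R_2 = (\cut_{i_2} \ler \cut_{j_2})$, I would apply elementary moves along such a chain to transfer $\cut_{j_1}$ into $\cut_{j_2}$, reducing to the case in which $R_1$ and $R_2$ share their top cut, after which a parallel argument on the bottom endpoint completes the linkage.

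The hard part will be preserving rigidity of intermediate relations. When extending the top of $\cut_i \ler \cut_k$ to $\cut_i \ler \cut_{k'}$ via $\cut_k \leq \cut_{k'}$, the target relation is rigid only if both $\lr{i} \neq \lr{k'}$ and $\du{i} \neq \du{k'}$, and one or both projections may degenerate somewhere along the chain. Handling this will require a careful case analysis, typically rerouting through an auxiliary cut whose horizontal or vertical projection differs from that of $\cut_i$ in the degenerate direction, and occasionally breaking a single move into a two-step ``extend then contract'' sequence that avoids the degeneracy. The existence of such detours is precisely where the indecomposability hypothesis becomes indispensable: without it, rigid relations can genuinely split into multiple inequivalent $\sim$-classes, so any proof must rely on this assumption in an essential way.
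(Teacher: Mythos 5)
Your proposal is structurally sound in its easy cases, and you correctly identify both that indecomposability of $\CUT$ is equivalent to connectedness of the non-crossing graph and that one needs symmetric companions of \cref{lemma:rigid_trans,lemma:rigid_overlap}. When $\cut_{i_1}$ is comparable to $\cut_{i_2}$, your ``slide along a chain'' mechanism (applying \cref{lemma:rigid_trans} to $\cut_{i_1}\leq\cut_{i_2}\ler\cut_{j_2}$) is exactly what the paper does, and the $\cut_{j_1},\cut_{j_2}$ comparable case is handled with the symmetric version. So far, so good.

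The gap is in your hard case. You propose to ``apply elementary moves along such a chain to transfer $\cut_{j_1}$ into $\cut_{j_2}$, reducing to the case in which $R_1$ and $R_2$ share their top cut.'' But after all the easy cases are dispatched, one is left precisely with the situation where each of $\cut_{i_1},\cut_{j_1}$ crosses each of $\cut_{i_2},\cut_{j_2}$. If you transfer the top endpoint of $R_1$ from $\cut_{j_1}$ to $\cut_{j_2}$, the terminal relation $\cut_{i_1}\ler\cut_{j_2}$ must exist, i.e.\ $\cut_{i_1}$ and $\cut_{j_2}$ must be comparable --- but they cross by hypothesis, so no such relation exists. The obstruction is not that rigidity ``degenerates along the chain''; the whole relation you are sliding toward is vacuous. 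Your suggested fix --- rerouting through auxiliary cuts or breaking into two-step moves --- would need to circumvent this, but in the fully crossing situation there is no comparable cut between the $\cut_i$'s and the $\cut_j$'s to route through.

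The paper handles this differently: it does not reduce to a shared endpoint. Instead it uses indecomposability to find a \emph{minimal} sequence $k_0,k_1,\dots,k_t,k_{t+1}$ with $\{k_0,k_1\}=\{i_1,j_1\}$, $\{k_t,k_{t+1}\}=\{i_2,j_2\}$, and $\cut_{k_s},\cut_{k_{s+1}}$ comparable for each $s$, then links the chain \emph{relation-by-relation}: the relation between $\cut_{k_s},\cut_{k_{s+1}}$ is $\Phi$-linked to that between $\cut_{k_{s+1}},\cut_{k_{s+2}}$. Minimality forces $\cut_{k_s}$ to cross $\cut_{k_{s+2}}$, which is exactly the overlapping-rectangles hypothesis of \cref{lemma:rigid_overlap} (via \cref{rmk:rigid_cross}). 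Crucially, \cref{lemma:rigid_overlap} and \cref{rmk:rigid_cross} do not require rigidity of the relations being linked, so your worry about preserving rigidity for the intermediate chain relations is mostly a non-issue. Rigidity is needed only at the two ends of the chain (to link $(\cut_{k_0},\cut_{k_1})$ to $(\cut_{k_1},\cut_{k_2})$ and symmetrically at the other end), and there the paper uses \cref{lemma:rigid_trans} together with the given rigid relation. This end-linking step is where the real technical care is required, and it is not captured by a ``transfer to shared endpoint'' picture.
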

\begin{proof}
Assume that $\cut_{i_1}\leq \cut_{i_2}$. By \cref{lemma:rigid_trans}, the relations $\cut_{i_1}\ler\cut_{j_2}$ and $\cut_{i_2}\ler\cut_{j_2}$ are \lkd. If $\cut_{j_1}$ and $\cut_{j_2}$ cross then by \cref{rmk:rigid_cross}, the relations $\cut_{i_1}<\cut_{j_2}$ and $\cut_{i_1}<\cut_{j_1}$ are \lkd, and by transitivity this implies the result. If $\cut_{j_1}$ and $\cut_{j_2}$ do not cross then since the relations $\cut_{i_1}\ler\cut_{j_2}$ and $\cut_{i_1}\ler\cut_{j_1}$ are both rigid, \cref{lemma:rigid_trans} shows that they are \lkd, so we are done by transitivity. We have shown the result in the case $\cut_{i_1}\leq \cut_{i_2}$. 

The cases $\cut_{i_2}\leq \cut_{i_1}$, $\cut_{j_1}\leq \cut_{j_2}$, and $\cut_{j_2}\leq \cut_{j_1}$ are completely similar. Thus we may assume that $\cut_{i_1}$ crosses $\cut_{i_2}$ and $\cut_{j_1}$ crosses $\cut_{j_2}$.

Since $\cut_{i_1}$ crosses $\cut_{i_2}$ and $\cut_{i_1}<\cut_{j_1}$, we see that either $\cut_{i_2}<\cut_{j_1}$ or $\cut_{i_2}$ crosses $\cut_{j_1}$. If $\cut_{i_2}<\cut_{j_1}$ then by \cref{rmk:rigid_cross}, the relations $\cut_{i_2}<\cut_{j_1}$ and $\cut_{i_1}\ler \cut_{j_1}$ are \lkd, and by the same remark, the relations $\cut_{i_2}<\cut_{j_1}$ and $\cut_{i_2}\ler \cut_{j_2}$ are \lkd, so the result follows. The case $\cut_{i_1}<\cut_{j_2}$ is handled similarly, thus we may assume that each of $\cut_{i_1},\cut_{j_1}$ crosses each of $\cut_{i_2},\cut_{j_2}$.

Since $\CUT$ is indecomposable, there exists a sequence $k_0,k_1,\dots,k_t,k_{t+1}$ satisfying $\{k_0,k_1\}=\{i_1,j_1\}$, $\{k_t,k_{t+1}\}=\{i_2,j_2\}$, and such that for all $s\in[0,t]$ we have either $\cut_{k_s}<\cut_{k_{s+1}}$ or $\cut_{k_s}>\cut_{k_{s+1}}$. Out of all such sequences, choose the one with minimal possible $t$. Because of the above assumptions, we must have $t\geq 3$. 

Since $t$ is minimal possible, we find that for any $s\in[1,t-2]$, $\cut_{k_s}$ must cross $\cut_{k_{s+2}}$. 
 Thus by \cref{rmk:rigid_cross}, the relations between $\cut_{k_s},\cut_{k_{s+1}}$ and $\cut_{k_{s+1}},\cut_{k_{s+2}}$ are \lkd for all $s\in[1,t-2]$. By transitivity, the relations between $\cut_{k_1},\cut_{k_2}$ and $\cut_{k_{t-1}},\cut_{k_t}$ are \lkd. 

Our next goal is to show that the relations between $\cut_{k_0},\cut_{k_1}$ and $\cut_{k_1},\cut_{k_2}$ are \lkd. Note that $\cut_{k_3}$ must cross both $\cut_{i_1}$ and $\cut_{j_1}$, otherwise we could remove $k_2$ from the sequence (after possibly swapping $k_0$ and $k_1$) thus decreasing $t$. Because $\cut_{k_2}$ does not cross $\cut_{k_3}$, we cannot have $\cut_{i_1}\leq \cut_{k_2}\leq \cut_{j_1}$. Recall also that $\cut_{k_2}$ does not cross $\cut_{k_1}$. Thus either $\cut_{k_2}$ crosses $\cut_{k_0}$ (in which case the relations between $\cut_{k_0},\cut_{k_1}$ and $\cut_{k_1},\cut_{k_2}$ are \lkd by \cref{rmk:rigid_cross}), or $\cut_{k_2}<\cut_{i_1}\ler\cut_{j_1}$, or $\cut_{i_1}\ler\cut_{j_1}<\cut_{k_2}$. In the latter two cases, after possibly swapping $k_0$ and $k_1$, we see that the relations between $\cut_{k_0},\cut_{k_1}$ and $\cut_{k_1},\cut_{k_2}$ are \lkd by \cref{lemma:rigid_trans}. Similarly, we show that the relations between $\cut_{k_{t-1}},\cut_{k_t}$ and $\cut_{k_t},\cut_{k_{t+1}}$ are \lkd. The result follows by transitivity.
\end{proof}

\subsection{Finishing the proof}
So far our main focus has been on structural properties of $\CUT$-admissible transformations. Next, we describe how each such transformation can be represented as a composition of strongly $\CUT$-admissible transformations constructed in \cref{sec:examples}.

\begin{definition}
Two $\CUT$-admissible transformations are called \emph{flip-equivalent} if they can be obtained from each other by composing with the transformations described in Lemmas~\ref{lemma:CPS}--\ref{lemma:double_H_flip}.
\end{definition}

Recall that we have fixed a particular $\CUT$-admissible transformation $\PHI$.
\begin{lemma}\label{lemma:rigid_preserved}
$\PHI$ is flip-equivalent to a transformation that preserves all rigid relations. More specifically, after possibly applying the color-position symmetry (\cref{lemma:CPS}) and several global \Hd-flips (\cref{lemma:global_H_flip}), we may assume that for all $\cut_i\ler\cut_j$ we have $\cut'_i<\cut'_j$.
\end{lemma}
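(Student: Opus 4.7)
The plan is to combine \cref{prop:rigid_indec} with the indecomposable decomposition of $\CUT$. First, I would partition $[1,m] = A_1 \sqcup \cdots \sqcup A_s$ into minimal blocks such that each sub-tuple $\CUT_{A_t} := (\cut_i)_{i \in A_t}$ is indecomposable in the sense of \cref{dfn:indec} while every cut in $A_t$ crosses every cut in $A_{t'}$ for $t \neq t'$. Such a partition exists and is unique because the relation ``belonging to a common indecomposable sub-tuple'' is transitive. Any rigid relation $\cut_i \ler \cut_j$ must lie within a single block, since rigidly related cuts do not cross. Hence, by \cref{prop:rigid_indec} applied to each $\CUT_{A_t}$, the rigid relations within $A_t$ are all $\PHI$-linked, so there is a well-defined sign $\epsilon_t \in \{0,1\}$, equal to $0$ if $\PHI$ preserves every rigid relation in $A_t$ and equal to $1$ otherwise. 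My goal is to modify $\PHI$ using the available flips so that $\epsilon_t = 0$ for every $t$.

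Next, I would observe that the color-position symmetry $(\rv_{1,\Minf}, \rv_{1,\Ninf})$ of \cref{lemma:CPS} reverses every order relation in $\PC$ and hence toggles every $\epsilon_t$ simultaneously. So after possibly composing $\PHI$ with it, I may assume $\epsilon_1 = 0$. To finish, I need to show that for each $t \geq 2$ with $\epsilon_t = 1$ there is a global \Hd-flip (or a composition of such) that toggles $\epsilon_t$ while leaving $\epsilon_{t'}$ unchanged for $t' \neq t$; composing such flips over the blocks with $\epsilon_t = 1$ will then yield the desired transformation.

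The expected main obstacle is the construction of such ``selective'' flips. Here one should exploit the decomposable structure between blocks: since every cut in $A_t$ crosses every cut outside $A_t$, the horizontal and vertical spans of cuts in $A_t$ are tightly constrained relative to those of cuts in other blocks. I expect that these constraints allow one to choose a $\SKD$-cut $\cut^{(t)} = (l, d, u, r)$ that crosses every cut in $\CUT$ and whose horizontal span $[l,r]$ lies strictly inside the horizontal spans of cuts in every $A_{t'}$ with $t' \neq t$, possibly after enlarging $\SKD$ via \cref{cor:domain_indep}. The associated global \Hd-flip should then reverse the horizontal and vertical nestings inside $A_t$, toggling $\epsilon_t$, while acting on the cuts in each other block $A_{t'}$ only by a uniform horizontal shift combined with a vertical reversal that preserves all rigid orderings internal to $A_{t'}$. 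Verifying these assertions through an explicit coordinate computation using the formula $(\rv_{1,\Minf} \circ \rv_{l,r}, \rv_{d,u})$ of \cref{lemma:global_H_flip}, together with handling the boundary cases where a block consists of a single cut or lies wholly above or below the strip $[d,u]$, constitutes the technical core of the argument.
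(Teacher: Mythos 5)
Your high-level plan—decompose $[1,m]$ into indecomposable blocks ordered by slope, use \cref{prop:rigid_indec} to attach a single toggle bit $\epsilon_t$ to each block, and then correct each bad $\epsilon_t$ using the available flips—is exactly the paper's strategy. The gap is in the construction of the single ``selective'' flip that you hope toggles $\epsilon_t$ for one block $A_t$ while leaving the other $\epsilon_{t'}$ unchanged. No single global \Hd-flip can do this when $A_t$ is in the interior of the slope order. The reason is geometric: a global \Hd-flip at $\cut=(l,d,u,r)$ acts on a cut $\cut_a$ crossing $\cut$ as a full $180^\circ$ rotation when $\cut_a$ is flatter than $\cut$ (that is, $[l,r]\subset\lr a$ and $\du a\subset[d,u]$), and as a pure horizontal shift when $\cut_a$ is steeper (that is, $\lr a\subset[l,r]$ and $[d,u]\subset\du a$). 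Since the blocks are nested by slope, any cut $\cut$ crossing all of $\CUT$ sits between some pair $A_p,A_{p+1}$ in the slope order, and the resulting flip rotates all of $A_1,\dots,A_p$ (toggling all of $\epsilon_1,\dots,\epsilon_p$) and shifts all of $A_{p+1},\dots,A_s$ (leaving $\epsilon_{p+1},\dots,\epsilon_s$ unchanged). So one global \Hd-flip only ever toggles a prefix of the $\epsilon$'s; it cannot isolate $\epsilon_t$. Your stated condition on $\cut^{(t)}$ also contradicts itself: requiring $[l,r]$ strictly inside $\lr(A_{t'})$ for $t'\neq t$ makes those blocks flat, hence rotated (not shifted), and your claimed action ``uniform horizontal shift combined with a vertical reversal'' is neither of the two possibilities above.

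The repair is the paper's observation that prefixes suffice: to toggle only $\epsilon_p$, perform a global \Hd-flip at a cut between $A_p$ and $A_{p+1}$ (toggling $\epsilon_1,\dots,\epsilon_p$), then a second global \Hd-flip at a cut between $A_{p-1}$ and $A_p$ (toggling $\epsilon_1,\dots,\epsilon_{p-1}$ back). The slope nesting of the blocks is preserved after the first flip (the first $p$ blocks rotate together and the rest shift together), so the second cut exists. Composing such pairs over all $p$ with $\epsilon_p=1$, together with one application of \cref{lemma:CPS} if needed, yields the statement. The rest of your argument (rigid relations lie inside a single block, \cref{prop:rigid_indec} gives well-defined $\epsilon_t$, CPS toggles all $\epsilon_t$ simultaneously) is sound.
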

\begin{proof}
Suppose that $\cut_i\ler\cut_j$ is such that $\cut'_i>\cut'_j$. If $\CUT$ is indecomposable then after applying the color-position symmetry (\cref{lemma:CPS}), we find $\cut'_i<\cut'_j$, and for any other rigid relation $\cut_a\ler\cut_b$, we get $\cut'_a<\cut'_b$ by \cref{prop:rigid_indec}. If $\CUT$ is not indecomposable then we can split it into indecomposable components: $[1,m]=I_1\sqcup I_2\sqcup \dots\sqcup I_s$. We assume that these  indecomposable components are ordered ``by slope'' so that for any $i,j\in[1,s]$, $i<j$, and any $a\in I_i$, $b\in I_j$, we have $\lr a\supset \lr b$ and $\du a\subset \du b$. It is then clear that for each $p\in[1,s]$, one choose a cut $\cut=(l,d,u,r)$ crossing all cuts in $\CUT$ in such a way that $\lr a\supset [l,r]\supset \lr b$ and $\du a\subset [d,u]\subset \du b$ for all $a\in[1,p]$ and $b\in[p+1,s]$. Applying a global \Hd-flip at $\cut$ will flip the first $p$ components while leaving the rest unchanged. Similarly, applying a second global \Hd-flip, we can flip the first $p-1$ components back. Thus each individual component $I_p$ can be flipped using a composition of two \Hd-flips.
\end{proof}

\begin{definition}
We say that $\PHI$ is \emph{orientation-preserving} if for any relation $\cut_i<\cut_j$, we have $\cut'_i<\cut'_j$.
\end{definition}

\begin{proposition}\label{prop:or_pres}
$\PHI$ is flip-equivalent to an orientation-preserving transformation.
\end{proposition}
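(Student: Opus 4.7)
The plan is to induct on the quantity
\[
N(\PHI) := \#\{(i,j) : \cut_i<\cut_j \text{ and } \cut'_i>\cut'_j\},
\]
starting from a transformation that preserves all rigid relations, as produced by \cref{lemma:rigid_preserved}. Under that assumption, any reversed pair $(i,j)$ is non-rigid and so satisfies either $\lr i=\lr j$ or $\du i=\du j$. The base case $N(\PHI)=0$ is the definition of orientation-preservation, so the goal of the inductive step is to exhibit a composition with a local or double \Hd- or \Vd-flip (\cref{lemma:local_H_flip,lemma:double_H_flip}) that preserves rigid relations and strictly decreases $N(\PHI)$.

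For the inductive step, pick a reversed pair $(i,j)$ with $\cut_i<\cut_j$ that is \emph{minimal}, in the sense that no $\cut_k$ with $\cut_i\leq\cut_k\leq\cut_j$ and $\cut_k\notin\{\cut_i,\cut_j\}$ is itself involved in a reversed pair with $\cut_i$ or $\cut_j$. After possibly exchanging the horizontal and vertical directions (which exchanges the roles of \Hd- and \Vd-flips throughout), we may assume $\lr i=\lr j=[l,r]$ and $\du i=[d',u']\subsetneq[d,u]=\du j$. The plan is to apply the double \Hd-flip of \cref{lemma:double_H_flip} with these data; in the notation of~\eqref{eq:double_H_I}--\eqref{eq:double_H_K} we then have $i\in I$ and $j\in K$, and the flip rotates the cuts in $J$ inside $[d,u]$ while shifting the cuts in $I$ upward by $u+d-u'-d'$. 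In particular, it swaps the relative vertical order of $\cut'_i$ and $\cut'_j$, eliminating the inversion $(i,j)$.

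The main obstacle is to verify the hypotheses of \cref{lemma:double_H_flip}, namely that (a) both $\ZP\times[d',u']$ and $\ZP\times[d,u]$ are horizontal $\CUT$-strips, and (b) some $l\leq r$ satisfies $[l,r]\subset[l_{i'},r_{i'}]$ for all $i'\in I$, $[l,r]=[l_{j'},r_{j'}]$ for all $j'\in J$, and $[l_{k'},r_{k'}]\subset[l,r]$ for all $k'\in K$. Any obstruction comes from a cut $\cut_k$ whose rectangle straddles one of the strips (i.e.\ $d<d_k\leq u<u_k$ or $d'<d_k\leq u'<u'_k$, and the symmetric cases) or whose horizontal projection is incompatible with $[l,r]$. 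The crux of the argument is to show that every such obstructing $\cut_k$ would itself have to be part of a reversed pair lying strictly inside the nesting between $\cut_i$ and $\cut_j$, contradicting the minimality of $(i,j)$. This implication uses the preservation of rigid relations (by inductive hypothesis) together with \cref{lemma:rigid_overlap} and \cref{prop:rigid_indec}, which propagate the preservation of rigid relations to the preservation of crossings and of nesting with respect to overlapping cuts, forcing $\cut'_k$ into a position that would produce such a smaller reversed pair. Granted the flip exists, it only modifies $\piv$ inside $[d,u]$ in a controlled manner that preserves all previously-preserved rigid relations, so the new transformation $\PHI'$ satisfies the hypotheses of \cref{lemma:rigid_preserved} with $N(\PHI')<N(\PHI)$, completing the induction.
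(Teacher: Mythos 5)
There is a genuine gap, and it is rooted in a misreading of the geometry. You claim that for a bad pair $(i,j)$ with $\cut_i<\cut_j$ and $\lr i=\lr j$ one may write $\du i=[d',u']\subsetneq[d,u]=\du j$. This is impossible: if $\lr i=\lr j$ then $\cut_i$ and $\cut_j$ cross exactly when $\du i$ and $\du j$ are nested (by \cref{dfn:cut_cross}, since $\lr i\subset\lr j$ and $\lr j\subset\lr i$ both hold trivially), and $\cut_i<\cut_j$ requires them \emph{not} to cross. Hence the correct configuration for a non-rigid $\cut_i<\cut_j$ with equal horizontal projections is the \emph{staggered} one, $d_i<d_j$ and $u_i<u_j$, so that $[d_i,u_i]$ and $[d_j,u_j]$ each sit strictly inside the larger interval $[d_i,u_j]$ without either containing the other. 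Consequently your placement of $i\in I$ and $j\in K$ in the double \Hd-flip is wrong: both indices should land in the set $J$ of cuts that get rotated by $180^\circ$, as the paper does (setting $d:=d_a$, $u:=u_b$ so that $\du a,\du b\subsetneq[d,u]$). The flip that swaps $\cut'_i,\cut'_j$ must rotate them, not shift one of them.

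Two further consequences follow. First, your induction quantity $N(\PHI)=\#\{\text{reversed pairs}\}$ is not monotone: the paper explicitly notes that the flip "may introduce new bad pairs $(a',b')$", only guaranteeing that the new ones have strictly smaller $u_{b'}-d_{a'}$. The paper therefore inducts on the \emph{size} of the largest bad pair with a fixed $[l,r]$ (after picking the pair \emph{maximizing} $u_b-d_a$), not on the count of bad pairs — and not via the "minimal reversed pair" selection you propose, which does not obviously control what happens after the flip. Second, the hard step you flag in passing — verifying the hypotheses of \cref{lemma:double_H_flip} — is exactly where the paper does real work: it must carefully build the strip $[d,u]$, show it is a horizontal $\CUT$-strip, and then construct $J$ via alternating $\{a,b\}$-paths and prove $J$ is disjoint from the set $\Ip$ of cuts in the strip with $\lr i\neq[l,r]$ (using rigid-relation preservation, \cref{lemma:rigid_overlap}, and \cref{prop:rigid_indec}). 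None of this is reproduced in your sketch, and it cannot be, because your geometric setup sends you toward the wrong flip. The high-level idea (apply Lemma \ref{lemma:rigid_preserved} first, then kill reversed pairs by flips) matches the paper, but the execution as written does not go through.
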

\begin{proof}
By \cref{lemma:rigid_preserved}, we may assume that for all $\cut_i\ler\cut_j$ we have $\cut'_i<\cut'_j$. Let us say that $(a,b)$ is a \emph{bad pair} if $\cut_a<\cut_b$ and $\cut'_a>\cut'_b$. For each bad pair $(a,b)$, we must have either $\lr a=\lr b$ or $\du a=\du b$, so assume that $l\leq r$ are such that $\lr a=\lr b=[l,r]$ for some bad pair $(a,b)$.

Let $\bplr$ denote the set of all bad pairs $(a,b)$ satisfying $\lr a=\lr b=[l,r]$. Choose $(a,b)\in\bplr$ to be the pair which maximizes the value $u_b-d_a$, and let $d:=d_a$, $u:=u_b$. We claim that $\ZP\times [d,u]$ is a horizontal $\CUT$-strip. Indeed, otherwise there would exist $i\in [1,m]$ such that either $d_i<d\leq u_i<u$ or $d<d_i\leq u<u_i$. Suppose for example that $d_i<d\leq u_i<u$. These conditions imply that $C_i<C_b$. Observe that $d\in [d_i,u_i]\setminus[d_b,u_b]$ and also $d\in[d_a,u_a]\setminus [d_b,u_b]$. Since $\cut'_a>\cut'_b$, \eqref{eq:observation_supp_cut} shows that $\cut'_i>\cut'_b$ so $(i,b)$ is a bad pair, and because $\du i\neq \du b$, we must have $\lr i=\lr b=[l,r]$. This implies $(i,b)\in\bplr$, and since we have $u_b-d_i>u_b-d_a$, we get a contradiction.  The case $d<d_i\leq u<u_i$ is handled similarly, thus we have shown that $\ZP\times [d,u]$ is a horizontal $\CUT$-strip.

Our goal is to apply either a local \Hd-flip or a double \Hd-flip inside $\ZP\times [d,u]$ so that $(a,b)$ would stop being a bad pair. We note that this operation may introduce new bad pairs $(a',b')$ but for each of them, the quantity $u_{b'}-d_{a'}$ will be strictly smaller than $u-d=u_b-d_a$. Indeed, our flip only changes the orientation of the pairs $(a',b')$ such that $[d_{a'},u_{b'}]\subset[d,u]$, and for any pair satisfying $\lr{a'}=\lr{b'}=[l,r]$, $d_{a'}=d$, $u_{b'}=u$, and $\cut_{a'}<\cut_{b'}$, we see that $\cut_{a'}$ crosses $\cut_a$, $\cut_{b'}$ crosses $\cut_b$, $\cut_{a'}<\cut_b$, and $\cut_a<\cut_{b'}$. By \cref{rmk:rigid_cross}, we get that $\cut'_{a'}>\cut'_{b'}$, so $(a',b')$ is already a bad pair and will stop being a bad pair after we apply the flip. We have shown that it is enough to apply either \cref{lemma:local_H_flip} or \cref{lemma:double_H_flip} inside $\ZP\times [d,u]$ so that, in the notation of~\eqref{eq:local_H_IJ}--\eqref{eq:double_H_I}, we would have $a,b\in J$.

\def\Jp{J'}
\def\Ip{I'}
 Following~\eqref{eq:local_H_IJ}, let 
\begin{equation*}
  \Jp:=\{j\in[1,m]\mid \du j\subsetneq [d,u]\},\quad K:=\{k\in[1,m]\mid [d,u]\subset\du k\}.
\end{equation*}
Since $d_a<d_b$ and $u_a<u_b$, we find that for all $k\in K$, $\cut_k$ crosses both $\cut_a$ and $\cut_b$ and therefore satisfies $\lr k\subset [l,r]$. Thus if $\lr j=[l,r]$ for all $j\in \Jp$ then we can apply a local \Hd-flip (\cref{lemma:local_H_flip}) and finish the proof. Otherwise, let $\Ip:=\{i\in \Jp\mid \lr i\neq [l,r]\}$. We need to apply a double \Hd-flip for a careful choice of $[d',u']\subsetneq[d,u]$. 

First, we would like to show that for all $i\in \Ip$, $\cut_i$ crosses both $\cut_a$ and $\cut_b$. To see that, recall that $\du i\subsetneq [d,u]$ and $\lr i\neq [l,r]$. Thus either $d\notin \du i$ or $u\notin \du i$ (or both), so assume $d\notin \du i$. Then we cannot have $\cut_i<\cut_a$. If $\cut_i$ crosses both $\cut_a$ and $\cut_b$ then we are done. If $\cut_i$ crosses  $\cut_a$ but not $\cut_b$ then $\du i\neq \du b$ so we must have $\cut_i\ler \cut_b$. This contradicts \cref{rmk:rigid_cross} since $\cut'_i<\cut'_b$ but $\cut'_a>\cut'_b$. Thus we may assume that $\cut_i$ does not cross $\cut_a$, and since $d\notin \du i$, we get $\cut_a\ler \cut_i$. We cannot have $\cut_a<\cut_i<\cut_b$ because then we would have $\lr i=[l,r]$. Thus $\cut_i\not<\cut_b$, so there exists $c\in(\du b\cap \du i)\setminus \du a$, in which case we arrive at a contradiction via~\eqref{eq:observation_supp_cut}. The case $u\notin \du i$ is handled similarly. We have shown that for all $i\in\Ip$, $\cut_i$ crosses both $\cut_a$ and $\cut_b$.

We now proceed as in the proof of \cref{prop:rigid_indec}. An \emph{alternating \ab i-path} is a sequence $k_0,k_1,\dots,k_t$ of elements of $\Jp$ such that $\{k_0,k_1\}=\{a,b\}$, and for all $s\in[0,t-1]$, we have either $\cut_{k_s}<\cut_{k_{s+1}}$ or $\cut_{k_s}>\cut_{k_{s+1}}$. We say that such a path \emph{ends at $k_t$}. We let 
\begin{equation}\label{eq:J_alt_path}
  J:=\{j\in \Jp\mid\text{there exists an alternating \ab j-path ending at $j$}\}.
\end{equation}
\def\jp{j}
\def\ip{i}
Clearly this set contains both $a$ and $b$. We claim that $J\cap \Ip=\emptyset$, in other words, that for all $\jp\in J$, we have $\lr \jp=[l,r]$. Otherwise, consider an alternating \ab \jp-path of smallest possible length $t$ such that $k_t=\jp\in J\cap \Ip$. We showed above that $\cut_\jp$ crosses both $\cut_a$ and $\cut_b$, so $t\geq 3$. Since $t$ is minimal possible, we see that $\cutk_{s}$ crosses $\cutk_{s+2}$ for all $s\in[1,t-2]$, and in addition $\cutk_3$ crosses both $\cutk_0$ and $\cutk_1$. By construction, $\cutk_2$ crosses neither $\cutk_1$ nor $\cutk_3$. If it also does not cross $\cutk_0$ then we must have  $\cut_a<\cutk_2<\cut_b$ (because $\du{k_2}\subset[d,u]$ and $\lr{k_2}=[l,r]$). This contradicts the fact that $\cutk_3$ crosses both $\cutk_0,\cutk_1$ but not $\cutk_2$. Thus $\cutk_{s}$ crosses $\cutk_{s+2}$ for all $s\in[0,t-2]$. In particular, for all $s\in[0,t-2]$, the relations between $\cutk_s,\cutk_{s+1}$ and $\cutk_{s+1},\cutk_{s+2}$ are \lkd. Since $t$ is minimal and $k_t\in \Ip$, we find $\lr{k_{t-1}}=[l,r]\neq\lr{k_t}$. We showed above that $\cut_{k_t}$ must cross both $\cut_a$ and $\cut_b$. Since $\du{k_t}\subsetneq [d,u]$, we get $[l,r]\subsetneq \lr{k_t}$. By construction, $\cutk_{t-1}$ does not cross $\cutk_t$. Together with $[l,r]\subsetneq \lr{k_t}$, this implies $\du{k_{t-1}}\neq\du{k_t}$, so the relation between $\cutk_{t-1}$ and $\cutk_t$ must be rigid. We arrive at a contradiction since the relation between $\cutk_0,\cutk_1$ (i.e., between $\cut_a$ and $\cut_b$) was reversed by $\PHI$ while the \lkd relation between $\cutk_{t-1},\cutk_t$ was preserved by $\PHI$ due to being rigid. We have shown that $J\cap \Ip=\emptyset$.

Consider a graph $G$ with vertex set $\Jp$ and edges $\{i,j\}$ for all $i,j\in\Jp$ such that $\cut_i$ and $\cut_j$ do not cross. Then it is easy to check from~\eqref{eq:J_alt_path} that $J$ is actually the connected component of $G$ that contains the edge $\{a,b\}$. Thus letting $I:=\{\Jp\setminus J\}$, we find that for all $i\in I$ and $j\in J$, $\cut_i$ crosses $\cut_j$. Set $d':=\min_{i\in I} d_i$ and $u':=\max_{i\in I} u_i$. We see that all conditions of \cref{lemma:double_H_flip} are satisfied, and that moreover $a,b\in J$. Applying the double \Hd-flip, we proceed by induction on $u_b-d_a$ as described above.
\end{proof}

\begin{proof}[Proof of \cref{thm:main}]
By \cref{prop:or_pres}, we may assume that $\PHI$ is orientation-preserving. We will show that $\PHI$ is a composition of \Hd- and \Vd-shifts as described in \cref{rmk:H_shift_BGW}. Specifically, we would like to apply \cref{lemma:double_H_flip} when $d'=d+1$ and $u'=u$, in which case~\eqref{eq:double_H_J} gives $J=\emptyset$ and the transformation $(\id,\rv_{d,u}\circ\rv_{d+1,u})$ swaps $y_d$ and $(y_d+1,\dots,y_u)$, preserving the order of the latter. 

\def\r{a}
\def\rr{\r'}

Let $\r\in[1,\Ninf-1]$ be such that $\piv(\r)>\piv(\r+1)$. If there exist indices $i,j\in[1,m]$ such that $u_i=\r$ and $d_j=\r+1$ then $\cut_i<\cut_j$ and $\cut'_i>\cut'_j$ by~\eqref{eq:observation_supp_cut}, which contradicts the assumption that $\PHI$ is orientation-preserving. Thus let us assume that there does not exist $i\in[1,m]$ such that $u_i=\r$. We set $d:=\r$.

We construct $u\geq d+1$ by the following algorithm. First, set $u:=d+1$. If we have found $i\in[1,m]$ such that $d<d_i\leq u<u_i$, we increase $u$ by setting $u:=u_i$, and repeat this procedure until there is no $i$ satisfying this assumption.  It is straightforward to see that for each $\rr\in[d+1,u]$ we must have $\piv(\rr)<\piv(d)$.

We claim that $\ZP\times[d,u]$ and $\ZP\times[d+1,u]$ are both horizontal $\CUT$-strips. By construction, there is no $i\in[1,m]$ satisfying $d<d_i\leq u<u_i$. Suppose that we have found $j\in[1,m]$ satisfying $d_j<d\leq u_j<u$. We cannot have $u_j=d$, thus $d_j<d+1\leq u_j<u$. Because of the way we constructed the interval $[d+1,u]$, there must exist some $i\in I$ such that $d_j<d_i\leq u_j<u_i$. This implies $\cut_i>\cut_j$, and since $d\in \du j\setminus \du i$, we get a contradiction by~\eqref{eq:observation_supp_cut}. We have shown that $\ZP\times[d,u]$ and $\ZP\times[d+1,u]$ are horizontal $\CUT$-strips. 

Let $I:=\{i\in[1,m]\mid \du i\subset[d+1,u]\}$ and  $K:=\{k\in[1,m]\mid d\in \du k\}$. By our assumption, we have $d+1\in \du k$ for all $k\in K$.  Because $\ZP\times[d,u]$ and $\ZP\times[d+1,u]$ are horizontal $\CUT$-strips, we get that $[d,u]\subset \du k$ for all $k\in K$. We claim that $\cut_i$ crosses $\cut_k$ for all $i\in I$ and $k\in K$. Indeed, if otherwise $\cut_i$ does not cross $\cut_k$ for some $i\in I$ and $k\in K$, we must have $\cut_i>\cut_k$ by~\eqref{eq:observation_supp_cut}. But we have already shown that $\piv(\rr)<\piv(d)$ for $\rr\in[d+1,u]$, which by~\eqref{eq:observation_supp_cut} contradicts the fact that $\Phi$ is orientation-preserving. Thus indeed $\cut_i$ crosses $\cut_k$ for all $i\in I$ and $k\in K$, so there exist $l\leq r$ such that $\lr k \subset [l,r]\subset \lr i$ for all $i\in I$ and $k\in K$. We may now apply the corresponding \Hd-shift of \cref{lemma:double_H_flip} with $d'=d+1$, $u'=u$, and $J=\emptyset$. We have already observed that $\piv(\rr)<\piv(d)$ for all $\rr\in[d+1,u]$. Therefore this \Hd-shift reduces the number of inversions of $\piv$.

We have assumed above that $u_i\neq \r$ for all $i\in[1,m]$. If this is not the case then we must have $d_i\neq \r+1$ for all $i\in[1,m]$. This case is treated similarly -- we apply an \Hd-shift with $d'=d$ (for some choice of $d\leq \r$), $u'=\r$, and $u=\r+1$. Applying these \Hd-shifts repeatedly, we arrive at the case $\piv=\id$, and then we apply a sequence of \Vd-shifts until $\pih=\id$.
\end{proof}

\section{Applications}\label{sec:applications-body}
We degenerate the \SCSV model proceeding step-by-step in the order specified in~\cite[Sections~6 and~7]{BGW}. The nature of the limiting procedure in~\cite{BGW} requires us to only consider tuples of $\SKD$-cuts all of whose left endpoints belong to the same vertical line.

\begin{definition}\label{dfn:left_aligned}
We say that $\CUT=(\cut_1,\dots,\cut_m)$ with $\cut_i=(l_i,d_i,u_i,r_i)$ is a \emph{left-aligned tuple of cuts} if $l_i=1$ for all $i\in[1,m]$ and there exists an up-left path $\Qpath$ that contains the top right corners of the cells $(r_i,u_i)$ for all $i\in[1,m]$. Given a bijection $\piv:\ZP\to\ZP$ and another left-aligned tuple of cuts $\CUT'=(\cut'_1,\dots,\cut'_m)$, we write $\CUT'=\piv(\CUT)$ if $l'_i=l_i=1$, $r'_i=r_i$, and $\piv([d_i,u_i])=[d'_i,u'_i]$ for all $i\in[1,m]$.
\end{definition}
\begin{remark}
In the language of \cref{dfn:cut_adm}, $(\id,\piv)$ is a $\CUT$-admissible transformation. It is thus a composition of double \Hd-flips, \Hd-shifts, and local \Hd-flips described in \cref{sec:examples}.
\end{remark}

In all cases discussed below, the convergence in finite-dimensional distributions was shown in~\cite{BGW} in order to prove the shift-invariance property, and we rely on the same convergence result to take the limit of \cref{thm:main}.

\def\brho{\pmb{\rho}}
\def\bro{\brho}
\def\bsig{\pmb{\sigma}}
\subsection{Continuous model}\label{sec:continuous_model}
Recall that the \emph{Beta distribution} $B(a,b)$ with parameters $a,b\in\R_{>0}$ is supported on a line segment $(0,1)\subset\R$ and has density
\begin{equation*}%
  \frac{\Gamma(a)\Gamma(b)}{\Gamma(a+b)} x^{a-1}(1-x)^{b-1},\quad 0<x<1.
\end{equation*}
Fix two families $\bsig=(\sigma_i)_{i\in\ZNN}$ and $\brho=(\rho_j)_{j\in\ZP}$ of real numbers satisfying $0<\rho_j<\sigma_i$ for all $(i,j)\in\ZNN\times\ZP$. The following description\footnote{The description in~\cite{BGW} is stated for the case when all $\rho_j$ are equal to a single value $\rho$, but their limiting results, specifically, \cite[Corollary~6.21]{BGW} are valid for an arbitrary choice of $(\rho_j)_{j\in\ZP}$.} of the \emph{continuous \SCSV model} can be found in~\cite[Section~6.4]{BGW}.
\begin{itemize}[leftmargin=0.2in]
\item For each $(i,j)\in\ZNN\times\ZP$, we sample (independently) a Beta-distributed random variable $\eta_{i,j}\sim B(\sigma_i-\rho_j,\rho_j)$.
\item We consider the vertices in $\ZP\times\ZP$. To each of the four edges adjacent to a vertex of $\ZP\times\ZP$ we will assign a (random) vector $(m_1,m_2,\dots)$ of nonnegative real numbers. We refer to $m_c$ as the \emph{mass} of color $c$ passing through this edge.
\item For $i\in\ZP$, the edge entering the vertex $(i,1)$ from the bottom has all masses equal to $0$.
\item For $j\in\ZP$, the masses $(m_1,m_2,\dots)$ assigned to the edge entering the vertex $(1,j)$ from the left are sampled as follows. We set $m_c=0$ for all $c\neq j$ and let $m_j:=-\ln(\eta_{0,j})$.
\item Suppose that $(i,j)\in\ZP\times\ZP$ has incoming masses $(\alpha_1,\alpha_2,\dots)$ and $(\beta_1,\beta_2,\dots)$ assigned to the bottom and left edges, respectively. The outgoing masses $(\gamma_1,\gamma_2,\dots)$ and $(\delta_1,\delta_2,\dots)$ assigned respectively to the top and right outgoing edges are defined by
\begin{equation*}%
  \exp(-\delta_{\geq c})=\exp(-\alpha_{\geq c})+(1-\exp(-\alpha_{\geq c}))\eta_{i,j},\quad c=1,2,\dots,
\end{equation*}
where $\delta_{\geq c}:=\sum_{c'=c}^{\infty} \delta_{c'}$ and $\alpha_{\geq c}:=\sum_{c'=c}^{\infty} \alpha_{c'}$. We define the remaining masses via the mass conservation law: for $c=1,2,\dots$, we set $\gamma_c:=\alpha_c+\beta_c-\delta_c$.
\end{itemize}

Given a left-aligned tuple $\CUT=(\cut_1,\dots,\cut_m)$ of cuts with $\cut_i=(1,d_i,u_i,r_i)$, we will be interested in the random vector $\left(\HTop(\cut_1;\bsig,\brho),\dots,\HTop(\cut_m;\bsig,\brho)\right)$ of height functions, where each individual height function $\HTop(\cut_i;\bsig,\brho)$ is defined as the total mass of color $\geq d_i$ passing through the right outgoing edges of the vertices $(r_i,d_i),(r_i,d_i+1),\dots,(r_i,u_i)$. For a bijection $\piv:\ZP\to\ZP$, we denote $\piv^{-1}(\brho):=(\rho_{\piv^{-1}(1)},\rho_{\piv^{-1}(2)},\dots)$ (cf. \cref{rmk:inverse}).
\begin{theorem}\label{thm:continuous_model}
In the above setting of the continuous \SCSV model, consider two left-aligned tuples $\CUT=(\cut_1,\dots,\cut_m)$ and $\CUT'=(\cut'_1,\dots,\cut'_m)$ of cuts. Suppose that we have a bijection $\piv:\ZP\to\ZP$ such that $\piv(\CUT)=\CUT'$. Then
\begin{equation*}%
\left(\HTop(\cut_1;\bsig,\brho),\dots,\HTop(\cut_m;\bsig,\brho)\right)\eqd
\left(\HTop(\cut'_1;\bsig,\brho'),\dots,\HTop(\cut'_m;\bsig,\brho')\right),\quad\text{where $\brho':=\piv^{-1}(\brho)$}.
\end{equation*}
\end{theorem}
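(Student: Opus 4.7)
My plan is to obtain \cref{thm:continuous_model} as a direct limiting consequence of \cref{thm:main}, using the degeneration procedure from~\cite[Section~6]{BGW} that was already developed for the shift-invariance proofs.

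First, at the level of the discrete \SCSV model, observe that for any left-aligned tuple $\CUT=(\cut_1,\dots,\cut_m)$ and any bijection $\piv:\ZP\to\ZP$ with $\CUT'=\piv(\CUT)$ in the sense of \cref{dfn:left_aligned}, the transformation $\PHI:=(\id,\piv)$ is a $\CUT$-admissible transformation: by construction we have $\id([l_i,r_i])=[l'_i,r'_i]$ and $\piv([d_i,u_i])=[d'_i,u'_i]$ for all $i\in[1,m]$. One may take $\SKD=\SKDp$ to be any \skew domain large enough to contain all the rectangles $[1,r_i]\times[d_i,u_i]$ and $[1,r'_i]\times[d'_i,u'_i]$; by \cref{cor:domain_indep} the joint distribution of the height vector is independent of this choice. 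Applying \cref{thm:main} (equivalently, the combination of Lemmas~\ref{lemma:double_H_flip}, \ref{lemma:local_H_flip} and the \Hd-shifts described in \cref{sec:main_proof}, since only the vertical coordinate is being permuted) gives the distributional identity
\begin{equation*}
\Big(\HT(\cut_1),\dots,\HT(\cut_m)\Big)\eqd \Big(\HTop^{\Ppath,\Qpath}(\cut'_1;\bx,\by'),\dots,\HTop^{\Ppath,\Qpath}(\cut'_m;\bx,\by')\Big),
\end{equation*}
where $\by'=\piv^{-1}(\by)$ and $\bx$ remains unchanged.

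Second, I would invoke the explicit limit transition from the stochastic colored six-vertex model to the continuous model described in~\cite[Section~6.4]{BGW}; the relevant statement is~\cite[Corollary~6.21]{BGW}, which establishes convergence in finite-dimensional distributions of the discrete height functions to the continuous height functions $\HTop(\cut_i;\bsig,\brho)$. Concretely, under the specialization of the rapidities $\bx,\by$ to sequences of real values depending on $\bsig$ and $\brho$ (and the limit $q\to 1$ together with a suitable rescaling of lattice spacing/colors), the joint distribution of $(\HT(\cut_1),\dots,\HT(\cut_m))$ converges to that of $(\HTop(\cut_1;\bsig,\brho),\dots,\HTop(\cut_m;\bsig,\brho))$. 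The crucial point is that the identification of the row parameter $\rho_j$ is made through the $j$-th row rapidity $y_j$, so the substitution $\by\mapsto\piv^{-1}(\by)$ at the discrete level corresponds exactly to the substitution $\brho\mapsto\piv^{-1}(\brho)=\brho'$ at the continuous level, while the column parameters $\sigma_i$ are left untouched.

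Passing to the limit on both sides of the discrete distributional identity then yields the desired equality of joint distributions in the continuous model. The main (and essentially only) obstacle is verifying that the convergence in finite-dimensional distributions of~\cite[Corollary~6.21]{BGW}, stated in the homogeneous setting, extends to the fully inhomogeneous parameters $(\sigma_i)_{i\in\ZNN}$ and $(\rho_j)_{j\in\ZP}$ that appear here; however, the argument in~\cite{BGW} is local in the rapidities and goes through verbatim once one allows $\rho_j$ to vary with $j$, so no new analytic input is needed. Once the convergence is in place, the distributional identity passes to the limit because distributional equality is preserved under weak convergence of finite-dimensional marginals.
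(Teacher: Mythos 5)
Your high-level strategy — apply \cref{thm:main} at the discrete level and pass to the limit using the convergence results of~\cite{BGW} — matches the paper's. However, the proposal skips the crucial intermediate \emph{fused vertex model} step, and this omission is a genuine gap, not just a gloss.

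\cite[Corollary~6.21]{BGW}, which you invoke as the limit transition, is a statement about the convergence of the \emph{fused} \SCSV model to the continuous model — not about the original \SCSV model directly. In the paper's proof, one first subdivides $\ZP\times\ZP$ into $M_i\times N_j$ blocks, specializes the row and column rapidities inside each block to a $q$-geometric progression, and then collapses each block into a single fused vertex. The cuts and the permutation $\piv$ must therefore be replaced by their \emph{unmerged} versions at the original \SCSV level: the cut $\cut_i=(1,d_i,u_i,r_i)$ becomes $\ov\cut_i$ with coordinates given by partial sums of the block sizes, and $\piv$ becomes a block permutation $\ov\piv$. \Cref{thm:main} is applied to these unmerged objects, then the fusion is performed, and only then does \cite[Corollary~6.21]{BGW} give convergence to the continuous model.

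Your claim that ``the identification of the row parameter $\rho_j$ is made through the $j$-th row rapidity $y_j$, so the substitution $\by\mapsto\piv^{-1}(\by)$ at the discrete level corresponds exactly to the substitution $\brho\mapsto\piv^{-1}(\brho)$ at the continuous level'' is where the proof breaks: a single continuous parameter $\rho_j$ corresponds not to a single rapidity $y_j$ but to an entire block of $N_j$ rapidities forming a geometric progression, and the discrete-level substitution that matches $\brho\mapsto\piv^{-1}(\brho)$ is the block permutation $(\ov\piv)^{-1}$, not $\piv^{-1}$. Without this intermediate bookkeeping, the rapidity substitution and the cut coordinates used in your application of \cref{thm:main} do not match the ones needed for the limit. (Your observation that the argument of \cite{BGW} extends verbatim to $\rho_j$ varying in $j$ is correct — that part is fine and is indeed how the paper justifies the inhomogeneous generalization.)
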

\begin{proof}
\def\ov#1{\overline{#1}}
The continuous \SCSV model is obtained as the following limit. First, consider the original \SCSV model. Choose $M_0,M_1,\dots\in\ZP$ and $N_1,N_2,\dots\in\ZP$ and subdivide the positive quadrant into rectangles of sizes $M_i\times N_j$ for all $(i,j)\in\ZNN\times \ZP$. One then combines each rectangle into a single \emph{fused vertex} by specializing the corresponding row and column rapidities to geometric progressions in $q$ of lengths $M_i$ and $N_j$. The fact that the height functions of the resulting \emph{fused \SCSV model} are specializations of the height functions of the original \SCSV model is shown in~\cite[Theorem~6.2]{BGW}. (It is stated for the case when all $N_j$ are equal but translates verbatim to the case of arbitrary $N_j$.) For each cut $\cut_i=(1,d_i,u_i,r_i)$, we can consider the \emph{unmerged cut} $\ov\cut_i=(1,\ov d_i,\ov u_i,\ov r_i)$ given by $\ov d_i=N_1+N_2+\dots+N_{d_i-1}+1$, $\ov u_i=N_1+N_2+\dots+N_{u_i}$, and $\ov r_i=M_1+M_2+\dots+M_{r_i}$. The unmerged versions $\ov\piv$ of $\piv$ and $\ov\cut'_i$ of $\cut'_i$ are defined similarly. 
 The two resulting tuples of unmerged cuts satisfy the assumptions of \cref{thm:main} and therefore the corresponding vectors of height functions have the same distribution modulo replacing the row rapidities $\by=(y_1,y_2,\dots)$ with $(\ov\piv)^{-1}(\by)$.  Thus after specializing the rapidities to geometric progressions and merging each $M_i\times N_j$ rectangle into a single fused vertex, we see that the two height vectors have the same distribution in the setting of the fused \SCSV model. A similar argument can be found in the proof of \cite[Theorem~6.3]{BGW}. The result now follows from \cite[Corollary~6.21]{BGW} which states that vertex weights of the continuous \SCSV model are obtained as particular limits of the fused \SCSV model vertex weights.
\end{proof}

\def\del{{\operatorname{delayed}}}

\subsection{Beta polymer}
Our next object is the \emph{directed Beta polymer}~\cite{BC} which is essentially just a restatement of the continuous \SCSV model from the previous subsection. We again fix two families $\bsig=(\sigma_i)_{i\in\ZNN}$ and $\brho=(\rho_j)_{j\in\ZP}$ as above. We first describe the model following \cite[Section~7.1]{BGW}.
\begin{itemize}[leftmargin=0.2in]
\item For each $(i,j)\in\ZNN\times\ZP$, we sample (independently) a Beta-distributed random variable $\eta_{i,j}\sim B(\sigma_i-\rho_j,\rho_j)$.
\item We consider the vertices of the grid $\ZNN\times\ZNN$ with diagonal and vertical edges, that is, for each $(i,j)\in\ZP\times\ZP$, we introduce diagonal edges connecting $(i-1,j-1)$ to $(i,j)$ and vertical edges connecting $(i,j-1)$ to $(i,j)$. For each $j\in\ZP$, we also have a vertical edge connecting $(0,j-1)$ to $(0,j)$.
\item To each edge $e$ we assign a \emph{weight} $\wt(e)$: if $e$ is a 
vertical edge connecting $(i,j-1)$ to $(i,j)$ then we set $\wt(e):=\eta_{i,j}$, and if $e$ is a diagonal edge connecting $(i-1,j-1)$ to $(i,j)$ then we set $\wt(e):=1-\eta_{i,j}$. 
\item For $l,d,u,r\in\ZNN$ such that $l\leq r$ and $r-l\leq u-d$, the \emph{delayed Beta polymer partition function} $\zux ldru^{B;\bsig,\brho}$ is defined by
\begin{equation*}%
 \zux ldru^{B;\bsig,\brho}=\sum_{(l,d)=\pi_d\to\pi_{d+1}\to\dots\to\pi_{u}=(r,u)} \ \prod_{k=f(\pi)}^{u}\wt(\pi_{k-1}\to\pi_k), 
\end{equation*}
where $\pi_k-\pi_{k-1}\in\{(1,1),(0,1)\}$ for all $k$ and $f(\pi):=\min\{k\mid \pi_k-\pi_{k-1}=(0,1)\}$. In other words, we take the product of weights of all edges of our lattice path that occur weakly after its first vertical edge. 
\end{itemize}

\begin{definition}\label{dfn:Cdur}
For vectors $\bd,\bu,\br\in(\ZNN)^m$, we denote $\CUT(\bd,\bu,\br):=(\cut_1,\dots,\cut_m)$, where  $\cut_i:=(1,d_i+1,u_i,r_i)$ for all $i\in[1,m]$.
\end{definition}
\begin{remark}
We put $d_i+1$ above in order to account for the following discrepancy: for $\cut=(1,d,u,r)$, $\HTop(\cut;\bsig,\brho)$ depends on $\sigma_0,\dots,\sigma_r$ and $\rho_d,\dots,\rho_u$, while $\zux 0dru^{B;\bsig,\brho}$ depends on $\sigma_0,\dots,\sigma_r$ and $\rho_{d+1},\dots,\rho_u$. It is thus natural to identify $\zux 0dru^{B;\bsig,\brho}$ with the cut $(1,d+1,u,r)$.
\end{remark} 

The following result confirms the prediction of \cite[Remark~7.4]{BGW}.
\begin{theorem}\label{thm:Beta}
Consider vectors $\bd,\bu,\bd',\bu',\br\in(\ZNN)^m$ such that both $\CUT(\bd,\bu,\br)$ and $\CUT(\bd',\bu',\br)$ are left-aligned tuples of cuts. 
 Suppose that we have a bijection $\piv:\ZP\to\ZP$ such that $\piv(\CUT(\bd,\bu,\br))=\CUT(\bd',\bu',\br)$ and set $\brho':=\piv^{-1}(\brho)$. Then
\begin{equation}\label{eq:Beta}
  \left(\zux 0{d_1}{r_1}{u_1}^{B;\bsig,\brho},\dots,\zux0{d_m}{r_m}{u_m}^{B;\bsig,\brho}\right)\eqd  \left(\zux 0{d'_1}{r_1}{u'_1}^{B;\bsig,\brho'},\dots,\zux0{d'_m}{r_m}{u'_m}^{B;\bsig,\brho'}\right).
\end{equation}
\end{theorem}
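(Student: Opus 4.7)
The plan is to reduce \cref{thm:Beta} to \cref{thm:continuous_model} via the known dictionary between the continuous \SCSV model and the Beta polymer, which is developed in~\cite[Section~7.1]{BGW}. The key point is that under this dictionary, the delayed Beta polymer partition function $\zux 0{d}{r}{u}^{B;\bsig,\brho}$ is a deterministic (monotone) function of the height observable $\HTop(\cut;\bsig,\brho)$ associated with the cut $\cut=(1,d+1,u,r)$ appearing in \cref{dfn:Cdur}. More precisely, both quantities are built from the same Beta noise $\{\eta_{i,j}\}$, and inspection of the mass update rule from \cref{sec:continuous_model} (compare with the definition of $\wt(e)$ for vertical and diagonal edges in the polymer model) shows that the total mass of color $\geq d+1$ exiting on the right boundary of the strip of rows $[d+1,u]$ at column $r$ can be written as $-\ln\bigl(\zux 0{d}{r}{u}^{B;\bsig,\brho}\bigr)$, possibly up to a shift coming from the color-$j$ input masses $-\ln \eta_{0,j}$ on the left boundary. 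This identification is the content of \cite[Section~7.1]{BGW} and in particular \cite[Corollary~6.21]{BGW}.

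With this identification in hand, the first step is to check that the assumption $\piv(\CUT(\bd,\bu,\br))=\CUT(\bd',\bu',\br)$ in \cref{thm:Beta} translates directly into the hypothesis of \cref{thm:continuous_model} applied to the cuts $\cut_i=(1,d_i+1,u_i,r_i)$ and $\cut'_i=(1,d'_i+1,u'_i,r_i)$. Both tuples are left-aligned by assumption, and the bijection $\piv$ sending $[d_i+1,u_i]$ to $[d'_i+1,u'_i]$ for every $i$ is precisely the datum required in \cref{dfn:left_aligned}. Thus \cref{thm:continuous_model} yields the joint distributional identity
\[
\bigl(\HTop(\cut_1;\bsig,\brho),\dots,\HTop(\cut_m;\bsig,\brho)\bigr)\eqd \bigl(\HTop(\cut'_1;\bsig,\brho'),\dots,\HTop(\cut'_m;\bsig,\brho')\bigr),
\]
with $\brho'=\piv^{-1}(\brho)$, exactly as required.

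The second step is to apply the polymer--height dictionary pointwise in $i$ to rewrite both sides of this identity in terms of $\zux 0{d_i}{r_i}{u_i}^{B;\bsig,\brho}$ and $\zux 0{d'_i}{r_i}{u'_i}^{B;\bsig,\brho'}$. Since the dictionary is a deterministic function of the noise (not involving any reshuffling of the randomness across different cuts), it transports joint distributions to joint distributions, and~\eqref{eq:Beta} follows.

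The main obstacle I expect is purely bookkeeping: one must verify carefully that the same function of the Beta noise $\eta_{i,j}$ gives both the height observable and the polymer partition function, matching the indexing of $\bsig$ and $\brho$ on the two sides. In particular one must check that the shift from $d_i$ to $d_i+1$ in \cref{dfn:Cdur} correctly compensates the discrepancy between which $\rho_j$ enter the Beta weights on the vertical edges used by the polymer and the $\rho_j$ appearing in the input masses of the continuous \SCSV model. Once this is established (via \cite[Section~7.1]{BGW}), the proof is a direct transfer of \cref{thm:continuous_model} through a measurable bijection.
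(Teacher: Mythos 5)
Your proposal follows essentially the same route as the paper's proof: both reduce \cref{thm:Beta} to \cref{thm:continuous_model} via the identification of the delayed Beta polymer partition functions with exponentiated height functions of the continuous \SCSV model, which is exactly the content of \cite[Proposition~7.2]{BGW}. The paper simply cites \cite[Proposition~7.2]{BGW} for the pointwise identity $\zux 0{d_i}{r_i}{u_i}^{B;\bsig,\brho}=\exp\left(-\HTop(\cut_i;\bsig,\brho)\right)$ (no extra shift is needed; your ``possibly up to a shift'' hedge is unnecessary once one reads that proposition), and then applies \cref{thm:continuous_model} exactly as you do. So your plan is correct; the only thing to tighten is to replace the informal ``dictionary, possibly up to a shift'' discussion with a direct citation of \cite[Proposition~7.2]{BGW}, which already records the exact identity in the form needed.
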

\begin{proof}
By \cite[Proposition~7.2]{BGW}, the joint distribution of the left hand side of~\eqref{eq:Beta} coincides with the distribution of the vector $\left(\exp \left(-\HTop(\cut_1;\bsig,\brho)\right),\dots,\exp \left(-\HTop(\cut_m;\bsig,\brho)\right)\right)$ of exponentiated height functions $\HTop(\cut_i;\bsig,\brho)$ from \cref{sec:continuous_model}. Here $(\cut_1,\dots,\cut_m)=\CUT(\bd,\bu,\br)$ as in \cref{dfn:Cdur}. The result now follows from \cref{thm:continuous_model}.
\end{proof}

\subsection{Intersection matrices}
When taking the limits in the next subsections, we will specialize to the \emph{homogeneous case} where all $\sigma_i$ are equal to a single value $\sigma$ and all $\rho_j$ are equal to a single value $\rho$ satisfying $0<\rho<\sigma$. In view of this, let us discuss intersection matrices introduced in \cref{dfn:IMP}.

\begin{proposition}\label{prop:left_aligned}
Consider vectors $\bd,\bu,\bd',\bu',\br\in(\ZNN)^m$ and let $\CUT(\bd,\bu,\br)=(\cut_1,\dots,\cut_m)$ and $\CUT(\bd',\bu',\br)=(\cut'_1,\dots,\cut'_m)$ be as in \cref{dfn:Cdur}. Assume that both $\CUT(\bd,\bu,\br)$ and $\CUT(\bd',\bu',\br)$ are left-aligned tuples of cuts. Then there exists a bijection $\piv:\ZP\to\ZP$ satisfying $\piv(\CUT(\bd,\bu,\br))=\CUT(\bd',\bu',\br)$ if and only if $\IMP(\bd,\bu)=\IMP(\bd',\bu')$.
\end{proposition}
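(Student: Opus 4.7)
The plan is to translate the condition on $\piv$ into a statement about the vertical intervals $I_i := [d_i+1, u_i]$ and $I'_i := [d'_i+1, u'_i]$: unpacking \cref{dfn:left_aligned} for the cuts of \cref{dfn:Cdur}, the equality $\piv(\CUT(\bd,\bu,\br)) = \CUT(\bd',\bu',\br)$ means precisely that $\piv(I_i) = I'_i$ as subsets of $\ZP$ for all $i \in [1,m]$. The forward direction is then immediate, since the $(i,j)$-th entry of $\IMP(\bd,\bu)$ satisfies $\IMP_{i,j} = \max(0, \min(u_i,u_j) - \max(d_i,d_j)) = |I_i \cap I_j|$: if $\piv(I_i) = I'_i$ and $\piv(I_j) = I'_j$, then $\piv$ restricts to a bijection $I_i \cap I_j \xrasim I'_i \cap I'_j$, forcing $\IMP(\bd,\bu) = \IMP(\bd',\bu')$.

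For the backward direction, I would reduce to a pure counting problem by introducing the \emph{atoms}
\[A_S := \{k \in \ZP : (k \in I_i) \Leftrightarrow (i \in S) \text{ for every } i \in [1,m]\}, \qquad S \subseteq [1,m],\]
and analogously $A'_S$. Each $I_i$ is the disjoint union $I_i = \bigsqcup_{S \ni i} A_S$, and likewise for $I'_i$. Once $|A_S| = |A'_S|$ is established for every $S$, the required $\piv$ can be built by choosing any bijection $A_S \to A'_S$ for each nonempty $S$ together with any bijection between the infinite complements $\ZP \setminus \bigcup_i I_i$ and $\ZP \setminus \bigcup_i I'_i$; the resulting $\piv$ then automatically satisfies $\piv(I_i) = \bigsqcup_{S \ni i} A'_S = I'_i$.

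The remaining step is to recover each $|A_S|$ from $\IMP$. By Möbius inversion on $2^{[1,m]}$ it suffices to express $\bigl|\bigcap_{i \in T} I_i\bigr|$ in terms of $\IMP$ for every $T \subseteq [1,m]$, and here I would invoke a Helly-type identity specific to intervals on a line: picking $i^* \in T$ maximizing $d_i$ and $j^* \in T$ minimizing $u_j$, one has $\bigcap_{i \in T} I_i = [d_{i^*}+1,\, u_{j^*}]$, so
\[\Bigl|\bigcap_{i \in T} I_i\Bigr| = \max(0, u_{j^*} - d_{i^*}) = \IMP_{i^*,j^*} = \min_{i,j \in T} \IMP_{i,j},\]
where the last equality uses the monotonicity of $\min(u_i,u_j) - \max(d_i,d_j)$ in its arguments. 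This expresses every multi-way intersection size purely in terms of the pairwise data, so $\IMP(\bd,\bu) = \IMP(\bd',\bu')$ yields $|A_S| = |A'_S|$ for all $S$, completing the proof. The argument is essentially combinatorial and I anticipate no serious obstacle; the only mild subtlety is that the atoms $A_S$ need not themselves be intervals (they can be unions of several intervals), but since the conclusion only requires $\piv(I_i) = I'_i$ as sets rather than as ordered intervals, this does not affect the construction.
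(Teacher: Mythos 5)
Your proof is correct and follows essentially the same path as the paper's: both use the Helly-type identity $\#\bigcap_{i\in T}I_i = \min_{i,j\in T}\IMP_{i,j}$ to show that all multi-way intersection cardinalities are determined by the pairwise data. The only difference is that you spell out, via atoms and Möbius inversion, the step from ``all $\bigl|\bigcap_{i\in T}I_i\bigr|$ agree'' to the existence of the bijection $\piv$, which the paper treats as immediate.
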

\begin{proof}
In order to distinguish between subsets of $\Z$ and subsets of $\R$, let us denote $[d,u]_\Z:=\{j\in\Z\mid d\leq j\leq u\}$ and $[d,u]_\R:=\{y\in\R\mid d\leq y\leq u\}$. Recall that $\cut_i=(1,d_i+1,u_i,r_i)$ and $\cut'_i=(1,d'_i+1,u'_i,r'_i)$ for all $i\in[1,m]$. Recall also that the entries of $\IMP(\bd,\bu)$ are given by $\IMP_{i,j}=\max(0,\min(u_i,u_j)-\max(d_i,d_j))$, which is the cardinality of $[d_i+1,u_i]_\Z\cap[d_j+1,u_j]_\Z$. 

If $\piv$ exists then it sends $[d_i+1,u_i]_\Z\cap[d_j+1,u_j]_\Z$ bijectively to $[d'_i+1,u'_i]_\Z\cap[d'_j+1,u'_j]_\Z$, thus $\IMP(\bd,\bu)=\IMP(\bd',\bu')$. Conversely, it suffices to show that for any $I\subset[1,m]$, the intersections $\bigcap_{i\in I}[d_i+1,u_i]_\Z$ and $\bigcap_{i\in I}[d'_i+1,u'_i]_\Z$ have the same cardinality. Indeed, we have
\begin{equation*}\label{eq:IMPZ}
  \# \bigcap_{i\in I}[d_i+1,u_i]_\Z=\max\left(0,\min\{u_i\mid i\in I\}-\max\{d_j\mid j\in I \}\right)=\min \left\{\IMP_{i,j}\middle|i,j\in I\right\}.
\end{equation*}
Thus the assumption $\IMP(\bd,\bu)=\IMP(\bd',\bu')$ implies the result for all $I\subset[1,m]$.
\end{proof}

\subsection{Gamma polymer}

Recall that the Gamma distribution with parameter $\kappa>0$ is supported on $\R_{>0}$ with density
\begin{equation*}%
  \frac1{\Gamma(\kappa)} x^{\kappa-1}\exp(-x),\quad x>0.
\end{equation*}
Let us now fix some $\kappa>0$ and describe the Gamma polymer~\cite{CSS,OCO}.
\begin{itemize}[leftmargin=0.2in]
\item For each $(i,j)\in\ZNN\times\ZP$, we sample an independent random variable $\eta_{i,j}$ which is Gamma-distributed with parameter  $\kappa$.
\item We again deal with the grid $\ZNN\times\ZNN$ with vertical and diagonal edges. The weight of each vertical edge connecting $(i,j-1)$ to $(i,j)$ is equal to $\eta_{i,j}$, and the weights of all diagonal edges are equal to $1$.
\item For $l,d,u,r\in\ZNN$ such that $l\leq r$ and $r-l\leq u-d$, the \emph{Gamma polymer partition function} $\zux ldru^{\Gamma}$ is defined by
\begin{equation*}%
 \zux ldru^{\Gamma}=\sum_{(l,d)=\pi_d\to\pi_{d+1}\to\dots\to\pi_{u}=(r,u)} \ \prod_{k=d+1}^{u}\wt(\pi_{k-1}\to\pi_k), 
\end{equation*}
where $\pi_k-\pi_{k-1}\in\{(1,1),(0,1)\}$ for all $k$. Note that since the diagonal edges have weight $1$, only the vertical edges contribute to the product.
\end{itemize}

\begin{theorem}\label{thm:Gamma}
  Consider vectors $\bd,\bu,\bd',\bu',\br\in(\ZNN)^m$ such that both $\CUT(\bd,\bu,\br)$ and $\CUT(\bd',\bu',\br)$ are left-aligned tuples of cuts. If $\IMP(\bd,\bu)=\IMP(\bd',\bu')$ then 
\begin{equation*}
  \left(\zux 0{d_1}{r_1}{u_1}^{\Gamma},\dots,\zux 0{d_m}{r_m}{u_m}^{\Gamma}\right)\eqd    \left(\zux 0{d'_1}{r_1}{u'_1}^{\Gamma},\dots,\zux 0{d'_m}{r_m}{u'_m}^{\Gamma}\right).
\end{equation*}
\end{theorem}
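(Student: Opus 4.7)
The plan is to obtain \cref{thm:Gamma} as a scaling limit of \cref{thm:Beta} applied to the homogeneous Beta polymer.

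First, I would specialize \cref{thm:Beta} to the homogeneous case where $\sigma_i = \sigma$ for all $i \in \ZNN$ and $\rho_j = \rho$ for all $j \in \ZP$, with $0 < \rho < \sigma$. By \cref{prop:left_aligned}, the hypothesis $\IMP(\bd,\bu) = \IMP(\bd',\bu')$ combined with the assumption that $\CUT(\bd,\bu,\br)$ and $\CUT(\bd',\bu',\br)$ are left-aligned tuples of cuts produces a bijection $\piv:\ZP\to\ZP$ such that $\piv(\CUT(\bd,\bu,\br)) = \CUT(\bd',\bu',\br)$. In the homogeneous case this bijection leaves the vector $\brho$ invariant, so $\brho' = \piv^{-1}(\brho) = \brho$, and \cref{thm:Beta} yields the distributional identity
\begin{equation*}
  \left(\zux 0{d_1}{r_1}{u_1}^{B;\sigma,\rho},\dots,\zux 0{d_m}{r_m}{u_m}^{B;\sigma,\rho}\right)\eqd  \left(\zux 0{d'_1}{r_1}{u'_1}^{B;\sigma,\rho},\dots,\zux 0{d'_m}{r_m}{u'_m}^{B;\sigma,\rho}\right)
\end{equation*}
for the homogeneous Beta polymer partition functions with identical parameters on both sides.

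Next, I would invoke the standard degeneration from the homogeneous Beta polymer to the Gamma polymer described in~\cite{BGW} (and going back to~\cite{BC}): under a specific choice of scaling of the parameters $(\sigma,\rho)$ toward a boundary of the admissible region, together with an appropriate deterministic renormalization of the partition functions, the finite-dimensional distributions of the rescaled Beta polymer partition functions converge to those of the Gamma polymer partition functions $\zux 0{d_i}{r_i}{u_i}^{\Gamma}$. Since distributional equality is preserved under joint convergence in distribution of the two sides, the identity from the previous paragraph passes to the limit and yields exactly the assertion of \cref{thm:Gamma}.

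The step of the argument that requires the most attention is verifying that the joint convergence of the whole partition function vector (and not merely marginal convergence of individual partition functions) is available from the framework of~\cite{BGW}. This joint convergence is immediate once one writes each of the $2m$ partition functions appearing in the identity above as a continuous functional of the same finite family of suitably scaled Beta random variables $\{\eta_{i,j}\}$ indexed by the cells in a finite subregion of $\ZNN \times \ZP$ containing all paths involved; the continuous mapping theorem then converts the pointwise almost-sure convergence of the rescaled weights to Gamma variables into joint distributional convergence of the partition function vectors on both sides.
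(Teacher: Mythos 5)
Your proof is correct and follows essentially the same route as the paper: use \cref{prop:left_aligned} to produce the bijection $\piv$, apply \cref{thm:Beta} in the homogeneous case where $\brho'=\piv^{-1}(\brho)=\brho$, and pass to the Gamma polymer via the scaling $\rho_j=\varepsilon^{-1}$, $\sigma_i=\varepsilon^{-1}+\kappa$, $\varepsilon\to 0$ of~\cite{BGW}. Your extra remarks on coupling the Beta variables to their Gamma limits and invoking the continuous mapping theorem simply spell out the joint finite-dimensional convergence that the paper delegates to~\cite[Section~7.2]{BGW}.
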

\begin{proof}
  By \cref{prop:left_aligned}, there exists a bijection $\piv:\ZP\to\ZP$ sending $\CUT(\bd,\bu,\br)$ to $\CUT(\bd',\bu',\br)$. The result now follows from \cref{thm:Beta} by substituting $\rho_j=\varepsilon^{-1}$, $\sigma_i=\varepsilon^{-1}+\kappa$ and taking a limit as $\varepsilon\to0$, see \cite[Section~7.2]{BGW}.
\end{proof}

\subsection{O’Connell--Yor polymer}
Our next limiting transition leads to the following model.
\begin{itemize}[leftmargin=0.2in]
\item For each $n\in\ZNN$, let $B_n(t)$, $t\geq0$, be an independent standard Brownian motion.
\item For each $l, r\in\ZNN$ and $d,u\in\R_{\geq0}$ satisfying $l\leq r$ and $d\leq u$, define
\begin{equation*}%
  \zux ldru^{OY}=\int_{d=t_l<t_{l+1}<\dots<t_{r+1}=u} \exp \left[\sum_{i=l}^r B_i(t_{i+1})-B_i(t_i)\right]\,dt_{l+1}\dots dt_{r}.
\end{equation*}
\end{itemize}

We extend the definition of a cut $\cut=(l,d,u,r)$ to the case where $d\leq u$ are real numbers. In this case, we say that  $\CUT=(\cut_1,\dots,\cut_m)$ is a \emph{left-aligned tuple of cuts} if $l_i=1$ for all $i\in[1,m]$ and for any $i,j\in[1,m]$, we have either $r_i\leq r_j$ and $u_i\geq u_j$ or $r_i\geq r_j$ and $u_i\leq u_j$.
 
\begin{theorem}\label{thm:OY}
  Consider vectors $\bd,\bu,\bd',\bu'\in(\R_{\geq0})^m$ and $\br\in(\ZNN)^m$ such that both $\CUT(\bd,\bu,\br)$ and $\CUT(\bd',\bu',\br)$ are left-aligned tuples of cuts. If $\IMP(\bd,\bu)=\IMP(\bd',\bu')$ then 
\begin{equation*}
  \Big(\zux 0{d_1}{r_1}{u_1}^{OY},\dots,\zux 0{d_m}{r_m}{u_m}^{OY}\Big)\eqd    \Big(\zux 0{d'_1}{r_1}{u'_1}^{OY},\dots,\zux 0{d'_m}{r_m}{u'_m}^{OY}\Big).
\end{equation*}
\end{theorem}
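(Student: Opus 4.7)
The plan is to obtain \cref{thm:OY} as a scaling limit of \cref{thm:Gamma}, following the degeneration from the Gamma polymer to the O'Connell--Yor polymer described in \cite{BGW}. Since only the vectors $\bd,\bu$ are now real while $\br$ remains integer, the essential difficulty lies in discretizing the time coordinates without disrupting the matching of intersection matrices.

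First I would establish an approximation lemma: given $\bd,\bu,\bd',\bu'\in(\R_{\geq 0})^m$ with $\IMP(\bd,\bu)=\IMP(\bd',\bu')$ and both $\CUT(\bd,\bu,\br)$, $\CUT(\bd',\bu',\br)$ left-aligned, there exist sequences $\bd^{(N)},\bu^{(N)},\bd'^{(N)},\bu'^{(N)}\in(\ZNN)^m$ such that $\bd^{(N)}/N\to\bd$, $\bu^{(N)}/N\to\bu$, $\bd'^{(N)}/N\to\bd'$, $\bu'^{(N)}/N\to\bu'$ componentwise; $\IMP(\bd^{(N)},\bu^{(N)})=\IMP(\bd'^{(N)},\bu'^{(N)})$ for every $N$; and both $\CUT(\bd^{(N)},\bu^{(N)},\br)$ and $\CUT(\bd'^{(N)},\bu'^{(N)},\br)$ are left-aligned tuples of cuts for $N$ large. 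To construct the approximation, note that (cf.\ the argument in the proof of \cref{prop:left_aligned}) the equality $\IMP(\bd,\bu)=\IMP(\bd',\bu')$ forces the continuous measures $|\bigcap_{i\in I}[d_i,u_i]|$ and $|\bigcap_{i\in I}[d'_i,u'_i]|$ to coincide for every $I\subseteq[1,m]$. Approximating all $4m$ real values by rationals with a common large denominator $N$ and rescaling by $N$ yields integer vectors; the approximation can be chosen to preserve the pattern of equalities and strict inequalities among the relevant sums and differences that determine the $\IMP$ matrix.

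Given such $\bd^{(N)},\bu^{(N)},\bd'^{(N)},\bu'^{(N)}$, \cref{thm:Gamma} gives
\begin{equation*}
\left(\zux 0{d^{(N)}_1}{r_1}{u^{(N)}_1}^\Gamma,\dots,\zux 0{d^{(N)}_m}{r_m}{u^{(N)}_m}^\Gamma\right) \eqd \left(\zux 0{d'^{(N)}_1}{r_1}{u'^{(N)}_1}^\Gamma,\dots,\zux 0{d'^{(N)}_m}{r_m}{u'^{(N)}_m}^\Gamma\right)
\end{equation*}
for every $N$. The next step is to invoke the joint convergence from Gamma polymer to O'Connell--Yor polymer established in \cite{BGW}: taking the Gamma shape parameter $\kappa=N$ and rescaling the vertical coordinate by $N$, the appropriately centered quantities $\log \zux{0}{\lfloor Nd\rfloor}{r}{\lfloor Nu\rfloor}^\Gamma$ converge jointly in distribution over finite collections of endpoints to $\log \zux{0}{d}{r}{u}^{OY}$, with the same deterministic normalization on both sides of the distributional identity above. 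Exponentiating and passing to the $N\to\infty$ limit yields the desired equality of joint distributions of the O'Connell--Yor partition functions.

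The main obstacle is the approximation step itself: guaranteeing simultaneously that the integer approximations satisfy $\IMP(\bd^{(N)},\bu^{(N)})=\IMP(\bd'^{(N)},\bu'^{(N)})$ \emph{exactly} at every $N$, approximate the given continuous vectors componentwise, and retain the left-aligned structure of the cuts. Once this discretization is in place, the rest of the argument is a routine passage to the limit in \cref{thm:Gamma} using the Gamma-to-OY convergence of \cite{BGW}, which is precisely the same tool BGW use to lift their discrete shift-invariance statements to the continuous setting.
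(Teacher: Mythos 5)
Your proposal follows the paper's argument essentially verbatim: construct integer approximations $\bd_L,\bu_L,\bd'_L,\bu'_L\in(\ZNN)^m$ that exactly preserve the intersection-matrix equality $\IMP(\bd_L,\bu_L)=\IMP(\bd'_L,\bu'_L)$ and the left-aligned structure while converging to $(\bd,\bu,\bd',\bu')$ after rescaling, apply \cref{thm:Gamma} at each scale, and pass to the O'Connell--Yor limit via the convergence established in \cite[Section~7.3]{BGW}. The paper dispatches the discretization with ``it is easy to check,'' whereas you spell out a plausible construction (common-denominator rational approximations preserving the pattern of equalities and strict inequalities among the defining differences), but the strategy and the key inputs are identical.
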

\begin{proof}
As explained in \cite[Section~7.3]{BGW}, one obtains $\zux ldru^{OY}$ as a properly rescaled limit of 
$\zux l{ Ld}r{ Lu}^{\Gamma}$ as $L\to\infty$. It is easy to check that there exists a sequence $\bd_L,\bu_L,\bd'_L,\bu'_L\in(\ZNN)^m$ for $L=1,2,\dots$ such that for all $L$,
\begin{itemize}[leftmargin=0.2in]
\item $\CUT(\bd_L,\bu_L,\br)$ and $\CUT(\bd'_L,\bu'_L,\br)$ are left-aligned tuples of cuts,
\item  $\IMP(\bd_L,\bu_L)=\IMP(\bd'_L,\bu'_L)$, 
\item the limit of $\frac1L(\bd_L,\bu_L,\bd'_L,\bu'_L)$ as $L\to\infty$ equals $(\bd,\bu,\bd',\bu')$, and
\item the limit of $\frac1L\IMP(\bd_L,\bu_L)$ as $L\to\infty$ equals $\IMP(\bd,\bu)$.
\end{itemize}
The result now follows by applying \cref{thm:Gamma} and taking the limit as $L\to\infty$.
\end{proof}

We are ready to consider the models described in \cref{sec:applications}.
\subsection{Brownian last passage percolation}
Recall that $\zux ldru$ has been defined in~\eqref{eq:BLPP}. Let us give a generalization of \cref{thm:BLPP}.
\begin{theorem}\label{thm:BLPP2}
  Consider vectors $\bd,\bu,\bd',\bu'\in(\R_{\geq0})^m$ and $\br\in(\ZNN)^m$ such that both $\CUT(\bd,\bu,\br)$ and $\CUT(\bd',\bu',\br)$ are left-aligned tuples of cuts. If $\IMP(\bd,\bu)=\IMP(\bd',\bu')$ then 
\begin{equation*}
  \left(\zux 0{d_1}{r_1}{u_1},\dots,\zux 0{d_m}{r_m}{u_m}\right)\eqd    \left(\zux 0{d'_1}{r_1}{u'_1},\dots,\zux 0{d'_m}{r_m}{u'_m}\right).
\end{equation*}  
\end{theorem}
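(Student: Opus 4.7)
The strategy is to obtain Brownian last passage percolation as a zero-temperature limit of the O'Connell--Yor polymer, and transport the joint distributional identity of \cref{thm:OY} through this limit.

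First I would verify that the intersection matrix scales linearly: for any $L>0$, we have $\IMP(L\bd,L\bu)=L\,\IMP(\bd,\bu)$ directly from~\eqref{eq:IM_dfn}. Consequently, the assumption $\IMP(\bd,\bu)=\IMP(\bd',\bu')$ passes to $\IMP(L\bd,L\bu)=\IMP(L\bd',L\bu')$. One also checks that scaling preserves the left-aligned property (the vector $\br$ is unchanged, and the ordering conditions on $(r_i,u_i)$ versus $(r_j,u_j)$ depend only on the signs of $u_i-u_j$). Hence \cref{thm:OY} applies to the vectors $L\bd,L\bu,L\bd',L\bu'$ and gives
\[
\Big(\zux 0{Ld_1}{r_1}{Lu_1}^{OY},\dots,\zux 0{Ld_m}{r_m}{Lu_m}^{OY}\Big)\eqd \Big(\zux 0{Ld'_1}{r_1}{Lu'_1}^{OY},\dots,\zux 0{Ld'_m}{r_m}{Lu'_m}^{OY}\Big).
\]

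Next, I would perform the standard Brownian rescaling $t_k=Ls_k$ inside the integral defining $\zux 0{Ld}{r}{Lu}^{OY}$. Using $B_i(Ls)-B_i(Ls')\stackrel{d}{=}\sqrt{L}\,(\tilde B_i(s)-\tilde B_i(s'))$ for an independent family $\{\tilde B_i\}$ of standard Brownian motions (coupled simultaneously across all $i$), one obtains, jointly in all $m$ coordinates,
\[
\zux 0{Ld_i}{r_i}{Lu_i}^{OY}\stackrel{d}{=}L^{\,r_i}\!\!\int_{d_i=s_0<\dots<s_{r_i+1}=u_i}\!\!\exp\!\Bigl[\sqrt{L}\sum_{k=0}^{r_i}(\tilde B_k(s_{k+1})-\tilde B_k(s_k))\Bigr]ds.
\]
Applying $\tfrac{1}{\sqrt{L}}\log$ and using Laplace's method/Varadhan's lemma (or just the elementary fact that $\tfrac{1}{\sqrt L}\log\int e^{\sqrt L f}\to\max f$ for continuous $f$ on a compact domain), the coordinate-wise limit as $L\to\infty$ is exactly $\zux 0{d_i}{r_i}{u_i}$ as defined in~\eqref{eq:BLPP}, with the $L^{r_i}$ prefactor contributing a vanishing $r_i L^{-1/2}\log L$. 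The crucial observation is that since the same Brownian family $\{\tilde B_i\}$ governs all $m$ integrals simultaneously, this convergence is \emph{joint} in the $m$ coordinates.

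Finally, I would combine the distributional identity at finite $L$ with the joint almost-sure convergence to pass to the limit, obtaining
\[
\Big(\zux 0{d_1}{r_1}{u_1},\dots,\zux 0{d_m}{r_m}{u_m}\Big)\eqd \Big(\zux 0{d'_1}{r_1}{u'_1},\dots,\zux 0{d'_m}{r_m}{u'_m}\Big).
\]
The main technical point to be careful about is uniform integrability or Skorokhod-type arguments to upgrade pointwise Laplace asymptotics to convergence in distribution of the rescaled vectors; however, since the identity is already known at every finite $L$, we only need continuity of the map $(B_\cdot)\mapsto \tfrac{1}{\sqrt L}\log(\cdots)$ and joint convergence, both of which hold by the classical analysis already used in \cite[Section~7.3]{BGW} to pass from the Gamma polymer to the O'Connell--Yor polymer. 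Once this step is made rigorous, the theorem follows.
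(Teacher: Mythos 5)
Your proposal is correct and takes essentially the same route as the paper: the paper's proof simply cites \cite[Section~7.4]{BGW} for the fact that $\zux 0dru$ arises as the $L\to\infty$ limit of $\frac1{\sqrt L}\log\zux 0{Ld}r{Lu}^{OY}$ (jointly in the starting/ending data since all coordinates are driven by the same Brownian family), and then applies \cref{thm:OY}. You have merely unpacked the Brownian rescaling, the linearity $\IMP(L\bd,L\bu)=L\,\IMP(\bd,\bu)$, and the Laplace asymptotics, all of which are the standard content of the cited reference; the argument is sound.
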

\begin{proof}
As explained in \cite[Section~7.4]{BGW}, $\zux ldru$ is obtained as a scaling limit of the logarithm of $\zux l{dL}r{uL}^{OY}$ as $L\to\infty$. The result thus follows from \cref{thm:OY}.
\end{proof}

\def\sqL{\sqrt{L}}
\subsection{KPZ equation}
\begin{proof}[Proof of \cref{thm:KPZ}]
The argument in \cite[Section~7.5]{BGW} shows that one can obtain $\KZxxx ytx$ as a scaling limit of $\zux 0{y}{tL}{t\sqrt L+x}^{OY}$ as $L\to\infty$. Let $L$ be such that $tL$ is an integer and set $\br=(tL,tL,\dots,tL)\in(\ZNN)^m$, $\br'=(t\sqL,t\sqL,\dots,t\sqL)\in(\R_{\geq0})^m$. We introduce $\bd,\bd',\bu,\bu'\in(\R_{\geq0})^m$ by $\bd:=\by$, $\bd':=\by'$, $\bu:=\br'+\bx$, and $\bu':=\br'+\bx'$. Let $\onebb$ denote the $m\times m$ matrix with all entries equal to $1$. If $L$ is large enough so that $t\sqL>d_i$ for all $i\in[1,m]$ then we have 
\begin{equation}\label{eq:KPZ_IMP}
\IMP(\bd,\bu)=t\sqL\onebb+\IM(\by,\bx)=  t\sqL\onebb+\IM(\by',\bx')=\IMP(\bd',\bu').
\end{equation}
Therefore \cref{thm:OY} applies and we obtain the result by sending $L\to\infty$.
\end{proof}

\subsection{Airy sheet}
In view of the recent results~\cite{DOV}, one can define the Airy sheet $\Acal(x,y)$ as a limit of the Brownian last passage percolation, cf. \cite[Equation~(7.14)]{BGW}:
\begin{equation}\label{eq:Airy}
 \frac{\zux 0{2x n^{2/3}}{n}{n+2 y n^{2/3}} -2n -2n^{2/3}(y-x)+(x-y)^2n^{1/3}}{  n^{1/3}} \to \mathcal A(x,y)
\end{equation}
for $x,y\in\R$ as $n\to\infty$. 
\begin{proof}[Proof of \cref{thm:Airy}]
We proceed as in the proof of \cref{thm:KPZ}: for an integer $n\in\ZP$, set $\br=(n,n,\dots,n)\in(\ZNN)^m$, $\bd:=2n^{2/3}\bx$, $\bd':=2n^{2/3}\bx'$, $\bu:=\br+2n^{2/3}\by$, and $\bu':=\br+2n^{2/3}\by'$. As in~\eqref{eq:KPZ_IMP}, we find $\IMP(\bd,\bu)=\IMP(\bd',\bu')$ and then deduce the result from \cref{thm:BLPP2} by sending $n\to\infty$.
\end{proof}

\section{Arbitrary permutations, Kazhdan--Lusztig polynomials, and~positroid~varieties}\label{sec:KL_GR}

We discuss a surprising connection between the \SCSV model and a family of algebraic and combinatorial objects: Kazhdan--Lusztig $R$-polynomials (see~\cite{KL1} or~\cite[Chapter~5]{BjoBre}), Deodhar's \emph{distinguished subexpressions}~\cite{Deodhar,MR}, and the positroid decomposition~\cite{Pos,BGY,KLS} of the Grassmannian. As we speculate in \cref{sec:common_gen}, these observations suggest that there could be a common generalization of the \SCSV model and the combinatorics of positroid varieties.

In the last subsection, we give a counterexample to a natural extension of \cref{thm:main} to arbitrary wiring diagram domains but give a conjectural generalization of the shift invariance of~\cite{BGW} to arbitrary domains.

First, we concentrate on the limit of the \SCSV model as $y_1,y_2,\dots \to 0$, in which case all parameters $\sp_{i,j}$ defined in~\eqref{eq:sp} become equal to $1/q$. More generally, for an arbitrary Yang--Baxter element $\YB^w$ defined for $w\in\Sn$ in~\eqref{eq:YB_rec}, we consider a limiting regime where $z_1\gg z_2\gg\dots\gg z_n$, i.e., we assume that $z_j/z_i\to0$ for all $1\leq i<j\leq n$. In the limit, we get $\sp_{i,j}=1/q$ for all $1\leq i<j\leq n$. We denote the resulting element by 
\[\YB^w|_{z_1\gg\dots\gg z_n}=R_{i_1}(1/q)R_{i_2}(1/q)\cdots R_{i_r}(1/q),\]
where $w=s_{i_1}s_{i_2}\cdots s_{i_r}$ is a reduced word. 

\subsection{Kazhdan--Lusztig $R$-polynomials}
For all pairs $\pi,w\in\Sn$ such that $\pi\leq w$ in the Bruhat order, the associated \emph{Kazhdan--Lusztig $R$-polynomials} $\Rvw(q)$ are defined uniquely by the condition that for all $w\in \Sn$, we have
\begin{equation}\label{eq:R_v^w_dfn}
  q^{\ell(w)}(T_{w^{-1}})^{-1}=\sum_{\pi\leq w} \Rvw(q) T_\pi.
\end{equation}

The following result was shown in~\cite{BuNa} by induction on $\ell(w)$. We will prove it bijectively in \cref{prop:dist_SCSV} below.
\begin{lemma}
\label{lemma:YB_Rvw}
For all $w\in\Sn$, we have
\begin{equation*}%
  q^{\ell(w)}\YB^w|_{z_1\gg\dots\gg z_n}=\sum_{\pi\leq w} \Rvw(q) T_\pi.
\end{equation*}
\end{lemma}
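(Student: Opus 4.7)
The plan is to show that $\YB^w|_{z_1\gg\dots\gg z_n}$ equals $(T_{w^{-1}})^{-1}$ directly in the Hecke algebra, after which the claim is the defining equation~\eqref{eq:R_v^w_dfn} of the $R$-polynomials multiplied by $q^{\ell(w)}$. No bijective or combinatorial argument is needed here; the statement reduces to an elementary manipulation inside $\Hecke$.

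First I would analyze the limit of the parameters $\sp_{i,j}$. Dividing numerator and denominator in the formula~\eqref{eq:YB_rec} by $z_i$ gives $\sp_{i,j}=\frac{z_j/z_i-1}{z_j/z_i-q}$, which tends to $1/q$ as $z_j/z_i\to 0$ for every $i<j$. Because the recursive construction~\eqref{eq:YB_rec} only ever inserts factors $R_k(\sp_{a,b})$ with $a<b$, fixing any reduced word $w=s_{i_1}s_{i_2}\cdots s_{i_r}$ yields
\[
\YB^w|_{z_1\gg\dots\gg z_n}=R_{i_1}(1/q)\,R_{i_2}(1/q)\cdots R_{i_r}(1/q).
\]

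Second I would establish the identity $R_k(1/q)=T_k^{-1}$ in $\Hecke$. By definition $R_k(1/q)=(1/q)\bigl(T_k+(q-1)\bigr)$, and the quadratic relation $(T_k+q)(T_k-1)=0$ from~\eqref{eq:Hecke_T} gives $T_k^2=(1-q)T_k+q$, so
\[
T_k\cdot (T_k+q-1)=T_k^2+(q-1)T_k=q,
\]
proving $T_k^{-1}=(1/q)(T_k+q-1)=R_k(1/q)$.

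Combining the two steps, the previous display becomes $T_{i_1}^{-1}T_{i_2}^{-1}\cdots T_{i_r}^{-1}$. Since $w=s_{i_1}\cdots s_{i_r}$ is reduced, so is $w^{-1}=s_{i_r}\cdots s_{i_1}$; by the braid-type relations in~\eqref{eq:Hecke_T} this gives $T_{w^{-1}}=T_{i_r}\cdots T_{i_1}$, hence
\[
\YB^w|_{z_1\gg\dots\gg z_n}=T_{i_1}^{-1}T_{i_2}^{-1}\cdots T_{i_r}^{-1}=(T_{w^{-1}})^{-1}.
\]
Multiplying by $q^{\ell(w)}$ and applying the defining identity~\eqref{eq:R_v^w_dfn} of the Kazhdan--Lusztig $R$-polynomials yields the claim. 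The only subtlety, which is mild, is making sure the limit $z_1\gg\dots\gg z_n$ truly specializes every $\sp$ appearing in $\YB^w$ to $1/q$; this is automatic because the indices $(a,b)$ arising in~\eqref{eq:YB_rec} always satisfy $a<b$, so I do not anticipate any genuine obstacle.
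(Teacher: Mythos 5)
Your proof is correct and takes essentially the same route as the paper: show $\sp_{i,j}\to 1/q$, observe $R_k(1/q)=T_k^{-1}$ via the quadratic relation, conclude $\YB^w|_{z_1\gg\dots\gg z_n}=(T_{w^{-1}})^{-1}$, and invoke~\eqref{eq:R_v^w_dfn}. You fill in a little more detail than the paper (the limit computation and the explicit inverse calculation), but the argument is the same.
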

\begin{proof}
Indeed, we have $R_k(1/q)=1/q T_k+(1-1/q)$ which by~\eqref{eq:Hecke_T} is equal to $T_k^{-1}$. Thus $\YB^w|_{z_1\gg\dots\gg z_n}=T_{i_1}^{-1}\cdots T_{i_r}^{-1}=(T_{w^{-1}})^{-1}$. The result follows from~\eqref{eq:R_v^w_dfn}.
\end{proof}
\noindent By \cref{prop:YB_equals_Zbipi}, this gives an interpretation of each Kazhdan--Lusztig $R$-polynomial in terms of the \SCSV model: $q^{-\ell(w)}\Rvw(q)$ equals the probability of observing $\pi$ as the color permutation of the \SCSV model inside a wiring diagram for $w$ with all $\sp$-parameters equal to $1/q$. Note that one of the vertex weights (namely, $1-q\sp$) in \cref{fig:spec} becomes equal to zero, however, the resulting ``five-vertex model'' is very different from the standard five-vertex model studied e.g. in~\cite{BBB,BSW,dGKW}. Our goal is to show that after sending all $\sp$-parameters to $1/q$, the configurations of the \SCSV model with nonzero weight admit a weight-preserving bijection with well-studied combinatorial objects called \emph{Deodhar's distinguished subexpressions}~\cite{Deodhar}.

\def\bv{{\pmb{\pi}}}

\begin{figure}
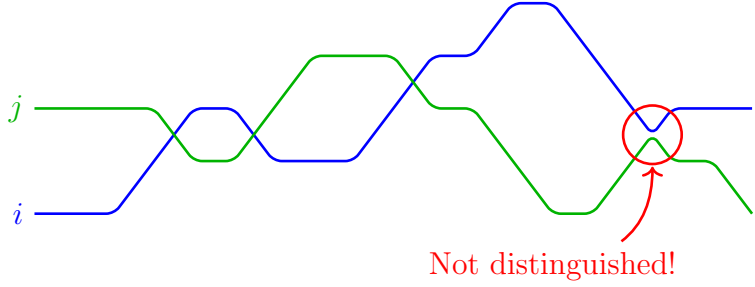

  \FIGnondist
  \caption{\label{fig:non_dist} The distinguished condition (\cref{dfn:dist}) states that two paths cannot touch after they have crossed an odd number of times. This is precisely the condition describing \SCSV configurations that remain after we send the weight $1-q\sp$ to $0$.}
\end{figure}

\begin{definition}\label{dfn:dist}
A \emph{reduced expression} for $w\in\Sn$ is a sequence $\bw=(s_{i_1},s_{i_2},\dots,s_{i_r})$ such that $w=s_{i_1}s_{i_2}\cdots s_{i_r}$ is a reduced word. A \emph{subexpression} for $\pi\in\Sn$ inside $\bw$ is a sequence $\bv=(\sb_{i_1},\sb_{i_2},\dots,\sb_{i_r})$ such that $\pi=\sb_{i_1}\sb_{i_2}\cdots \sb_{i_r}$ and each $\sb_{i_j}$ is equal to either $s_{i_j}$ or $\id$. We introduce partial products $\vp_j:=\sb_{i_1}\sb_{i_2}\cdots \sb_{i_j}$. A subexpression $\bv$ is called \emph{distinguished} (see \cref{fig:non_dist}) if whenever $\ell(\vp_{j-1}s_{i_j})=\ell(\vp_{j-1})-1$, we have $\sb_{i_j}=s_{i_j}$. We denote $\Jo:=\{j\in[1,r]\mid \sb_{i_j}=\id\}$, $\Jm:=\{j\in[1,r]\mid \ell(\vp_{j-1}\sb_{i_j})=\ell(\vp_{j-1})-1\}$ and define the \emph{weight} of $\bv$ by $(q-1)^{|\Jo|}q^{|\Jm|}$.
\end{definition} 
When $q\geq2$ is a prime power,\footnote{Note that for $q\geq 1$ and $\sp:=1/q$, the probabilities $\sp$, $1-\sp$, $q\sp$, and $1-q\sp$ all belong to $[0,1]$.} the above weights have the following geometric meaning: the $R$-polynomial $\Rvw(q)$ equals the number of points over $\Fbb_q$ inside an \emph{(open) Richardson variety} $\Rich$. Richardson varieties give a natural stratification of the \emph{flag variety} $G/B$, where $G=\SL_n(\C)$ and $B$ is the subgroup of $G$ consisting of upper triangular matrices. (Thus $G/B$ is the space of \emph{complete flags} $(V_0\subset V_1\subset\dots\subset V_n=\C^n)$ of linear subspaces of $\C^n$ with $\dim V_i=i$ for all $i$.) We have $G/B=\bigsqcup_{\pi\leq w}\Rich$. Deodhar~\cite{Deodhar} constructed a decomposition of each Richardson variety $\Rich$ into finer pieces corresponding to distinguished subexpressions of $\pi$ inside a reduced expression $\bw$ for $w$. Each piece is isomorphic to $(\C^\ast)^{|\Jo|}\times \C^{|\Jm|}$. The same decomposition works over finite fields, thus the number of $\Fbb_q$-points inside the corresponding piece equals $(q-1)^{|\Jo|}q^{|\Jm|}$. In particular, we have
\begin{equation*}%
  \Rvw(q)=\sum_{\substack{\text{distinguished subexpressions }\bv\\\text{for $\pi$ inside $\bw$}}} (q-1)^{|\Jo|}q^{|\Jm|}.
\end{equation*}
The following observation gives a bijective proof of \cref{lemma:YB_Rvw}.
\begin{proposition}\label{prop:dist_SCSV}
Let $\pi,w\in\Sn$ and choose a reduced expression $\bw$ for $w$. Then distinguished subexpressions for $\pi$ inside $\bw$ are in a natural bijection with configurations of the \SCSV model inside a wiring diagram corresponding to $\bw$ with all $\sp$-parameters set to $1/q$. For a given subexpression $\bv$, the \SCSV-weight of the corresponding configuration equals $q^{-\ell(w)}\cdot (q-1)^{|\Jo|}q^{|\Jm|}$.
\end{proposition}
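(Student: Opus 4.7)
My plan is to establish the bijection by tracing configurations cell-by-cell through the wiring diagram and matching each local choice to the choice of $\sb_{i_j}\in\{s_{i_j},\id\}$ in the subexpression. Each cell of the wiring diagram admits two outcomes: either the colors on the two incoming wires at heights $i_j$ and $i_j+1$ are exchanged (``swap''), or they remain on their wires. I match ``swap'' at cell $j$ with $\sb_{i_j}=s_{i_j}$ and ``no swap'' with $\sb_{i_j}=\id$, so that inductively the color on wire $h$ after cell $j$ equals $\vp_j(h)$ and the outgoing color permutation is $\vp_r$; this confirms that configurations yielding $\pi$ correspond to subexpressions for $\pi$ inside $\bw$.

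Next, I would read off the local cell weights from \cref{fig:spec} with $\sp=1/q$. The weight at cell $j$ depends on whether the colors entering it at heights $i_j$ and $i_j+1$ are in natural order ($\vp_{j-1}(i_j)<\vp_{j-1}(i_j+1)$) or reversed order ($\vp_{j-1}(i_j)>\vp_{j-1}(i_j+1)$): in natural order, swap has weight $\sp=1/q$ and no swap has weight $1-\sp=(q-1)/q$; in reversed order, swap has weight $q\sp=1$ and no swap has weight $1-q\sp=0$. The convention identifying ``swap'' with the $T_k$-coefficient of $R_k(\sp)=\sp T_k+(1-\sp)$ is pinned down by the $n=2$ base case $\YB^{s_1}=\sp T_{s_1}+(1-\sp)T_\id$, and \cref{prop:YB_equals_Zbipi} then guarantees consistency with the algebra at every cell.

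With the weights in hand, the key observation is that a configuration carries positive weight if and only if one never selects the zero-weight option $1-q\sp=0$; that is, whenever $\vp_{j-1}(i_j)>\vp_{j-1}(i_j+1)$, one is forced to swap, so $\sb_{i_j}=s_{i_j}$. Since $\vp_{j-1}(i_j)>\vp_{j-1}(i_j+1)$ is exactly the condition $\ell(\vp_{j-1}s_{i_j})=\ell(\vp_{j-1})-1$, this coincides with the distinguished condition from \cref{dfn:dist}. Thus the bijection above restricts to a bijection between nonzero-weight \SCSV configurations and distinguished subexpressions for $\pi$ inside $\bw$.

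Finally, I would verify the weight formula by grouping cells according to subexpression type: each $j\in\Jo$ corresponds to natural-order/no-swap (weight $(q-1)/q$), each $j\in\Jm$ corresponds to reversed-order/forced-swap (weight $1$), and each of the remaining $r-|\Jo|-|\Jm|$ cells corresponds to natural-order/swap (weight $1/q$). Multiplying these and using $r=\ell(w)$ gives
\[
\Big(\tfrac{q-1}{q}\Big)^{|\Jo|}\cdot 1^{|\Jm|}\cdot\Big(\tfrac{1}{q}\Big)^{r-|\Jo|-|\Jm|}=q^{-\ell(w)}(q-1)^{|\Jo|}q^{|\Jm|},
\]
as required. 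The only subtle point is fixing the correct swap/no-swap convention at each cell, after which the argument reduces to pure bookkeeping; as a consistency check, summing these weights over all distinguished $\bv$ for $\pi$ recovers $q^{-\ell(w)}\Rvw(q)$ in accordance with \cref{lemma:YB_Rvw}.
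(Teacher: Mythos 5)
Your proof is correct and is precisely the detailed version of what the paper dismisses as ``obvious from the definitions'' (cf.\ \cref{fig:non_dist}): you match the swap/no-swap choice at each crossing to the choice of $\sb_{i_j}$, observe that the vanishing weight $1-q\sp=0$ at $\sp=1/q$ enforces exactly the distinguished condition, and tally the three surviving cell weights $(q-1)/q$, $1$, $1/q$ into $q^{-\ell(w)}(q-1)^{|\Jo|}q^{|\Jm|}$. One small convention slip: with the paper's $(uw)(j)=w(u(j))$, the color at height $h$ after cell $j$ is $\vp_j^{-1}(h)$ rather than $\vp_j(h)$, so natural order at cell $j$ reads $\vp_{j-1}^{-1}(i_j)<\vp_{j-1}^{-1}(i_j+1)$; this is indeed equivalent to $\ell(\vp_{j-1}s_{i_j})=\ell(\vp_{j-1})+1$ as you invoke it, so your bookkeeping and the final weight formula go through unchanged.
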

\begin{proof}
This is obvious from the definitions, see \cref{fig:non_dist}.
\end{proof}

\subsection{Grassmannian interpretation of the flip theorem}
The flip theorem (\cref{thm:flip}) gives a non-trivial relation between partition functions of the \SCSV model associated with a wiring diagram of a ``rectangular'' permutation $w:=\wMN$ from \cref{ex:wMN}. Therefore specializing to $\by=0$, we obtain a relation between $R$-polynomials $\Rvw(q)$ for various $\pi$ satisfying horizontal and vertical boundary conditions. 

Recall that we set $n:=M+N$.  When $w=\wMN$ is a rectangular permutation and $\pi\leq w$, the Richardson variety $\Rich$ projects isomorphically onto a \emph{positroid variety} inside the \emph{Grassmannian} $\GRMN$ of complex $M$-dimensional linear subspaces of $\C^{n}$.  The positroid decomposition of $\GRMN$ was constructed by Knutson--Lam--Speyer~\cite{KLS} building on the work of Postnikov~\cite{Pos}. \Cref{thm:flip} implies that the number of points over $\Fbb_q$ in one subvariety $\Pi^{\H,\V}$ of $\GRMN$ equals the number of points in another subvariety $\Pi^{\flip(\H),\V}$ of $\GRMN$. Our goal is to describe the subvarieties $\Pi^{\H,\V}$ and $\Pi^{\flip(\H),\V}$ and give a simple bijection between their points over any field. This gives a ``lift'' of the $\by=0$ specialization of \cref{thm:flip} to $\GRMN$. One might wonder whether the whole \cref{thm:flip}, as well as other properties and objects related to the \SCSV model, can be lifted to the level of $\GRMN$. We discuss this further in \cref{sec:common_gen}.

Let $\Bound$ denote the set of \emph{bounded affine permutations} which are bijections $f:\Z\to\Z$ such that 
\begin{itemize}
\item $f(i+n)=f(i)+n$ for all $i\in\Z$, 
\item $\frac1n\sum_{i=1}^n(f(i)-i)=M$, and
\item $i\leq f(i)\leq i+n$ for all $i\in\Z$.
\end{itemize}
An element $X\in\GRMN$ can be viewed as a row span of a full rank complex $M\times \MPN$ matrix $A$. Denote the columns of $A$ by $A_1,A_2,\dots,A_{\MPN}\in\C^M$. We extend this uniquely to all $i\in\Z$ by requiring that $A_{i+n}=A_i$ for all $i\in\Z$. We define $f_A:\Z\to\Z$ by
\begin{equation}\label{eq:f_A}
  f_A(i)=\min \left\{j\geq i\mid A_i\in\Span \left(A_{i+1},A_{i+2},\dots,A_j\right)\right\}\quad\text{for $i\in\Z$.}
\end{equation}
It is not hard to see that $f_A\in\Bound$ and that it only depends on the row span $X$ of $A$, thus we denote $f_X:=f_A$ and consider a map $\GRMN\to\Bound$ sending $X\mapsto f_X$. Denoting $\Pio_f:=\{X\in\GRMN\mid f_X=f\}$, we obtain the \emph{positroid decomposition} of $\GRMN$ given by $\GRMN=\bigsqcup_{f\in\Bound}\Pio_f$.

A permutation $u\in\Sn$ is called \emph{$(M,N)$-Grassmannian} if $u(1)<u(2)<\dots<u(N)$ and $u(N+1)<u(N+2)<\dots<u(n)$. Let  $\tom\in\Bound$ denote the map sending $i\mapsto i+n$ for $i\in[1,M]$ and $i\mapsto i$ for $i\in[M+1,\MPN]$ (this defines $\tom(i)$ uniquely for all other $i\in\Z$). Extend each permutation $\pi\in \Sn$ to a map $\pi:\Z\mapsto\Z$ satisfying $\pi(i+n)=\pi(i)+n$. Then it is well known~\cite[Proposition~3.15]{KLS} that the map $(\pi,w)\mapsto f_{\pi,w}:=\pi\tom w^{-1}$ (where multiplication is given by composition) gives a bijection between pairs $(\pi,w)$ such that $\pi\leq w$ and $w$ is $(M,N)$-Grassmannian and the set $\Bound$. See~\cite[Figure~2]{GKL3} for a pictorial representation of this correspondence. By~\cite[Proposition~5.4]{KLS}, the Richardson variety $\Rich$ is isomorphic to the positroid variety $\Pio_{f_{\pi,w}}$, thus the number of points in $\Pio_{f_{\pi,w}}$ over $\Fbb_q$ is counted by the $R$-polynomial $\Rvw(q)$. When $w=\wMN$, we have $f_{\pi,w}=\pi\id_M$, where $\id_M:\Z\to\Z$ sends $i\mapsto i+M$ for all $i\in\Z$. 

\begin{example}
Let $w=\wMN$ and $\pi=\id$. In this case, $f_{\pi,w}=\id_M$ corresponds to the \emph{top-dimensional positroid variety} $\Pio_{\id_M}$. The row span of a $k\times n$ matrix $A$ belongs to $\Pio_{\id_M}$ if and only if all of its \emph{Pl\"ucker coordinates} corresponding to cyclic intervals are nonzero:
\begin{equation*}%
  \Delta_{1,2,\dots,M}(A)\neq0,\quad \Delta_{2,3,\dots,M+1}(A)\neq0,\quad\dots,\quad \Delta_{n,1,2,\dots,M-1}(A)\neq0.
\end{equation*}
 In other words, the row span of $A$ belongs to $\Pio_{\id_M}$ if and only if the vectors $A_{i+1},A_{i+2},\dots,A_{i+M}$ form a basis of $\C^M$ for all $i\in\Z$, as clearly follows from~\eqref{eq:f_A}. Let us now consider the case $M=N=2$ and count the number of points inside $\Pio_{\id_M}$ over $\Fbb_q$ in two different ways. Having $\pi=\id$ corresponds to the following two \SCSV model configurations:
\begin{center}
\FIGgrex
\end{center}
By \cref{prop:dist_SCSV}, multiplying the probabilities by $q^{\ell(w)}$, we get $\Rvw(q)=(q-1)^4+q(q-1)^2$. On the other hand, applying row operations, we find that $\Pio_{\id_M}$ consists of (row spans of) matrices $\begin{pmatrix}
1 & 0 & a & b\\
0 & 1 & c & d
\end{pmatrix}$ such that $a\neq 0$, $d\neq 0$, and $ad\neq bc$. The number of such matrices over $\Fbb_q$ equals $(q-1)^4+q(q-1)^2$, as expected.
\end{example}

Recall from \cref{sec:intro-rect} that we have defined the quantities $\H:=\Hpi$ and $\V:=\Vpi$ associated to $\pi\in\Sn$. We see that if $\H=\{(\HL_1,\HR_1),\dots,(\HL_\h,\HR_\h)\}$ and $\V=\{(\VD_1,\VU_1),\dots,(\VD_\v,\VU_\v)\}$ then $f:=f_{\pi,w}=\pi\id_M$ satisfies the following conditions:
\begin{align}
\label{eq:Haff}
\{(i,f(i))\mid i,f(i)\in[1,N]\}&=\{(\HL_1-M,\HR_1),\dots,(\HL_\h-M,\HR_\h)\};\\
\label{eq:Vaff}
\{(i,f(i))\mid i\in[1-M,0]\text{ and }f(i)\in[N+1,n]\}&=\{(\VD_1-M,\VU_1),\dots,(\VD_\v-M,\VU_\v)\}.
\end{align}
Denote the left and right hand sides of~\eqref{eq:Haff} (resp., \eqref{eq:Vaff}) by $\Haff_f$ and $\Haff$ (resp., $\Vaff_f$ and $\Vaff$). 

Let $X\in\GRMN$ be the row span of a matrix $A$ and consider the pair $(\pi_X,w_X)$ such that $f_X=f_{\pi_X,w_X}$. It is not hard to see that $w_X=\wMN$ if and only if the vectors $A_{N+1},A_{N+2},\dots,A_{n}$ form a basis of $\C^M$. We denote by $\Omega_{[N+1,n]}\subset\GRMN$ the set of (row spans of) such matrices $A$, which is usually called an \emph{opposite Schubert cell}. We let
\begin{equation*}%
  \Pi^{\H,\V}:=\{X\in\Omega_{[N+1,n]}\mid \Haff_{f_X}=\Haff\text{ and }\Vaff_{f_X}=\Vaff\}.
\end{equation*}
Finally, define the map $\fop:\GRMN\to\GRMN$ sending (the row span of) a matrix with columns $A_1,A_2,\dots,A_n$ to (the row span of) the matrix with columns
\begin{equation*}%
  A_N,A_{N-1},\dots,A_1,A_{N+1},A_{N+2},\dots,A_n.
\end{equation*}
The $\by\to0$ limit of \cref{thm:flip} states that the varieties $\Pi^{\H,\V}$ and $\Pi^{\flip(\H),\V}$ contain the same number of $\Fbb_q$-points. In fact, a much stronger statement holds.
\begin{proposition}\label{prop:Gr}
For all $\H,\V$, the map $\fop$ gives an isomorphism of varieties:
\begin{equation*}%
  \fop:\Pi^{\H,\V}\xrasim \Pi^{\flip(\H),\V}.
\end{equation*}
\end{proposition}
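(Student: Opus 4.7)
The plan is to verify directly that $\fop$ is an isomorphism on the variety level and transforms $\Haff$ and $\Vaff$ in the required way. The map $\fop$ descends from matrices to $\GRMN$ (it commutes with row operations), is clearly regular, and is its own inverse, hence an algebraic involution. Since $\fop$ fixes the columns $A_{N+1},\dots,A_n$, it preserves the opposite Schubert cell $\Omega_{[N+1,n]}$. It thus suffices to show that if $X\in\Pi^{\H,\V}$ then $\fop(X)\in\Pi^{\flip(\H),\V}$. Writing $A_1,\dots,A_n$ for the columns of $X$ (extended periodically) and $A'_k$ for those of $\fop(X)$, so that $A'_k=A_{N+1-k}$ for $k\in[1,N]$ while $A'_k=A_k$ otherwise, and setting $f':=f_{\fop(X)}$, we aim to prove
\begin{equation*}
\Vaff_{f'}=\Vaff_{f_X}\qquad\text{and}\qquad \Haff_{f'}=\{(N+1-j,\,N+1-i):(i,j)\in\Haff_{f_X}\}.
\end{equation*}
A direct manipulation of~\eqref{eq:Haff} identifies the right-hand map with the action of $\flip$ on pairs of $\Haff$, completing the reduction.

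The identity $\Vaff_{f'}=\Vaff_{f_X}$ is easy: for $i\in[1-M,0]$ and $j\in[N+1,n]$, the span $\Span(A_{i+1},\dots,A_j)$ decomposes as $\Span(A_{i+1},\dots,A_0)+\Span(A_1,\dots,A_N)+\Span(A_{N+1},\dots,A_j)$, and each summand is invariant under $\fop$: the first and third involve columns fixed by $\fop$, while the second depends on $\{A_1,\dots,A_N\}=\{A'_1,\dots,A'_N\}$ only as an unordered set. Since also $A'_i=A_i$ for $i\in[1-M,0]$ (by periodicity), \eqref{eq:f_A} gives $f_X(i)=j\iff f'(i)=j$ whenever $(i,j)\in[1-M,0]\times[N+1,n]$, yielding the desired equality.

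For $\Haff$, we use a short linear-algebra duality. For $i<j$ in $[1,N]$, the condition $f_X(i)=j$ is equivalent to
\begin{equation*}
A_i\in\Span(A_{i+1},\dots,A_j)\quad\text{and}\quad A_i\notin\Span(A_{i+1},\dots,A_{j-1}).
\end{equation*}
Writing $A_i=\sum_{k=i+1}^{j}c_kA_k$, the second condition forces $c_j\neq0$, so $A_j=c_j^{-1}\bigl(A_i-\sum_{k=i+1}^{j-1}c_kA_k\bigr)\in\Span(A_i,\dots,A_{j-1})$; moreover, if $A_j$ lay in $\Span(A_{i+1},\dots,A_{j-1})$, then $A_i$ would too. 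The converse direction is symmetric, so the original pair is equivalent to $A_j\in\Span(A_i,\dots,A_{j-1})$ together with $A_j\notin\Span(A_{i+1},\dots,A_{j-1})$. Using $A'_{N+1-k}=A_k$ for $k\in[1,N]$, these become $A'_{N+1-j}\in\Span(A'_{N+2-j},\dots,A'_{N+1-i})$ and $A'_{N+1-j}\notin\Span(A'_{N+2-j},\dots,A'_{N-i})$, which together say precisely $f'(N+1-j)=N+1-i$. The main source of friction will be the bookkeeping---between $f_X$ and its Grassmannian interpretation, between the $(\H,\V)$ and $(\Haff,\Vaff)$ conventions, and the periodic extension used in~\eqref{eq:f_A}---while the underlying linear algebra is entirely elementary.
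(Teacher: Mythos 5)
Your proof follows the same route as the paper's: reduce to showing that $\fop$ preserves the opposite Schubert cell, fixes $\Vaff$, and transforms $\Haff$ by the $\flip$ map, then verify the $\Haff$ claim via the linear-algebra duality between $f_X(i)=j$ (the ``min'' characterization) and the equivalent ``reverse'' statement $A_j\in\Span(A_i,\dots,A_{j-1})$, $A_j\notin\Span(A_{i+1},\dots,A_{j-1})$. Your treatment of the $\Haff$ duality is actually more explicit than the paper's (the paper asserts the equivalence with a ``max'' characterization without spelling out why the coefficient of $A_j$ is nonzero), so that part is a welcome expansion.

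One step is under-justified. In the $\Vaff$ argument you claim that for $i\in[1-M,0]$ and $j\in[N+1,n]$, the span decomposition makes~\eqref{eq:f_A} give $f_X(i)=j\iff f'(i)=j$. But the $\min$ in~\eqref{eq:f_A} ranges over \emph{all} $j'\geq i$, including $j'\in(i,N)$, and for those intermediate $j'$ the spans $\Span(A_{i+1},\dots,A_{j'})$ and $\Span(A'_{i+1},\dots,A'_{j'})$ do \emph{not} agree (they use different subsets of $\{A_1,\dots,A_N\}$). So the decomposition alone doesn't rule out that $\fop$ produces a smaller min. The missing observation is cheap: for $j'\leq N$ both spans are contained in $\Span(A_{i+1},\dots,A_N)=\Span(A'_{i+1},\dots,A'_N)$, so $f_X(i)\geq N+1$ and $f'(i)\geq N+1$ are equivalent, and once both exceed $N$ the decomposition applies and they coincide. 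You should also note the degenerate case $i=j$ (a zero column), which your $i<j$ argument skips, though it is immediate since $A_i=0\iff A'_{N+1-i}=0$. With these two patches the proof is complete and essentially identical to the paper's.
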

\begin{proof}
  Let $X\in\Pi^{\H,\V}$ be the row span of $A$, $f:=f_X$, $Y:=\fop(X)$, $g:=f_Y$. Clearly, $Y\in\Omega_{[N+1,n]}$. By~\eqref{eq:f_A}, we see that if $f(i)=j$ then $A_j$ appears with a nonzero coefficient in the expansion of $A_i$ as a linear combination of $A_{i+1},\dots,A_j$. In particular, $\max\{i'\leq j\mid A_j\in\Span \left(A_{j-1},A_{j-2},\dots,A_{i'}\right)\}$ is equal to $i$. This shows that $\Haff_g=\widetilde{\flip(\H)}$. Since $\Span(A_1,\dots,A_N)=\Span(A_N,\dots,A_1)$, we get $\Vaff_g=\Vaff$, thus indeed $Y\in\Pi^{\flip(\H),\V}$.
\end{proof}

\subsection{Common generalizations?}\label{sec:common_gen}
The main message suggested by the above observations is the following. The $\by=0$ specialization of the \SCSV model  recovers well studied objects such as $R$-polynomials, which are ``shadows'' of geometric objects  such as positroid varieties. The latter can be parametrized by other combinatorial objects such as planar bipartite graphs of~\cite{Pos}. 
\begin{question}
Does there exist a common generalization of the stochastic colored six-vertex model in a rectangle (with arbitrary $\bx,\by$) and positroid varieties in the Grassmannian?
\end{question}

For example, it would be interesting to give an interpretation of the \SCSV model in terms of \emph{planar bipartite graphs} of~\cite{Pos}. Surprisingly, the recurrence~\eqref{eq:HV_T_w*R_k}--\eqref{eq:master_flip} used in the proof of \cref{thm:flip} has already appeared in the literature precisely in the language of such graphs.  We thank Thomas Lam for pointing out the following remark to us.
\begin{remark}\label{rmk:MuSp}
The Grassmannian lift of the $\by=0$ specialization of~\eqref{eq:HV_T_w*R_k}--\eqref{eq:master_flip} was used in~\cite[Section~4]{MuSp} to show local acyclicity of cluster algebras associated to Postnikov's planar bipartite graphs. Applying either recurrence to $R$-polynomials and distinguished subexpressions leads to unexpected phenomena related to rational Catalan combinatorics which will be explored in future work with Thomas Lam.
\end{remark}
The above local acyclicity property was used in~\cite{GL} to show that positroid varieties are cluster varieties, which allows one to study their cohomology via the associated \emph{mixed Hodge tables}, see~\cite{LS}. The number of points over $\Fbb_q$ can be calculated from the mixed Hodge table, thus one possible direction is to understand the relation between the \SCSV model and the cohomology of positroid varieties.

Kazhdan--Lusztig $R$-polynomials arise in several other contexts. For example, they are used to express \emph{Kazhdan--Lusztig $P$-polynomials}~\cite{KL1,KL2}, which have nonnegative integer coefficients and are of great interest in representation theory and algebraic geometry. One can define $P$-polynomials through an involution on $\Hecke$ that is very similar (but different) from the one used in the proof of \cref{thm:CPS}. We thank Pavlo Pylyavskyy for discussions related to the following question.
\begin{question}
Determine the relation between Kazhdan--Lusztig $P$-polynomials and the \SCSV model. For example, is there a probabilistic interpretation of their nonnegative integer coefficients?
\end{question}

By \cref{prop:YB_equals_Zbipi}, the \SCSV model is literally equivalent to the expansion of the Yang--Baxter basis $\{\YB^w\}$ in the $\{T_\pi\}$ basis. The Yang--Baxter basis has several interesting properties. For example, the \emph{orthogonality relations}, discovered already in~\cite[Theorem~5.1]{LLT}, are yet to be understood from the \SCSV model point of view.

One other direction that we think is worth exploring is to understand the appearance of pipe dreams both in the \SCSV model dynamics and in Schubert calculus. There are several classes of pipe dreams arising in Schubert calculus, for example, ordinary pipe dreams~\cite{BeBi,FK,KM} or \emph{bumpless} pipe dreams recently introduced in~\cite{LLS}. One may consider \emph{reduced} pipe dreams (where two paths can intersect at most once) or \emph{non-reduced} pipe dreams which appear e.g. in the study of Grothendieck polynomials~\cite{FK2,LRS,Lascoux2}. 
\begin{question}
Determine which of the above classes of pipe dreams are related to the \SCSV model or its specializations.
\end{question}
An example of a relation like this between \emph{bumpless pipe dreams}~\cite{LLS} and the six-vertex model was recently pointed out by Anna Weigandt~\cite{Weigandt}. These objects are closely related to alternating sign matrices~\cite{Lascoux2,KupASM}. See also~\cite{BBB,BSW} for related work.

\def\PD{\mathcal{PD}}
\subsection{Counting pipe dreams}\label{sec:counting-pipe-dreams}
We give a curious enumerative specialization of \cref{thm:flip}. The number of configurations of the \SCSV model inside an $M\times N$-rectangular domain is equal to $2^{MN}$, and for each configuration $\pip$, we denote the associated color permutation by $\pi_\pip$ (as in \cref{sec:equiv-class-pipe}). Given $\H,\V$, we let $\PD^{\H,\V}:=\{\pip\mid  \text{$\H^{M,N}_{\pi_\pip}=\H$ and $\V^{M,N}_{\pi_\pip}=\V$}\}.$
\begin{corollary}
For any $\H,\V$, we have 
\begin{equation}\label{eq:PD_cnt}
  \#\PD^{\H,\V}=\#\PD^{\flip(\H),\V}.
\end{equation}
\end{corollary}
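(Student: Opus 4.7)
The corollary follows from \cref{thm:flip} by evaluating both sides at a specific specialization of the rapidities that makes every pipe dream contribute the same weight. I would take $\bx = (q, q, \ldots, q)$ (all column rapidities equal to $q$) and $\by = (1, 1, \ldots, 1)$ (all row rapidities equal to $1$). Under this substitution, the cell parameter becomes
\[
\sp_{i,j} = \frac{y_j - x_i}{y_j - q x_i} = \frac{1-q}{1-q^2} = \frac{1}{1+q}
\]
independently of $(i,j)$, and the four vertex weights become $\sp = \frac{1}{1+q}$, $1-\sp = \frac{q}{1+q}$, $q\sp = \frac{q}{1+q}$, and $1-q\sp = \frac{1}{1+q}$.

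The key observation is that at $q = 1$, all four of these weights collapse to $1/2$, so every pipe dream contributes exactly $(1/2)^{MN}$ to the partition function. Consequently,
\[
\ZHV\Big|_{\bx=(q,\ldots,q),\,\by=(1,\ldots,1),\,q=1} = (1/2)^{MN} \cdot \#\PD^{\H,\V}.
\]
Since $\rev(\by) = \by$ at this specialization, \cref{thm:flip} gives $\ZHV = \ZHFV$ at the same parameters, so the right-hand side evaluates to $(1/2)^{MN} \cdot \#\PD^{\flip(\H),\V}$, and dividing by $(1/2)^{MN}$ yields~\eqref{eq:PD_cnt}.

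The only subtle point is that the naive formula $(y_j - x_i)/(y_j - q x_i)$ is of the form $0/0$ at $q = 1$ (since both $y_j - x_i = 1 - q$ and $y_j - q x_i = 1 - q^2$ vanish there), but the rational function $1/(1+q)$ extends regularly to $q = 1$ after the common factor $(1-q)$ is cancelled. Thus every vertex weight --- and hence the whole partition function --- is regular at $q = 1$ under this specialization, and the argument goes through without any limiting procedure.
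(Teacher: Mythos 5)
Your proof is correct, and it follows essentially the same strategy as the paper: specialize the rapidities so that the parameter $\sp_{i,j}$ becomes constant, observe that the flip theorem then becomes an equality of polynomials regular at $q=1$, and evaluate at $q=1$ where every pipe dream has weight $(1/2)^{MN}$. The paper instead parametrizes the specialization by a free variable $t$ (via $x_i=x$, $y_j=\frac{1-qt}{1-t}x$), which yields the equivalent result after sending $q\to 1$ and setting $t=1/2$, but also produces the $q$-analog~\eqref{eq:PD_cnt_t} as a bonus; your one-shot substitution $\bx=(q,\ldots,q)$, $\by=(1,\ldots,1)$ gets the count directly without that extra refinement.
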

\begin{proof}
For a variable $t$, denote $y:=\frac{1-qt}{1-t}x$ and substitute $y_j:=y$ and $x_i:=x$ for all $i\in[1,M]$ and $j\in[1,N]$. Then each parameter $\sp_{i,j}$ given by~\eqref{eq:sp} becomes equal to $t$. For a pipe dream $\pip$, let $\xing(\pip)$ denote the number of cells of $\pip$ that contain a crossing, thus $0\leq \xing(\pip)\leq MN$. Applying \cref{thm:flip} and sending $q\to1$, we get
\begin{equation}\label{eq:PD_cnt_q}
  \sum_{\pip\in\PD^{\H,\V} } t^{\xing(\pip)}(1-t)^{MN-\xing(\pip)}=
\sum_{\pip\in\PD^{\flip(\H),\V} } t^{\xing(\pip)}(1-t)^{MN-\xing(\pip)}.
\end{equation}
Substituting $t=\frac12$, the result follows.
\end{proof}
\begin{remark}
Dividing both sides of~\eqref{eq:PD_cnt_q} by $(1-t)^{MN}$ and replacing $\frac{t}{1-t}$ with $q$ yields a $q$-analog of~\eqref{eq:PD_cnt}:
\begin{equation}\label{eq:PD_cnt_t}
  \sum_{\pip\in\PD^{\H,\V}} q^{\xing(\pip)}=\sum_{\pip\in\PD^{\flip(\H),\V}} q^{\xing(\pip)}.
\end{equation}
\end{remark}
\begin{problem}
Give a non-recursive bijective proof of~\eqref{eq:PD_cnt} or~\eqref{eq:PD_cnt_t}.
\end{problem}

\def\Ht{\HTop}
\subsection{Arbitrary permutations}\label{sec:arbitr-perm}
We finish with a discussion of more general wiring diagram domains associated to arbitrary Yang--Baxter basis elements.

The statement of \cref{thm:main} extends perfectly well to arbitrary $w\in\Sn$: one may consider two families of height functions $(\Ht(\ci_1,\cj_1),\dots,\Ht(\ci_m,\cj_m))$, $(\Ht(\ci'_1,\cj'_1),\dots,\Ht(\ci'_m,\cj'_m))$ defined in~\eqref{eq:HTpi}. They are considered as random vectors with respect to the probability distribution arising from the \SCSV model associated with $\YB^w$. Suppose that a permutation $\bz'=(z'_1,\dots,z'_n)$ of the variables in $\bz$ is such that all marginal distributions are preserved: 
\begin{equation*}%
  \Ht(\ci_k,\cj_k)\eqd\Ht(\ci'_k,\cj'_k)\mid_{\bz\mapsto\bz'}\quad\text{for all $k\in[1,m]$}.
\end{equation*}
Does this imply that 
\begin{equation*}%
 (\Ht(\ci_1,\cj_1),\dots,\Ht(\ci_m,\cj_m))\eqd\left(\Ht(\ci'_1,\cj'_1),\dots,\Ht(\ci'_m,\cj'_m)\right)\mid_{\bz\mapsto\bz'}?
\end{equation*}
When $w$ is associated with a \skew domain as in \cref{sec:skew_to_wiring} (such permutations are called \emph{fully commutative}), the answer is positive since this is precisely the subject of \cref{thm:main}. For general $w$, we give a counterexample.
\begin{example}
Let $n=4$, $w=s_3s_2s_3=s_2s_3s_2$ and consider height functions $\Ht(2,2)$ and $\Ht(3,3)$. Suppose $\pi\in\Sn$ has nonzero probability (i.e., $\pi\leq w$ in the Bruhat order). Then
\begin{equation*}%
  \Ht_\pi(2,2)=
  \begin{cases}
    0, &\text{if $\pi(2)=2$,}\\
    1, &\text{otherwise;}\\
  \end{cases}\quad\Ht_\pi(3,3)=
  \begin{cases}
    0, &\text{if $\pi(4)=4$,}\\
    1, &\text{otherwise.}\\
  \end{cases}
\end{equation*}
Considering the wiring diagram associated with the reduced word $w=s_3s_2s_3$, we see that the probability that $\Ht(2,2)=0$ is equal to $1-\sp_{2,4}$. If we instead use the Yang--Baxter relation and consider the reduced word $w=s_2s_3s_2$, we similarly find that the probability that $\Ht(3,3)=0$ is also equal to $1-\sp_{2,4}$. In particular, the distributions of $\Ht(2,2)$ and $\Ht(3,3)$ are the same and are invariant under swapping $z_1$ and $z_3$. However, their joint distribution is not invariant under this substitution: for example, the only permutation $\pi$ satisfying $\Ht_\pi(2,2)=0$ and $\Ht_\pi(3,3)=1$ is the simple transposition $\pi=s_3$ which appears with probability $(1-\sp_{2,3})(1-\sp_{3,4})\sp_{2,4}$. This quantity depends on $z_3$ but not on $z_1$ so it is not invariant under swapping these two variables. Therefore \cref{thm:main} does not extend to arbitrary permutations $w$.
\end{example}

\def\HO{\H}
\def\VO{\V}

\def\Ho{\accentset{\circ}{\H}^{\alpha,\delta}}
\def\Vo{\accentset{\circ}{\V}^{\alpha,\delta}}
\def\SATHVO{\accentset{\circ}{\SATop}_{\alpha,\delta}(\HO,\VO)}
\def\ZHV{\accentset{\circ}{\PF}^{\HO,\VO}}
Despite this example, we can still give a conjectural generalization of the shift-invariance results of~\cite{BGW} to arbitrary $w$. Fix $\alpha,\delta\in[1,n]$, and for each $\pi\in\Sn$, define
\begin{equation*}%
  \Ho_\pi:=\{(i,\pi(i))\mid i>\alpha \text{ and }\pi(i)<\delta \},\quad    \Vo_\pi:=\{(i,\pi(i))\mid i< \alpha\text{ and }\pi(i)> \delta\}.
\end{equation*}
This differs from $\H^{\alpha,\delta}_\pi$ and $\V^{\alpha,\delta}_\pi$ defined in~\eqref{eq:Hcicj} by passing from weak to strong inequalities.

Fix two sets $\HO,\VO$ of pairs and denote $\SATHVO:=\{\pi\in\Sn\mid \Ho_\pi=\HO,\Vo_\pi=\VO\}$. Also let us fix $w\in\Sn$ and consider the probability distribution associated with $\YB^w$. We let $\ZHV_{\XX}$ denote the probability that $\pi\in\SATHVO$. We further introduce probabilities $\ZHV_\MU,\ZHV_\MMa,\ZHV_\MD$ that $\pi$ belongs to $\SATHVO$ and satisfies, respectively, $\pi(\alpha)>\delta$, $\pi(\alpha)=\delta$, or $\pi(\alpha)<\delta$. Similarly, we let $\ZHV_\UM,\ZHV_\MMd,\ZHV_\DM$ denote the probabilities that $\pi$ belongs to $\SATHVO$ and satisfies, respectively, $\pi^{-1}(\delta)>\alpha$, $\pi^{-1}(\delta)=\alpha$, or $\pi^{-1}(\delta)<\alpha$. The probabilities $\ZHV_\MMa$ and $\ZHV_\MMd$ are equal and denoted by $\ZHV_\MM$. We have
\begin{equation*}%
  \ZHV_\MU+\ZHV_\MM+\ZHV_\MD=\ZHV_\UM+\ZHV_\MM+\ZHV_\DM=\ZHV_\XX.
\end{equation*}

Let $\beta:=w^{-1}(\delta)$ and $\gamma:=w(\alpha)$. We say that $\{\alpha,\beta\}$ is an \emph{inversion of $w$} if either $\alpha<\beta$ and $\gamma>\delta$ or $\alpha>\beta$ and $\gamma<\delta$. (In particular, $\{\alpha,\beta\}$ is not an inversion when $\alpha=\beta$.) The following conjecture has been verified for $n\leq 6$.

\def\middd{\Bigg|}
\begin{conjecture}\label{conj:cond_shift}
Let $\alpha,\delta\in[1,n]$, $w\in\Sn$, and $\beta:=w^{-1}(\delta)$ be such that $\{\alpha,\beta\}$ is not an inversion of $w$. Then for any $\H,\V$, we have
\begin{equation}\label{eq:cond_shift}
  \frac{\ZHV_\MU}{\ZHV_\XX}=\frac{\ZHV_\DM}{\ZHV_\XX}\middd_{z_\alpha\leftrightarrow z_\beta},\qquad \frac{\ZHV_\MM}{\ZHV_\XX}=\frac{\ZHV_\MM}{\ZHV_\XX}\middd_{z_\alpha\leftrightarrow z_\beta},\quad\text{and}\quad
\frac{\ZHV_\MD}{\ZHV_\XX}=\frac{\ZHV_\UM}{\ZHV_\XX}\middd_{z_\alpha\leftrightarrow z_\beta}.
\end{equation}
\end{conjecture}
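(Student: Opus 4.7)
The strategy is to reduce \cref{conj:cond_shift} to a conditional, two-color version of the generalized flip theorem (\cref{thm:flip_gen}), working within the Hecke algebra framework of \cref{sec:Hecke}. First, I would exploit the Yang--Baxter equation \eqref{eq:YB} to choose a reduced expression for $w$ in which the strands originating at positions $\alpha$ and $\beta$ can be routed so that they do not cross one another; by standard $A_n$-type root system combinatorics, such an expression exists precisely because $\{\alpha,\beta\}$ is a non-inversion of $w$. In this expression, one can attempt to group together all elementary transpositions $s_k$ whose joint action on the two special strands $\{\alpha,\beta\}$ can be encapsulated in a single ``two-row'' sub-element of the form $\Yij\in\Hecke$ appearing in \cref{thm:flip_gen}.

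Next, I would seek a factorization $\YB^w = U\cdot \Yij\cdot V$ (suitably interpreted) where $U$ and $V$ collect crossings that are independent of $\{\alpha,\beta\}$ in an appropriate sense. Applying \cref{thm:flip_gen} to the factor $\Yij$ with color cutoff levels matching the data $(\alpha,\delta)$ of the conjecture yields an identity relating the partition function sums over $\SATHVO$ and $\SAT_{\alpha,\delta}(\flip_{\alpha,\delta}(\HO),\VO)$ with the row rapidities of the sub-factor reversed. Because $\Yij$ effectively has two rows in the relevant regime, this reversal is exactly the transposition $z_\alpha\leftrightarrow z_\beta$. The three cases $\MU,\MM,\MD$ of the conjecture should then correspond respectively to the three possible trajectories of the two special colors through the flipped sub-region (passing above it, meeting on the diagonal, or passing below it), which under the flip map to $\DM,\MM,\UM$; matching the normalizations by $\ZHV_\XX$ on both sides then gives the three ratio identities of the conjecture.

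The principal obstacle will be producing the factorization $\YB^w=U\cdot \Yij\cdot V$ in a form compatible with arbitrary boundary conditions $(\HO,\VO)$. In general, the strands other than $\alpha$ and $\beta$ may weave around them in complicated ways, so $U$ and $V$ cannot simply be products of $R_k$'s disjoint from the pair $\{\alpha,\beta\}$; one must instead isolate their effect at the level of the conditional distribution on $\SATHVO$, perhaps via techniques analogous to the $(\CUT,\cut)$-equivalence classes of \cref{sec:equiv-class-pipe}. That the non-inversion hypothesis is genuinely needed is clear already from the trivial case $\alpha=\beta$, where the conjecture reduces to the identities $\ZHV_\MU=\ZHV_\DM$ and $\ZHV_\MD=\ZHV_\UM$ as rational functions of $\bz$; these identities fail if the weights $\sp_{i,j}$ are treated as independent parameters and depend crucially on the precise form $\sp_{i,j}=(z_j-z_i)/(z_j-qz_i)$, reflecting the underlying Yang--Baxter structure. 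I therefore expect that a clean proof will require either a refined, conditional version of \cref{thm:flip_gen} tailored to extracting two specific strands from a general wiring diagram, or perhaps an inductive argument on $\ell(w)$ using the recurrence \eqref{eq:YB_rec}; the fact that the conjecture is only verified for $n\le 6$ suggests that genuinely new structural ideas may be needed beyond the techniques already developed in the paper.
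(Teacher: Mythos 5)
This is stated in the paper as a \emph{conjecture}, not a theorem: the paper gives no proof and records only that the claim ``has been verified for $n\le 6$.'' There is accordingly no argument in the paper to compare your proposal against, and your closing assessment---that a proof likely requires ideas beyond those developed in the paper---matches the paper's own framing.

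A couple of remarks on the sketch itself. The non-crossing of the wires at $\alpha$ and $\beta$ is not a property of a cleverly chosen reduced expression: in any reduced wiring diagram a pair of wires crosses at most once, and it crosses exactly when the pair of starting positions is an inversion of $w$, so the non-crossing is automatic from the hypothesis rather than something you get to arrange via Yang--Baxter moves. The genuine difficulty, as you recognize, is obtaining a factorization $\YB^w=U\cdot\Yij\cdot V$ to which \cref{thm:flip_gen} could be applied; for non-fully-commutative $w$ the other wires may cross one of the two special wires on both sides of the would-be flip region. Moreover, even if such a factorization existed, the denominators $\ZHV_{\XX}$ are not symmetric under $z_\alpha\leftrightarrow z_\beta$ in general (as noted in the remark after \cref{conj:cond_shift} and exhibited in \cref{ex:cond}), so any flip-type identity would have to operate directly on conditional probabilities, a phenomenon with no analogue in \cref{thm:flip_gen}. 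Your proposal identifies the correct obstacles, but, as you yourself conclude, it is an outline of a strategy rather than a proof, and it leaves the conjecture open.
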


\begin{remark}
Both sides of~\eqref{eq:cond_shift} can be interpreted as \emph{conditional} probabilities and can be restated in the language of height functions. If the denominator $\ZHV_\XX$ is symmetric in $z_\alpha,z_\beta$ then the equalities in~\eqref{eq:cond_shift} hold for just the numerators. This is the case for \skew domains, and therefore it is straightforward to check that \cref{conj:cond_shift} implies~\cite[Theorems~1.2 and~4.13]{BGW}. However, $\ZHV_\XX$ is not symmetric in $z_\alpha,z_\beta$ for other choices of $w$ as \cref{ex:cond} demonstrates.
\end{remark}
\begin{remark}
When $\{\alpha,\beta\}$ is an inversion of $w$, \cref{conj:cond_shift} does not hold: in the limit regime where $z_1\gg\dots\gg z_n$, the $\sp$-parameters become equal to $1/q$, and then sending $q\to 1$, we obtain $T_w$ as the limit of $\YB^w$. In this case, $w$ appears as the color permutation with probability $1$. If, say, $\alpha>\beta$ then we get $\ZHV_\MD=\ZHV_\DM=1$ while $\ZHV_\MU=\ZHV_\UM=0$, violating the first and the third equalities in~\eqref{eq:cond_shift}.
\end{remark}

\def\lw{1pt}
\def\tikzscl{0.7}
\def\sclbx{0.7}
\def\roc{3}

\def\ZHVr{\accentset{\circ}{Z}^{\HO,\textcolor{red}{\VO}}}

\begin{figure}

\begin{tabular}{ccc}
\scalebox{\sclbx}{
\begin{tikzpicture}[yscale=-\tikzscl, xscale=\tikzscl,baseline=(ZUZU.base)]
\coordinate(ZUZU) at (0,0);
\node[anchor=east] (A) at (0,0) {$4$};
\node[anchor=east] (A) at (0,1) {$\alpha=3$};
\node[anchor=east] (A) at (0,2) {$2$};
\node[anchor=east] (A) at (0,3) {$\beta=1$};

\draw[rounded corners=\roc,line width=\lw] (0,0)--(2,0)--(5,3)--(6,3);
\draw[rounded corners=\roc,line width=\lw] (0,1)--(1,1)--(2,2)--(3,2)--(4,1)--(6,1);
\draw[rounded corners=\roc,line width=\lw] (0,2)--(1,2)--(3,0)--(6,0);
\draw[rounded corners=\roc,line width=\lw] (0,3)--(4,3)--(5,2)--(6,2);

\node[anchor=west] (A) at (6,0) {$4$};
\node[anchor=west] (A) at (6,1) {$3=\gamma$};
\node[anchor=west] (A) at (6,2) {$2=\delta$};
\node[anchor=west] (A) at (6,3) {$1$};
\end{tikzpicture}
}

&

\scalebox{\sclbx}{
\begin{tikzpicture}[yscale=-\tikzscl, xscale=\tikzscl,baseline=(ZUZU.base)]
\coordinate(ZUZU) at (0,0);
\node[anchor=east] (A) at (0,0) {$4$};
\node[anchor=east,blue] (A) at (0,1) {$\alpha=3$};
\node[anchor=east,red] (A) at (0,2) {$2$};
\node[anchor=east] (A) at (0,3) {$\beta=1$};

\draw[rounded corners=\roc,line width=\lw] (0,0)--(2,0)--(3.5,1.5)--(4,1)--(6,1);
\draw[rounded corners=\roc,line width=\lw,blue] (0,1)--(1,1)--(2,2)--(3,2)--(3.5,1.5)--(5,3)--(6,3);
\draw[rounded corners=\roc,line width=\lw,red] (0,2)--(1,2)--(3,0)--(6,0);
\draw[rounded corners=\roc,line width=\lw] (0,3)--(4,3)--(5,2)--(6,2);

\node[anchor=west,blue] (A) at (6,3) {$1$};
\node[anchor=west] (A) at (6,1) {$3=\gamma$};
\node[anchor=west] (A) at (6,2) {$2=\delta$};
\node[anchor=west,red] (A) at (6,0) {$4$};
\end{tikzpicture}
}

&

\scalebox{\sclbx}{
\begin{tikzpicture}[yscale=-\tikzscl, xscale=\tikzscl,baseline=(ZUZU.base)]
\coordinate(ZUZU) at (0,0);
\node[anchor=east,blue] (A) at (0,0) {$4$};
\node[anchor=east] (A) at (0,1) {$\alpha=3$};
\node[anchor=east,red] (A) at (0,2) {$2$};
\node[anchor=east] (A) at (0,3) {$\beta=1$};

\draw[rounded corners=\roc,line width=\lw,blue] (0,0)--(2,0)--(4.5,2.5)--(5,2)--(6,2);
\draw[rounded corners=\roc,line width=\lw] (0,1)--(1,1)--(2,2)--(3,2)--(4,1)--(6,1);
\draw[rounded corners=\roc,line width=\lw,red] (0,2)--(1,2)--(3,0)--(6,0);
\draw[rounded corners=\roc,line width=\lw] (0,3)--(4,3)--(4.5,2.5)--(5,3)--(6,3);

\node[anchor=west] (A) at (6,3) {$1$};
\node[anchor=west] (A) at (6,1) {$3=\gamma$};
\node[anchor=west,red] (A) at (6,0) {$4$};
\node[anchor=west,blue] (A) at (6,2) {$2=\delta$};
\node[anchor=west,red] (A) at (6,0) {$4$};
\end{tikzpicture}
}

\\
$w=s_2s_3s_2s_1$
 & $\ZHVr_\MDb$ & $\ZHVr_\UMb$\\

\end{tabular}

  \caption{\label{fig:cond_shift} In this case, \cref{conj:cond_shift} does not hold without the denominators, see \cref{ex:cond}.}
\end{figure}

\begin{example}\label{ex:cond}
Let $n=4$, $w=s_2s_3s_2s_1$, $\alpha=3$, $\beta=1$, $\gamma=3$, $\delta=2$, $\H=\emptyset$, and $\V=\{(2,4)\}$, see \cref{fig:cond_shift}. For such $\H$ and $\V$, we find $\ZHV_{\XX}=\sp_{23}\sp_{24}(1-\sp_{34}\sp_{14})$. 
The permutations contributing to $\ZHV_\MD$ and $\ZHV_\UM$ are shown in \cref{fig:cond_shift}, so~\eqref{eq:cond_shift} reads $\frac{\sp_{23}\sp_{24}(1-\sp_{34})\sp_{14}}{\sp_{23}\sp_{24}(1-\sp_{34}\sp_{14})}=\frac{\sp_{23}\sp_{24}\sp_{34}(1-\sp_{14})}{\sp_{23}\sp_{24}(1-\sp_{34}\sp_{14})}\mid_{z_1\leftrightarrow z_3}.$ 
 This identity is true, but would be false without the denominators.
\end{example}

\newcommand{\arxiv}[1]{\href{https://arxiv.org/abs/#1}{\textup{\texttt{arXiv:#1}}}}

\bibliographystyle{alpha}
\bibliography{biblio}

\end{document}